\documentclass[1 leqno,11pt,psfig]{amsart}
\usepackage{amssymb, amsmath,amsmath,latexsym,amssymb,amsfonts,amsbsy, amsthm,mathtools,graphicx,CJKutf8,CJKnumb,CJKulem}
\usepackage{graphicx,epsfig}
\usepackage{graphicx}
\usepackage{amssymb}
\usepackage{graphicx}
\usepackage{graphicx,epsfig}
\usepackage{amsmath}
\usepackage{mathrsfs}
\usepackage{amssymb}
\usepackage[colorlinks=true,citecolor=blue,bookmarks=true]{hyperref}

\usepackage{amssymb,mathrsfs,amsfonts,amsmath,amsbsy,tikz}
\usepackage{fontenc}
\usepackage{textcomp}


\setlength{\oddsidemargin}{0mm}
\setlength{\evensidemargin}{0mm} \setlength{\topmargin}{0mm}
\setlength{\textheight}{220mm} \setlength{\textwidth}{155mm}

\numberwithin{equation}{section}

\allowdisplaybreaks





\newcommand{\beq}{\begin{equation}}
\newcommand{\eeq}{\end{equation}}
\newcommand{\ben}{\begin{eqnarray}}
\newcommand{\een}{\end{eqnarray}}
\newcommand{\beno}{\begin{eqnarray*}}
\newcommand{\eeno}{\end{eqnarray*}}

\newtheorem{theorem}{Theorem}[section]
\newtheorem{definition}[theorem]{Definition}
\newtheorem{lemma}[theorem]{Lemma}
\newtheorem{proposition}[theorem]{Proposition}

\newtheorem{remark}[theorem]{Remark}
\newtheorem{Step}{Step}
\newtheorem{Theorem}{Theorem}[section]

\newtheorem{Corollary}[Theorem]{Corollary}


\begin{document}
\begin{CJK*}{UTF8}{gkai}
\title[Dynamical magneto-rotational instability]{Dynamical magneto-rotational instability}

\author{Zhiwu Lin}
\address{School of Mathematical Sciences, Fudan University,
 200433, Shanghai, P. R. China}
\email{zwlin@fudan.edu.cn}

\author{Yucong Wang}
\address{
School of Mathematics and Computational Science, Xiangtan University, 411105, Xiangtan, Hunan, P. R. China}
\email{yucongwang666@163.com}

\author{Wenpei Wu}
\address{
School of Mathematics and Computational Science, Xiangtan University, 411105, Xiangtan, Hunan, P. R. China}
\email{wenpeiwu16@163.com}

\date{\today}

\maketitle

\begin{abstract}
Magneto-rotational instability (MRI) is an important instability mechanism for
rotating flows with magnetic fields. In particular, when the strength of the
magnetic field tends to zero, the stability criterion for rotating flows is
generally different from the classical Rayleigh criterion for rotating flows
without a magnetic field. MRI has wide applications in astrophysics,
particularly to the turbulence and enhanced angular momentum transport in
accretion disks. For the case of vertical magnetic fields, we give rigorous
proof of linear MRI and a complete description of the spectra and semigroup
growth of the linearized operator. Moreover, we prove nonlinear stability and
instability from the sharp linear stability/instability criteria.

\end{abstract}


\section{Introduction}

Consider the incompressible ideal magneto-hydrodynamic (MHD) system
\begin{align}\label{EPM}
\begin{cases}
\partial_tv+v\cdot\nabla v+\nabla p=H\cdot\nabla H,\\
\partial_t H+v\cdot\nabla H=H\cdot\nabla v,\\
\text{div}v=\text{div} H=0.
\end{cases}
\end{align}
Here, the velocity $v(t,x)\in\mathbb{R}^{3}$, magnetic field $H(t,x)\in
\mathbb{R}^{3}$, scalar pressure $p(t,x)$, where $t\geq0,x\in\Omega
=\{x=(x_{1},x_{2},x_{3})\in\mathbb{R}^{3}|\ R_{1}\leq\sqrt{x_{1}^{2}+x_{2}%
^{2}}<R_{2},x_{3}\in\mathbb{T}_{2\pi}\}$, $0\leq R_{1}<R_{2}\leq\infty$. The
\textquotedblleft ideal\textquotedblright\ means the effects of viscosity and
electrical resistivity are neglected. We impose the initial condition
\begin{align}\label{1.2'}
(v, H)(0,x)=(v^0,H^0)(x), \quad x\in\Omega
\end{align}
and boundary conditions
\begin{align}\label{bdc}
\begin{cases}
 v\cdot\mathbf{n}|_{\partial\Omega}= H\cdot\mathbf{n}|_{\partial\Omega}=0,\\
 (v,H)(t,x)\to (v_0,H_0)(x) \quad as \quad |x|\to \infty, \quad \mbox{if}\quad R_2=\infty;\\
 v(t,x_1,x_2,x_3)=v(t,x_1,x_2,x_3+2\pi),\\ H(t,x_1,x_2,x_3)=H(t,x_1,x_2,x_3+2\pi),
\end{cases}
\end{align}
with the unit outward normal vector $\mathbf{n}=\left(  \frac{x_{1}}%
{\sqrt{x_{1}^{2}+x_{2}^{2}}},\frac{x_{2}}{\sqrt{x_{1}^{2}+x_{2}^{2}}%
},0\right)  $. In the following, we shall consider the axisymmetric solutions
of the system \eqref{EPM}. The incompressible ideal MHD equations \eqref{EPM},
which describe the macroscopic motion of an electrically conducting fluid,
have many steady solutions \cite{MR1216012,TT1988}. One simple axisymmetric
steady solution $(v_{0},H_{0})(x)$ is a rotating flow with a vertical magnetic
field, that is,
\begin{align}\label{steady1}
\begin{cases}
v_0(x)=\mathbf{v}_{0}(r)\mathbf{e}_\theta=r\omega(r)\mathbf{e}_\theta,\\
H_0(x)=\epsilon b(r)\mathbf{e}_z,\\
\end{cases}
\end{align}
where $\epsilon\neq0$ is a constant, $\left(  r,\theta,z\right)  $ are the
cylindrical coordinates with $r=\sqrt{x_{1}^{2}+x_{2}^{2}},\ z=x_{3}$,
$\left(  \mathbf{e}_{r},\mathbf{e}_{\theta},\mathbf{e}_{z}\right)  $ are unit
vectors along $r,\theta,z$ directions, $\omega(r)\in C^{3}(R_{1},R_{2})$ is
the angular velocity of the rotating fluid, the magnetic profile $b(r)\in
C^{3}(R_{1},R_{2})$ has a positive lower bound.

The study of stability of rotating flows (i.e., without a magnetic field) has a
long history, going back to Rayleigh \cite{Rayleigh1880} in 1880s, who showed
that a sufficient condition for stability is that the square of angular
momentum of the rotating fluid increases outwards, i.e.,
\begin{equation}
\partial_{r}(\omega^{2}r^{4})>0,\ \text{for all }r\in(R_{1},R_{2}).
\label{Rayleigh}%
\end{equation}
Rayleigh's stability criterion was shown to be sharp for axisymmetric
perturbations of 3D Euler equations \cite{DR2004,Synge1933}.

For 3D axisymmetric Euler equations, the angular momentum of each fluid
element is invariant along the fluid trajectory. This is important for proving
Rayleigh's stability criterion. The addition of magnetic fields implies that
the angular momentum of each fluid element is no longer conserved along the
trajectory. This difference suggests that the stability criterion for a
rotating flow with a magnetic field might be very different from the Euler
case. The influence of a vertical and uniform magnetic field (i.e.,
$b(r)=$constant) on the stability of the rotating flows was first studied by
Velikhov \cite{VEP1959} and Chandrasekhar \cite{CS1960Couette}, who derived a
sufficient condition for linear stability of a rotating flow in the limit of
vanishing magnetic fields that the square of the angular velocity increases
outwards, i.e.,
\begin{equation}
\partial_{r}(\omega^{2})>0,\ \ \text{for all }r\in(R_{1},R_{2}).\label{MRI}%
\end{equation}
It is remarkable that this stability criterion in the limit of vanishing
magnetic fields is different from the Rayleigh stability criterion
(\ref{Rayleigh}) for rotating flows without magnetic fields. Indeed, it is
easy to see that the condition (\ref{MRI}) implies that Rayleigh stability
condition (\ref{Rayleigh}) but not vice versa. If the stability condition
(\ref{MRI}) fails, it was suggested in \cite{VEP1959} and \cite{CS1960Couette}
that there is linear instability with small magnetic fields and they also
showed that the unstable eigenvalues are necessarily real. Acheson and Hide
\cite{AH1973} suggested that this instability mechanism plays a role in the
Earth's geodynamo problem. Such instability of rotating flows induced by small
magnetic fields is called magneto-rotational instability (MRI) in the
literature. Although there were some subsequent works in later decades
\cite{AG1978,FK1969}, the importance of MRI was not fully realized until 1991,
when Balbus and Hawley \cite{BH1991} provided a relatively simple elucidation
and a physical explanation of the important role that MRI plays in the
turbulence and enhanced angular momentum transport in astrophysical accretion
disks. In particular, for Keplerian rotation $\omega^{2}(r)=\frac{GM_{\ast}%
}{r^{3}}$ which is widely used in modeling accretion disks around black holes,
it is Rayleigh stable but becomes unstable by MRI with the addition of small
magnetic fields. We refer to the reviews \cite{SB2003,BH1991,BH1998,JMStone2011} for
the history and results of this important topic.

In the physical literature \cite{BH1991,BH1998,GKS2010,PJKA2014}, MRI is
usually obtained by a local dispersion analysis which we sketch below. First,
the spectral problem for $b=1$ can be reduced to the following nonlinear
eigenvalue problem
\begin{align}\label{noneigen}
&  \left(  \Lambda+\frac{k^{2}\epsilon^{2}}{\Lambda}\right)  r\partial
_{r}\left(  \frac{1}{r}\partial_{r}{\tilde{\varphi}}(r)\right)
  =k^{2}\left[  \Lambda+\frac{k^{2}\epsilon^{2}}{\Lambda}+\frac{4\omega^{2}%
}{\Lambda}\left(  1-\frac{k^{2}\epsilon^{2}}{\Lambda^{2}+k^{2}\epsilon^{2}%
}\right)  +\frac{r\partial_{r}(\omega^{2})}{\Lambda}\right]  {\tilde{\varphi}%
}(r),
\end{align}
with boundary conditions
\begin{align}\label{1.7boundarycd}
\tilde{\varphi}(R_{1})=\tilde{\varphi}(R_{2})=0,
\end{align}
where $\Lambda>0$ is the unstable eigenvalue, and $k$ is the $z-$frequency.
The local dispersion analysis is to consider localized perturbations with a
short radial wavelength, and obtain an algebraic dispersion relation as an
approximation of (\ref{noneigen}). Specifically, we take $\tilde{\varphi
}(r)=p(r)e^{ik_{r}r}$ for $r\in(R_{1},R_{2})$, where $p(r)\in C_{0}^{\infty
}(r_{0}-\delta,r_{0}+\delta)$ for $r_{0}\in(R_{1},R_{2})$ and $\delta>0$
small. Then the second-order ODE \eqref{noneigen} becomes
\begin{align*}
&  \left(  \Lambda+\frac{k^{2}\epsilon^{2}}{\Lambda}\right)  r\left[
-p+\frac{1}{k_{r}^{2}}\left(  \frac{p^{\prime}}{r}\right)  ^{\prime}+\frac
{i}{k_{r}}\frac{p^{\prime}}{r}+\frac{i}{k_{r}}\left(  \frac{p}{r}\right)
^{\prime}\right]  e^{ik_{r}r}\\
&  =\frac{k^{2}}{k_{r}^{2}}\left[  \Lambda+\frac{k^{2}\epsilon^{2}}{\Lambda
}+\frac{4\omega^{2}}{\Lambda}\left(  1-\frac{k^{2}\epsilon^{2}}{\Lambda
^{2}+k^{2}\epsilon^{2}}\right)  +\frac{r\partial_{r}(\omega^{2})}{\Lambda
}\right]  pe^{ik_{r}r}.
\end{align*}
By assuming that the radial wave number is sufficiently large, i.e., $k_{r}%
\gg1$, and the perturbation is highly localized at $r_{0}\in(R_{1},R_{2})$,
i.e., $\delta\ll1$, we obtain the following \textquotedblleft dispersion
equation\textquotedblright\
\begin{equation}
\left(  1+\frac{k^{2}}{k_{r}^{2}}\right)  (\Lambda^{2}+\epsilon^{2}k^{2}%
)^{2}+\frac{k^{2}}{k_{r}^{2}}\frac{\partial_{r}(\omega^{2}r^{4})}{r^{3}%
}|_{r=r_{0}}(\Lambda^{2}+\epsilon^{2}k^{2})-\frac{k^{4}}{k_{r}^{2}}%
\epsilon^{2}4\omega(r_{0})^{2}=0.\label{dispersion equation}%
\end{equation}
Then we have the following approximate formula for the roots of
(\ref{dispersion equation})
\[
\Lambda^{2}\approx%
\begin{cases}
-\frac{k^{2}}{k^{2}+k_{r}^{2}}\frac{\partial_{r}(\omega^{2}r^{4})}{r^{3}%
}|_{r=r_{0}}-(\frac{4\omega^{2}r^{3}}{\partial_{r}(\omega^{2}r^{4}%
)}+1)|_{r=r_{0}}\epsilon^{2}k^{2}\quad(\text{epicyclic modes})\\
-\frac{\partial_{r}(\omega^{2})r^{4}}{\partial_{r}(\omega^{2}r^{4})}%
|_{r=r_{0}}\epsilon^{2}k^{2}\quad(\text{MRIs})
\end{cases}
\]
with small magnetic field strength $\epsilon^{2}k^{2}\ll\frac{\partial
_{r}(\omega^{2}r^{4})}{r^{3}}|_{r=r_{0}}$. MRI for small magnetic fields can
be derived from above formula. However, the instability found by such local
analysis is often associated with the continuous spectra in many fluid
problems. We note that steady flows with unstable continuous spectra might be
nonlinearly stable. For example, for 2D Euler equations, steady flows with
hyperbolic stagnation points necessarily have unstable continuous spectra
(\cite{FV1991,LS2003,LH1991}) but can still be nonlinearly stable
(\cite{Lin2004CMP,LLZ2023}). Therefore, to verify MRI it is important to prove
the existence of global unstable modes satisfying the nonlinear eigenvalue
problem (\ref{noneigen}). In the astrophysical literature
(\cite{CP1995,CPS1994,DBG2018,LFF2015,MK2008,SPKL2022}), much efforts have
been made on the global analysis to solve the eigenvalue problem
(\ref{noneigen}). Moreover, there were even inconsistencies between the
stability conditions derived from local analysis and global analysis (see
\cite{MK2008,MK2009}). For some special cases such as $b=1,\omega^{2}=\omega_{0}%
^{2}\left(  1+\beta r^{\gamma}\right)  $ with constants $\omega_{0}%
,\beta,\gamma$, the equation (\ref{noneigen}) can be reduced to a
Schr\"{o}dinger-like equation with an effective potential. Then the global unstable
solutions of (\ref{noneigen}) can be obtained under some conditions and with
numerical help. See also \cite{MK2008} for the special case $\omega^{2}%
=c_{1}r+\frac{c_{2}}{r}$. But even for these special cases, the sharp
criterion for MRI was not proved.

Our goal of this paper is to address three natural questions for MRI: 1) What
is the sharp criterion for MRI, that is, the existence of an unstable
eigenvalue to the nonlinear eigenvalue problem (\ref{noneigen})? 2) What is
the nature of MRI? Is it due to continuous or discrete spectrum? 3) Is MRI
true at the nonlinear level? We are able to answer these questions in a
rigorous way. First, we give a sharp instability criterion for general
vertical magnetic fields and angular velocities. Second, we show that MRI is
due to discrete unstable spectrum. Moreover, we give the precise counting of
unstable modes and the exponential trichotomy estimates of the linearized
semigroup. Thirdly, we prove that the sharp stability or instability criteria
also imply nonlinear stability or instability respectively.
To state our results more precisely, first we introduce some notations. Define
the spaces
\[
H_{mag}^{r}(R_{1},R_{2}):=\{h(r)\ |\ h(R_{1})=h(R_{2})=0,\Vert h(r)\Vert
_{H_{mag}^{r}}<\infty\},
\]
with the norm
\[
\Vert h(r)\Vert_{H_{mag}^{r}(R_{1},R_{2})}:=\left(  \int_{R_{1}}^{R_{2}}%
\left(\frac{1}{r}|\partial_{r}h(r)|^{2}+\frac{1}{r}|h(r)|^{2}\right)dr\right)  ^{\frac
{1}{2}},
\]
and
\[
H_{mag}^{1}(\Omega):=\{\varphi(r,z)\ |\ \Vert\varphi\Vert_{H_{mag}^{1}%
(\Omega)}<\infty\},
\]
with the norm
\[
\Vert\varphi\Vert_{H_{mag}^{1}(\Omega)}:=\left(  \int_{\Omega}\left(\frac{1}{r^{2}%
}|\partial_{z}\varphi|^{2}+\frac{1}{r^{2}}|\partial_{r}\varphi|^{2}\right)dx\right)
^{\frac{1}{2}},
\]
and
\[
Z:=\{\varphi(r,z)\in H_{mag}^{1}(\Omega)\ |\ \varphi(R_{1},z)=\varphi
(R_{2},z)=0\}.
\]
Consider the energy spaces $\mathbf{X}=X\times Y$ with $X=L^{2}(\Omega)\times
Z,\ Y=L_{\text{div}}^{2}(\Omega)\times L^{2}(\Omega),$ where $L^{2}(\Omega)$
is the cylindrically symmetric $L^{2}$ space on $\Omega$, and
\[
L_{\text{div}}^{2}(\Omega):=\{\vec{u}=u_{r}(r,z)\mathbf{e}_{r}+u_{z}%
(r,z)\mathbf{e}_{z}\in (L^{2}(\Omega))^{2}\big|\operatorname{div}\vec{u}%
=0,u_{r}(R_{1},z)=u_{r}(R_{2},z)=0\}.
\]
For convenience of notation, we denote
\[
\vec{u}=(u_{r},u_{z})=u_{r}(r,z)\mathbf{e}_{r}+u_{z}(r,z)\mathbf{e}_{z}.
\]

To study the linear MRI, we use the separable Hamiltonian framework developed
in \cite{LZ2019}, instead of solving the eigenvalue problem (\ref{noneigen})
directly. First, we find that the linearized axisymmetric MHD system
\eqref{linearmhd} can be written in a Hamiltonian form
\begin{equation}
\frac{d}{dt}%
\begin{pmatrix}
u_{1}\\
u_{2}%
\end{pmatrix}
=\mathbf{J}\mathbf{L}%
\begin{pmatrix}
u_{1}\\
u_{2}%
\end{pmatrix}
,\label{linear-Hamiltonian}%
\end{equation}
where $u_{1}=\left(  u_{\theta}+\frac{\partial_{r}\omega(r)}{\epsilon
b(r)}\varphi,\varphi\right)  $, $u_{2}=(\vec{u},B_{\theta})$ with $\vec
{u}=(u_{r},u_{z})$. The off-diagonal anti-self-dual operator $\mathbf{J}$ and
diagonal self-dual operator $\mathbf{L}$ are defined by
\[
\mathbf{J}:=%
\begin{pmatrix}
0 & \mathbb{B}\\
-\mathbb{B}^{\prime} & 0
\end{pmatrix}
:\mathbf{X}^{\ast}\rightarrow\mathbf{X},\quad\mathbf{L}:=%
\begin{pmatrix}
\mathbb{L} & 0\\
0 & A
\end{pmatrix}
:\mathbf{X}\rightarrow\mathbf{X}^{\ast},
\]
where
\[
\mathbb{B}%
\begin{pmatrix}
\vec{u}\\
B_{\theta}%
\end{pmatrix}
=%
\begin{pmatrix}
-2\omega(r)u_{r}+\epsilon b(r)\partial_{z}B_{\theta}\\
\epsilon rb(r)u_{r}%
\end{pmatrix}
:Y^{\ast}\rightarrow X,
\]%
\[
\mathbb{B}^{^{\prime}}%
\begin{pmatrix}
f_{1}\\
f_{2}%
\end{pmatrix}
=%
\begin{pmatrix}
\mathcal{P}%
\begin{pmatrix}
-2\omega(r)f_{1}+r\epsilon b(r)f_{2}\\
0
\end{pmatrix}
\\
-\epsilon b(r)\partial_{z}f_{1}%
\end{pmatrix}
:X^{\ast}\rightarrow Y,
\]

\[
\mathbb{L}=%
\begin{pmatrix}
Id_{1}, & 0\\
0, & L
\end{pmatrix}
:X\rightarrow X^{\ast},\quad A=Id_{2}:Y\rightarrow Y^{\ast},
\]
where
\begin{equation*}
L:=-\frac{1}{r}\partial_{r}(\frac{1}{r}\partial_{r}\cdot)-\frac{1}{r^{2}%
}\partial_{z}^{2}+\mathfrak{F}(r):Z\rightarrow Z^{\ast},
\end{equation*}
$\mathfrak{F}(r)$ is defined in (\ref{defn-F-r}), $Id_{1}:L^{2}(\Omega
)\rightarrow\left(  L^{2}(\Omega)\right)  ^{\ast}$ and $Id_{2}:Y\rightarrow
Y^{\ast}$ are the isomorphisms. The operator $\mathcal{P}$ is the Leray
projection from $\left(  L^{2}(\Omega)\right)  ^{2}\ $to $L_{\text{div}}%
^{2}(\Omega)$, which is defined by \eqref{projectP}. By Theorem
\ref{T:abstract} for general separable Hamiltonian PDEs, the unstable spectra
of (\ref{linear-Hamiltonian}) are all discrete and the number of unstable
modes equals $n^{-}\left(  \mathbb{L}|_{\overline{R\left(  \mathbb{B}\right)
}}\right)  $, that is, the number of negative directions of $\left\langle
\mathbb{L}\cdot,\cdot\right\rangle $ restricted to $\overline{R(\mathbb{B})}$
which is shown to be
\[
\overline{R(\mathbb{B})}=\left\{
\begin{pmatrix}
g_{1}\\
g_{2}%
\end{pmatrix}
\in X\bigg|g_{j}(r,z)=\sum_{0\neq k \in \mathbb{Z}}e^{ikz}\tilde{\varphi}_{k,j}(r),\quad
j=1,2\right\}  .
\]
It follows that $n^{-}\left(  \mathbb{L}|_{\overline{R\left(  \mathbb{B}%
\right)  }}\right)  =2\sum\limits_{k=1}^{\infty}n^{-}(\mathbb{L}_{k})$, where
the operator $\mathbb{L}_{k}:H_{mag}^{r}\rightarrow(H_{mag}^{r})^{\ast}$ is
defined by
\begin{equation}
\mathbb{L}_{k}:=-\frac{1}{r}\partial_{r}\left(  \frac{1}{r}\partial_{r}%
\cdot\right)  +\frac{k^{2}}{r^{2}}+\mathfrak{F}(r)\label{defn-L-k}%
\end{equation}
for any $k\in \mathbb{Z},$ with
\begin{equation}
\mathfrak{F}(r):=\frac{\partial_{r}(\omega^{2})}{\epsilon^{2}b(r)^{2}%
r}+\left(  \frac{\partial_{r}^{2}b(r)}{r^{2}b(r)}-\frac{\partial_{r}%
b(r)}{r^{3}b(r)}\right)  .\label{defn-F-r}%
\end{equation}
Since $\mathbb{L}_{-k}=\mathbb{L}_{k}>\mathbb{L}_{1}$ for $k>1$, we get the sharp stability
criterion $\mathbb{L}_{1}\geq0$.

Below, we state our main results for the linear stability of rotating flows
with vertical magnetic fields under the axisymmetric perturbations.

\begin{theorem}
[A sharp stability/instability criterion]\label{linearstability}Assume
that the steady state $(v_{0},H_{0})(x)$ is given by \eqref{steady1},
with $\omega(r)\in C^{3}(R_{1},R_{2})$ and $b(r)\in C^{3}(R_{1},R_{2})$ with a
positive lower bound.\newline1) If $R_{1}=0$, $\partial_{r}(\omega
^{2})=O(r^{\beta-3}),\partial_{r}b=O(r^{\beta-1})$, for some constant
$\beta>0$, as $r\rightarrow0$.\newline2)If $R_{2}=\infty$, $\partial
_{r}(\omega^{2})=O(r^{-3-2\alpha}),\partial_{r}b=O(r^{-1-2\alpha})$, for some
constant $\alpha>0$, as $r\rightarrow\infty$.\newline Then the linearized
operator $\mathbf{JL}$ defined by (\ref{defn-JL}) generates a $C^{0}$ group
$e^{t\mathbf{JL}}$ of bounded linear operators on $\mathbf{X}=X\times Y$ and
there exists a decomposition%
\[
\mathbf{X}=E^{u}\oplus E^{c}\oplus E^{s}\quad
\]
of closed subspaces $E^{u,s,c}$ satisfying the following properties:

i) $E^{c},E^{u},E^{s}$ are invariant under $e^{t\mathbf{JL}}$.

ii) $E^{u}\left(  E^{s}\right)  $ only consists of eigenvectors corresponding
to positive (negative) eigenvalues of $\mathbf{JL}$ and
\[
\dim E^{u}=\dim E^{s}=%
\begin{cases}
0,\quad\text{if}\quad n^{-}(\mathbb{L}_{1})=0,\text{(linear stability)}\\
2\sum\limits_{k=1}^{\infty}n^{-}(\mathbb{L}_{k})>0,\quad\text{if}\quad
n^{-}(\mathbb{L}_{1})\neq0,\text{(linear instability)}%
\end{cases}
\]
where the operator $\mathbb{L}_{k}$ is defined in (\ref{defn-L-k}) and
$n^{-}(\mathbb{L}_{k})$ denotes the number of negative directions of
$\langle\mathbb{L}_{k}\cdot,\cdot\rangle$. In particular, $n^{-}%
(\mathbb{L}_{k})=0$ when $k$ is large enough.

iii) The exponential trichotomy is true in the space $\mathbf{X}$, i.e. if $n^{-}(\mathbb{L}_{1})\neq0$,
then there exists $M>0$ such that
\begin{equation}
\left\vert e^{t\mathbf{JL}}|_{E^{s}}\right\vert \leq Me^{-\lambda_{u}%
t},\;t\geq0;\quad\left\vert e^{t\mathbf{JL}}|_{E^{u}}\right\vert \leq
Me^{\lambda_{u}t},\;t\leq0, \label{estimate-stable-unstable}%
\end{equation}
where $\lambda_{u}=\min\{\lambda\mid\lambda\in\sigma(\mathbf{JL}|_{E^{u}%
})\}>0$.

iii) The quadratic form $\left\langle \mathbf{L}\cdot,\cdot\right\rangle
$ vanishes on $E^{u,s}$, i.e., $\langle\mathbf{L}\mathbf{u},\mathbf{u}%
\rangle=0$ for all $\mathbf{u}\in E^{u,s}$, but is non-degenerate on
$E^{u}\oplus E^{s}$, and
\[
E^{c}=\left\{  \mathbf{u}\in\mathbf{X}\mid\left\langle \mathbf{\mathbf{L}%
u,v}\right\rangle =0,\ \forall\ \mathbf{v}\in E^{s}\oplus E^{u}\right\}  .
\]
There exists $M>0$ such that
\begin{equation}
|e^{t\mathbf{J}\mathbf{L}}|_{E^{c}}|\leq M(1+t^{2}),\text{ for all }%
t\in\mathbb{R}. \label{estimate-center}%
\end{equation}

\end{theorem}

Theorem \ref{linearstability} gives not only the sharp stability criterion for
general rotating flows with vertical magnetic fields, but also more detailed
information on the spectra of the linearized operator and exponential
trichotomy estimates for the linearized MHD system which play important roles
on the study of nonlinear dynamics.

Back to the
linearized system of \eqref{EPM} around the steady solution $(v_0,H_0)(x)$ in \eqref{steady1}, we obtain
\begin{align*}
\begin{cases}
\partial_t\begin{pmatrix}
 u\\
 B
\end{pmatrix}=-
\begin{pmatrix}
u\cdot\nabla v_0+v_0\cdot\nabla u-B\cdot\nabla H_0-H_0\cdot\nabla B+\nabla W\\
u\cdot\nabla H_0+v_0\cdot\nabla B-B\cdot\nabla v_0-H_0\cdot\nabla u
\end{pmatrix}:=L_{u,B}\begin{pmatrix}
 u\\
 B
\end{pmatrix}\\
\text{div}u=\text{div} B=0,
\end{cases}
\end{align*}
where the operator $L_{u,B}$ can be defined as acting only on $(u,B)$ since the linearized pressure $W$ can be determined by $(u,B)$ linearly.
Let
\begin{align}
H^s_{mhd}:=\{(v_1,v_2)|v_i\in (H^s(\Omega))^3,\text{div}(v_i)=0 \text{ in } \Omega, v_i\cdot \textbf{n}=0 \text{ on } \partial\Omega, i=1,2\},\quad
 s\geq 0.
\end{align}
From the special structure of linearized MHD equations, we can obtain the following exponential trichotomy properties on $H^s_{mhd}$.
\begin{Corollary}
\label{exp_dychotomy} Assume that the functions
$\omega(r)\in H^{s+2}[R_{1},R_{2}]$, $b(r)\in H^{s+2}[R_{1},R_{2}](R_{2}<\infty)$ with
a positive lower bound. 
Then
 the linearized
operator $L_{u,B}$ generates a $C^{0}$ group
$e^{tL_{u,B}}$ of bounded linear operators on $H^s_{mhd}$ and
there exists a decomposition%
\[
H^s_{mhd}=X^{u}\oplus X^{c}\oplus X^{s}\quad
\]
of closed subspaces $X^{u,s,c}$ satisfying the following properties:

i) $X^{c},X^{u},X^{s}$ are invariant under $e^{tL_{u,B}}$.

ii) $X^{u}\left(  X^{s}\right)  $ only consists of eigenvectors corresponding
to positive (negative) eigenvalues of $L_{u,B}$ and
\[
\dim X^{u}=\dim X^{s}=\dim E^{u}=\dim E^{s}.
\]

iii) The exponential trichotomy is true in the space $H^s_{mhd}$, i.e. if $n^{-}(\mathbb{L}_{1})\neq0$,
then there exists $M>0$ such that
\begin{equation*}
\left\vert e^{tL_{u,B}}|_{X^{s}}\right\vert \leq Me^{-\lambda_{u}%
t},\;t\geq0;\quad\left\vert e^{tL_{u,B}}|_{X^{u}}\right\vert \leq
Me^{\lambda_{u}t},\;t\leq0, 
\end{equation*}
where $\lambda_{u}=\min\{\lambda\mid\lambda\in\sigma(L_{u,B}|_{X^{u}%
})\}>0$.

iv)
There exists $M>0$ such that
\begin{equation*}
|e^{tL_{u,B}}|_{X^{c}}|\leq M (1+|t|^{s+2}),\text{ for all }%
t\in\mathbb{R}. 
\end{equation*}

\end{Corollary}

We make some comments to compare MRI and Rayleigh instability for rotating
flows. Besides the significant gap of the Rayleigh criterion (\ref{Rayleigh})
and the MRI criterion (\ref{MRI}) in the limit of vanishing magnetic fields,
there are other fundamental differences between these two instability
mechanisms. First, MRI is caused by unstable discrete spectrum and
perturbations of low frequency ($z$-direction). In particular, the unstable
subspace is finite-dimensional and the maximal growth rate is obtained at low
frequency. By contrast, the Rayleigh instability for rotating flows is caused
by unstable continuous spectrum (see Section \ref{Eulercase}). The unstable
subspace is infinite-dimensional and the maximal growth rate is obtained at
the high frequency limit.

Below, we list some corollaries of Theorem \ref{linearstability}. First, we
show that the condition (\ref{MRI}) is indeed the sharp stability criterion in
the limit $\epsilon\rightarrow0$.

\begin{Corollary}
\label{BHcorolly} Under the assumptions of Theorem \ref{linearstability}, \\
i) If $\partial_{r}(\omega^{2})>0$,
then for $\epsilon^{2}$ small enough the steady state $(v_{0},H_{0})(x)$ in
\eqref{steady1} is linearly stable to axisymmetric perturbations.\newline ii)
If there exists $r_{0}\in(R_{1},R_{2})$ such that $\partial_{r}(\omega
^{2})|_{r=r_{0}}<0$,
then for $\epsilon^{2}$ small enough the steady state $(v_{0},H_{0})(x)$ in
\eqref{steady1} is linearly unstable to axisymmetric perturbations.

\end{Corollary}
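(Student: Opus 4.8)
By the dichotomy in Theorem~\ref{linearstability}, everything reduces to the sign of the quadratic form of the single operator $\mathbb{L}_{1}$ from \eqref{defn-L-k}: the steady state is linearly stable exactly when $n^{-}(\mathbb{L}_{1})=0$, i.e. $\langle\mathbb{L}_{1}h,h\rangle\geq0$ for every $h\in H_{mag}^{r}$, and linearly unstable as soon as a single $h$ makes this form negative. The plan is therefore to track the $\epsilon$-dependence of this form. Splitting the potential \eqref{defn-F-r} as
\[
\mathfrak{F}(r)=\frac{1}{\epsilon^{2}}\,\tilde{F}(r)+G(r),\qquad
\tilde{F}(r):=\frac{\partial_{r}(\omega^{2})}{b(r)^{2}r},\qquad
G(r):=\frac{\partial_{r}^{2}b}{r^{2}b}-\frac{\partial_{r}b}{r^{3}b},
\]
with $G$ independent of $\epsilon$, one has
\[
\langle\mathbb{L}_{1}h,h\rangle
=\underbrace{\int_{R_{1}}^{R_{2}}\Big(\tfrac{1}{r}|\partial_{r}h|^{2}+\tfrac{1}{r^{2}}|h|^{2}+G|h|^{2}\Big)\,dr}_{=:Q_{0}(h)}
+\frac{1}{\epsilon^{2}}\int_{R_{1}}^{R_{2}}\tilde{F}\,|h|^{2}\,dr .
\]

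For part~ii), which is the easy direction, I would use a localized test function. Since $\partial_{r}(\omega^{2})$ is continuous and $\partial_{r}(\omega^{2})(r_{0})<0$, $\tilde{F}<0$ on some interval $I=(r_{0}-\delta,r_{0}+\delta)$ with $\overline{I}\subset(R_{1},R_{2})$. Fixing any nonzero $h\in C_{0}^{\infty}(I)\subset H_{mag}^{r}$, the number $Q_{0}(h)$ is finite and independent of $\epsilon$ while $\int_{I}\tilde{F}|h|^{2}\,dr<0$, so $\langle\mathbb{L}_{1}h,h\rangle\to-\infty$ as $\epsilon\to0$. Hence $\langle\mathbb{L}_{1}h,h\rangle<0$ for $\epsilon$ small, giving $n^{-}(\mathbb{L}_{1})\geq1$ and linear instability by Theorem~\ref{linearstability}.

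Part~i) is the substantive direction: here $\partial_{r}(\omega^{2})>0$ forces $\tilde{F}\geq0$ (using $r>0$ and the positive lower bound of $b$), so the $\epsilon^{-2}$ term is a nonnegative form whose weight is strictly positive almost everywhere, and I would show $\langle\mathbb{L}_{1}h,h\rangle\geq0$ for all $h$ once $\epsilon$ is small by contradiction and compactness. If this failed there would be $\epsilon_{n}\to0$ and $h_{n}$ with $\|h_{n}\|_{H_{mag}^{r}}=1$ and $Q_{0}(h_{n})+\epsilon_{n}^{-2}\int\tilde{F}|h_{n}|^{2}<0$. The endpoint hypotheses~1)--2) of Theorem~\ref{linearstability} make $G$ relatively form-bounded (with relative bound $<1$) and relatively form-compact with respect to the positive part $\int(\tfrac1r|\partial_{r}h|^{2}+\tfrac{1}{r^{2}}|h|^{2})$; in particular $Q_{0}$ is bounded below, so $\epsilon_{n}^{-2}\int\tilde{F}|h_{n}|^{2}$ stays bounded and $\int\tilde{F}|h_{n}|^{2}\to0$. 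Passing to a weak limit $h_{n}\rightharpoonup h$ in the form domain and using compactness of the embeddings into $L^{2}(\tilde{F}\,dr)$, $L^{2}(|G|\,dr)$ and $L^{2}(\tfrac1r\,dr)$, one gets $\int\tilde{F}|h|^{2}=0$, whence $h=0$ because $\tilde{F}>0$ a.e. Feeding $\int G|h_{n}|^{2}\to0$ and $\int\tilde{F}|h_{n}|^{2}\to0$ back into the inequality forces $\int(\tfrac1r|\partial_{r}h_{n}|^{2}+\tfrac{1}{r^{2}}|h_{n}|^{2})\to0$, and then the compact embedding into $L^{2}(\tfrac1r\,dr)$ gives $\|h_{n}\|_{H_{mag}^{r}}\to0$, contradicting $\|h_{n}\|_{H_{mag}^{r}}=1$.

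The main obstacle is concentrated at the endpoints $r\to0$ (when $R_{1}=0$) and $r\to\infty$ (when $R_{2}=\infty$), where $\tilde{F}$ and $G$ are both singular or decaying and, by the very choice of the exponents in hypotheses~1)--2), of the same order. There a pointwise bound $\mathfrak{F}\geq0$ need not hold for any fixed $\epsilon$, so positivity has to be recovered at the level of the quadratic form via Hardy-type inequalities adapted to the weight $\tfrac1r$; these same inequalities are what yield both the relative form-compactness of $G$ and the compactness of the weighted embeddings used above. Carrying this out with the sharp exponents $\beta$ and $\alpha$ is the technical heart of part~i). I also note that the $\epsilon$-threshold is genuinely necessary: for large $\epsilon$ the possibly negative potential $G$ can by itself create negative directions of $\mathbb{L}_{1}$, so stability can only be expected in the vanishing-field regime, consistent with the criterion \eqref{MRI}.
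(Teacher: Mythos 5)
Your part ii) is correct and is essentially the paper's own proof: the paper likewise takes a bump function supported in an interval where $\partial_{r}(\omega^{2})<0$ and observes that the $\epsilon^{-2}$-term dominates the $\epsilon$-independent positive terms (the paper merely records the explicit admissible threshold for $\epsilon^{2}$ instead of a limiting statement).

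Part i) is where your route genuinely departs from the paper's, and it contains one concrete flaw, rooted in a wrong weight in the quadratic form. Against the duality pairing the paper actually uses (see the computation in the proof of Theorem \ref{linearstability} and \eqref{L1hh}), the form of $\mathbb{L}_{1}$ carries the measure $r\,dr$:
\begin{equation*}
\langle\mathbb{L}_{1}h,h\rangle=\int_{R_{1}}^{R_{2}}\Bigl(\tfrac{1}{r}|\partial_{r}h|^{2}+\tfrac{1}{r}|h|^{2}\Bigr)dr
+\int_{R_{1}}^{R_{2}}\Bigl(\tfrac{\partial_{r}^{2}b}{rb}-\tfrac{\partial_{r}b}{r^{2}b}\Bigr)|h|^{2}dr
+\frac{1}{\epsilon^{2}}\int_{R_{1}}^{R_{2}}\frac{\partial_{r}(\omega^{2})}{b^{2}}|h|^{2}dr,
\end{equation*}
so the centrifugal term contributes $\tfrac1r|h|^{2}\,dr$, not $\tfrac{1}{r^{2}}|h|^{2}\,dr$ as you wrote. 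This matters precisely at your last step: to pass from ``$\int(\tfrac1r|\partial_{r}h_{n}|^{2}+\tfrac{1}{r^{2}}|h_{n}|^{2})\to0$'' to ``$\|h_{n}\|_{H_{mag}^{r}}\to0$'' you invoke compactness of the embedding $H_{mag}^{r}\hookrightarrow L^{2}(\tfrac1r\,dr)$. That embedding is \emph{not} compact when $R_{2}=\infty$: for $\phi\in C_{0}^{\infty}(1,2)$, $\phi\neq0$, the translates $h_{n}(r)=\sqrt{n}\,\phi(r-n)$ are bounded in $H_{mag}^{r}$, converge weakly to zero, yet $\|h_{n}\|_{L^{2}(dr/r)}^{2}\to\int|\phi|^{2}ds\,/\int(|\phi'|^{2}+|\phi|^{2})ds>0$. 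So, as written, your proof of part i) has a genuine gap in the unbounded case.

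The gap is repairable, and with the correct weights your argument actually becomes shorter: the zeroth-order piece $\int\tfrac1r|h|^{2}dr$ of $\langle\mathbb{L}_{1}h,h\rangle$ is then exactly the zeroth-order piece of the $H_{mag}^{r}$-norm, so once your compactness steps give $\int\frac{\partial_{r}(\omega^{2})}{b^{2}}|h_{n}|^{2}dr\to0$ (hence weak limit $h=0$, using $\partial_{r}(\omega^{2})>0$ a.e.) and $\int r|G||h_{n}|^{2}dr\to0$ (both embeddings are compact by the very argument of Lemma \ref{embedd}, since $\partial_{r}(\omega^{2})/b^{2}r$ and $G$ separately obey the endpoint rates assumed in Theorem \ref{linearstability}), the assumed negativity yields $1=\|h_{n}\|_{H_{mag}^{r}}^{2}\leq-\int rG|h_{n}|^{2}dr\to0$, an immediate contradiction, with no appeal to $L^{2}(dr/r)$-compactness at all. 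Even after this repair, your proof remains a different route from the paper's: the paper argues directly and quantitatively, splitting $(R_{1},R_{2})$ by cutoffs, using the pointwise comparison $\epsilon^{2}\bigl|\tfrac{\partial_{r}^{2}b}{rb}-\tfrac{\partial_{r}b}{r^{2}b}\bigr|\leq\tfrac{\partial_{r}(\omega^{2})}{b^{2}}$ on the compact middle region $[\tfrac1M,M]$ and Hardy's inequality with the exponents $\beta$, $2\alpha$ to absorb the $b$-potential into $\epsilon^{2}\int\tfrac1r|\partial_{r}h|^{2}dr$ near $r=0$ and $r=\infty$. The paper's method produces an explicit admissible range of $\epsilon^{2}$ and needs no extraction of subsequences; your compactness argument is softer, treats the bounded and unbounded cases uniformly, but is non-quantitative.
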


For $b=1$, Corollary \ref{BHcorolly} recovers the classical MRI criterion for
linear stability in the limit of vanishing magnetic fields. Next, we give the
threshold of the magnetic field strength for MRI.

Define the operator
\[
\widehat{L}:=-\frac{1}{r}\partial_{r}\left(  \frac{1}{r}\partial_{r}%
\cdot\right)  +\frac{1}{r^{2}}+\left(  \frac{\partial_{r}^{2}b}{r^{2}b}%
-\frac{\partial_{r}b}{r^{3}b}\right)  :H_{mag}^{r}\rightarrow(H_{mag}%
^{r})^{\ast},
\]
then $n^{-}(\widehat{L})<\infty$ by the proof of Lemma \ref{le2.1}.

\begin{Corollary}
\label{BHcorolly2} Under the assumptions of Theorem \ref{linearstability},\\
i) If $\widehat{L}>0,$ let $\epsilon_{min}^{2}%
:=\max\left\{  \sup\limits_{\tilde{\varphi}(r)\in H_{mag}^{r}}\frac
{-\int_{R_{1}}^{R_{2}}\frac{\partial_{r}(\omega^{2})}{b^{2}}|\tilde{\varphi
}|^{2}dr}{\langle\widehat{L}\tilde{\varphi},\tilde{\varphi}\rangle},0\right\}
$, then the steady state $(v_{0},H_{0})(x)$ in \eqref{steady1} is linearly
stable to axisymmetric perturbations
if and only if $\epsilon^{2}\geq\epsilon_{min}^{2}$.\newline ii)If
$n^{-}(\widehat{L})>0$ and $\partial_{r}(\omega^{2})>0$, let $\frac
{1}{\epsilon_{max}^{2}}:=\sup\limits_{\tilde{\varphi}(r)\in H_{mag}^{r}}%
\frac{-\langle\widehat{L}\tilde{\varphi},\tilde{\varphi}\rangle}{\int_{R_{1}%
}^{R_{2}}\frac{\partial_{r}(\omega^{2})}{b^{2}}|\tilde{\varphi}|^{2}dr},$ then
the steady state $(v_{0},H_{0})(x)$ in \eqref{steady1} is linearly stable to
axisymmetric perturbations
if and only if $\epsilon^{2}\leq\epsilon_{max}^{2}$. \newline iii) If
$n^{-}(\widehat{L})>0$ and $\partial_{r}(\omega^{2})$ changes sign, then for
$\epsilon$ small enough or large enough, the steady state $(v_{0},H_{0})(x)$
in \eqref{steady1} is linearly unstable to axisymmetric perturbations.

\end{Corollary}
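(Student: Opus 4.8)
The plan is to reduce the entire statement to the scalar stability criterion furnished by Theorem \ref{linearstability}: the steady state is linearly stable if and only if $n^{-}(\mathbb{L}_{1})=0$, and since $\mathbb{L}_{k}\geq\mathbb{L}_{1}$ for all $k\geq1$ this is equivalent to $\mathbb{L}_{1}\geq0$. First I would record the elementary but decisive identity relating $\mathbb{L}_{1}$ to $\widehat{L}$: comparing (\ref{defn-L-k}) at $k=1$, the definition (\ref{defn-F-r}) of $\mathfrak{F}$, and the definition of $\widehat{L}$, the two operators differ only by the multiplication operator $\frac{\partial_{r}(\omega^{2})}{\epsilon^{2}b^{2}r}$, so that on $H_{mag}^{r}$ the associated quadratic forms (paired against $r\,dr$) satisfy
\[
\langle\mathbb{L}_{1}\tilde{\varphi},\tilde{\varphi}\rangle=\langle\widehat{L}\tilde{\varphi},\tilde{\varphi}\rangle+\frac{1}{\epsilon^{2}}\int_{R_{1}}^{R_{2}}\frac{\partial_{r}(\omega^{2})}{b^{2}}|\tilde{\varphi}|^{2}\,dr .
\]
Writing $Q(\tilde{\varphi})=\langle\widehat{L}\tilde{\varphi},\tilde{\varphi}\rangle$ and $P(\tilde{\varphi})=\int_{R_{1}}^{R_{2}}\frac{\partial_{r}(\omega^{2})}{b^{2}}|\tilde{\varphi}|^{2}\,dr$, linear stability becomes the single requirement that $Q(\tilde{\varphi})+\frac{1}{\epsilon^{2}}P(\tilde{\varphi})\geq0$ for all $\tilde{\varphi}\in H_{mag}^{r}$.

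The proofs of parts i) and ii) are then a matter of rearranging this inequality, using $\epsilon^{2}>0$. In part i) the hypothesis $\widehat{L}>0$ gives $Q(\tilde{\varphi})>0$ for $\tilde{\varphi}\neq0$, so for each fixed $\tilde{\varphi}$ the inequality $Q+\frac{1}{\epsilon^{2}}P\geq0$ is equivalent to $\epsilon^{2}\geq -P(\tilde{\varphi})/Q(\tilde{\varphi})$; imposing this for every $\tilde{\varphi}$ and combining with the constraint $\epsilon^{2}>0$ yields exactly $\epsilon^{2}\geq\epsilon_{min}^{2}$ (the $\max\{\cdot,0\}$ absorbing the case $P\geq0$), and the chain of equivalences is reversible, giving the stated "if and only if" with the boundary value $\epsilon^{2}=\epsilon_{min}^{2}$ landing in the stable regime. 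In part ii) the hypothesis $\partial_{r}(\omega^{2})>0$ makes $P(\tilde{\varphi})>0$ for $\tilde{\varphi}\neq0$, so the same inequality is equivalent to $\frac{1}{\epsilon^{2}}\geq -Q(\tilde{\varphi})/P(\tilde{\varphi})$ for all $\tilde{\varphi}$, i.e.\ $\frac{1}{\epsilon^{2}}\geq\frac{1}{\epsilon_{max}^{2}}$, i.e.\ $\epsilon^{2}\leq\epsilon_{max}^{2}$; here $n^{-}(\widehat{L})>0$ forces the supremum defining $1/\epsilon_{max}^{2}$ to be strictly positive, so the threshold is genuine.

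Part iii) I would prove by testing the form on two well-chosen functions rather than by a threshold computation. Since $n^{-}(\widehat{L})>0$, there is $\tilde{\varphi}_{0}$ with $Q(\tilde{\varphi}_{0})<0$; for $\epsilon$ large enough the perturbation $\frac{1}{\epsilon^{2}}P(\tilde{\varphi}_{0})$ is negligible and $\langle\mathbb{L}_{1}\tilde{\varphi}_{0},\tilde{\varphi}_{0}\rangle<0$, so $n^{-}(\mathbb{L}_{1})\geq1$ and the flow is unstable. Since $\partial_{r}(\omega^{2})$ changes sign, I can choose $\tilde{\varphi}_{1}\in H_{mag}^{r}$ supported in an interval where $\partial_{r}(\omega^{2})<0$, whence $P(\tilde{\varphi}_{1})<0$; then for $\epsilon$ small enough the term $\frac{1}{\epsilon^{2}}P(\tilde{\varphi}_{1})$ dominates the fixed quantity $Q(\tilde{\varphi}_{1})$ and again $\langle\mathbb{L}_{1}\tilde{\varphi}_{1},\tilde{\varphi}_{1}\rangle<0$, giving instability at both ends.

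The equivalences above are purely algebraic; the genuine analytic content is to guarantee that the thresholds $\epsilon_{min}^{2}$ and $\epsilon_{max}^{2}$ are finite and strictly positive, that is, that the relevant suprema are finite. This amounts to relative form-boundedness estimates of Hardy type — controlling the indefinite form $P$ by the positive form $Q$ in part i), and $-Q$ by $P$ in part ii) — and it is exactly here that the decay hypotheses of Theorem \ref{linearstability} enter. Under $\partial_{r}(\omega^{2})=O(r^{\beta-3})$, $\partial_{r}b=O(r^{\beta-1})$ as $r\to0$ (and the analogous conditions as $r\to\infty$), the weight $\frac{\partial_{r}(\omega^{2})}{b^{2}}$ and the curvature coefficient $\frac{\partial_{r}^{2}b}{r^{2}b}-\frac{\partial_{r}b}{r^{3}b}$ decay at matching rates at the endpoints, so that the gradient and $\frac{1}{r}|\tilde{\varphi}|^{2}$ terms of $Q$ dominate near the boundary, while $n^{-}(\widehat{L})<\infty$ (from the proof of Lemma \ref{le2.1}) controls the interior. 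Establishing these weighted inequalities is the only non-algebraic step, and I expect it to be the main obstacle.
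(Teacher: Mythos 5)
Your reduction is the same as the paper's: both proofs rest on the sharp criterion $\mathbb{L}_{1}\geq0$ from Theorem \ref{linearstability} together with the splitting $\langle\mathbb{L}_{1}\tilde{\varphi},\tilde{\varphi}\rangle=\langle\widehat{L}\tilde{\varphi},\tilde{\varphi}\rangle+\epsilon^{-2}\int_{R_{1}}^{R_{2}}\frac{\partial_{r}(\omega^{2})}{b^{2}}|\tilde{\varphi}|^{2}dr$, which is exactly \eqref{L1hh}, and your part iii) (a negative direction of $\widehat{L}$ for large $\epsilon$, a test function localized where $\partial_{r}(\omega^{2})<0$ for small $\epsilon$) is precisely the paper's argument, the small-$\epsilon$ half being delegated there to Corollary \ref{BHcorolly}. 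Two points of divergence are worth recording. First, what you single out as ``the main obstacle'' --- finiteness of the thresholds --- is not actually load-bearing: writing $Q(\tilde{\varphi})=\langle\widehat{L}\tilde{\varphi},\tilde{\varphi}\rangle$ and $P(\tilde{\varphi})=\int_{R_{1}}^{R_{2}}\frac{\partial_{r}(\omega^{2})}{b^{2}}|\tilde{\varphi}|^{2}dr$, the chain ``stable $\Longleftrightarrow$ $\epsilon^{2}\geq-P(\tilde{\varphi})/Q(\tilde{\varphi})$ for every $\tilde{\varphi}$ $\Longleftrightarrow$ $\epsilon^{2}\geq\sup(\cdot)$'' is valid under the pointwise positivity $Q>0$ (resp.\ $P>0$) whether or not the supremum is finite or attained, so your algebraic equivalences already prove the stated if-and-only-if and the weighted Hardy estimates you anticipate are not required for it. Second, the paper's proof spends most of its length on something your proposal omits entirely: showing that the suprema defining $\epsilon_{min}^{2}$ and $\epsilon_{max}^{2}$ are \emph{attained} by a maximizer $\varphi^{*}\in H_{mag}^{r}$, via a normalized maximizing sequence, weak convergence, and the compact embedding $H_{mag}^{r}\hookrightarrow\hookrightarrow L_{r|\mathfrak{F}(r)|}^{2}$ of Lemma \ref{embedd}. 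That attainment is not needed for the corollary as stated, but it is what justifies the remark immediately following it in the paper (``the supremum \dots can be obtained''), so a full replacement of the paper's proof would need to add this direct-method step.
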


The supremum in the definition of $\epsilon_{min}^{2}$ and $\epsilon_{max}%
^{2}\ $can be obtained (see the proof of Corollary \ref{BHcorolly2}). By
Corollary \ref{BHcorolly2} i), for the uniform background magnetic field (i.e.,
$b(r)=1$) as considered in the astrophysical literature, the stability
criterion becomes
\begin{equation*}
\left\vert \epsilon\right\vert \geq B_{0},\ \ \ B_{0}^{2}:=\max\left\{
\sup_{\tilde{\varphi}(r)\in H_{mag}^{r}}\frac{-\int_{R_{1}}^{R_{2}}%
\partial_{r}(\omega^{2})|\tilde{\varphi}(r)|^{2}dr}{\int_{R_{1}}^{R_{2}}%
\left(\frac{1}{r}|\partial_{r}\tilde{\varphi}(r)|^{2}+\frac{1}{r}|\tilde{\varphi
}(r)|^{2}\right)dr},0\right\}  .
\end{equation*}


Corollary \ref{BHcorolly2} ii) shows that MRI might be generated by the uneven
distribution of the background magnetic field (i.e., $b\left(  r\right)  \neq
constant$) even if $\partial_{r}(\omega^{2})>0$. Corollary \ref{BHcorolly2}
iii) shows that if $n^{-}(\widehat{L})>0$ and $\partial_{r}(\omega^{2})$
changes sign, then: for $\epsilon$ small enough, MRI is caused by the negative
sign of $\partial_{r}(\omega^{2})$; for $\epsilon$ large enough, MRI is caused
by the uneven distribution of the background magnetic field. The next
corollary also shows that how the uneven distribution of the background
magnetic field can influence MRI.

\begin{Corollary}
\label{BHcorolly3} Under the assumptions of Theorem \ref{linearstability},\\
i) If $n^{-}(\widehat{L})=0$ and $\partial_{r}(\omega
^{2})>0$, then for all $\epsilon>0$, the steady state $(v_{0},H_{0})(x)$ in
\eqref{steady1} is linearly stable to axisymmetric perturbations.
\newline ii) If $n^{-}(\widehat{L})=0$ and $\ker(\widehat{L})=\hat{\varphi
}(r)$, then for all $\epsilon$ large enough, the steady state $(v_{0}%
,H_{0})(x)$ in \eqref{steady1} is linearly stable to axisymmetric
perturbations if $\int_{R_{1}}^{R_{2}}\frac{\partial_{r}(\omega^{2})}{b^{2}%
}|\hat{\varphi}|^{2}dr>0$ and linearly unstable when $\int_{R_{1}}^{R_{2}%
}\frac{\partial_{r}(\omega^{2})}{b^{2}}|\hat{\varphi}|^{2}dr<0$.
\end{Corollary}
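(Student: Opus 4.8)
The plan is to reduce everything, via Theorem~\ref{linearstability}, to counting the negative directions of the quadratic form $\langle\mathbb{L}_1\tilde{\varphi},\tilde{\varphi}\rangle$ on $H_{mag}^{r}$: linear stability holds precisely when $n^{-}(\mathbb{L}_1)=0$. The starting observation is that, comparing $\mathbb{L}_1$ (i.e.\ (\ref{defn-L-k}) with $k=1$ and $\mathfrak{F}$ from (\ref{defn-F-r})) with the operator $\widehat{L}$, only the term $\partial_r(\omega^2)/(\epsilon^2 b^2 r)$ differs, so that after pairing in $L^2(r\,dr)$
\[
\langle\mathbb{L}_1\tilde{\varphi},\tilde{\varphi}\rangle=\langle\widehat{L}\tilde{\varphi},\tilde{\varphi}\rangle+\frac{1}{\epsilon^2}\int_{R_1}^{R_2}\frac{\partial_r(\omega^2)}{b^2}|\tilde{\varphi}|^2\,dr .
\]
Thus $\mathbb{L}_1=\widehat{L}+\epsilon^{-2}W$, where $W$ denotes the (sign-indefinite) multiplication form $\int\frac{\partial_r(\omega^2)}{b^2}|\cdot|^2\,dr$, and the whole corollary becomes a statement about how a vanishing multiple of $W$ perturbs the spectrum of $\widehat{L}$.

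Part i) is then immediate: $n^{-}(\widehat{L})=0$ gives $\langle\widehat{L}\tilde{\varphi},\tilde{\varphi}\rangle\ge0$, while $\partial_r(\omega^2)>0$ and $b>0$ give $W\ge0$; hence the form is nonnegative for every $\epsilon$, so $n^{-}(\mathbb{L}_1)=0$ and we have stability. For the instability half of ii), set $\gamma:=\int_{R_1}^{R_2}\frac{\partial_r(\omega^2)}{b^2}|\hat{\varphi}|^2\,dr$ and test with the kernel element $\hat{\varphi}$: since $\widehat{L}\hat{\varphi}=0$ we obtain $\langle\mathbb{L}_1\hat{\varphi},\hat{\varphi}\rangle=\epsilon^{-2}\gamma<0$ whenever $\gamma<0$, which already forces $n^{-}(\mathbb{L}_1)\ge1$ for every $\epsilon>0$, in particular for $\epsilon$ large.

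The substance is the stability half of ii), where $\gamma>0$ and we must show $n^{-}(\mathbb{L}_1)=0$ for all large $\epsilon$, i.e.\ that the form stays nonnegative \emph{uniformly over all test functions}, not merely on $\hat{\varphi}$. I would treat $\epsilon^{-2}W$ as a small relatively form-bounded (in fact $\widehat{L}$-compact) perturbation of $\widehat{L}$; the relative bound and compactness follow from the decay/growth hypotheses of Theorem~\ref{linearstability} through a Hardy-type inequality, exactly as in the proof of Lemma~\ref{le2.1}, which also yields that $\widehat{L}$ has discrete spectrum $0=\nu_1<\nu_2\le\nu_3\le\cdots$ with $\nu_1$ simple and eigenfunction $\hat{\varphi}$. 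By the min-max principle the ordered eigenvalues $\mu_j(\epsilon)$ of $\mathbb{L}_1$ converge to $\nu_j$ as $\epsilon\to\infty$; in particular $\mu_2(\epsilon)\to\nu_2>0$, so for large $\epsilon$ there is at most one negative eigenvalue. Standard first-order perturbation theory for the simple eigenvalue $\nu_1=0$ then gives $\mu_1(\epsilon)=\epsilon^{-2}\gamma/\|\hat{\varphi}\|^2+o(\epsilon^{-2})$, whose sign is that of $\gamma$; since $\gamma>0$ we get $\mu_1(\epsilon)>0$ and hence $n^{-}(\mathbb{L}_1)=0$ for $\epsilon$ large. Equivalently, one can argue by contradiction and compactness: a sequence $\tilde{\varphi}_n$, $\|\tilde{\varphi}_n\|=1$, with $\langle\mathbb{L}_1\tilde{\varphi}_n,\tilde{\varphi}_n\rangle<0$ along $\epsilon_n\to\infty$ would force $\langle\widehat{L}\tilde{\varphi}_n,\tilde{\varphi}_n\rangle\to0$, hence $\tilde{\varphi}_n\to\hat{\varphi}/\|\hat{\varphi}\|$ and $\langle W\tilde{\varphi}_n,\tilde{\varphi}_n\rangle\to\gamma/\|\hat{\varphi}\|^2>0$, contradicting $\langle W\tilde{\varphi}_n,\tilde{\varphi}_n\rangle<0$.

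The main obstacle, and the only nonroutine point, is justifying this perturbation picture on the possibly unbounded domain ($R_1=0$ or $R_2=\infty$), where $W=\partial_r(\omega^2)/(b^2 r)$ is genuinely unbounded. The content is precisely the relative form-boundedness with relative bound zero together with the $\widehat{L}$-compactness of $W$ --- the same estimates needed to guarantee $n^{-}(\widehat{L})<\infty$ in Lemma~\ref{le2.1} --- which keep $0$ an isolated simple eigenvalue of $\widehat{L}$ and legitimize both the eigenvalue convergence $\mu_j(\epsilon)\to\nu_j$ and the first-order expansion. Once these are in hand the conclusion is immediate from analytic perturbation theory.
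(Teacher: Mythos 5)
Your part i) and the reduction $\langle\mathbb{L}_1\tilde\varphi,\tilde\varphi\rangle=\langle\widehat{L}\tilde\varphi,\tilde\varphi\rangle+\epsilon^{-2}\int_{R_1}^{R_2}\frac{\partial_r(\omega^2)}{b^2}|\tilde\varphi|^2dr$ coincide with the paper (whose proof of i) is exactly ``clear from \eqref{L1hh}''). For part ii) your route genuinely differs from the paper's. The paper tracks a \emph{generalized} (weighted) eigenvalue: it studies the pencil $-\frac1r\partial_r(\frac1r\partial_r\tilde\varphi_\epsilon)+\frac{1}{r^2}\tilde\varphi_\epsilon+\lambda_\epsilon\mathfrak{F}(r)\tilde\varphi_\epsilon=0$, normalizes $\tilde\varphi_\epsilon$ in a weighted $L^2$ norm, uses the compact embedding of Lemma~\ref{embedd} to pass to a nonzero weak limit $c\hat\varphi$, and computes $\lim_{\epsilon\to\infty}\epsilon^2(\lambda_\epsilon-1)=\gamma\big/\int_{R_1}^{R_2}\bigl(\frac1r|\partial_r\hat\varphi|^2+\frac1r|\hat\varphi|^2\bigr)dr$, so that the sign of $\gamma$ decides whether the kernel direction perturbs into a positive or negative direction. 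Your treatment of the unstable case ($\gamma<0$) is both simpler and stronger than the paper's: testing $\mathbb{L}_1$ directly on $\hat\varphi$ gives $\langle\mathbb{L}_1\hat\varphi,\hat\varphi\rangle=\epsilon^{-2}\gamma<0$, hence instability for \emph{every} $\epsilon>0$, not merely $\epsilon$ large. Your second argument for the stable case (contradiction plus compactness: a normalized sequence with $\langle\mathbb{L}_1^{\epsilon_n}\tilde\varphi_n,\tilde\varphi_n\rangle<0$ forces $\langle\widehat{L}\tilde\varphi_n,\tilde\varphi_n\rangle\to0$, hence $\tilde\varphi_n\to c\hat\varphi$ strongly by Lemma~\ref{embedd} and weak lower semicontinuity, hence $\langle W\tilde\varphi_n,\tilde\varphi_n\rangle\to c^2\gamma>0$, contradicting $\langle W\tilde\varphi_n,\tilde\varphi_n\rangle<0$) is correct, needs only $\widehat{L}\geq0$, the one-dimensionality of $\ker\widehat{L}$ and the compact embedding, and is close in spirit to the weak-convergence step inside the paper's proof while dispensing with the eigenvalue pencil altogether.

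One caveat: your \emph{first} proposed argument for the stable case, and your closing claim, are not correct when $R_2=\infty$. There $\widehat{L}$ does not have discrete spectrum $0=\nu_1<\nu_2\le\cdots$: since the potential part of $\widehat{L}$ is relatively compact with respect to the free operator, $\sigma_{ess}(\widehat{L})=[0,\infty)$, so $0$ is an eigenvalue sitting at the \emph{edge} of the essential spectrum, and relative form-compactness of $W$ does not make it isolated. Consequently the min-max convergence $\mu_2(\epsilon)\to\nu_2>0$ and the Rellich--Kato expansion $\mu_1(\epsilon)=\epsilon^{-2}\gamma/\|\hat\varphi\|^2+o(\epsilon^{-2})$ are not justified by standard perturbation theory in that case. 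This is precisely the issue the paper's formulation avoids: by placing the spectral parameter $\lambda_\epsilon$ on the (relatively compact) potential rather than on the identity, the resulting Birman--Schwinger-type problem has discrete spectrum even on the unbounded domain. Since your compactness-by-contradiction alternative covers all cases, the proposal as a whole stands, but the analytic-perturbation version should be deleted or restricted to $R_2<\infty$.
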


Our second main result is to show that the sharp linear stability criterion in
Theorem \ref{linearstability} is also true at the nonlinear level. First we
state the result of nonlinear conditional stability when the linear stability
condition $\mathbb{L}_{1}>0$ holds. Define the distance function
\begin{align}
d(t) &  =d((v,H)(t,x),(v_{0},H_{0})(x))\nonumber\\
&=\frac{1}{2}\int_{\Omega}(|v_{r}%
|^{2}+|v_{z}|^{2}+|H_{\theta}|^{2})dx\label{distance-function}\\
&\quad+\frac{1}{2}\int_{\Omega}|v_{\theta}-\mathbf{v}_{0}(r)|^{2}dx+\frac
{1}{2}\int_{\Omega}\left(\frac{1}{r^{2}}|\partial_{z}(\psi-\psi_{0})|^{2}+\frac
{1}{r^{2}}|\partial_{r}(\psi-\psi_{0})|^{2}\right)dx,\nonumber
\end{align}
where $\psi$ and $\psi_{0}$ defined by \eqref{ellipticpsi} are the magnetic
potential functions of $H$ and $H_{0}$ respectively.

\begin{theorem}
[Nonlinear stability]\label{nonlinearstable} Assume that the steady state
$(v_{0},H_{0})(x)$ is given by \eqref{steady1}, where $\omega(r)\in
C^{3}(R_{1},R_{2})$ and $b(r)\in C^{3}(R_{1},R_{2})$ with a positive lower
bound.\newline When $R_{1}=0$ or $R_{2}=\infty$, we further assume i) If
$R_{1}=0$, $\partial_{r}\omega=O(r^{\beta-3}),\partial_{r}b=O(r^{\beta-1})$,
for some constant $\beta\geq4$, as $r\rightarrow0$. ii) If $R_{2}=\infty$,
$\omega=O(r^{-1-\alpha}),\partial_{r}b=O(r^{-1-2\alpha})$, for some constant
$\alpha>1$, as $r\rightarrow\infty$.\newline Assume the linear stability
condition $\mathbb{L}_{1}>0$ with $\mathbb{L}_{1}$ defined by
\eqref{defn-L-k} for $k=1$ and the existence of global axisymmetric weak
solution (defined in Definition \ref{weaksolution}) satisfying:\newline1)The
total energy defined by \eqref{energy} for $R_{2}<\infty$ or \eqref{'7.1} for
$R_{2}=\infty$ is non-increasing with respect to $t$.\newline2)The distance
function $d(t)$ defined by \eqref{distance-function} is continuous with respect to $t$.\newline3)The
functionals $\int_{\Omega}f(\psi)dx$ and $\int_{\Omega}rv_{\theta}\Phi
(\psi)dx$ for $R_{2}<\infty$ or the functionals $\int_{\Omega}(rv_{\theta}%
\Phi(\psi)-r^{2}\omega(r)\Phi(\psi_{0}))dx$ and $\int_{\Omega}%
(f(\psi)-f(\psi_{0}))dx$ for $R_{2}=\infty$ are conserved.\newline Then the
steady state $(v_{0},H_{0})(x)$ in \eqref{steady1} of \eqref{EPM}-\eqref{bdc}
is nonlinearly stable, in the sense that if $d(0)=d((v^{0},H^{0}%
)(x),(v_{0},H_{0})(x))$ is small enough, then the weak solution $(v,H)(t,x)$
of \eqref{EPM}-\eqref{bdc} satisfies
\[
d(t)=d((v,H)(t,x),(v_{0},H_{0})(x))\lesssim d(0)
\]
for any $t\geq0$.
\end{theorem}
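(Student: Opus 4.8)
The plan is to run an energy--Casimir (Arnold-type) argument, using the conserved functionals in the hypotheses as Casimir invariants and the linear stability operator $\mathbb{L}_1$ to supply coercivity. I write the perturbation in physical components $u_r,u_z,H_\theta,\ u_\theta:=v_\theta-\mathbf{v}_0$ and $\varphi:=\psi-\psi_0$, and treat the case $R_2<\infty$ in detail, the case $R_2=\infty$ being identical after the substitution of functionals indicated in the statement. First I would form the invariant
\[
\mathcal{H}(t):=E(t)+\int_\Omega f(\psi)\,dx+\int_\Omega r v_\theta\,\Phi(\psi)\,dx,
\]
where $E$ is the total energy; by hypotheses 1) and 3) it satisfies $\mathcal{H}(t)\le\mathcal{H}(0)$. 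The profiles $f,\Phi$ are fixed by demanding that the steady state be a critical point of $\mathcal{H}$: since $\psi_0(r)=-\epsilon\int^r s\,b(s)\,ds$ is strictly monotone (as $b>0$), the vanishing of the first variation determines $\Phi(\psi_0(r))$ and $f'(\psi_0(r))$ as explicit functions of $r$, hence $\Phi,f'$ on the range of $\psi_0$; differentiating fixes $\Phi'(\psi_0),f''(\psi_0)$, which are exactly the data assembling the potential $\mathfrak{F}(r)$ of \eqref{defn-F-r}.

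Next I would Taylor-expand $\mathcal{H}$ about the steady state, $\mathcal{H}(t)-\mathcal{H}_0=Q+N$, where $Q$ is the second variation and $N$ gathers cubic-and-higher terms. The quadratic part of $E-E_0$ reproduces the distance \eqref{distance-function} exactly, while its linear part---carried entirely by the $z$-independent profiles $\mathbf{v}_0(r)$ and $\partial_r\psi_0(r)$, hence pairing only with the $k=0$ modes---is cancelled by the first-variation condition; the Casimir second variations add the $f''$- and $\Phi'$-corrections, and completing the square in the azimuthal variable recovers the Hamiltonian quadratic form $Q=\tfrac12\langle\mathbf{L}\,\cdot,\cdot\rangle$ with $\varphi$-block $\tfrac12\langle L\varphi,\varphi\rangle$. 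Passing to the Fourier series $\varphi=\sum_k e^{ikz}\tilde\varphi_k$ and using $dx\sim r\,dr\,dz$ (which turns the $\tfrac1r$-weights of $\|\cdot\|_{H^r_{mag}}$ into the $\tfrac1{r^2}$-weights of $d$), that block splits as $\tfrac12\sum_k\langle\mathbb{L}_k\tilde\varphi_k,\tilde\varphi_k\rangle$. For $k\neq0$ one has $\mathbb{L}_k\ge\mathbb{L}_1>0$, and strict positivity of $\mathbb{L}_1$ supplies a spectral gap $\lambda_1>0$ on $H^r_{mag}$; this controls $\|\tilde\varphi_k\|_{H^r_{mag}}$, and feeding it back through the completed square recovers $\|u_\theta\|$ as well, giving $Q\gtrsim d(t)$ on the sector $k\neq0$.

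Finally I would close the estimate. The $k=0$ mode lies outside $\overline{R(\mathbb{B})}$ and is \emph{not} controlled by $Q$ alone, since $\mathbb{L}_0=\mathbb{L}_1-\tfrac1{r^2}$ may fail to be positive; there I would instead use the \emph{nonlinear} Casimir $\int_\Omega f(\psi)\,dx$ with the Lagrangian invariance of $\psi$, bounding the $z$-independent flux perturbation by a convexity/rearrangement estimate rather than by the quadratic form (in the primary stable regime $\partial_r(\omega^2)\ge0$ one has $\mathfrak{F}\ge0$, so $\mathbb{L}_0>0$ and this step is vacuous). With $Q\gtrsim d(t)$ secured and $|N|\lesssim d(t)^{3/2}$ from the explicit cubic structure, the chain
\[
c\,d(t)-C\,d(t)^{3/2}\le\mathcal{H}(t)-\mathcal{H}_0\le\mathcal{H}(0)-\mathcal{H}_0\le C'\,d(0)
\]
(the final bound because $Q\lesssim d(0)$ and $|N|\lesssim d(0)^{3/2}$) combines with the continuity of $d(t)$ from hypothesis 2) in a standard bootstrap: while $d(t)$ stays below the threshold $C\,d(t)^{1/2}\le\tfrac c2$, one has $\tfrac c2 d(t)\le C'd(0)$, and continuity propagates this for all $t\ge0$, yielding $d(t)\lesssim d(0)$.

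The main obstacle is the coercivity step, with two coupled difficulties. The first is the $k=0$ center-space mode, about which the positivity of $\mathbb{L}_1$ says nothing: one must control the $z$-independent magnetic flux through the full nonlinear Casimir, with uniform estimates near $r=R_1$ and, when $R_2=\infty$, at infinity---precisely where the decay hypotheses on $\omega,\partial_r\omega,\partial_r b$ are consumed. The second is justifying the expansion $\mathcal{H}-\mathcal{H}_0=Q+N$, the identification $Q=\tfrac12\langle\mathbf{L}\,\cdot,\cdot\rangle$, and the bound $|N|\lesssim d(t)^{3/2}$ at the regularity of weak solutions, so that every integration by parts and every rearrangement estimate is admissible within the class of Definition \ref{weaksolution}.
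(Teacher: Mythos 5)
Your overall skeleton matches the paper's: the same Casimir--energy functional $E_{c}=E+\int_{\Omega}rv_{\theta}\Phi(\psi)\,dx+\int_{\Omega}f(\psi)\,dx$, the same fixing of $\Phi,f$ by the critical-point condition, the same Taylor expansion with the second variation identified as $\langle\mathbf{L}\cdot,\cdot\rangle$, the same Fourier splitting with $\mathbb{L}_{k}\geq\mathbb{L}_{1}>0$ handling all $k\neq0$ modes, and the same bootstrap through the continuity of $d(t)$. You also correctly identify the crux: the $z$-independent ($k=0$) part of $\delta\psi$, on which $\mathbb{L}_{1}>0$ gives no information and $\mathbb{L}_{0}$ may have negative directions and a kernel.

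But at exactly this crux your proposal has a genuine gap. You offer only ``a convexity/rearrangement estimate'' using ``the nonlinear Casimir $\int_{\Omega}f(\psi)\,dx$ with the Lagrangian invariance of $\psi$,'' with no mechanism; that single Casimir is already spent in building $E_{c}$, and when $\mathbb{L}_{0}$ genuinely has negative directions the quadratic form of $E_{c}$ is indefinite on the $k=0$ sector, so no soft convexity argument on $E_{c}$ alone can restore coercivity. The paper's proof supplies the missing mechanism: letting $K=n^{-}(\mathbb{L}_{0})$ and $\{h_{i}\}_{i=0}^{K}$ be the kernel and negative directions of $\mathbb{L}_{0}$, it constructs $K+1$ \emph{additional} conserved functionals $J_{i}(\psi)=\int_{\Omega}f_{i}(\psi)\,dx$ with $f_{i}'(\psi_{0})=h_{i}(r)\mathfrak{F}(r)$; expanding $J_{i}(\psi)-J_{i}(\psi_{0})$ and using their conservation shows the projection $P_{K}\delta\psi$ onto $\mathrm{span}\{h_{i}\}$ is $O(d(0)^{1/2})+O(d)$, so $|\langle LP_{K}\delta\psi,P_{K}\delta\psi\rangle|=O(d(0))+O(d^{2})$ is absorbable, while a separate lemma (the paper's Lemma \ref{lemma3.3}, proved via $n^{-}(L|_{\hat Z})=n^{-}(L)-K=0$ and a kernel argument) gives coercivity $\langle L(I-P_{K})\delta\psi,(I-P_{K})\delta\psi\rangle\geq\tilde\delta\|(I-P_{K})\delta\psi\|_{H^{1}_{mag}}^{2}$. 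Without this finite-dimensional reduction your chain $c\,d(t)\leq\mathcal{H}(t)-\mathcal{H}_{0}+\dots$ simply does not hold under the stated hypothesis $\mathbb{L}_{1}>0$. Note also that your fallback claim --- that $\partial_{r}(\omega^{2})\geq0$ forces $\mathfrak{F}\geq0$, making the step vacuous --- is false for non-constant $b$, since $\mathfrak{F}$ contains the terms $\frac{\partial_{r}^{2}b}{r^{2}b}-\frac{\partial_{r}b}{r^{3}b}$, which can be negative; so the case you would skip is in fact the generic one covered by the theorem.
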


Our next result is to prove nonlinear instability under the linear instability
condition $n^{-}(\mathbb{L}_{1})\neq0$.

\begin{theorem}
[Nonlinear instability]\label{nonlinearunstable} Assume that the functions
$\omega(r)\in C^{\infty}[R_{1},R_{2})$, $b(r)\in C^{\infty}[R_{1},R_{2})$ with
a positive lower bound. In addition, if $R_{2}=\infty$, $\omega=O(r^{-1-\alpha
}),\partial_{r}b=O(r^{-1-2\alpha})$, for some constant $\alpha>0$, as
$r\rightarrow\infty$.\newline If $n^{-}(\mathbb{L}_{1})\neq0$ with
$\mathbb{L}_{1}$ defined by (\ref{defn-L-k}) for $k=1$, then the steady state $(v_{0}%
,H_{0})(x)$ in \eqref{steady1} of \eqref{EPM}-\eqref{bdc} is nonlinearly
unstable in the sense that for any $s>0$ large enough, there exists
$\epsilon_{0}>0$, such that for any small $\delta>0$, there exists a family of
axisymmetric solutions $(v^{\delta},H^{\delta})(t,x)$ to
\eqref{EPM}-\eqref{bdc} satisfying
\[
\Vert v^{\delta}(0,x)-v_{0}(x)\Vert_{H^{s}}+\Vert H^{\delta}(0,x)-H_{0}%
(x)\Vert_{H^{s}}\leq\delta
\]
and
\[
\sup_{0\leq t\leq T^{\delta}}\{\Vert v^{\delta}(t,x)-v_{0}(x)\Vert_{L^{2}%
}+\Vert H^{\delta}(t,x)-H_{0}(x)\Vert_{L^{2}}\}\geq\epsilon_{0},
\]
where $T^{\delta}=O(|\ln\delta|)$.
\end{theorem}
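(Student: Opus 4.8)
The plan is to upgrade the linear (spectral) instability of Theorem~\ref{linearstability} to nonlinear instability by an escape-time bootstrap in the spirit of Grenier and Guo--Strauss, with the exponential trichotomy of $e^{t\mathbf{JL}}$ serving as the linear engine. I would first write an axisymmetric solution as $(v,H)=(v_0,H_0)+(\tilde v,\tilde H)$ and recast the equation for the perturbation $w$, expressed in the Hamiltonian variables $(u_1,u_2)$ of \eqref{linear-Hamiltonian}, as
\[
\partial_t w=\mathbf{JL}\,w+\mathcal{N}(w),
\]
where $\mathbf{JL}$ is the linearized operator and $\mathcal{N}(w)$ collects the quadratic convective and Lorentz terms coming from $v\cdot\nabla v$ and $H\cdot\nabla H$. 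Since $n^{-}(\mathbb{L}_1)\neq0$, Theorem~\ref{linearstability} furnishes a nontrivial finite-dimensional unstable space $E^u$ of eigenvectors of $\mathbf{JL}$ with positive real eigenvalues; let $\lambda>0$ be the largest such eigenvalue and $W$ a corresponding (real) eigenfunction. Because $\omega,b\in C^\infty$, the coefficients of the ODE operators $\mathbb{L}_k$ in \eqref{defn-L-k} are smooth, so elliptic regularity makes $W$ smooth, i.e.\ $W\in H^s$ for every $s$; this is precisely why the hypotheses here are strengthened from $C^3$ to $C^\infty$. Moreover item iii) of Theorem~\ref{linearstability} gives the semigroup bound $\|e^{t\mathbf{JL}}\|_{\mathbf{X}\to\mathbf{X}}\lesssim e^{\lambda t}$ for $t\ge0$, since every growth rate is $\le\lambda$.

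Next I would assemble the nonlinear ingredients. Local well-posedness of the incompressible ideal MHD system \eqref{EPM} in $H^s$ for $s$ large, together with product estimates, yields the quadratic bound $\|\mathcal{N}(w)\|_{\mathbf{X}}\lesssim\|w\|_{H^s}^2$ and an energy a priori estimate of the form $\|w(t)\|_{H^s}\lesssim\|w(0)\|_{H^s}\,e^{\Lambda_s t}$ as long as $w$ remains small, where $\Lambda_s$ is a rate governed by the base flow and by $s$. Choosing the initial perturbation $w^\delta(0)=\delta W$, so that $\|w^\delta(0)\|_{H^s}\sim\delta$, the Duhamel formula reads
\[
w(t)=\delta e^{\lambda t}W+\int_0^t e^{(t-\tau)\mathbf{JL}}\,\mathcal{N}(w(\tau))\,d\tau.
\]
Fixing a small threshold $\epsilon_0$, I define $T^\delta$ as the first time $\|w(t)\|_{\mathbf{X}}$ reaches $\epsilon_0$ and run a continuity argument under the ansatz $\|w(t)\|_{\mathbf{X}}\le2\delta e^{\lambda t}$: inserting the bound for $\mathcal{N}$ into the integral shows that the nonlinear correction is $O(\delta^2 e^{2\lambda t})$, hence subdominant to the linear term $\delta e^{\lambda t}W$ until $\delta e^{\lambda t}\sim\epsilon_0$. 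This pins the escape time to $T^\delta=\tfrac1\lambda\ln(\epsilon_0/\delta)=O(|\ln\delta|)$ and gives $\|w(T^\delta)\|_{\mathbf{X}}\ge\epsilon_0-C\epsilon_0^2\ge\epsilon_0/2$. Since the $\mathbf{X}$-norm of $w$ is equivalent to $\|v-v_0\|_{L^2}+\|H-H_0\|_{L^2}$ (the magnetic part being captured by the potential in the $H^1_{mag}$ norm), this yields the stated lower bound after renaming $\epsilon_0$.

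The main obstacle is the mismatch between the two norms: the sharp growth/decay estimates from the trichotomy hold only in the energy space $\mathbf{X}$, whereas existence and control of the quadratic nonlinearity for the inviscid, quasilinear system require the high norm $H^s$, in which the a priori growth rate $\Lambda_s$ (driven by transport along the base flow) may exceed the instability rate $\lambda$. Then $\|w(T^\delta)\|_{H^s}$ is only bounded by $\delta e^{\Lambda_s T^\delta}\sim\epsilon_0^{\Lambda_s/\lambda}\delta^{1-\Lambda_s/\lambda}$, which need not be small as $\delta\to0$, so the naive bootstrap cannot close. The remedy, which is where the real work lies, is Grenier's high-order approximate-solution method: build $w^{app}=\sum_{j=1}^M\delta^j e^{j\lambda t}W_j$ solving \eqref{EPM} up to a remainder of size $\delta^{M+1}e^{(M+1)\lambda t}$, where each smooth profile $W_j$ is obtained by solving the elliptic problems generated at order $j$ (again using the $C^\infty$ coefficients), and then estimate the genuine error $w-w^{app}$ in $H^s$. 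Taking $M$ large enough that $(M+1)\lambda>\Lambda_s$ forces the remainder forcing to dominate the homogeneous $e^{\Lambda_s t}$ growth, so the error stays subdominant to the leading mode $\delta e^{\lambda t}W_1$ throughout $[0,T^\delta]$; closing the product and error estimates in turn requires $s$ large, which is exactly the hypothesis ``$s>0$ large enough'' in the statement. This derivative-loss issue, rather than the abstract instability mechanism, is the crux of the proof.
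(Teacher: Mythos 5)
Your overall skeleton matches the paper's: initial data $\delta$ times the most unstable (smooth) mode, a Grenier-type high-order approximate solution, $H^s$ energy estimates for the difference, and the escape time $T^\delta=\Lambda^{-1}\ln(\theta/\delta)=O(|\ln\delta|)$; you also correctly identify the derivative-loss/norm-mismatch as the crux. However, your proposed resolution of that crux has a genuine gap. Choosing $M$ with $(M+1)\lambda>\Lambda_s$ only controls the \emph{error} $w-w^{app}$; it does nothing for the \emph{construction of $w^{app}$ itself}. If the linearized semigroup is only known to satisfy a lossy bound $e^{\Lambda_s t}$ in $H^s$, then the order-$j$ profile, defined by Duhamel from a forcing of size $e^{j\lambda t}$, is only bounded by $e^{\Lambda_s t}$ whenever $j\lambda<\Lambda_s$; at the escape time its contribution is $\delta^{j}e^{\Lambda_s T^\delta}=\theta^{\Lambda_s/\lambda}\delta^{\,j-\Lambda_s/\lambda}$, which blows up as $\delta\to0$ for every order $j<\Lambda_s/\lambda$. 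Thus already the second-order term can dominate the leading mode $\delta e^{\lambda t}W_1$, and the whole hierarchy collapses; increasing $M$ makes this worse, not better. This is precisely why the paper's proof is dominated by Lemma \ref{linearexponential}: the \emph{sharp} growth estimate \eqref{sharp}, $\|(u,B)(t)\|_{H^s}\lesssim e^{\Lambda t}\|(u,B)(0)\|_{H^s}$, i.e.\ $\Lambda_s=\Lambda$ (any rate strictly below $2\Lambda$ would suffice), which makes every Duhamel integral in the inductive construction of Lemma \ref{4.1} close with the correct growth $e^{j\Lambda t}$. Proving \eqref{sharp} is the real work here: naive $H^s$ energy estimates fail because $\partial_r^{\alpha}\mathcal{P}\neq\mathcal{P}\partial_r^{\alpha}$ and because of boundary terms, so the paper exploits instead that $\partial_z^{\alpha}$ preserves the Hamiltonian form \eqref{hamiltonian-RS} (hence the trichotomy bounds apply to all $z$-derivatives), and that the $\partial_r^{\alpha}$-systems for $(u_\theta,B_\theta)$ and for $(u_r,u_z,B_r,B_z)$ have anti-self-adjoint principal parts $A_1,A_2$ plus lower-order forcings \eqref{fs}, so Stone's theorem, Duhamel and induction give rate exactly $\Lambda$.

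Your alternative phrasing --- time-independent profiles $W_j$ obtained from ``elliptic problems,'' i.e.\ resolvent equations $(j\lambda-\mathbf{JL})W_j=G_j$ --- could in principle bypass the semigroup estimate, since $j\lambda$ lies in the resolvent set for $j\ge2$ (the unstable spectrum is finite, real, and bounded by $\lambda$). But then the missing ingredient becomes an $H^s$ (or finitely lossy) resolvent bound for $\mathbf{JL}$: this is not an elliptic problem (it is a first-order system coupled through the Leray projection in cylindrical geometry), and it faces exactly the same non-commutativity and boundary-term issues, together with the need for bounds uniform in the $z$-frequency after reducing each mode to a forced version of the ODE \eqref{noneigen}. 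Either way, a quantitative high-regularity estimate for the linearized operator is the indispensable lemma missing from your proposal.
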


Below we discuss main ideas in the proof of Theorems \ref{nonlinearstable} and
\ref{nonlinearunstable}. To prove nonlinear stability for the case
$R_{2}<\infty$, we construct the Casimir-energy functional
\[
E_{c}(v,H_{\theta},\psi)=\frac{1}{2}\int_{\Omega}\left(  v^{2}+H^{2}\right)
\ dx+\int_{\Omega}rv_{\theta}\Phi(\psi)dx+\int_{\Omega}f(\psi)dx,
\]
 where $\Phi$ and $f$ are defined in
(\ref{extension1})-(\ref{extension2}). The Casimir invariants$\ \int_{\Omega
}rv_{\theta}\Phi(\psi)dx$ and $\int_{\Omega}f(\psi)dx$ can be proved by using
the following Hamiltonian structure of the nonlinear system \eqref{EPM}
\[%
\begin{cases}
\partial_{t}(rv_{\theta})=\{\varpi,rv_{\theta}\}+\{rH_{\theta},\psi\},\\
\partial_{t}\psi=\{\varpi,\psi\},
\end{cases}
\]
where the Poisson bracket $\{f,g\}:=\frac{1}{r}\left(  \partial_{r}%
f\partial_{z}g-\partial_{z}f\partial_{r}g\right)  $ and the stream function
$\varpi$ is such that $-\frac{\partial_{r}\varpi}{r}=v_{z}$, $\frac
{\partial_{z}\varpi}{r}=v_{r}$. By our construction, $(v_{0},0,\psi_{0})$ is a
critical point of $E_{c}(v,H_{\theta},\psi)$. The second variation of
$E_{c}(v,H_{\theta},\psi)$ at $(v_{0},0,\psi_{0})\ $can be written as
\begin{equation}
\langle\mathbb{L}\delta u_{1},\delta u_{1}\rangle+\langle A\delta u_{2},\delta
u_{2}\rangle,\label{2nd variation}%
\end{equation}
where $\delta u_{1}=\left(  \delta v_{\theta}+\frac{\partial_{r}\omega
(r)}{\epsilon b(r)}\delta\psi,\delta\psi\right)  $, $\delta u_{2}=(\delta
v_{r},\delta v_{z},\delta H_{\theta})$ are associated with the perturbation
$(v,H_{\theta},\psi)-(v_{0},0,\psi_{0})$. The quadratic form
(\ref{2nd variation}) is conserved for the linearized system
(\ref{linear-Hamiltonian}). The functional $E_{c}(v,H_{\theta},\psi)$ can be
shown to be \thinspace$C^{2}$ in the energy space. By the Taylor expansion,
we have
\begin{align*}
E_{c}(v,H_{\theta},\psi)-E_{c}(v_{0},0,\psi_{0}) &  =\frac{1}{2}\left\Vert
\delta v_{\theta}+\frac{\partial_{r}\omega}{\epsilon b}\delta\psi\right\Vert
_{L^{2}}^{2}+\frac{1}{2}\langle L\delta\psi,\delta\psi\rangle\\
&  \quad+\frac{1}{2}\langle A\delta u_{2},\delta u_{2}\rangle+o(\Vert
\delta\psi\Vert_{H_{mag}^{1}}^{2})+o(\Vert\delta v_{\theta}\Vert_{L^{2}}^{2}).
\end{align*}
To prove nonlinear stability, the key step is to get the positivity of
$\langle L\delta\psi,\delta\psi\rangle$. The linear stability condition
$\mathbb{L}_{k}>0$ for $k\neq0$ implies that $n^{-}\left(  \mathbb{L}\right)
=\sum\limits_{ k\in \mathbb{Z}}n^{-}(\mathbb{L}_{k})=n^{-}(\mathbb{L}_{0})$. We
need to deal with the negative directions and possible zero direction of
$\left\langle \mathbb{L}_{0}\cdot,\cdot\right\rangle \ $in the space of zero
modes, that is, the space $H_{mag}^{r}(R_{1},R_{2})$. For this purpose, let
$K=n^{-}(\mathbb{L}_{0})\ $denote $\left\{  h_{i}\left(  r\right)  \right\}
_{i=0}^{K}$ be the kernel and negative directions of $\mathbb{L}_{0}$
($i=1,\cdots,K$ if $\ker\mathbb{L}_{0}=\{0\}$). Then we construct invariants
$J_{i}(\psi)=\int_{\Omega}f_{i}(\psi)dx$ ($i=0,1,..,K$) with $f_{i}^{\prime
}(\psi_{0})=h_{i}\left(  r\right)  \mathfrak{F}(r)$. Expanding the invariants
$J_{i}(\psi)$ at $\psi_{0}$, we get
\[
J_{i}(\psi)-J_{i}(\psi_{0})=\int_{\Omega}\mathfrak{F}(r)h_{i}(r)(\psi-\psi
_{0})dx+O(d),
\]
which implies that the projection of $\delta\psi\ $to $h_{i}(r)$ is
$O(d(0)^{\frac{1}{2}})+O(d)$, i.e.,
\[
\left\vert \int_{\Omega}\mathfrak{F}(r)h_{i}(r)(\psi-\psi_{0})dx\right\vert
=O(d(0)^{\frac{1}{2}})+O(d).
\]
By using above estimates, we deduce that the projection $P_{K}\delta\psi$ of $\delta\psi$ to the
space spanned by $\{h_{i}(r)\}$ is $O(d(0)^{\frac{1}{2}})+O(d)$, and
consequently
\[
|\langle LP_{K}\delta\psi,P_{K}\delta\psi\rangle|=O(d(0))+O(d^{2})
\]
can be treated as a higher order small quantity. Since the positivity holds
for $\langle L(I-P_{K})\delta\psi,(I-P_{K})\delta\psi\rangle$, we are able to
get the desirable control for the term $\langle L\delta\psi,\delta\psi\rangle$
and the nonlinear stability can be shown. The similar arguments are also used
to prove nonlinear stability for the case $R_{2}=\infty$.

To prove the nonlinear instability under the linear instability condition
$n^{-}(\mathbb{L}_{1})\neq0$, we use the approach in \cite{GE2000,MR1978583}
to construct higher-order approximate solutions to \eqref{2.1'} for initial
perturbation along the most unstable mode of the linearized operator. Then
energy estimates are used to overcome the loss of derivative and show that the
leading order of the perturbation is given by the solution of the
linearized equations with exponential growth. To construct the higher-order approximate solutions, a
key step is to obtain the sharp exponential growth estimate for the linearized
MHD equations \eqref{hamiltonian-RS}, i.e.,
\begin{equation}
\Vert(u,B)(t)\Vert_{H^{s}}\lesssim e^{\Lambda t}\Vert(u,B)(0)\Vert_{H^{s}%
}\quad\text{for any}\quad s\geq0,\label{sharp}%
\end{equation}
where $\Lambda^{2}=\max\{\sigma(-\mathbb{B}^{^{\prime}}\mathbb{L}%
\mathbb{B}A)\}>0$ determines the maximal growth rate. The sharp estimate
(\ref{sharp}) can not be obtained by the energy methods such as in \cite{MR1978583} due to difficulties caused by the boundary integral terms and
non-commutativity of $r-$derivative and Leray Projection, i.e., $\partial_{r}^{\alpha}\mathcal{P}\neq\mathcal{P}\partial
_{r}^{\alpha}$. To prove  (\ref{sharp}), we strongly use the Hamiltonian
structure of the linearized equations \eqref{hamiltonian-RS}. We sketch the
main steps below:

Step 1: We find the Hamiltonian structure is still preserved if we apply
$\partial_{z}^{\alpha}$ $(|\alpha|\leq s)$ to \eqref{hamiltonian-RS}. So we
can use the exponential trichotomy estimates (\ref{estimate-stable-unstable})
and (\ref{estimate-center}) to get the estimate for $\Vert\partial_{z}%
^{\alpha}(u,B)(t)\Vert_{L^{2}}$, i.e.,
\[
\Vert\partial_{z}^{\alpha}(u,B)(t)\Vert_{L^{2}}\lesssim e^{\Lambda t}%
\Vert(u,B)(0)\Vert_{H^{s}},\quad|\alpha|\leq s.
\]

Step 2: The Hamiltonian structure is destroyed if we apply $\partial
_{r}^{\alpha}$ $(|\alpha|\leq s)$ to \eqref{hamiltonian-RS}. However, we
observe that
\[
\partial_{t}\left(
\begin{array}
[c]{c}%
\partial_{r}^{\alpha}u_{\theta}\\
\partial_{r}^{\alpha}B_{\theta}%
\end{array}
\right)  =A_{1}\left(
\begin{array}
[c]{c}%
\partial_{r}^{\alpha}u_{\theta}\\
\partial_{r}^{\alpha}B_{\theta}%
\end{array}
\right)  +f_{s},
\]
where $A_{1}$ is an anti-self-adjoint operator and $f_{s}$ defined
by\eqref{fs} contains the low-order $r-$derivative and at most $s-$order
$z-$derivative terms. Thus by induction assumptions and Duhamel's principle,
we shall obtain the growth estimate for $\Vert\partial_{r}^{\alpha}(u_{\theta
},B_{\theta})(t)\Vert_{L^{2}}$, i.e.,
\[
\left\Vert \left(
\begin{array}
[c]{c}%
\partial_{r}^{\alpha}u_{\theta}\\
\partial_{r}^{\alpha}B_{\theta}%
\end{array}
\right)  (t)\right\Vert _{L^{2}}\lesssim e^{\Lambda t}\Vert(u,B)(0)\Vert
_{H^{s}},\quad|\alpha|\leq s.
\]

Step 3: We observe that
\[
\partial_{t}\left(
\begin{array}
[c]{c}%
\partial_{r}^{\alpha}u_{r}\\
\partial_{r}^{\alpha}u_{z}\\
\partial_{r}^{\alpha}B_{r}\\
\partial_{r}^{\alpha}B_{z}%
\end{array}
\right)  =A_{2}\left(
\begin{array}
[c]{c}%
\partial_{r}^{\alpha}u_{r}\\
\partial_{r}^{\alpha}u_{z}\\
\partial_{r}^{\alpha}B_{r}\\
\partial_{r}^{\alpha}B_{z}%
\end{array}
\right)  +%
\begin{pmatrix}
\partial_{r}^{\alpha}\mathcal{P}%
\begin{pmatrix}
2\omega(r)u_{\theta}\\
0
\end{pmatrix}
\\
0\\
0
\end{pmatrix}
,\quad|\alpha|\leq s,
\]
where $A_{2}$ is an anti-self-adjoint operator. Hence by the estimate obtained
in Step 2 for $\Vert\partial_{r}^{\alpha}u_{\theta}(t)\Vert_{L^{2}}$ and
Duhamel's principle, we get the estimate for $\Vert\partial_{r}^{\alpha}%
(u_{r},u_{z},B_{r},B_{z})(t)\Vert_{L^{2}},$ i.e.,
\[
\left\Vert \left(
\begin{array}
[c]{c}%
\partial_{r}^{\alpha}u_{r}\\
\partial_{r}^{\alpha}u_{z}\\
\partial_{r}^{\alpha}B_{r}\\
\partial_{r}^{\alpha}B_{z}%
\end{array}
\right)  (t)\right\Vert _{L^{2}}\lesssim e^{\Lambda t}\Vert(u,B)(0)\Vert
_{H^{s}},\quad|\alpha|\leq s.
\]
Thus, we obtain the estimates for $\Vert\partial_{r}^{\alpha}(u,B)(t)\Vert
_{L^{2}}$, i.e.,
\[
\Vert\partial_{r}^{\alpha}(u,B)(t)\Vert_{L^{2}}\lesssim e^{\Lambda t}%
\Vert(u,B)(0)\Vert_{H^{s}}%
\]
with $|\alpha|\leq s.$ Combining together, we obtain the sharp exponential
growth estimate \eqref{sharp}.


Below, we will summarize the novelties of our methods used in this article:

$\bullet$ The biggest novelty of our approach is to totally bypass the
solution of the ODE \eqref{noneigen} which is very difficult to study.
Instead, we find a new separable Hamiltonian formulation of the linearized MHD equation
and utilize this structure to study eigenvalue problem and get semigroup estimates.
 Although the concept and
abstract formulation of separable Hamiltonian PDE were proposed by \cite{LZ2019} for a totally different problem, for the
current MHD problem, it is highly nontrivial to find such a separable Hamiltonian structure. To find it, we need to group the unknowns carefully and use
proper transformations to rewrite the equations. It is far from being obvious
to recognize that the linearized MHD equation can be written in a separable
Hamiltonian formulation due to the complexity of the equations.

$\bullet$ In the nonlinear stability proof, the main difficulty is to dealt with infinitely
many Casimir constraints of the perturbations. Under the sharp linear stability criterion, we find an approach to reduce the infinitely many constraints to
finitely many constraints which exactly match with the negative dimension of
the second order expansion of the energy-Casimir functional. To our knowledge,
such an approach to dealt with infinitely many constraints has not been used in
other papers.

$\bullet$ For the nonlinear instability proof, it is important to get the sharp
exponential growth estimate of the linearized semigroup. It is far from enough to only use the separable Hamiltonian structure of the linearized equation, especially
for the estimates of the higher-order derivatives. The usual energy estimates
are also too crude to use. Our novelty is to find additional unitary structures of the equations for higher-order derivatives. This enables us to
get the sharp semigroup estimates required for the nonlinear instability proof. Moreover, we get exponential trichotomy which will
be used for the construction of unstable and stable manifolds in a work under preparation.

In this paper, we consider steady magnetic fields with only the vertical
component. There are also works \cite{CP1995,PAD2013,PAD2017,EK1992,VEP1959} on the MRI induced by other
steady magnetic fields. However, there is no complete classification of the
kind of magnetic fields causing MRI. We believe that our methods might be
applicable to study MRI for more general magnetic fields.

The paper is organized as follows. In Section \ref{Linearstability}, we prove
the sharp stability criterion for rotating flows with vertical magnetic
fields. In Section \ref{nonlinearstability}, we prove the conditional
nonlinear stability under the linear stability condition. In Section
\ref{nonlinearinstability}, we prove nonlinear instability under the linear
instability condition. In Section \ref{Eulercase}, we study the nonlinear
stability/instability for rotating flows of Euler equations and make
comparison with the case of rotating flows with magnetic fields. In Appendix,
we provide detailed proofs of some technical lemmas and the conditional
nonlinear stability for the case of unbounded domain.

\noindent{\textbf{Notation}}. Throughout this paper, we use $C>0$ to represent
a generic constant. The notation $a\lesssim b$ represents $a\leq Cb$ and
$a\sim b$ means $a=Cb$ for some constant $C>0$. For simplicity, we denote
$L^{p}:=L^{p}(\Omega),H^{s}:=H^{s}(\Omega)$, $H_{mag}^{r}:=H_{mag}^{r}%
(R_{1},R_{2})$ and $H_{mag}^{1}:=H_{mag}^{1}(\Omega)$. Let $(\cdot
,\cdot)_{H_{mag}^{1}}$ be the inner product in $H_{mag}^{1}.$ To simplify
notations, we use  $f(R_{2})$ or $f(R_{2},z)$ for $\lim\limits_{r\rightarrow
\infty}f(r)$ or $\lim\limits_{r\rightarrow\infty}f(r,z)$ for the case
$R_{2}=\infty$. 

\section{Linear stability/instability for rotating shear flows with magnetic
fields}\label{Linearstability}
Roughly speaking, we shall use a
framework of separable Hamiltonian systems in \cite{LZ2019} to study the linear stability/instability for the linearized MHD system \eqref{linearmhd}. For completeness, we sketch the abstract theory in \cite{LZ2019}.

\subsection{Separable Linear Hamiltonian PDEs}

Consider a linear Hamiltonian PDEs of the separable form%
\begin{equation}\label{separate-hamiltonian}
\frac{d}{dt}\left(
\begin{array}
[c]{c}%
u_1\\
u_2
\end{array}
\right)  =\left(
\begin{array}
[c]{cc}%
0 & \mathbb{B}\\
-\mathbb{B}^{\prime} & 0
\end{array}
\right)  \left(
\begin{array}
[c]{cc}%
\mathbb{L} & 0\\
0 & A
\end{array}
\right)  \left(
\begin{array}
[c]{c}%
u_1\\
u_2
\end{array}
\right)  =\mathbf{JL}\left(
\begin{array}
[c]{c}%
u_1\\
u_2
\end{array}
\right)  , %
\end{equation}
where $u_1\in X,\ u_2\in Y$ and $X,Y$ are real Hilbert spaces. We briefly describe
the results in \cite{LZ2019} about general separable Hamiltonian PDEs
\eqref{separate-hamiltonian}. The triple $\left(  \mathbb{L},A,\mathbb{B}\right)  $ is assumed
to satisfy assumptions:

\begin{enumerate}
\item[(\textbf{G1})] The operator $\mathbb{B}:Y^{\ast}\supset D(\mathbb{B})\rightarrow X$ and
its dual operator $\mathbb{B}^{\prime}:X^{\ast}\supset D(\mathbb{B}^{\prime})\rightarrow Y\ $are
densely defined and closed (and thus $\mathbb{B}^{\prime\prime}=\mathbb{B}$).

\item[(\textbf{G2})] The operator $A:Y\rightarrow Y^{\ast}$ is bounded and
self-dual. 
Moreover, there exist $\tilde{\delta}>0$ and a closed subspace $Y_{+}\subset Y$ such
that
\[
Y=\ker A\oplus Y_{+},\quad\quad \langle Au_2,u_2\rangle\geq\tilde{\delta}\left\Vert u_2\right\Vert _{Y}^{2},\;\forall u_2\in
Y_{+}.
\]

\item[(\textbf{G3})] The operator $\mathbb{L}:X\rightarrow X^{\ast}$ is bounded and
self-dual (i.e., $\mathbb{L}^{\prime}=\mathbb{L}$ \textit{etc.}) and there exists a decomposition
of $X$ into the direct sum of three closed subspaces
\begin{equation*}
X=X_{-}\oplus\ker \mathbb{L}\oplus X_{+},\ \ n^{-}(\mathbb{L})\triangleq\dim
X_{-}<\infty\label{decom-X}%
\end{equation*}
satisfying

\begin{enumerate}
\item[(\textbf{G3.a})] $\left\langle \mathbb{L}u_1,u_1\right\rangle <0$ for all $u_1\in
X_{-}\backslash\{0\}$;

\item[(\textbf{G3.b})] there exists $\tilde{\delta}>0$ such that
\[
\left\langle \mathbb{L}u_1,u_1\right\rangle \geq\tilde{\delta}\left\Vert u_1\right\Vert ^{2}\ ,\text{
for any }u_1\in X_{+}.
\]
\end{enumerate}
\item[(\textbf{G4})] The above $X_{\pm}$  and $Y_{+}$ satisfy
\[
\ker (i_{X_{+}\oplus X_{-}})^{\prime}\subset D(\mathbb{B}^{\prime}), \quad\quad \ker (i_{Y_{+}})^{\prime}\subset D(\mathbb{B}).
\]

\end{enumerate}
\begin{remark}
\label{remark-G4}The assumption (\textbf{G4}) for $\mathbb{L}$ (or for $A$) is satisfied automatically if
$\dim\ker \mathbb{L}<\infty$ (or for $\dim\ker A<\infty$). (see Remark 2.3 in \cite{lin-zeng-hamiltonian} for details).
\end{remark}

\begin{theorem}
\label{T:abstract} \cite{LZ2019}Assume (\textbf{G1-4}) for
(\ref{separate-hamiltonian}). The operator $\mathbf{JL}$ generates a $C^{0}$
group $e^{t\mathbf{JL}}$ of bounded linear operators on $\mathbf{X}=X\times Y$
and there exists a decomposition%
\[
\mathbf{X}=E^{u}\oplus E^{c}\oplus E^{s}\quad
\]
of closed subspaces $E^{u,s,c}$ with the following properties:

i) $E^{c},E^{u},E^{s}$ are invariant under $e^{t\mathbf{JL}}$.

ii) $E^{u}\left(  E^{s}\right)  $ only consists of eigenvectors corresponding
to positive (negative) eigenvalues of $\mathbf{JL}$ and
\begin{equation*}
\dim E^{u}=\dim E^{s}=n^{-}\left(  \mathbb{L}|_{\overline{R\left(  \mathbb{B}A\right)  }}\right)
, \label{unstable-dimension-formula}%
\end{equation*}
where $n^{-}\left(  \mathbb{L}|_{\overline{R\left(  \mathbb{B}A\right)  }}\right)  $ denotes the
number of negative modes of
$\left\langle \mathbb{L}\cdot,\cdot\right\rangle |_{\overline{R\left(  \mathbb{B}A\right)  }}$.
If $n^{-}\left(  \mathbb{L}|_{\overline{R\left(  \mathbb{B}A\right)  }}\right)  >0$,
then there exists $M>0$ such that
\begin{equation*}
\left\vert e^{t\mathbf{JL}}|_{E^{s}}\right\vert \leq Me^{-\lambda_{u}%
t},\;t\geq0;\quad\left\vert e^{t\mathbf{JL}}|_{E^{u}}\right\vert \leq
Me^{\lambda_{u}t},\;t\leq0, %
\end{equation*}
where $\lambda_{u}=\min\{\lambda\mid\lambda\in\sigma(\mathbf{JL}|_{E^{u}%
})\}>0$.

iii) The quadratic form $\left\langle \mathbf{L}\cdot,\cdot\right\rangle
$ vanishes on $E^{u,s}$, i.e., $\langle\mathbf{L}\mathbf{u},\mathbf{u}%
\rangle=0$ for all $\mathbf{u}\in E^{u,s}$, but is non-degenerate on
$E^{u}\oplus E^{s}$, and
\[
E^{c}=\left\{  \mathbf{u}\in\mathbf{X}\mid\left\langle \mathbf{\mathbf{L}%
u,v}\right\rangle =0,\ \forall\ \mathbf{v}\in E^{s}\oplus E^{u}\right\}  .
\]
There exists $M>0$ such that
\begin{equation*}
|e^{t\mathbf{J}\mathbf{L}}|_{E^{c}}|\leq M(1+t^{2}),\text{ for all }%
t\in\mathbb{R}. %
\end{equation*}

iv) Suppose $\left\langle \mathbb{L}\cdot,\cdot\right\rangle $ is non-degenerate on
$\overline{R\left(  \mathbb{B}\right)  }$, then $|e^{t\mathbf{JL}}|_{E^{c}}|\leq M$ for
some $M>0$. Namely, there is Lyapunov stability on the center space $E^{c}$.

\end{theorem}

\subsection{Stability/instability of the linearized axisymmetric MHD equations}
We consider the axisymmetric solution of the system \eqref{EPM}. In the cylindrical coordinates, we denote
$$H(t,r,z)=H_r(t,r,z)\mathbf{e}_r+H_\theta(t,r,z)\mathbf{e}_\theta +H_z(t,r,z)\mathbf{e}_z$$  and  $$v(t,r,z)=v_r(t,r,z)\mathbf{e}_r+v_\theta(t,r,z)\mathbf{e}_\theta +v_z(t,r,z)\mathbf{e}_z.$$
Due to $\text{div} H=0$, we can define the magnetic potential $\psi(t,r,z)$ of $H_r(t,r,z)\mathbf{e}_r +H_z(t,r,z)\mathbf{e}_z$ by the following boundary-value problem,
\begin{align}\label{ellipticpsi}
\begin{cases}
-\frac{1}{r}\partial_r\left(\frac{1}{r}\partial_r\psi\right)-\frac{1}{r^2}\partial_{z}^2\psi=\frac{1}{r}\partial_rH_z-\frac{1}{r}\partial_{z}H_r,\\
\psi(t,R_1,z)=0,\\
\psi(t,R_2,z)=-\int_{R_1}^{R_2}rH_z(t,r,z)dr=-\int_{R_1}^{R_2}rH_z(0,r,z)dr=C(R_1,R_2).
\end{cases}
\end{align}
The last two equalities follow from
\begin{align*}
\partial_t\int_{R_1}^{R_2}rH_z(t,r,z)dr&=-\int_{R_1}^{R_2}\partial_r[r(v_rH_z-v_zH_r)]dr\\
&=r(v_rH_z-v_zH_r)|_{r=R_1}^{r=R_2}=0
\end{align*}
and
\begin{align*}
\partial_z\int_{R_1}^{R_2}rH_z(t,r,z)dr&=-\int_{R_1}^{R_2}\partial_r(rH_r)dr=rH_r|_{r=R_1}^{r=R_2}=0,
\end{align*}
where we used $\eqref{bdc}_1$, $\eqref{EPM}_3$.
By axial symmetry, we are able to extend the function $\psi$ from $\Omega\subset\mathbb{R}^3$ to $\Omega^{e}\subset\mathbb{R}^5$, where $\Omega^{e}=\{x=(x_1,x_2,x_3,x_4,x_5)\in\mathbb{R}^5|R_1\leq\sqrt{x_1^2+x_2^2+x_4^2+x_5^2}< R_2, x_3\in\mathbb{T}_{2\pi}\}.$
The 5-dimensions extension method is also used to deal with the elliptic problem in other models such as the magnetic stars \cite{FLJ2015} and the vortex rings \cite{Ni1980}.
In precise,
we give the extension of $\psi(t,r,z)$ by
$\psi^e(t,R,z)$, where $R=\sqrt{x_1^2+x_2^2+x_4^2+x_5^2}, z=x_3.$
Hence we have
\begin{align*}
-\Delta_5\left(\frac{\psi^e}{R^2}\right)&=
-\partial_{R}^2\left(\frac{\psi^e}{R^2}\right)-\frac{3}{R}\partial_R\left(\frac{\psi^e}{R^2}\right)-\partial_{z}^2\left(\frac{\psi^e}{R^2}\right)\\
&=
-\frac{1}{R}\partial_R\left(\frac{1}{R}\partial_R\psi^e\right)-\frac{1}{R^2}\partial_{z}^2\psi^e.
\end{align*}
Then the boundary-value problem \eqref{ellipticpsi} is equivalent to the following boundary-value problem for Poisson's equation in $5$-dimensions
\begin{align}\label{Poissonequation}
\begin{cases}
-\Delta_5\left(\frac{\psi^e}{R^2}\right)=\frac{1}{R}\partial_RH_z(t,R,z)-\frac{1}{R}\partial_{z}H_r(t,R,z),\\
\psi^e(t,R_1,z)=0,\\
\psi^e(t,R_2,z)=-\int_{R_1}^{R_2}rH_z(0,r,z)dr,
\end{cases}
\end{align}
where $\frac{1}{R}\partial_RH_z(t,R,z)-\frac{1}{R}\partial_{z}H_r(t,R,z)\in \dot{H}^{-1}(\Omega^e)$.
Thus, we can obtain the unique solution $\psi^e(t,R,z)$ with $\frac{\psi^e}{R^2}\in\dot{H}^1(\Omega^{e})$ to \eqref{Poissonequation}.
Meanwhile, we get the unique solution $\psi(t,r,z)$ to \eqref{ellipticpsi}.
Moreover, $\psi$ automatically satisfies
\begin{align}\label{timeevolution}
\begin{cases}
H_r(t,r,z)=\frac{\partial_z\psi}{r},\\
H_z(t,r,z)=-\frac{\partial_r\psi}{r},\\
\psi(t,R_1,z)=0,\\
\psi(t,R_2,z)=-\int_{R_1}^{R_2}rH_z(0,r,z)dr.
\end{cases}
\end{align}
By $\eqref{EPM}_2$ and \eqref{timeevolution}, we have
\begin{align*}
\partial_t \nabla\psi=\nabla[r(v_rH_z-v_zH_r)]=\nabla(-v_r\partial_r\psi-v_z\partial_z\psi).
\end{align*}
Together with $\eqref{timeevolution}_3$ and $\eqref{bdc}_1$, one has
\begin{align*}
\partial_t \psi-r[v_rH_z-v_zH_r]
&=\partial_t\psi(t,R_1,z)-R_1[v_r(R_1,z)H_z(R_1,z)
\\
&\quad-v_z(R_1,z)H_r(R_1,z)]=0.
\end{align*}
Then, the system \eqref{EPM} can be rewritten in the cylindrical coordinates as
\begin{align}\label{nonlinearMHDrz}
\begin{cases}
  \partial_t v_r+\partial_{r} p=\frac{\partial_z\psi}{r}\partial_r (\frac{\partial_z\psi}{r})-\frac{\partial_r\psi}{r^2} \partial_{zz} \psi-\frac{1}{r}H_{\theta}^2-v_r\partial_r v_r-v_z\partial_z v_r+\frac{1}{r}v_{\theta}^2,\\
\partial_t v_\theta=\frac{\partial_z\psi}{r} \partial_r H_\theta-\frac{\partial_r\psi}{r} \partial_z H_\theta+\frac{1}{r^2}H_{\theta}  \partial_z\psi-v_r\partial_r v_\theta-v_z\partial_z v_\theta-\frac{v_{\theta}  v_{r}}{r},\\
\partial_t v_z+\partial_{z} p=-\frac{\partial_z\psi}{r} \partial_r (\frac{\partial_r\psi}{r})+\frac{\partial_r\psi}{r^2} \partial_{zr} \psi-v_r\partial_r v_z-v_z\partial_z v_z,\\
\partial_t \psi=-v_r\partial_r\psi-v_z\partial_z\psi,\\
\partial_t H_\theta=\frac{\partial_z\psi}{r} \partial_r v_\theta-\frac{\partial_r\psi}{r} \partial_z v_\theta+\frac{H_{\theta}  v_{r}}{r}-v_r\partial_r H_\theta-v_z\partial_z H_\theta-\frac{v_{\theta}  \partial_z\psi}{r^2},\\
\frac{1}{r}\partial_r(rv_r)+\partial_zv_z=0.
\end{cases}
\end{align}
For steady state, we can take
\begin{align*}
 \psi_0(r)=-\epsilon\int_{R_1}^{r}sb(s)ds .
\end{align*}
Now let the perturbations be
\begin{align*}
u(t,x)&=v(t,x)-v_0(x),\quad B_\theta(t,x)=H_\theta(t,x), \\
W(t,x)&=p(t,x)-p_0(x),\quad \varphi(t,r,z)=\psi-\psi_0.
\end{align*}
Then the linearized MHD system around a given steady state $(v_0,H_0)$ in \eqref{steady1}
under cylindrical coordinates can be reduced to the following equations:
\begin{align}\label{linearmhd}
\begin{cases}
 \partial_t u_r=\epsilon b(r)\partial_{z} (\frac{\partial_z\varphi}{r})+2\omega(r)u_{\theta}-\partial_r W,\\
\partial_t u_\theta
=\epsilon b(r)\partial_zB_\theta-\frac{u_r}{r}\partial_r(r^2\omega(r)),\\
\partial_t u_z=\epsilon b(r)\partial_{z} (-\frac{\partial_r\varphi}{r})-\partial_{z} W +\frac{\epsilon\partial_rb(r)}{r}\partial_z\varphi ,\\
\partial_t \varphi=\epsilon rb(r)u_r ,\\
\partial_t B_\theta=\epsilon b(r)\partial_zu_\theta+\partial_r\omega(r)\partial_z \varphi,\\
\frac{1}{r}\partial_r(ru_r)+\partial_zu_z=0.
\end{cases}
\end{align}
We impose the system \eqref{linearmhd} with the initial and boundary conditions
\begin{align*}
\begin{cases}
(u_r,u_\theta,u_z, \varphi, B_\theta)(t,r,z)\mid_{t=0}=(u_r^0, u_\theta^0,u_z^0, \varphi^0, B_\theta^0)(r,z),\\
u_r(t,R_1,z)=\varphi(t,R_1,z)=0,\\
u_r(t,R_2,z)=\varphi(t,R_2,z)=0,\quad  \mbox{if}\quad R_2<\infty;\\
(u_r,u_\theta,u_z, \varphi, B_\theta)(t,r,z)\to (0,0,0,0,0) \quad \mbox{as} \quad r\to \infty, \quad \mbox{if}\quad R_2=\infty;\\
(u_r,u_\theta,u_z, \varphi, B_\theta)(t,r,z)=(u_r, u_\theta,u_z, \varphi, B_\theta)(t,r,z+2\pi).
\end{cases}
\end{align*}
In order to get the separable Hamiltonian structure of the linearized system \eqref{linearmhd},
let
\begin{align}\label{u1u2}
u_1=\left(u_{\theta}+\frac{\partial_{r}\omega(r)}{\epsilon b(r)}\varphi,\varphi\right) \text{ and } u_2=(\vec{u},B_\theta),
\end{align}
with
$\vec{u}=(u_r,u_z),$
then
\begin{align*}
\frac{du_1}{dt}=\begin{pmatrix} -2\omega(r)u_r+\epsilon b(r)\partial_zB_\theta\\
\epsilon r b(r)u_r
\end{pmatrix}.
\end{align*}
Then, we define the Leray projection operator $\mathcal{P}$.  Specifically,
for any $\vec{u}(r,z)=(u_r,u_z)=u_r\mathbf{e}_r+u_z\mathbf{e}_z\in L^2(\Omega)$,
$\mathcal{P}$ is defined by
\begin{align}\label{projectP}
\mathcal{P}\vec{u}:=(u_r-\partial_rp(r,z),u_z-\partial_zp(r,z))=(u_r-\partial_rp)\mathbf{e}_r+(u_z-\partial_zp)\mathbf{e}_z,
\end{align}
 where
$p(r,z)\in \dot{H}^1(\Omega)$ is a weak solution of the equation
$$\Delta p=\text{div}\vec{u}.$$
This is equivalent to
\begin{align}\label{POweakeq}
\int_\Omega \nabla p\cdot\nabla\xi dx=\int_\Omega \vec{u}\cdot\nabla\xi dx,\quad \forall \xi\in \dot{H}^1(\Omega).
\end{align}
The right-hand side of \eqref{POweakeq} defines a bounded linear functional on $\dot{H}^1(\Omega)$. Thus by the
Riesz representation theorem, there exists a unique $p$ satisfying \eqref{POweakeq},
and consequently $\text{div}(\mathcal{P}\vec{u})=\text{div}(\vec{u}-\nabla p)=0$.
Moreover, we have
\begin{align*}
\frac{d}{dt}\begin{pmatrix}u_r\\
u_z\end{pmatrix}&=\mathcal{P}\begin{pmatrix}\epsilon b\partial_{z} (\frac{\partial_z\varphi}{r})+2\omega u_{\theta}\\
\partial_{z}(-\epsilon b(r) \frac{\partial_r\varphi}{r} +\frac{\epsilon\partial_rb(r)}{r}\varphi)
\end{pmatrix}\\
&=\mathcal{P}\begin{pmatrix}\epsilon b\partial_{z} (\frac{\partial_z\varphi}{r})+2\omega u_{\theta}-\partial_r[\epsilon b(r) (-\frac{\partial_r\varphi}{r}) +\frac{\epsilon\partial_rb(r)}{r}\varphi]\\
0
\end{pmatrix}\\
&=\mathcal{P}\begin{pmatrix}2\omega u_{\theta}+\epsilon b (\frac{\partial_{z}^2\varphi}{r})
 +\epsilon b\partial_r(\frac{\partial_r\varphi}{r})+
\epsilon\frac{\partial_rb(r)}{r^2}\varphi-\epsilon\frac{\partial_{r}^2b(r)}{r}\varphi\\
0
\end{pmatrix}\\
&=-\mathcal{P}\begin{pmatrix}-2\omega (u_{\theta}+\frac{\partial_{r}\omega(r)}{\epsilon b(r)}\varphi)+r\epsilon b L\varphi\\
0
\end{pmatrix},
\end{align*}
by using the fact that 
\begin{align*}
\mathcal{P}\begin{pmatrix}\partial_{r}(-\epsilon b(r) \frac{\partial_r\varphi}{r} +\frac{\epsilon\partial_rb(r)}{r}\varphi)\\
\partial_{z}(-\epsilon b(r) \frac{\partial_r\varphi}{r} +\frac{\epsilon\partial_rb(r)}{r}\varphi)
\end{pmatrix}=0,
\end{align*}
where
 $$L:=-\frac{1}{r}\partial_r(\frac{1}{r}\partial_r\cdot)-\frac{1}{r^2}\partial_z^2
+\mathfrak{F}(r): Z\rightarrow Z^{*},$$
$\mathfrak{F}(r):=\frac{\partial_r(\omega^2)}{ \epsilon^2b(r)^2r}+ \left(\frac{\partial_r^2b(r)}{r^2b(r)}-\frac{\partial_rb(r)}{r^3b(r)}\right) $.
Then the linearized axisymmetric MHD system \eqref{linearmhd} takes the Hamiltonian form
\begin{equation}
\frac{d}{dt}%
\begin{pmatrix}
u_{1}\\
u_{2}%
\end{pmatrix}
=\mathbf{J}\mathbf{L}%
\begin{pmatrix}
u_{1}\\
u_{2}%
\end{pmatrix}
,\label{hamiltonian-RS}%
\end{equation}
where
the off-diagonal anti-self-dual operator $\mathbf{J}$ and diagonal self-dual operator $\mathbf{L}$ are defined by
\begin{equation}
\mathbf{J}:=%
\begin{pmatrix}
0 & \mathbb{B}\\
-\mathbb{B}^{\prime} & 0
\end{pmatrix}
:\mathbf{X}^{\ast}\rightarrow\mathbf{X},\quad\mathbf{L}:=%
\begin{pmatrix}
\mathbb{L} & 0\\
0 & A
\end{pmatrix}
:\mathbf{X}\rightarrow\mathbf{X}^{\ast},\label{defn-JL}%
\end{equation}
with
\begin{equation}\label{2.7}
\mathbb{B}\begin{pmatrix}
\vec{u}\\
B_\theta%
\end{pmatrix}=\begin{pmatrix} -2\omega(r)u_r+\epsilon b(r)\partial_zB_\theta\\
\epsilon r b(r)u_r
\end{pmatrix}: Y^*\rightarrow X,
\end{equation}
\begin{equation}\label{2.8}
\mathbb{B}^{'}
\begin{pmatrix} f_1\\
f_2
\end{pmatrix}=
\begin{pmatrix} \mathcal{P}\begin{pmatrix}-2\omega(r)f_1+r\epsilon b(r)f_2\\
0\end{pmatrix}\\
- \epsilon b(r)\partial_zf_1
\end{pmatrix}: X^*\rightarrow Y,
\end{equation}

\begin{equation}\label{defL}
\mathbb{L}=\begin{pmatrix} Id_1, &0\\
0,&L
\end{pmatrix}: X\rightarrow X^{*},
\end{equation}
\begin{equation}\label{2.10}
A=Id_2 : Y\rightarrow Y^{*}.
\end{equation}
Then we need to check that $\left(  \mathbb{L},A,\mathbb{B}\right)  $ in the Hamiltonian PDEs \eqref{hamiltonian-RS}
 satisfy the assumptions (\textbf{G1})-(\textbf{G4}). The properties (\textbf{G1})
and (\textbf{G2}) are easy to check by the definition.
To show that $\mathbb{L}$ satisfies (\textbf{G3}),
first we observe that
\begin{align}\label{2.11}
-\frac{1}{r}\partial_r\left(\frac{1}{r}\partial_r\varphi\right)-\frac{1}{r^2}\partial_z^2\varphi
&=-\partial_r^2\left(\frac{\varphi}{r^2}\right)-\frac{3}{r}\partial_r\left(\frac{\varphi}{r^2}\right)-\partial_z^2\left(\frac{\varphi}{r^2}\right)
=-\Delta_5\left(\frac{\varphi^e}{R^2}\right),
\end{align}
where $\varphi^e(t,R,z)$ is the extension of $\varphi(t,r,z)$ from $\Omega\subset\mathbb{R}^3$ to $\Omega^{e}\subset\mathbb{R}^5$
with $R=(x_1^2+x_2^2+x_4^2+x_5^2)^{1/2}$, $z=x_3$.

By using \eqref{2.11}, a routine computation gives
\begin{align*}
\int_{\Omega}\left(\frac{1}{r^2}|\partial_z\varphi|^2+\frac{1}{r^2}|\partial_r\varphi|^2\right)d_3x=\int_{\Omega^{e}}\left|\nabla_5\left(\frac{\varphi^{e}}{R^2}\right)\right|^2 d_5x,
\end{align*}
which implies
\begin{align}\label{quadratic-equivalent}
\langle L\varphi,\varphi\rangle
=\int_{\Omega^{e}}\left(\left|\nabla_5\left(\frac{\varphi^{e}}{R^2}\right)\right|^2
+R^2\mathfrak{F}(R)\left|\frac{\varphi^{e}}{R^2}\right|^2\right) d_5x.
\end{align}
Therefore, we can define a new operator in $5$-dimensions,
$$L^e:=-\Delta_5+R^2\mathfrak{F}(R):\dot{H}^1(\Omega^{e})\rightarrow\dot{H}^{-1}(\Omega^{e}).$$ By \eqref{quadratic-equivalent},
\begin{align}\label{equivalent}
\left\langle L^e\left(\frac{\varphi^{e}}{R^2}\right),\left(\frac{\varphi^{e}}{R^2}\right)\right\rangle=\langle L\varphi,\varphi\rangle.
\end{align}
Define the operator $\mathcal{L}:L^2(\Omega^{e})\rightarrow L^2(\Omega^{e})$ by
\begin{align*}
\mathcal{L}:&=\left(-\Delta_5\right)^{-\frac{1}{2}}L^e\left(-\Delta_5\right)^{-\frac{1}{2}}
\\&=I+\left(-\Delta_5\right)^{-\frac{1}{2}}R^2\mathfrak{F}(R)\left(-\Delta_5\right)^{-\frac{1}{2}}
\\&:=I+\mathcal{K},
\end{align*}
then assumption (\textbf{G3}) follows from the following lemma.
\begin{lemma}\label{le2.1} Under the assumptions of Theorem \ref{linearstability},
 the operator $\mathcal{L}$ is bounded and self-adjoint on $L^2(\Omega^{e})$ and the operator $\mathcal{K}:L^2(\Omega^{e})\rightarrow L^2(\Omega^{e})$ is compact.
\end{lemma}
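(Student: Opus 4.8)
The plan is to reduce everything to the compactness of $\mathcal{K}$. Since $\mathcal{L}=I+\mathcal{K}$ and $I$ is bounded and self-adjoint, it suffices to treat $\mathcal{K}=(-\Delta_5)^{-1/2}M(-\Delta_5)^{-1/2}$, where $M$ denotes multiplication by the real-valued function $V(R):=R^2\mathfrak{F}(R)$. Writing $A:=(-\Delta_5)^{-1/2}$, which is self-adjoint on $L^2(\Omega^e)$ and maps $L^2(\Omega^e)$ isometrically onto $\dot{H}^1(\Omega^e)$ in the sense that $\|Af\|_{\dot{H}^1}=\|f\|_{L^2}$, I would first record the quadratic-form identity $\langle\mathcal{K}f,g\rangle=\int_{\Omega^e}V\,(Af)\overline{(Ag)}\,d_5x$ for $f,g\in L^2(\Omega^e)$. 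Once the right-hand side is shown to converge absolutely and to be bounded by $C\|f\|_{L^2}\|g\|_{L^2}$, both boundedness and self-adjointness of $\mathcal{K}$ (hence of $\mathcal{L}$) follow at once, since $V$ is real and the expression is symmetric in $f,g$. Compactness of $\mathcal{K}$ is then the only substantial point.

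The analytic engine is the Hardy inequality for the singular set $\{x_1=x_2=x_4=x_5=0\}$, which has codimension $4$ in $\mathbb{R}^5$; since $4>2$ the bound $\int_{\Omega^e}R^{-2}|u|^2\,d_5x\le C\int_{\Omega^e}|\nabla_5 u|^2\,d_5x=C\|u\|_{\dot{H}^1}^2$ holds with a finite constant (for axisymmetric $u$ this is the weighted one-dimensional Hardy inequality with weight $R^3$). The key preparatory step is the pointwise estimate on the potential: using the hypotheses of Theorem \ref{linearstability} together with the $C^3$-regularity and the positive lower bound of $b$, I would expand $V=R\partial_R(\omega^2)/(\epsilon^2 b^2)+\partial_R^2 b/b-\partial_R b/(Rb)$ and estimate each term to obtain $|V(R)|\le CR^{\beta-2}$ as $R\to0$ (when $R_1=0$) and $|V(R)|\le CR^{-2-2\alpha}$ as $R\to\infty$ (when $R_2=\infty$), while $V$ is bounded on compact subintervals. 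In particular $|V(R)|\le CR^{-2}$ on all of $(R_1,R_2)$, so the form above is controlled by Hardy, giving boundedness; and, crucially, $\eta(\delta):=\sup_{\{R\le\delta\}\cup\{R\ge1/\delta\}}R^2|V(R)|$ satisfies $\eta(\delta)\to0$ as $\delta\to0$, because $R^2|V(R)|=O(R^\beta)$ near $0$ and $O(R^{-2\alpha})$ near $\infty$.

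For compactness I would approximate $\mathcal{K}$ in operator norm by compact operators. Let $\chi_\delta$ be a smooth radial cutoff equal to $1$ on $[\delta,1/\delta]$ and supported in $[\delta/2,2/\delta]$, set $V_\delta:=\chi_\delta V$ and $\mathcal{K}_\delta:=A M_{V_\delta}A$. The form bound and Hardy give $|\langle(\mathcal{K}-\mathcal{K}_\delta)f,f\rangle|\le\int_{\Omega^e}|V-V_\delta|\,|Af|^2\,d_5x\le\eta(\delta)\int_{\Omega^e}R^{-2}|Af|^2\,d_5x\le C\eta(\delta)\|f\|_{L^2}^2$, so by self-adjointness $\|\mathcal{K}-\mathcal{K}_\delta\|\le C\eta(\delta)\to0$. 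Each $\mathcal{K}_\delta$ is compact: factoring $V_\delta=|V_\delta|^{1/2}\,\mathrm{sgn}(V_\delta)\,|V_\delta|^{1/2}$ yields $\mathcal{K}_\delta=T_\delta^{\ast}M_{\mathrm{sgn}(V_\delta)}T_\delta$ with $T_\delta:=M_{|V_\delta|^{1/2}}A$, and $T_\delta$ is compact because it factors as the isometry $A:L^2\to\dot{H}^1(\Omega^e)$ followed by multiplication by the bounded function $|V_\delta|^{1/2}$ supported in the bounded region $K_\delta=\{\delta/2\le R\le2/\delta\}\times\mathbb{T}_{2\pi}$, which lies in the interior of $\Omega^e$; the embedding $\dot{H}^1(\Omega^e)\to L^2(K_\delta)$ is compact by Rellich--Kondrachov. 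Hence $\mathcal{K}=\lim_\delta\mathcal{K}_\delta$ is compact, $\mathcal{L}=I+\mathcal{K}$ is bounded and self-adjoint, and the lemma follows. (When $R_1>0$ and $R_2<\infty$ the domain is bounded, $V$ is bounded, and compactness of $\mathcal{K}=T^\ast M_{\mathrm{sgn}V}T$ is immediate from Rellich without the cutoff.)

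I expect the main obstacle to be the pointwise decay estimate for $V$, specifically the term $\partial_R^2 b/b$: the hypotheses are stated for $\partial_r b$, so one must justify the corresponding asymptotics for $\partial_R^2 b$ from the $C^3$-regularity (or route the estimate through integrated/weighted quantities), and then assemble the two-sided borderline decay $R^2|V(R)|=o(1)$ at $R=0$ and at $R=\infty$. This two-sided subcritical behaviour relative to the critical Hardy weight $R^{-2}$ is exactly what simultaneously controls the inner singularity (present when $R_1=0$) and the unbounded tail (present when $R_2=\infty$); obtaining the uniform smallness $\eta(\delta)\to0$ at both ends at once is the crux that upgrades boundedness to compactness.
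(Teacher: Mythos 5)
Your proof is correct and reaches the same conclusion, but the operator-theoretic packaging is genuinely different from the paper's. The paper verifies compactness \emph{sequentially}: it takes $u_k\rightharpoonup u_\infty$ weakly in $L^2(\Omega^{e})$, sets $v_k=(-\Delta_5)^{-1/2}(u_k-u_\infty)$, and estimates $\Vert\mathcal{K}(u_k-u_\infty)\Vert_{L^2}$ by passing to the Fourier side, via $\Vert\,|\xi|^{-1}\widehat{h}\,\Vert_{L^2}\leq C\Vert\nabla_\xi\widehat{h}\Vert_{L^2}=C\Vert\,|x|\,h\Vert_{L^2}$ applied to $h=R^2\mathfrak{F}v_k$, and then splits $|x|R^2\mathfrak{F}v_k$ into an inner region, an annulus $\{1/M<|x|<M\}$, and an outer region: the two end regions carry the small factors $M^{-\beta}$ and $M^{-2\alpha}$ after a physical-space Hardy inequality (plus $L^5$--$L^{10/3}$ H\"older/Sobolev bounds to absorb the cutoff-derivative terms), while the annulus term tends to zero by Rellich. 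You instead approximate $\mathcal{K}$ \emph{in operator norm} by the truncated operators $\mathcal{K}_\delta$, getting $\Vert\mathcal{K}-\mathcal{K}_\delta\Vert\leq C\eta(\delta)$ directly from the quadratic form, self-adjointness, and Hardy, and prove compactness of each $\mathcal{K}_\delta$ by factoring through the Rellich embedding on the annulus. The underlying mechanism is identical in both arguments --- subcritical decay of $R^2\mathfrak{F}$ relative to the Hardy weight $R^{-2}$ at both ends, plus Rellich in the middle --- but your route avoids the Fourier transform entirely (which the paper applies somewhat informally on $\Omega^{e}=\mathbb{R}^4\times\mathbb{T}_{2\pi}$), dispenses with the cutoff-commutator estimates, treats all four boundary configurations uniformly rather than only the hardest case $R_1=0,\,R_2=\infty$, and gives a quantitative approximation rate $\eta(\delta)$. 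Your self-adjointness argument (bounded plus symmetric implies self-adjoint) is also more direct than the paper's appeal to the Kato--Rellich theorem, which is not really needed for a bounded symmetric perturbation of the identity.

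One remark on the obstacle you flag: the asymptotics of $\partial_R^2 b/b$ do not follow from the stated hypotheses on $\partial_r b$ alone, but this is not a defect of your proof relative to the paper's. The paper's proof simply asserts $\mathfrak{F}(R)=O(R^{-4+\beta})$ as $R\rightarrow0$ and $\mathfrak{F}(R)=O(R^{-4-2\alpha})$ as $R\rightarrow\infty$ ``by the assumption of Theorem \ref{linearstability}'', and this requires exactly the same second-derivative information, namely $\partial_r^2 b=O(r^{\beta-2})$ near $0$ and $\partial_r^2 b=O(r^{-2-2\alpha})$ near $\infty$. Both arguments therefore rest on the same implicit reading of the hypotheses; your write-up has the merit of making it explicit.
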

\begin{proof}
For the case $0<R_1<R_2<\infty$, the proof of this lemma is easy by the compactness of $H_0^1(\Omega^{e})\hookrightarrow L^2(\Omega^{e})$. So we only prove the case of $0=R_1<R_2=\infty$.
To show that $\mathcal{K}$ is compact. It is suffices to show that for any sequence $\{u_k\}\subset L^2(\Omega^{e})$ satisfying $u_k\rightharpoonup u_\infty$ weakly in $L^2(\Omega^{e})$, we have $\mathcal{K}u_k\rightarrow \mathcal{K}u_\infty$ strongly in $L^2(\Omega^{e})$.
Let $\chi\in C_0^{\infty}(\Omega^{e};[0,1])$ be a radial cut-off function satisfying
$\chi(x)=1$ if $|x|\leq\frac{1}{2}$ and $\chi(x)=0$ if $|x|\geq1.$
For any $M>2$, define $\chi_M=\chi(\frac{x}{M}).$ For convenience, we set $v_k=(-\Delta_5)^{-\frac{1}{2}}(u_k-u_\infty)$, then $\|v_k\|_{\dot{H}^1(\Omega^{e})}\leq\|u_k-u_\infty\|_{L^2(\Omega^{e})}\leq C.$ Hence,
\begin{align*}
&\|\mathcal{K}(u_k-u_\infty)\|_{L^2(\Omega^{e})}=\|(-\Delta_5)^{-\frac{1}{2}}R^2\mathfrak{F}(-\Delta_5)^{-\frac{1}{2}}(u_k-u_\infty)\|_{L^2(\Omega^{e})}\\
&=\left\|\frac{1}{|\xi|}\left(R^2\mathfrak{F}v_k\right)^{\wedge}(\xi)\right\|_{L^2(\Omega^{e})}
\leq C\left\|\nabla_\xi\left(R^2\mathfrak{F}v_k\right)^{\wedge}(\xi)\right\|_{L^2(\Omega^{e})}\\
&\leq C\left\||x|R^2\mathfrak{F}v_k\right\|_{L^2(\Omega^{e})}\\
&\leq C\left(\left\||x|R^2\mathfrak{F}\chi_Mv_k\right\|_{L^2(\Omega^{e})}+\left\||x|R^2\mathfrak{F}(1-\chi_M)v_k\right\|_{L^2(\Omega^{e})}\right)\\
&\leq C\left(\left\||x|R^2\mathfrak{F}\chi_M\chi_{\frac{2}{M}}v_k\right\|_{L^2(|x|<\frac{2}{M})}+\left\||x|R^2\mathfrak{F}\chi_M(1-\chi_{\frac{2}{M}})v_k\right\|_{L^2(\frac{1}{M}<|x|<M)}
\right.\\ &\left.\quad\quad\quad+\left\||x|R^2\mathfrak{F}(1-\chi_M)v_k\right\|_{L^2(|x|>\frac{M}{2})}\right)\\
&\leq C\left(\left\||x|R^2\mathfrak{F}\chi_M\chi_{\frac{2}{M}}v_k\right\|_{L^2(|x|<\frac{2}{M})}+\left\||x|R^2\mathfrak{F}\chi_M(1-\chi_{\frac{2}{M}})v_k\right\|_{L^2(\frac{1}{M}<|x|<M)}
\right.\\ &\left.\quad\quad\quad+\left\||x|R^2\mathfrak{F}(1-\chi_M)v_k\right\|_{L^2(|x|>\frac{M}{2})}\right)\\
&\leq C\left(\frac{1}{M^{\beta}}\left\|\frac{1}{|x|}\chi_M\chi_{\frac{2}{M}}v_k\right\|_{L^2(|x|<\frac{2}{M})}+\left\||x|R^2\mathfrak{F}\chi_M(1-\chi_{\frac{2}{M}})v_k\right\|_{L^2(\frac{1}{M}<|x|<M)}
\right.\\ &\left.\quad\quad\quad+\frac{1}{M^{2\alpha}}\left\|\frac{1}{|x|}(1-\chi_{M})v_k\right\|_{L^2(|x|>\frac{M}{2})}\right)
\\
&\leq \frac{C}{M^{\beta}}\left\|\nabla(\chi_M\chi_{\frac{2}{M}}v_k)\right\|_{L^2(\Omega^{e})}+C\left\||x|R^2\mathfrak{F}\chi_M(1-\chi_{\frac{2}{M}})v_k\right\|_{L^2(\frac{1}{M}<|x|<M)}
\\&\quad+\frac{C}{M^{2\alpha}}\|\nabla[(1-\chi_M)v_k]\|_{L^2(\Omega^{e})}\\
&\leq \frac{C}{M^{\beta}}(\left\|\nabla v_k\right\|_{L^2(\Omega^{e})}+\|\nabla(\chi_M\chi_{\frac{2}{M}})\|_{L^5(\Omega^{e})}\|v_k\|_{L^{\frac{10}{3}}(\Omega^{e})})\\
&\quad+C\left\||x|R^2\mathfrak{F}\chi_M(1-\chi_{\frac{2}{M}})v_k\right\|_{L^2(\frac{1}{M}<|x|<M)}
\\&\quad+\frac{C}{M^{2\alpha}}\left(\|\nabla v_k\|_{L^2(\Omega^{e})}+\left\|\nabla\chi_{M}\right\|_{L^5(\Omega^{e})}\|v_k\|_{L^{\frac{10}{3}}(\Omega^{e})}\right)\\
&\leq \frac{C}{M^{\beta}}\|\nabla v_k\|_{L^2(\Omega^{e})}+C\left\||x|R^2\mathfrak{F}\chi_M(1-\chi_{\frac{2}{M}})v_k\right\|_{L^2(\frac{1}{M}<|x|<M)}\nonumber\\
&\quad+\frac{C}{M^{2\alpha}}\|\nabla v_k\|_{L^2(\Omega^{e})},
\end{align*}
when $M$ is sufficiently large, by using $\mathfrak{F}(R)=O(R^{-4-2\alpha})$ when $R\rightarrow\infty$ and $\mathfrak{F}(R)=O(R^{-4+\beta})$ when $R\rightarrow0$, for some constants $\alpha>0,\beta>0$ given by the assumption of Theorem \ref{linearstability}.

Since $M$ can be arbitrarily large, by the compactness of $H_0^1(\{\frac{1}{M}<|x|<M\})\hookrightarrow L^2(\Omega^{e})$, we conclude $\|\mathcal{K}(u_k-u_\infty)\|_{L^2(\Omega^{e})}\rightarrow0$ when $k\rightarrow\infty.$ Thus $\mathcal{K}:L^2(\Omega^{e})\rightarrow L^2(\Omega^{e})$ is a compact operator. Since $\mathcal{K}$ is symmetric and bounded on $L^2(\Omega^{e})$, the self-adjointness of $\mathcal{L}$ follows from the Kato-Rellich theorem.

\end{proof}

Since $\mathcal{K}$ is a compact operator, it has discrete spectra and zero being the only possible accumulation point. 
Moreover, the operator $\mathcal{L}$ is bounded from below. Thus, $n^{-}(\mathcal{L})=n^{-}(I+\mathcal{K})<\infty.$  
Since
\begin{align*}
\langle L^ef,f\rangle&=\langle\left(-\Delta_5\right)^{\frac{1}{2}}\mathcal{L}\left(-\Delta_5\right)^{\frac{1}{2}}f,f\rangle\\
&=\langle\mathcal{L}\left(-\Delta_5\right)^{\frac{1}{2}}f,\left(-\Delta_5\right)^{\frac{1}{2}}f\rangle,
\end{align*}
together with \eqref{equivalent}, one has $n^{-}(\mathbb{L})=n^{-}(L)=n^{-}(L^e)=n^{-}(\mathcal{L})<\infty.$ Thus $\mathbb{L}$ satisfies (\textbf{G3}).
Since $\dim\ker \mathbb{L}<\infty$ and $A=I_{3\times3}$, the assumption (\textbf{G4}) for $\mathbb{L}$ (or for $A$) is automatically satisfied by the Remark \ref{remark-G4}.
Therefore, we verify the assumptions (\textbf{G1})-(\textbf{G4}) for the operators $\mathbb{L},\mathbb{B},\mathbb{B}^{'},A$ given by $\eqref{2.7}-\eqref{2.10}$. 

\begin{lemma}\label{embedd}
Under the assumptions of Theorem \ref{linearstability}, we have
$$H^1_{mag}(\Omega)\hookrightarrow\hookrightarrow L^2_{|\mathfrak{F}(r)|}(\Omega),$$
$$H^r_{mag}(R_1,R_2)\hookrightarrow\hookrightarrow L^2_{r|\mathfrak{F}(r)|}(R_1,R_2).$$
\end{lemma}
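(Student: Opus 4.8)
The plan is to establish both statements as compact embeddings into weighted $L^{2}$ spaces by the same mechanism: split the underlying domain into three regions — a neighborhood of the inner singular point $r=0$ (present when $R_{1}=0$), a neighborhood of the outer end $r=\infty$ (present when $R_{2}=\infty$), and a compact middle region bounded away from both — and control each region separately. On the middle region the weight is bounded and the classical Rellich embedding applies; at the singular endpoints ordinary compactness fails, and one instead uses a Hardy-type inequality near $r=0$ together with the decay of the weight relative to the norm weight near $r=\infty$. The quantitative inputs are exactly the endpoint rates already recorded in the proof of Lemma \ref{le2.1}, namely $\mathfrak{F}(r)=O(r^{\beta-4})$ as $r\to0$ and $\mathfrak{F}(r)=O(r^{-4-2\alpha})$ as $r\to\infty$, which come from the hypotheses of Theorem \ref{linearstability}. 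In each case it suffices to show that every sequence converging weakly to $0$ in the source space converges to $0$ in the target norm.

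For the first embedding $H^{1}_{mag}(\Omega)\hookrightarrow\hookrightarrow L^{2}_{|\mathfrak{F}(r)|}(\Omega)$, I would pass to the five-dimensional extension used in \eqref{2.11}--\eqref{equivalent}. Writing $\tilde{\varphi}=\varphi^{e}/R^{2}$, the identity underlying \eqref{quadratic-equivalent} gives $\|\varphi\|_{H^{1}_{mag}(\Omega)}^{2}=\|\nabla_{5}\tilde{\varphi}\|_{L^{2}(\Omega^{e})}^{2}$, while a change of variables in the weighted term yields $\int_{\Omega}|\mathfrak{F}(r)||\varphi|^{2}\,d_{3}x=c\int_{\Omega^{e}}R^{2}|\mathfrak{F}(R)||\tilde{\varphi}|^{2}\,d_{5}x$ for a fixed constant $c>0$. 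Thus the desired embedding is equivalent to the compactness of $\dot{H}^{1}(\Omega^{e})\hookrightarrow L^{2}_{R^{2}|\mathfrak{F}(R)|}(\Omega^{e})$, which is essentially the content of Lemma \ref{le2.1}: the estimate there for $\|\mathcal{K}(u_{k}-u_{\infty})\|_{L^{2}(\Omega^{e})}$ uses only $|\mathfrak{F}|$, so the same argument shows that $(-\Delta_{5})^{-\frac12}R^{2}|\mathfrak{F}|(-\Delta_{5})^{-\frac12}$ is a compact operator on $L^{2}(\Omega^{e})$. Taking $v_{k}=(-\Delta_{5})^{-\frac12}u_{k}$ with $u_{k}\rightharpoonup0$ in $L^{2}(\Omega^{e})$ and pairing this compact operator with $u_{k}$ converts operator compactness into $\int_{\Omega^{e}}R^{2}|\mathfrak{F}||v_{k}|^{2}\,d_{5}x\to0$, i.e. the compact embedding.

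For the second embedding $H^{r}_{mag}(R_{1},R_{2})\hookrightarrow\hookrightarrow L^{2}_{r|\mathfrak{F}(r)|}(R_{1},R_{2})$, I would argue directly in the one radial variable. When $0<R_{1}<R_{2}<\infty$ the weight $r|\mathfrak{F}(r)|$ is bounded and $H^{r}_{mag}$ is equivalent to $H^{1}(R_{1},R_{2})$, so Rellich gives the claim immediately. In the singular case $0=R_{1}<R_{2}=\infty$, let $h_{k}\rightharpoonup0$ in $H^{r}_{mag}$ and split $\int_{R_{1}}^{R_{2}}r|\mathfrak{F}||h_{k}|^{2}\,dr$ over $(0,\delta)$, $[\delta,M]$ and $(M,\infty)$. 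Near infinity $r|\mathfrak{F}(r)|=O(r^{-3-2\alpha})\lesssim M^{-2-2\alpha}\,r^{-1}$ for $r>M$, so that piece is bounded by a constant times $M^{-2-2\alpha}\|h_{k}\|_{H^{r}_{mag}}^{2}$, uniformly small in $k$. Near $0$, from $h_{k}(0)=0$ and Cauchy--Schwarz one has $|h_{k}(r)|^{2}\le\frac{r^{2}}{2}\int_{0}^{r}\frac1s|h_{k}'|^{2}\,ds$; combined with $r|\mathfrak{F}(r)|=O(r^{\beta-3})$ this gives $\int_{0}^{\delta}r|\mathfrak{F}||h_{k}|^{2}\,dr\lesssim\delta^{\beta}\|h_{k}\|_{H^{r}_{mag}}^{2}$, again uniformly small since $\beta>0$. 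On the compact middle interval the weight is bounded and $h_{k}\to0$ strongly in $L^{2}(\delta,M)$ by Rellich. Fixing $\delta$ small and $M$ large first, and then letting $k\to\infty$, yields $\int_{R_{1}}^{R_{2}}r|\mathfrak{F}||h_{k}|^{2}\,dr\to0$.

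The main obstacle is precisely the control at the two singular endpoints, where the weight blows up at $r=0$ and the domain is unbounded at $r=\infty$, so that Rellich compactness is unavailable. The crux is to render these endpoint contributions uniformly small in the sequence index, which forces the use of a Hardy-type inequality — one-dimensional near $r=0$ in the second embedding, and the $\mathbb{R}^{5}$ Hardy inequality implicit in the proof of Lemma \ref{le2.1} for the first — together with the exact growth exponents $\beta>0$ and $\alpha>0$ furnished by the hypotheses of Theorem \ref{linearstability}. Once the endpoints are tamed uniformly, a standard subsequence argument upgrades the middle-region Rellich convergence to convergence of the full weighted norm, which is the asserted compactness.
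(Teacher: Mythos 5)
Your proposal is correct, but it is organized in essentially the opposite way from the paper, in both halves. The paper proves the two-dimensional embedding $H^{1}_{mag}(\Omega)\hookrightarrow\hookrightarrow L^{2}_{|\mathfrak{F}(r)|}(\Omega)$ \emph{directly} in the $(r,z)$ variables: it takes $\delta_i=\phi_i-\phi_\infty$ with $\phi_i\rightharpoonup\phi_\infty$, splits the weighted norm with cutoffs $\chi_{2/M}$, $\chi_M$ into the three regions $r<2/M$, $1/M<r<M$, $r>M/2$, uses the decay rates of $\mathfrak{F}$ to extract the factors $M^{-\beta}$ and $M^{-2\alpha}$, controls the endpoint pieces by Hardy's inequality $\int |\delta_i|^2 r^{-3}dr\lesssim\int \frac{1}{r}|\partial_r\delta_i|^2dr$ (the $H^1_{mag}(\Omega)$ norm has no zeroth-order term, so Hardy is genuinely needed there), and applies Rellich on the middle annulus; the one-dimensional embedding $H^{r}_{mag}\hookrightarrow\hookrightarrow L^{2}_{r|\mathfrak{F}(r)|}$ is then obtained as an immediate corollary by viewing radial functions as $z$-independent elements of $H^{1}_{mag}(\Omega)$. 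You instead deduce the two-dimensional statement from Lemma \ref{le2.1} via the $5$-dimensional isometry of \eqref{2.11}--\eqref{equivalent} — correctly observing that the estimates in that lemma use only $|\mathfrak{F}|$, so $(-\Delta_5)^{-\frac12}R^2|\mathfrak{F}|(-\Delta_5)^{-\frac12}$ is compact, and that pairing with $u_k=(-\Delta_5)^{\frac12}v_k\rightharpoonup0$ turns operator compactness into the compact embedding — and you prove the one-dimensional statement by hand with your own three-region splitting, using the pointwise Hardy bound $|h_k(r)|^2\le\frac{r^2}{2}\int_0^r\frac{1}{s}|h_k'|^2ds$ near $r=0$ and, near infinity, the zeroth-order term $\int\frac{1}{r}|h_k|^2dr$ already present in the $H^r_{mag}$ norm (a simplification unavailable in the 2D case). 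Both routes rest on the same analytic inputs (endpoint decay of $\mathfrak{F}$, Hardy, Rellich); yours is more economical in that it does not re-run in $(r,z)$ coordinates the cutoff argument that Lemma \ref{le2.1} already carries out in $5$ dimensions, at the cost of proving the radial case separately, whereas the paper's direct 2D proof keeps everything in the original variables and gets the radial case in one line. The only point worth stating explicitly in your first half is that the $5$-dimensional reduction proves compactness of the full embedding $\dot H^1(\Omega^e)\hookrightarrow L^2_{R^2|\mathfrak{F}|}(\Omega^e)$, which then restricts to the closed subspace of functions of the form $\varphi^e/R^2$ arising from $H^1_{mag}(\Omega)$; this is automatic but should be said.
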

\begin{proof}
For the case $0<R_1<R_2<\infty$, the proof of this lemma is clear.
So we consider the case $R_1=0$ and $R_2=\infty$.
For any bounded sequence $\{\phi_i(r,z)\}$ $(i=1,2,...)$ in $H^1_{mag}(\Omega)$,
there exists a subsequence, still denoted by $\{\phi_i\}$, satisfying $\phi_i\rightharpoonup\phi_\infty$ in $H^1_{mag}(\Omega)$.
Denote $\delta_i(r,z)=\phi_i-\phi_\infty$. Let $\chi(r)\in C_0^{\infty}((R_1,R_2);[0,1])$ be a cut-off function satisfying
$\chi(r)=1$ if $r\leq\frac{1}{2}$ and $\chi(r)=0$ if $r\geq1.$
For any $M>2$, define $\chi_M=\chi(\frac{r}{M}).$
By using $\mathfrak{F}(R)=O(R^{-4-2\alpha})$ when $R\rightarrow\infty$ and $\mathfrak{F}(R)=O(R^{-4+\beta})$ when $R\rightarrow0$, for some constants $\alpha>0,\beta>0$ in Theorem \ref{linearstability}, and Hardy's inequality,
we obtain
\begin{align*}
\|\delta_i\|_{L^2_{|\mathfrak{F}(r)|}(\Omega)}
&=\|\delta_i\chi_{\frac{2}{M}}\|_{L^2_{|\mathfrak{F}(r)|}(\Omega)}
+\|\delta_i\chi_M(1-\chi_{\frac{2}{M}})\|_{L^2_{|\mathfrak{F}(r)|}(\Omega)}+\|\delta_i(1-\chi_M)\|_{L^2_{|\mathfrak{F}(r)|}(\Omega)}\nonumber\\
&=2\pi
\int_0^{2\pi}\left(\int_{0}^{\frac{2}{M}}|\delta_i|^2 r|\mathfrak{F}(r)|\chi_{\frac{2}{M}}dr+\int_{\frac{1}{M}}^{M}|\delta_i|^2 r|\mathfrak{F}(r)|\chi_M(1-\chi_{\frac{2}{M}})dr\right)dz\\
&\quad+2\pi
\int_0^{2\pi}\int_{\frac{M}{2}}^\infty|\delta_i|^2 r|\mathfrak{F}(r)|(1-\chi_M)drdz\\
&\leq \frac{C}{M^{\beta}}\int_0^{2\pi}\int_{0}^{\frac{2}{M}}|\delta_i|^2 r^{-3}drdz+\int_0^{2\pi}\int_{\frac{1}{M}}^{M}|\delta_i|^2 r|\mathfrak{F}(r)|drdz\nonumber\\
&\quad+\frac{C}{M^{2\alpha}}\int_0^{2\pi}\int_{\frac{M}{2}}^\infty|\delta_i|^2 r^{-3}drdz\nonumber\\
& \leq \frac{C}{M^{\beta}}\int_0^{2\pi}\int_{0}^{\frac{2}{M}}\frac{1}{r}|\partial_r\delta_i|^2drdz+\int_0^{2\pi}\int_{\frac{1}{M}}^{M}|\delta_i|^2 r|\mathfrak{F}(r)|drdz\nonumber\\
&\quad+\frac{C}{M^{2\alpha}}\int_0^{2\pi}\int_{\frac{M}{2}}^\infty\frac{1}{r}|\partial_r\delta_i|^2drdz.
\end{align*}
Since $M$ can be arbitrarily large, by the compactness of $H_0^1(\{\frac{1}{M}<r<M\})\hookrightarrow L^2(\Omega)$, we obtain
 $\phi_i\rightarrow\phi_\infty$ in $L^2_{|\mathfrak{F}(r)|}(\Omega)$. Thus we have $H^1_{mag}(\Omega)\hookrightarrow\hookrightarrow L^2_{|\mathfrak{F}(r)|}(\Omega),$ which implies
$H^r_{mag}(R_1,R_2)\hookrightarrow\hookrightarrow L^2_{r|\mathfrak{F}(r)|}(R_1,R_2).$

\end{proof}

\begin{lemma}\label{ragab}
It holds that
\begin{align*}
\overline{R(\mathbb{B})}
&= \left\{\begin{pmatrix} g_1\\ g_2\end{pmatrix}(r,z)\in X|\int_{0}^{2\pi}g_i(r,z)dz=0,\quad (i=1,2)\right\}.
\end{align*}
\end{lemma}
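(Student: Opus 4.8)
The plan is to identify $\overline{R(\mathbb{B})}$ through the kernel of the dual operator rather than by constructing preimages. Since $\mathbb{B}$ is densely defined and closed with $\mathbb{B}''=\mathbb{B}$ by assumption (\textbf{G1}), the standard duality relation $R(\mathbb{B})^{\perp}=\ker\mathbb{B}'$ gives
\[
\overline{R(\mathbb{B})}=\{g\in X\mid\langle f,g\rangle=0\ \text{for all }f\in\ker\mathbb{B}'\}.
\]
Thus the entire computation reduces to determining $\ker\mathbb{B}'\subset X^{*}$, after which taking the (pre-)annihilator produces the stated integral condition.

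To compute $\ker\mathbb{B}'$ I would solve $\mathbb{B}'(f_1,f_2)=0$ componentwise using \eqref{2.8}. The second component $-\epsilon b(r)\partial_z f_1=0$ forces $\partial_z f_1=0$ (as $b$ has a positive lower bound), so $f_1=f_1(r)$. The first component requires $\mathcal{P}\bigl(-2\omega f_1+r\epsilon b f_2,\,0\bigr)=0$, which by the definition \eqref{projectP} of the Leray projection means the vector $\bigl(-2\omega f_1+r\epsilon b f_2,\,0\bigr)$ equals a gradient $\nabla q$ with $q\in\dot{H}^1(\Omega)$. Reading off the two slots gives $\partial_z q=0$, so $q=q(r)$ is a single-valued function of $r$ alone (no multivaluedness arises since the potential $q$ is furnished directly by the definition of $\mathcal{P}$), whence $-2\omega(r)f_1(r)+r\epsilon b(r)f_2=q'(r)$ is $z$-independent; because $r\epsilon b(r)>0$ depends only on $r$, this yields $f_2=f_2(r)$. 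The converse is immediate: for any $z$-independent pair $(f_1(r),f_2(r))$ the primitive $q$ of $-2\omega f_1+r\epsilon b f_2$ is a function of $r$, so $\bigl(-2\omega f_1+r\epsilon b f_2,0\bigr)=\nabla q$ and $\mathbb{B}'(f_1,f_2)=0$. Hence $\ker\mathbb{B}'$ consists exactly of the $z$-independent (zero $z$-frequency) pairs.

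It then remains to pass to the annihilator. For $g=(g_1,g_2)\in X$ and $z$-independent $f=(f_1,f_2)$, the $X$--$X^{*}$ pairing splits as $\int_\Omega g_1 f_1\,dx+\langle g_2,f_2\rangle_{Z,Z^{*}}$. Writing $\int_\Omega g_1 f_1\,dx=\int_{R_1}^{R_2}\bigl(\int_0^{2\pi}g_1\,dz\bigr)f_1(r)\,r\,dr$, and observing that the $Z$--$Z^{*}$ pairing against a function of $r$ alone likewise tests only the $z$-average of $g_2$, the requirement $\langle f,g\rangle=0$ for all such $f$ is equivalent to $\int_0^{2\pi}g_i(r,z)\,dz=0$ for $i=1,2$. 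This is precisely the asserted description of $\overline{R(\mathbb{B})}$, and it agrees with the Fourier-series form recorded earlier, since the vanishing $z$-average is exactly the absence of the $k=0$ mode.

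As an independent confirmation of the inclusion $\subseteq$, I note it can be read off directly: any $(g_1,g_2)=\mathbb{B}(\vec u,B_\theta)$ satisfies $g_2=\epsilon r b\,u_r$ and $g_1=-2\omega u_r+\epsilon b\,\partial_z B_\theta$, and the constraint $\vec u\in L^2_{\mathrm{div}}$ with $u_r(R_1,z)=u_r(R_2,z)=0$ forces $\partial_r\bigl(r\int_0^{2\pi}u_r\,dz\bigr)=0$, hence $\int_0^{2\pi}u_r\,dz=0$; together with the periodicity of $B_\theta$ this gives $\int_0^{2\pi}g_i\,dz=0$. I expect the main obstacle to be the careful handling of the Leray-projection condition $\mathcal{P}(\cdot)=0$---in particular justifying that its kernel is exactly the $z$-independent gradients, with no harmonic or cohomological fields intervening on the (non-simply-connected) domain---and the correct interpretation of the $Z$--$Z^{*}$ duality pairing, especially in the unbounded cases $R_1=0$ or $R_2=\infty$, where the integrability and decay of the potential $q$ must be controlled using the asymptotic assumptions of Theorem \ref{linearstability}.
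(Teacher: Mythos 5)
Your proposal is correct and follows essentially the same route as the paper: both identify $\overline{R(\mathbb{B})}$ as the pre-annihilator of $\ker\mathbb{B}'$, solve $\mathbb{B}'(f_1,f_2)=0$ componentwise (the second slot forcing $f_1=f_1(r)$, the Leray-projection condition forcing the gradient potential and hence $f_2$ to be $z$-independent), and then translate orthogonality to $z$-independent pairs into the vanishing $z$-average condition. Your extra touches—making the converse inclusion for the kernel explicit and the direct check of $\subseteq$—are harmless refinements of the same argument.
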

\begin{proof}
For $(f_1,f_2)^{T}(r,z)\in\text{Ker} \mathbb{B}^{'},$ we have
\begin{equation*}
\mathbb{B}'
\begin{pmatrix} f_1(r,z)\\
f_2(r,z)
\end{pmatrix}=
\begin{pmatrix} \mathcal{P}\begin{pmatrix}-2\omega(r)f_1(r,z)+r\epsilon b(r)f_2(r,z)\\
0\end{pmatrix}\\
- \epsilon b(r)\partial_zf_1(r,z)
\end{pmatrix}=0.
\end{equation*}
This implies that $f_1(r,z) $ is a function independent of $z$, i.e., $f_1(r,z)=f_1(r)$.
We deduce from $\begin{pmatrix}-2\omega(r)f_1(r,z)+r\epsilon b(r)f_2(r,z)\\
0\end{pmatrix}=\nabla g(r,z)$ that $\partial_zg(r,z)=0$. This implies that $g(r,z) $ is a function independent of $z$, i.e., $g(r,z)=g(r)$. Hence, $f_2(r,z)=[g'(r)+2\omega(r)f_1(r)]/r \epsilon b(r)$ is also a function independent of $z$, thus we have $f_2(r,z)=f_2(r)$.
It follows that
$$\text{Ker} \mathbb{B}^{'}=\left\{(f_1, f_2)^{T}(r)\in X^*\right\}.$$ Consequently, we have
\begin{align*}
\overline{R(\mathbb{B})}&=(\text{Ker} \mathbb{B}^{'})^{\perp}
\\&= \left\{\begin{pmatrix} g_1\\ g_2\end{pmatrix}(r,z)\in X\big|\int_{R_1}^{R_2}\int_{0}^{2\pi}(f_1(r)g_{1}r+f_2(r)g_{2}r) drdz= 0,\forall \begin{pmatrix} f_1\\ f_2\end{pmatrix}(r)\in X^*\right\}\\
&= \left\{\begin{pmatrix} g_1\\ g_2\end{pmatrix}\in X\big|\int_{0}^{2\pi}g_i(r,z)dz=0,\quad i=1,2\right\}.
\end{align*}
\end{proof}

\begin{proof}[Proof of Theorem \ref{linearstability}]
By Theorem \ref{T:abstract}, the solutions of \eqref{hamiltonian-RS} are spectrally stable if and only if $\mathbb{L}|_{\overline{R(\mathbb{B})}}\geq0.$ Moreover, $n^{-}(\mathbb{L}|_{\overline{R(\mathbb{B})}})$ equals to the number of unstable modes.
Then we need to calculate $n^{-}(\mathbb{L}|_{\overline{R(\mathbb{B})}})$.
Define the operator $\mathbb{L}_k: H^r_{mag}\rightarrow (H^r_{mag})^*$ by
\begin{align}\label{defLk}
\mathbb{L}_k =-\frac{1}{r}\partial_r\left(\frac{1}{r}\partial_r \cdot \right)+\frac{k^2}{r^2}+\mathfrak{F}(r)
\end{align}
for any $k\in \mathbb{Z}.$
Then for any $(g_1,g_2)^{T}\in\overline{R(\mathbb{B})},$ by the Fourier expansion, taking
\begin{equation*}
g_2=\sum_{0\neq k\in \mathbb{Z}}e^{ikz}\tilde{\varphi}_k(r),
\end{equation*}
it follows that
\begin{align*}
\left\langle\mathbb{L}\begin{pmatrix} g_1\\
g_2
\end{pmatrix},
\begin{pmatrix} g_1\\
g_2
\end{pmatrix}\right\rangle\nonumber
&=2\pi\int_{R_1}^{R_2}\int_{0}^{2\pi}\left(|g_1|^2r+\frac{1}{r}|\partial_z g_2|^2+\frac{1}{r}|\partial_r g_2|^2+\mathfrak{F}(r)r|g_2|^2\right)drdz\nonumber\\
&= (2\pi)^2\sum\limits_{0\neq k\in \mathbb{Z}}\langle \mathbb{L}_k \tilde{\varphi}_k,\tilde{\varphi}_k\rangle+2\pi\int_{R_1}^{R_2}\int_{0}^{2\pi}|g_1|^2rdrdz.
\end{align*}
Thus, we have
$n^-(\mathbb{L}|_{\overline{R(\mathbb{B})}})
=2 \sum\limits_{k=1}^\infty n^-(\mathbb{L}_k)$.
Actually $n^-(\mathbb{L}|_{\overline{R(\mathbb{B})}})=2\sum\limits_{k=1}^N n^-(\mathbb{L}_k)$ for some $N<\infty$, since $n^-(\mathbb{L}_k)=0$ for $k$ large enough
by the definition of $\mathbb{L}_k$ in \eqref{defLk}.

 Since $\mathbb{L}_{-k}=\mathbb{L}_k>\mathbb{L}_1$ for $k>1$,  we deduce that if $\mathbb{L}_1\geq0$,
 then $n^-(\mathbb{L}|_{\overline{R(\mathbb{B})}})=0$.
On the other hand, if $n^-(\mathbb{L}_1)>0$, then $n^-(\mathbb{L}|_{\overline{R(\mathbb{B})}})\geq 2n^-(\mathbb{L}_1)>0$. Thus $\mathbb{L}_1\geq0$ is the sharp criterion for linear stability.
\end{proof}

\begin{proof}[Proof of Corollary \ref{BHcorolly}]
From Theorem \ref{linearstability}, a necessary and sufficient condition for the linearly stability is $\mathbb{L}_1\geq0$,
 that is, for any $\tilde{\varphi}(r)\in H^r_{mag}$,
\begin{align}\label{L1hh}
\int_{R_1}^{R_2}\frac{\partial_{r}(\omega^2)}{  b^2} |\tilde{\varphi}(r)|^2dr
\geq-\epsilon^2\int_{R_1}^{R_2}\left[\frac{1}{r}|\tilde{\varphi}(r)|^2+\frac{1}{r}|\partial_r \tilde{\varphi}(r)|^2+ \left(\frac{\partial_r^2b}{rb}-\frac{\partial_rb}{r^2b}\right)|\tilde{\varphi}(r)|^2\right]dr.
\end{align}
First, for the case i) $0<R_1<R_2<\infty$,
if $\partial_r(\omega^2)>0$ and $\epsilon^2$ is small enough, then
\begin{align*}\int_{R_1}^{R_2}\frac{\partial_{r}(\omega^2)}{  b^2} |\tilde{\varphi}(r)|^2dr\geq-\epsilon^2\int_{R_1}^{R_2}\left(\frac{\partial_r^2b}{rb}-\frac{\partial_rb}{r^2b}\right)|\tilde{\varphi}(r)|^2dr, \forall  \tilde{\varphi}(r)\in H^r_{mag},
\end{align*}
which implies \eqref{L1hh}.
Second, for the case ii) $R_1=0$ and $R_2=\infty, $
let $\chi(r)\in C_0^{\infty}((R_1,R_2);[0,1])$ be a cut-off function satisfying
$\chi(r)=1$ if $r\leq\frac{1}{2}$ and $\chi(r)=0$ if $r\geq1.$
For any $M>2$, we
define $\chi_M=\chi(\frac{r}{M}).$
By the same argument as in the proof of Lemma \ref{embedd},
one has
\begin{align*}
\epsilon^2\int_{0}^{\frac{2}{M}}\left|\frac{\partial_r^2b}{rb}-\frac{\partial_rb}{r^2b}\right||\tilde{\varphi}(r)\chi_{M}\chi_{\frac{2}{M}}|^2dr\leq
\frac{\epsilon^2C}{M^\beta}\int_{0}^{\frac{2}{M}}\frac{1}{r} |\partial_r\tilde{\varphi}(r)|^2dr, \forall  \tilde{\varphi}(r)\in H^r_{mag}
\end{align*}
and
\begin{align*}
\epsilon^2\int_{\frac{M}{2}}^{\infty}\left|\frac{\partial_r^2b}{rb}-\frac{\partial_rb}{r^2b}\right||\tilde{\varphi}(r)(1-\chi_{M})|^2dr\leq
\frac{\epsilon^2C}{M^{2\alpha}}\int_{\frac{M}{2}}^{\infty}\frac{1}{r} |\partial_r\tilde{\varphi}(r)|^2dr, \forall  \tilde{\varphi}(r)\in H^r_{mag},
\end{align*}
for large $M,$ where $C$ is a constant 
 independent of $M$.
For sufficiently large $M>\max\{3,C^{\frac{1}{\beta}}+1,C^{\frac{1}{2\alpha}}+1\}$ and $\epsilon^2$ small enough such that
$$\epsilon^2<\inf\limits_{r\in[\frac{1}{M},M]}\frac{\partial_{r}(\omega^2)}{  b^2} \left|\frac{\partial_r^2b}{rb}-\frac{\partial_rb}{r^2b}\right|^{-1},$$
we have
\begin{align*}
&\int_{R_1}^{R_2}\frac{\partial_{r}(\omega^2)}{  b^2} |\tilde{\varphi}(r)|^2dr
+\epsilon^2\int_{R_1}^{R_2}\left[\frac{1}{r}|\tilde{\varphi}(r)|^2+\frac{1}{r}|\partial_r \tilde{\varphi}(r)|^2+ \left(\frac{\partial_r^2b}{rb}-\frac{\partial_rb}{r^2b}\right)|\tilde{\varphi}(r)|^2\right]dr\\
&\geq \int_{\frac{1}{M}}^{M}\frac{\partial_{r}(\omega^2)}{  b^2} |\tilde{\varphi}(r)|^2dr+\int_{\frac{1}{M}}^{M}\epsilon^2\left(\frac{\partial_r^2b}{rb}-\frac{\partial_rb}{r^2b}\right)|\tilde{\varphi}(r)|^2dr
\nonumber\\
&\quad+\epsilon^2\left(1-\frac{C}{M^\beta}\right)\int_{0}^{\frac{2}{M}}\frac{1}{r}|\partial_r \tilde{\varphi}(r)|^2dr+\epsilon^2\left(1-\frac{C}{M^{2\alpha}}\right)\int_{\frac{2}{M}}^{\infty}\frac{1}{r}|\partial_r \tilde{\varphi}(r)|^2dr\geq0.
\end{align*}
Thus, if $\partial_r(\omega^2)>0$, and $\epsilon^2$ is small enough, \eqref{L1hh} holds, i.e., the steady state $(v_0,H_0)(x)$ in \eqref{steady1} is linearly stable to axisymmetric perturbations. For the case $R_1=0$, $R_2<\infty $ or $R_1>0$, $R_2=\infty $, the proof is similar, so we omit it.

If $\partial_r(\omega^2)|_{r=r_0}<0$ for some $r_0\in (R_1,R_2)$,
then $\partial_r(\omega^2)<0$ for all $r\in (r_0-\varepsilon,r_0+\varepsilon)$ for some $\varepsilon>0$ small enough.
Let $\xi(r)\in C_0^\infty(r_0-\varepsilon,r_0+\varepsilon)$ and $\xi(r_0)=1$, if $$\epsilon^2<-\frac{\int_{r_0-\varepsilon}^{r_0+\varepsilon}
\frac{\partial_r(\omega^2)}{ b(r)^2}|\xi|^2dr}{\int_{r_0-\varepsilon}^{r_0+\varepsilon}\left(\frac{1}{r}| \xi'|^2+\frac{1}{r}|\xi|^2+ \left|\frac{\partial_r^2b(r)}{rb(r)}|\xi|^2-\frac{\partial_rb(r)}{r^2b(r)}|\xi|^2\right|\right)dr},$$ then we have
\begin{align*}
\langle\mathbb{L}_1 \xi,\xi\rangle
&=\int_{r_0-\varepsilon}^{r_0+\varepsilon}\left[\frac{1}{r}| \xi'|^2+\frac{1}{r}|\xi|^2+
\frac{\partial_r(\omega^2)}{ \epsilon^2b(r)^2}|\xi|^2+ \left(\frac{\partial_r^2b(r)}{rb(r)}|\xi|^2-\frac{\partial_rb(r)}{r^2b(r)}|\xi|^2\right)\right]dr\\
&\leq\int_{r_0-\varepsilon}^{r_0+\varepsilon}\left(\frac{1}{r}| \xi'|^2+\frac{1}{r}|\xi|^2+
\frac{\partial_r(\omega^2)}{ \epsilon^2b(r)^2}|\xi|^2+ \left|\frac{\partial_r^2b(r)}{rb(r)}|\xi|^2-\frac{\partial_rb(r)}{r^2b(r)}|\xi|^2\right|\right)dr<0.
\end{align*}
Thus, $n^{-}(\mathbb{L}_1)>0$ and there is linear instability.
\end{proof}

\begin{proof}[Proof of Corollary \ref{BHcorolly2}]
i) If
$
\widehat{L}=-\frac{1}{r}\partial_r\left(\frac{1}{r}\partial_r\cdot\right)
+\frac{1}{r^2}+\left(\frac{\partial_r^2b}{r^2b}-\frac{\partial_rb}{r^3b}\right)>0,
$
then by \eqref{L1hh}, a necessary and sufficient condition for the linearly stability is that for any $\tilde{\varphi}(r)\in H^r_{mag}$,
\begin{align*}
\epsilon^2&\geq-\frac{\int_{R_1}^{R_2}\frac{\partial_{r}(\omega^2)}{  b^2} |\tilde{\varphi}(r)|^2dr}{\langle \widehat{L}\tilde{\varphi}, \tilde{\varphi} \rangle},
\end{align*}
i.e.,
\begin{align*}
\epsilon^2&\geq \epsilon^2_{min}=\max\left\{\sup\limits_{\tilde{\varphi}(r)\in H^r_{mag}}\frac{-\int_{R_1}^{R_2}\frac{\partial_r(\omega^2)}{b^2}|\tilde{\varphi}|^2dr}{\langle \widehat{L}\tilde{\varphi}, \tilde{\varphi} \rangle},0\right\}.
\end{align*}
In the remaining of the proof, we check
the supremum  in the definition of $\epsilon^2_{min}$ can be reached.

If $\partial_r(\omega^2)\geq0$ for $r\in(R_1,R_2)$, then $\epsilon_{min}=0$. Considering the case there exists $r_0\in (R_1,R_2)$ such that $\partial_r(\omega^2)|_{r=r_0}<0,$
then
\begin{align*}
\epsilon^2_{min}&=\sup\limits_{\tilde{\varphi}(r)\in H^r_{mag}}\frac{-\int_{R_1}^{R_2}\frac{\partial_r(\omega^2)}{b^2}|\tilde{\varphi}|^2dr}{\langle \widehat{L}\tilde{\varphi}, \tilde{\varphi} \rangle}>0,
\end{align*}
i.e.,
\begin{align}\label{maxmizer}
1=\sup\limits_{\tilde{\varphi}(r)\in H^r_{mag}}\frac{-\int_{R_1}^{R_2}\frac{\partial_r(\omega^2)}{b^2}|\tilde{\varphi}|^2dr
-\epsilon^2_{min}\int_{R_1}^{R_2}\left(\frac{\partial_r^2b}{rb}-\frac{\partial_rb}{r^2b}\right)|\tilde{\varphi}|^2dr}
{\epsilon^2_{min}\int_{R_1}^{R_2}\frac{1}{r}|\partial_r\tilde{\varphi}|^2+\frac{1}{r}|\tilde{\varphi}|^2dr},
\end{align}
equivalently,
\begin{align*}
\sup\limits_{\tilde{\varphi}(r)\in H^r_{mag}}\int_{R_1}^{R_2}-\frac{\partial_r(\omega^2)}{b^2}|\tilde{\varphi}|^2dr
-\epsilon^2_{min}\int_{R_1}^{R_2}\left(\frac{\partial_r^2b}{rb}-\frac{\partial_rb}{r^2b}\right)|\tilde{\varphi}|^2dr=1,
\end{align*}
with the constraint
\begin{align*}
\epsilon^2_{min}\int_{R_1}^{R_2}\frac{1}{r}|\partial_r\tilde{\varphi}|^2+\frac{1}{r}|\tilde{\varphi}|^2dr=1.
\end{align*}
 Then we can take a maximizing sequence $\tilde{\varphi}_n(r)\in H^r_{mag}$ satisfying
\begin{align*}
\epsilon^2_{min}\int_{R_1}^{R_2}\left(\frac{1}{r}|\partial_r\tilde{\varphi}_n|^2+\frac{1}{r}|\tilde{\varphi}_n|^2\right)dr=1
\end{align*}
and
\begin{align*}
\lim_{n\rightarrow\infty}\int_{R_1}^{R_2}-\frac{\partial_r(\omega^2)}{b^2}|\tilde{\varphi}_n|^2dr
-\epsilon^2_{min}\int_{R_1}^{R_2}\left(\frac{\partial_r^2b}{rb}-\frac{\partial_rb}{r^2b}\right)|\tilde{\varphi}_n|^2dr
=1.
\end{align*}
Note that there exists $\varphi^{*}(r)\in H^r_{mag}$ and a subsequence of $\tilde{\varphi}_n\in H^r_{mag}$, for convenience, we still denote this subsequence by $\tilde{\varphi}_n$, such that $\tilde{\varphi}_n\rightharpoonup \varphi^{*}$ in $H^r_{mag}$.
By the compactness of $H^r_{mag}(R_1,R_2)\hookrightarrow L^2_{r|\mathfrak{F}(r)|}(R_1,R_2)$,
it follows that
\begin{align*}
\int_{R_1}^{R_2}-\frac{\partial_r(\omega^2)}{b^2}|\varphi^{*}|^2dr
-\epsilon^2_{min}\int_{R_1}^{R_2}\left(\frac{\partial_r^2b}{rb}-\frac{\partial_rb}{r^2b}\right)|\varphi^{*}|^2dr=1.
\end{align*}
Next, 
\begin{align*}
&\epsilon^2_{min}\int_{R_1}^{R_2}\left(\frac{1}{r}|\partial_r\varphi^{*}|^2+\frac{1}{r}|\varphi^{*}|^2\right)dr
\leq\liminf_{n}\epsilon^2_{min}\int_{R_1}^{R_2}\left(\frac{1}{r}|\partial_r\tilde{\varphi}_n|^2+\frac{1}{r}|\tilde{\varphi}_n|^2\right)
dr=1.
\end{align*}
From the definition of $\epsilon^2_{min}$, we get
\begin{align*}
\frac{-\int_{R_1}^{R_2}\frac{\partial_r(\omega^2)}{b^2}|\varphi^{*}|^2dr
-\epsilon^2_{min}\int_{R_1}^{R_2}\left(\frac{\partial_r^2b}{rb}-\frac{\partial_rb}{r^2b}\right)|\varphi^{*}|^2dr}
{\epsilon^2_{min}\int_{R_1}^{R_2}\left(\frac{1}{r}|\partial_r\varphi^{*}|^2+\frac{1}{r}|\varphi^{*}|^2\right)dr}\leq1,
\end{align*}
which implies $\epsilon^2_{min}\int_{R_1}^{R_2}\left(\frac{1}{r}|\partial_r\varphi^{*}|^2+\frac{1}{r}|\varphi^{*}|^2\right)dr\geq1.$ Thus, $$\epsilon^2_{min}\int_{R_1}^{R_2}\left(\frac{1}{r}|\partial_r\varphi^{*}|^2+\frac{1}{r}|\varphi^{*}|^2\right)dr=1$$ and $\varphi^{*}$ is the maximizer of \eqref{maxmizer}. Therefore, we complete the proof of Corollary \ref{BHcorolly2} i).

ii)
If
$
\partial_{r}(\omega^2)>0,
$
by \eqref{L1hh}, a necessary and sufficient condition for the linearly stability is,
 for any $\tilde{\varphi}(r)\in H^r_{mag}$,
\begin{align*}
\frac{1}{\epsilon^2}&\geq-\frac{\langle \widehat{L}\tilde{\varphi}, \tilde{\varphi} \rangle}{\int_{R_1}^{R_2}\frac{\partial_{r}(\omega^2)}{  b^2} |\tilde{\varphi}|^2dr},
\end{align*}
i.e.,
\begin{align*}
\frac{1}{\epsilon^2}\geq \frac{1}{\epsilon^2_{\max}}&=\sup\limits_{\tilde{\varphi}(r)\in H^r_{mag}}\frac{-\langle \widehat{L}\tilde{\varphi}, \tilde{\varphi} \rangle}{\int_{R_1}^{R_2}\frac{\partial_r(\omega^2)}{b^2}|\tilde{\varphi}|^2dr},
\end{align*}
and hence, a necessary and sufficient condition for the linearly stability is $\epsilon^2\leq\epsilon^2_{\max}$.
We can check
the supremum in the definition of $\epsilon^2_{\max}$ can be reached similar to the proof of Corollary \ref{BHcorolly2} i).

iii)
By Corollary \ref{BHcorolly}, if $n^{-}(\widehat{L})>0$ and $\partial_r(\omega^2)$ changes sign, then for $\epsilon$ sufficiently small,
the steady state $(v_0,H_0)(x)$ in \eqref{steady1} is spectrally unstable to axisymmetric perturbations.
Let $h$ be a negative direction of $\widehat{L}$,
then
\begin{align*}
\langle \mathbb{L}_1h, h \rangle&=\langle \widehat{L}h, h\rangle+\frac{1}{\epsilon^2}\int_{R_1}^{R_2}\frac{\partial_r(\omega^2)}{ b^2r}h^2  rdr
\rightarrow \langle \widehat{L}h, h\rangle<0,
\end{align*}
as $\epsilon\rightarrow\infty.$ Therefore, the steady state $(v_0,H_0)(x)$ in \eqref{steady1} is spectrally unstable to axisymmetric perturbations
if $\epsilon$ is sufficiently large.
\end{proof}

\begin{proof}[Proof of Corollary \ref{BHcorolly3}]
i) It is clear from \eqref{L1hh}.

ii)
To denote the dependence of $\mathbb{L}_{1}$ on $\epsilon$, we use the
notation
\[
\mathbb{L}_{1}^{\epsilon}:=-\frac{1}{r}\partial_{r}\left(  \frac{1}{r}%
\partial_{r}\cdot\right)  +\frac{1}{r^{2}}+\frac{\partial_{r}(\omega^{2}%
)}{\epsilon^{2}b(r)^{2}r}+\left(  \frac{\partial_{r}^{2}b(r)}{r^{2}b(r)}%
-\frac{\partial_{r}b(r)}{r^{3}b(r)}\right)  .
\]
Then we have
\[
\lim_{\epsilon\rightarrow\infty}\|\mathbb{L}_{1}^{\epsilon}-\widehat{L}\|=0.
\]
Consider the eigenvalue problem
\begin{align}\label{eigenvalue}
-\frac{1}{r}\partial_{r}\left(  \frac{1}{r}\partial_{r}\tilde{\varphi}_{\epsilon}\right)
+\frac{1}{r^{2}}\tilde{\varphi}_{\epsilon}+\lambda_{\epsilon}\left[  \frac{\partial
_{r}(\omega^{2})}{\epsilon^{2}b(r)^{2}r}+\left(  \frac{\partial_{r}^{2}%
b(r)}{r^{2}b(r)}-\frac{\partial_{r}b(r)}{r^{3}b(r)}\right)  \right]
\tilde{\varphi}_{\epsilon}=0.
\end{align}
Here, $\lambda_{\epsilon}=1$ ($>1$ or $<1$ respectively)\ corresponds to a
zero (positive or negative respectively) direction of the quadratic form
$\left\langle \mathbb{L}_{1}^{\epsilon}\cdot,\cdot\right\rangle $.
Then we have
\[
\mathbb{L}_{1}^{\epsilon}\tilde{\varphi}_{\epsilon}+(\lambda_{\epsilon}-1)\left[  \frac{\partial
_{r}(\omega^{2})}{\epsilon^{2}b(r)^{2}r}+\left(  \frac{\partial_{r}^{2}%
b(r)}{r^{2}b(r)}-\frac{\partial_{r}b(r)}{r^{3}b(r)}\right)  \right]
\tilde{\varphi}_{\epsilon}=0,
\]
which implies
\begin{align*}
\widehat{L}\tilde{\varphi}_{\epsilon}+(\mathbb{L}_{1}^{\epsilon}-\widehat{L})\tilde{\varphi}_{\epsilon}+(\lambda_{\epsilon}-1)\left[  \frac{\partial
_{r}(\omega^{2})}{\epsilon^{2}b(r)^{2}r}+\left(  \frac{\partial_{r}^{2}%
b(r)}{r^{2}b(r)}-\frac{\partial_{r}b(r)}{r^{3}b(r)}\right)  \right]
\tilde{\varphi}_{\epsilon}=0.
\end{align*}
Now we choose
$\tilde{\varphi}_{\epsilon}$ by the normalization $$\int_{R_1}^{R_2}\left[\left|  \frac{\partial
_{r}(\omega^{2})}{\epsilon^{2}b(r)^{2}}\right|+\left|\left(  \frac{\partial_{r}^{2}%
b(r)}{rb(r)}-\frac{\partial_{r}b(r)}{r^{2}b(r)}\right)  \right|\right]|\tilde{\varphi}_{\epsilon}|^2dr=1.$$ Then it follows from \eqref{eigenvalue} that
\begin{align*}
\|\tilde{\varphi}_{\epsilon}\|_{H^r_{mag}}^2=-\lambda_{\epsilon}\int_{R_1}^{R_2} \left[\frac{\partial
_{r}(\omega^{2})}{\epsilon^{2}b(r)^{2}}+\left(  \frac{\partial_{r}^{2}
b(r)}{rb(r)}-\frac{\partial_{r}b(r)}{r^{2}b(r)}\right)\right]  |\tilde{\varphi}_{\epsilon}|^2dr\leq C,
\end{align*}
and consequently there exists $\tilde{\varphi}_{\infty}\in H^r_{mag}$ such that
\begin{align*}
\tilde{\varphi}_{\epsilon}\rightharpoonup \tilde{\varphi}_{\infty} \quad \text{in} \quad{H^r_{mag}} \quad \text{as }\epsilon\rightarrow\infty.
\end{align*}
By the compactness of $H^r_{mag}\hookrightarrow L^2_{\left|  \frac{\partial
_{r}(\omega^{2})}{\epsilon^{2}b(r)^{2}}\right|+\left|\left(  \frac{\partial_{r}^{2}%
b(r)}{rb(r)}-\frac{\partial_{r}b(r)}{r^{2}b(r)}\right)  \right|}$, we have
\begin{align*}\int_{R_1}^{R_2}\left[\left|  \frac{\partial
_{r}(\omega^{2})}{\epsilon^{2}b(r)^{2}}\right|+\left|\left(  \frac{\partial_{r}^{2}%
b(r)}{rb(r)}-\frac{\partial_{r}b(r)}{r^{2}b(r)}\right)  \right|\right]|\tilde{\varphi}_{\infty}|^2dr=1.
\end{align*}
So $\tilde{\varphi}_{\infty}\neq0.$ By taking the limit of \eqref{eigenvalue} as $\epsilon\rightarrow\infty,$ $\widehat{L}\tilde{\varphi}_{\infty}=0$. Thus $\tilde{\varphi}_{\infty}=c\hat{\varphi}$, with $c=\left(\int_{R_1}^{R_2}\left[\left|  \frac{\partial
_{r}(\omega^{2})}{\epsilon^{2}b(r)^{2}}\right|+\left|\left(  \frac{\partial_{r}^{2}%
b(r)}{rb(r)}-\frac{\partial_{r}b(r)}{r^{2}b(r)}\right)  \right|\right]|\hat{\varphi}|^2dr\right)^{-\frac{1}{2}}.$
Multiplying \eqref{eigenvalue} by $\epsilon^2r\hat{\varphi}$, and integrating on $(R_1,R_2)$, one has
\begin{align*}
&\epsilon^2\langle\widehat{L}\tilde{\varphi}_{\epsilon}, \hat{\varphi}\rangle+\int_{R_1}^{R_2}\lambda_{\epsilon}\frac{\partial
_{r}(\omega^{2})}{b(r)^{2}r}\tilde{\varphi}_{\epsilon} r\hat{\varphi}dr
+\int_{R_1}^{R_2}\epsilon^2(\lambda_{\epsilon}-1)\left(  \frac{\partial_{r}^{2}%
b(r)}{r^{2}b(r)}-\frac{\partial_{r}b(r)}{r^{3}b(r)}\right)
\tilde{\varphi}_{\epsilon} r\hat{\varphi} dr=0.
\end{align*}
Letting $\epsilon\rightarrow\infty,$ by $\langle\widehat{L}\hat{\varphi},\hat{\varphi} \rangle=0$ and $\lim\limits_{\epsilon\rightarrow\infty}\lambda_{\epsilon}=1,$ it follows that
\begin{align*}
\lim_{\epsilon\rightarrow\infty}\epsilon^2(\lambda_{\epsilon}-1)=\frac{\int_{R_{1}}^{R_{2}}\frac{\partial_{r}(\omega^{2})}{b^{2}}%
|\hat{\varphi}|^{2}dr}{-\int_{R_{1}}^{R_{2}}\left(  \frac{\partial_{r}^{2}%
b(r)}{rb(r)}-\frac{\partial_{r}b(r)}{r^{2}b(r)}\right)|\hat{\varphi}|^{2}dr}=\frac{\int_{R_{1}}^{R_{2}}\frac{\partial_{r}(\omega^{2})}{b^{2}}%
|\hat{\varphi}|^{2}dr}{\int_{R_{1}}^{R_{2}}\left(\frac{1}{r}|\partial_{r}\hat{\varphi}
|^{2}+\frac{1}{r}|\hat{\varphi}|^{2}\right)dr}.
\end{align*}
Consequently, when $\epsilon$ is large enough, the kernel of $\widehat{L}$ is
perturbed to be a positive (negative) direction of $\mathbb{L}_{1}^{\epsilon}$
when $\int_{R_{1}}^{R_{2}}\frac{\partial_{r}(\omega^{2})}{b^{2}}|\hat{\varphi}|^{2}dr>0$ ($<0$).
Therefore, we complete the proof of Corollary \ref{BHcorolly3} ii).

\end{proof}

\section{Nonlinear stability}\label{nonlinearstability}

In the section, we show the conditional nonlinear stability under the linear stability condition $\mathbb{L}_1>0$.
We impose the system \eqref{nonlinearMHDrz} with the initial and boundary conditions
\begin{align*}
\begin{cases}
(v_r, v_\theta,v_z, \psi, H_\theta)(0,r,z)=(v_r^0, v_\theta^0,v_z^0, \psi^0, H_\theta^0)(r,z),\\
 v_r(t,R_1,z)=0,\psi(t,R_1,z)=0 ,\\
 v_r(t,R_2,z)=0,\psi(t,R_2,z)=-\epsilon\int_{R_1}^{R_2}rH_z(0,r,z)dr,\quad  \mbox{if}\quad R_2<\infty ;\\
(v_r,v_\theta,v_z, \psi, H_\theta)(t,r,z)\to (0,r\omega(r),0,\psi_0,0) \quad \mbox{as} \quad r\to \infty, \quad \mbox{if}\quad R_2=\infty;\\
(v_r,v_\theta,v_z, \psi, H_\theta)(t,r,z)=(v_r, v_\theta,v_z, \psi, H_\theta)(t,r,z+2\pi).
\end{cases}
\end{align*}
Without loss of generality, we can take the initial magnetic potential $\psi^0(r,z)$ satisfying
\begin{align*}
\begin{cases}
\psi^0(R_1,z)=0, \\
 \psi^0(R_2,z)=-\epsilon\int_{R_1}^{R_2}sb(s)ds=\psi_0(R_2).
\end{cases}
\end{align*}
Indeed, for general perturbations with $\psi^0(r,z)$ sufficiently close to $\psi_0(r)$ in $H^{1}_{mag},$ we can find $\tilde{\epsilon}$ near $\epsilon$ such that $\psi^0(R_2,z)=-\tilde{\epsilon}\int_{R_1}^{R_2}sb(s)ds=\tilde{\psi}_0(R_2)$, where $\tilde{\psi}_0(r)=-\tilde{\epsilon}\int_{R_1}^{r}sb(s)ds$, and $\mathbb{L}_1^{\tilde{\epsilon}}>0$ by the continuity of $\mathbb{L}_1^{\epsilon}$ to $\epsilon$. Then by the nonlinear stability of $(v_0,0,\tilde{\psi}_0)$ and the triangle inequality, we have
\begin{align*}
d(t)&=d((v,H_\theta,\psi)(t,x),(v_0,0,\psi_0)(x))\\
&\leq d((v,H_\theta,\psi)(t,x),(v_0,0,\tilde{\psi_0})(x))+d((v_0,0,\psi_0)(x),(v_0,0,\tilde{\psi}_0)(x))\lesssim d(0).
\end{align*}

 To this end, we introduce
the definition of a weak solution to MHD equations \eqref{EPM}.

\begin{definition}\label{weaksolution}
We call $(v, H) \in C((0 , T ); L^2)$ is an axisymmetric weak
solution of the ideal MHD system \eqref{EPM} with initial-boundary conditions \eqref{1.2'}-\eqref{bdc}, if for any $t \in (0 , T )$ the vector fields
$(v, H)(t, \cdot)$ are divergence free in the sense of distributions, \eqref{EPM} holds in the sense of distributions, i.e.,
\begin{align*}
\int_0^t\int_{\Omega}&\left[\partial_t \xi \cdot v+\nabla\xi:(v\otimes v+H\otimes H)\right] dxdt
=\int_{\Omega} \xi(t,x) \cdot v(t,x) dx-\int_{\Omega} \xi(0,x) \cdot v(0,x) dx,\\
\int_0^t\int_{\Omega}&\left[\partial_t \xi \cdot H+\nabla\xi:(v\otimes H+H\otimes v)\right] dxdt=\int_{\Omega} \xi(t,x) \cdot H(t,x) dx-\int_{\Omega} \xi(0,x) \cdot H(0,x) dx
\end{align*}
hold for all divergence free test functions $\xi \in C_0^{\infty} ((0 , T ) \times \Omega)$.
\end{definition}

Here, the existence of global weak solutions of the MHD equations is assumed in the proof of nonlinear stability.
In the following, we will consider the nonlinear stability for MHD equations in the bounded domain $\Omega=\{x\in\mathbb{R}^3|R_1\leq r=\sqrt{x_1^2+x_2^2}< R_2 , x_3\in\mathbb{T}_{2\pi}\}$. Without loss of generality, we take $R_1=0$ and $R_2=1$. For unbounded case, we give the details of nonlinear stability in the appendix.

We define the total energy by
\begin{align}\label{energy}
E(t)&=E(v,H_\theta,\psi)
=\frac{1}{2}\int_{\Omega}\left(|v_r|^2+|v_z|^2+|H_\theta|^2+|v_\theta|^2+\frac{1}{r^2}|\partial_z\psi|^2+\frac{1}{r^2}|\partial_r\psi|^2\right)
 dx.
\end{align}
It is conserved for strong solutions of \eqref{nonlinearMHDrz}. We assume $E(t)$ is non-increasing for the weak solution.

\begin{lemma} 
\label{fpsicon}
For any $\Phi(\tau),f(\tau)\in C^1(\mathbb{R})$, 
the functionals $\int_{\Omega}f(\psi) dx$
and $\int_{\Omega}rv_{\theta} \Phi(\psi) dx$ are conserved for strong solutions of MHD equations \eqref{nonlinearMHDrz}.
\end{lemma}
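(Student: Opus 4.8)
The plan is to exploit the Hamiltonian (Poisson-bracket) structure recorded in the introduction rather than to differentiate the raw system \eqref{nonlinearMHDrz} term by term. Introducing the in-plane stream function $\varpi$ determined by $v_r=\frac{\partial_z\varpi}{r}$, $v_z=-\frac{\partial_r\varpi}{r}$ (which exists because $\frac1r\partial_r(rv_r)+\partial_z v_z=0$), and the bracket $\{f,g\}=\frac1r(\partial_r f\,\partial_z g-\partial_z f\,\partial_r g)$, the fourth and second scalar equations of \eqref{nonlinearMHDrz} become
\[
\partial_t\psi=\{\varpi,\psi\},\qquad \partial_t(rv_\theta)=\{\varpi,rv_\theta\}+\{rH_\theta,\psi\}.
\]
First I would verify these two identities directly by substituting $\frac1r\partial_r\varpi=-v_z$ and $\frac1r\partial_z\varpi=v_r$ into the brackets; this is the routine algebraic bookkeeping matching the six scalar equations to the two-line Hamiltonian form (and uses $\partial_t(rv_\theta)=r\partial_t v_\theta$).

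The engine of both conservation laws is a single integration lemma: for any smooth axisymmetric $g$,
\[
\int_\Omega\{\varpi,g\}\,dx=2\pi\int_{R_1}^{R_2}\!\!\int_{\mathbb{T}_{2\pi}}\big(\partial_r\varpi\,\partial_z g-\partial_z\varpi\,\partial_r g\big)\,dz\,dr=0,
\]
where the factor $r$ in $dx=2\pi r\,dz\,dr$ cancels the $\frac1r$ in the bracket and the Jacobian integrand is rewritten as $\partial_r(\varpi\partial_z g)-\partial_z(\varpi\partial_r g)$. The $\partial_z$-term integrates to zero by periodicity in $z$, while the $\partial_r$-term leaves $\int_{\mathbb{T}_{2\pi}}[\varpi\partial_z g]_{r=R_1}^{r=R_2}dz$, which vanishes because $v_r|_{\partial\Omega}=0$ forces $\partial_z\varpi=0$, i.e.\ $\varpi$ is constant in $z$ on each boundary circle, so the integral collapses to $\varpi$ times a full $z$-derivative. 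For $\int_\Omega f(\psi)\,dx$ I would then differentiate in $t$, use the chain rule $f'(\psi)\partial_t\psi=f'(\psi)\{\varpi,\psi\}=\{\varpi,f(\psi)\}$, and conclude by the lemma. (Conceptually this is just that $\psi$ is transported by the meridian flow $(v_r,v_z)$, which preserves the weighted measure $r\,dr\,dz$.)

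For $\int_\Omega rv_\theta\Phi(\psi)\,dx$ I would differentiate and substitute the Hamiltonian form, producing $\int_\Omega\big(\Phi(\psi)\{\varpi,rv_\theta\}+rv_\theta\Phi'(\psi)\{\varpi,\psi\}+\Phi(\psi)\{rH_\theta,\psi\}\big)\,dx$. The first two terms recombine, via the Leibniz and chain rules for the bracket (it is a derivation in each slot), into $\int_\Omega\{\varpi,rv_\theta\Phi(\psi)\}\,dx$, which is zero by the integration lemma. The remaining term $\int_\Omega\Phi(\psi)\{rH_\theta,\psi\}\,dx$ I would handle by integrating $\partial_r(rH_\theta)$ and $\partial_z(rH_\theta)$ by parts in $r$ and $z$ respectively: the $\Phi'(\psi)\,\partial_r\psi\,\partial_z\psi$ contributions and the mixed second derivatives $\Phi(\psi)\partial_r\partial_z\psi$ cancel pairwise, leaving only the boundary term $\int_{\mathbb{T}_{2\pi}}[\Phi(\psi)\,rH_\theta\,\partial_z\psi]_{r=R_1}^{r=R_2}dz$.

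The hard (or at least the only nonroutine) part is the boundary bookkeeping, and it rests on two boundary facts. First, $v_r|_{\partial\Omega}=0$ gives $\partial_z\varpi=0$ on $r=R_1,R_2$, which is what makes the integration lemma's boundary term vanish. Second, the magnetic potential is \emph{constant in $z$ on each boundary circle}: by \eqref{timeevolution}, $\psi(t,R_1,z)=0$ and $\psi(t,R_2,z)$ is a $z$-independent constant, so $\partial_z\psi=0$ on $\partial\Omega$, which is exactly what annihilates the leftover $\Phi(\psi)\,rH_\theta\,\partial_z\psi$ term. Throughout I work with strong solutions so that the chain rule, the integrations by parts, and the exchange of $\partial_t$ with $\int_\Omega$ are all legitimate; in the unbounded case $R_2=\infty$ the far-field decay treated in the appendix ensures the analogous contributions at infinity vanish.
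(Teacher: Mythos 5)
Your proposal is correct and follows essentially the same route as the paper: both rewrite the relevant equations in the Poisson-bracket form $\partial_t\psi=\{\varpi,\psi\}$, $\partial_t(rv_\theta)=\{\varpi,rv_\theta\}+\{rH_\theta,\psi\}$ and then kill each term by the chain/Leibniz rules for the bracket together with integration by parts. The only difference is organizational: the paper invokes the cyclic property of the bracket and the pointwise identity $\{\Phi(\psi),\psi\}=0$ while leaving the boundary terms implicit, whereas you make the same cancellations explicit and justify the vanishing boundary contributions via $v_r|_{\partial\Omega}=0$ (so $\partial_z\varpi=0$ there) and $\partial_z\psi=0$ on $\partial\Omega$ — a welcome extra level of rigor, but not a different proof.
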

\begin{proof}
Define the Poisson bracket $\{f,g\}=\frac{1}{r}\left(\partial_rf\partial_zg-\partial_z f\partial_rg\right)$ and the function $\varpi$ satisfies $-\frac{\partial_r\varpi}{r}=v_z$,
 $\frac{\partial_z\varpi}{r}=v_r$, then $\eqref{nonlinearMHDrz}_2$ and $\eqref{nonlinearMHDrz}_4$ can be rewritten as
\begin{align}\label{PoissonBracket}
\begin{cases}
\partial_t(rv_\theta)=\{\varpi,rv_\theta\}+\{rH_\theta,\psi\},\\
\partial_t \psi=\{\varpi,\psi\}.
\end{cases}
\end{align}
 By \eqref{PoissonBracket}, we have
\begin{align*}
\frac{d}{d t}\int_{\Omega}f(\psi) dx&=\int_{\Omega}f^{'}(\psi)\partial_t\psi dx
=2\pi\int_{0}^{2\pi}\int_{0}^{1}f^{'}(\psi)\{\varpi,\psi\}rdrdz\\
&=2\pi\int_{0}^{2\pi}\int_{0}^{1}f^{'}(\psi)\left(\partial_r\varpi\partial_z\psi-\partial_z \varpi\partial_r\psi\right)drdz
=0.
\end{align*}
 Similarly, by \eqref{PoissonBracket}, we obtain
\begin{align*}
&\frac{d}{d t}\int_{\Omega}rv_\theta\Phi(\psi) dx
=\int_{\Omega}\partial_t(rv_\theta)\Phi(\psi)+rv_\theta\Phi^{'}(\psi)\partial_t\psi dx\\
&=2\pi\int_{0}^{2\pi}\int_{0}^{1}\left[\Phi(\psi)\{\varpi,rv_\theta\}+\Phi(\psi)\{rH_\theta,\psi\}+rv_\theta\Phi^{'}(\psi)\{\varpi,\psi\}\right] rdrdz\\
&=2\pi\int_{0}^{2\pi}\int_{0}^{1}\left[\Phi(\psi)\{\varpi,rv_\theta\}+rH_\theta\{\Phi(\psi),\psi\}+rv_\theta\{\varpi,\Phi(\psi)\}\right] rdrdz\\
&=2\pi\int_{0}^{2\pi}\int_{0}^{1}\left[\Phi(\psi)\{\varpi,rv_\theta\}+\Phi(\psi)\{rv_\theta,\varpi\}\right] rdrdz=0.
\end{align*}

\end{proof}

Since the steady solution $\psi_0$ satisfies $\frac{-\partial_r\psi_0}{r}= \epsilon b(r),$
we define the Casimir-energy functional \cite{MR794110}
\begin{align*}
E_{c}(v,H_\theta,\psi)=E(v,H_\theta,\psi)+\int_{\Omega}rv_\theta\Phi(\psi)dx+\int_{\Omega}f(\psi) dx,
\end{align*}
where
\begin{align}\label{extension1}
\Phi(\tau)=\begin{cases}
 -\omega(h^{-1}(\tau)),\quad  \tau\in (-\epsilon\int_0^1sb(s)ds,0) ,\\
\text{some $C_0^\infty$ extension function, }  \tau\notin (-\epsilon\int_0^1sb(s)ds,0)
\end{cases}
\end{align}
and
\begin{align}\label{extension2}
f^{'}(\tau)=\begin{cases}
 \frac{-\epsilon b^{'}(h^{-1}(\tau))}{h^{-1}(\tau)}+\frac{(h^{-1}(\tau))^2}{2}\partial_\tau(\omega^2(h^{-1}(\tau))),  \tau\in (-\epsilon\int_0^1sb(s)ds,0) ,\\
\text{some $C_0^\infty$ extension function, }  \tau\notin (-\epsilon\int_0^1sb(s)ds,0),
\end{cases}
\end{align}
with  $\tau=h(r)=-\epsilon\int_0^rsb(s)ds$.
Here, $\Phi(\tau)$ and $f^{'}(\tau)$ are well-defined in $\mathbb{R}$.

A natural method to prove nonlinear stability is to do the Taylor expansion of $E_c(v,H_\theta,\psi)$ at $(v_0,0,\psi_0)$. For this, we first show that $E_c(v,H_\theta,\psi)$ is $C^2$.
\begin{lemma}\label{Fderivate}
1) For any $f'(\tau)\in C_0^2(\mathbb{R})$  and satisfies \eqref{order5bound}-\eqref{order7bound}, then $F_1(\psi):=\int_\Omega f(\psi)dx \in C^2(H^1_{mag})$.\\
2) For any $\Phi(\tau)\in C_0^3(\mathbb{R})$ satisfies \eqref{order1bound}-\eqref{order4bound}, then $F_2(v_\theta,\psi):=\int_\Omega rv_\theta \Phi(\psi)dx \in C^2(L^2\times H^1_{mag})$.
\end{lemma}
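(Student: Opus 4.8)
The plan is to identify the Gateaux derivatives of $F_1$ and $F_2$ explicitly, to verify that they are bounded (multi)linear forms on the respective energy spaces, and then to upgrade Gateaux to Fr\'echet differentiability by showing that the derivative maps depend continuously on the base point; continuity of the second Gateaux differential is the standard criterion yielding $C^2$ regularity. For $F_1$ the candidate derivatives are
\[
DF_1(\psi)[\phi]=\int_\Omega f'(\psi)\phi\,dx,\qquad D^2F_1(\psi)[\phi_1,\phi_2]=\int_\Omega f''(\psi)\phi_1\phi_2\,dx,
\]
while for $F_2$ the first differential is
\[
DF_2(v_\theta,\psi)[\delta v_\theta,\delta\psi]=\int_\Omega \big(r\,\delta v_\theta\,\Phi(\psi)+r v_\theta \Phi'(\psi)\delta\psi\big)\,dx,
\]
and the second differential is built analogously from $\Phi'$ and $\Phi''$. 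Since $f'\in C_0^2$ and $\Phi\in C_0^3$ have compact support, the functions $f',f'',f'''$ and $\Phi,\Phi',\Phi''$ are all bounded, so the genuine work is to control the spatial integrals against the cylindrical weight $r\,dr\,dz$.

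The key analytic input is the five-dimensional reformulation already used in \eqref{2.11}--\eqref{quadratic-equivalent}: writing $\phi^e/R^2$ for the extension of $\phi$, one has $\|\phi\|_{H^1_{mag}}^2=\int_{\Omega^e}|\nabla_5(\phi^e/R^2)|^2\,d_5x$, and the Sobolev embedding $\dot H^1(\Omega^e)\hookrightarrow L^{10/3}(\Omega^e)$ then controls $\phi^e/R^2$ in $L^{10/3}$, which translates into a weighted $L^{10/3}$ bound for $\phi$ on $\Omega$. Combining this embedding with H\"older's inequality and the growth bounds \eqref{order5bound}--\eqref{order7bound} on $f',f'',f'''$ (respectively \eqref{order1bound}--\eqref{order4bound} on $\Phi,\Phi',\Phi''$), I would estimate each integral in the displayed forms by a product of $H^1_{mag}$-norms of the increments, establishing boundedness of $DF_1(\psi)$, $D^2F_1(\psi)$ and their $F_2$ analogues. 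The extra factor $v_\theta$ in $F_2$ is absorbed by an $L^2$ factor in a threefold H\"older splitting, which is exactly why $F_2$ is taken on $L^2\times H^1_{mag}$.

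To pass from Gateaux to Fr\'echet I would control the first- and second-order Taylor remainders, for instance
\[
F_1(\psi+\phi)-F_1(\psi)-\int_\Omega f'(\psi)\phi\,dx=\int_\Omega\!\Big(\int_0^1 (1-s)f''(\psi+s\phi)\,ds\Big)\phi^2\,dx,
\]
which is $O(\|\phi\|_{H^1_{mag}}^2)=o(\|\phi\|_{H^1_{mag}})$ by the weighted $L^2$ bound for $\phi^2$ together with boundedness of $f''$. Continuity of the maps $\psi\mapsto DF_1(\psi)$ and $\psi\mapsto D^2F_1(\psi)$ (and of the $\Phi'$, $\Phi''$ forms) then follows from continuity of the associated Nemytskii operators, via dominated convergence and the compact support of $f''$ and $\Phi''$; the same argument applied one order higher gives the $C^2$ conclusion.

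The main obstacle will be matching the growth and decay conditions \eqref{order5bound}--\eqref{order7bound} and \eqref{order1bound}--\eqref{order4bound} to the Sobolev exponent produced by the five-dimensional embedding, precisely because of the singular weight $1/r^2$ in the $H^1_{mag}$ norm and the behaviour of the profile functions as $r\to 0$. One must verify that the weighted integrands are integrable and that the H\"older exponents close; this is where the hypotheses on $f'$ and $\Phi$ enter essentially, rather than merely through their boundedness.
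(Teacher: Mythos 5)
Your overall architecture (explicit G\^ateaux differentials, integral-form Taylor remainders, continuity of the resulting Nemytskii-type maps) is exactly the structure of the paper's proof, but the key analytic ingredient you chose does not suffice, and the failure is quantitative. The five-dimensional reformulation controls $\psi^e/R^2$ only in $L^{10/3}(\Omega^e)$, because $10/3$ is the critical Sobolev exponent for $\dot H^1$ in dimension five, and no better exponent is available (interpolation with the weighted $L^2$ Hardy bound only lowers it). Now look at the estimates that part 2) actually requires. Continuity of the first differential of $F_2$ forces you to bound a term of the form
\begin{equation*}
\left\vert \int_\Omega r\,v_\theta\,\bigl(\Phi'(\psi_n)-\Phi'(\psi)\bigr)\,\tilde\psi\,dx\right\vert ,
\end{equation*}
i.e.\ $v_\theta\in L^2$ against \emph{two} $\psi$-type factors, and the second differential requires $v_\theta\in L^2$ against \emph{three} $\psi$-type factors (the term $\int_\Omega r v_\theta\,\Phi'''(\xi)(\psi_n-\psi)\tilde\psi\hat\psi\,dx$). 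With only $L^{10/3}$ integrability for each $\psi$-factor the H\"older exponents cannot close: $\tfrac12+2\cdot\tfrac{3}{10}=\tfrac{11}{10}>1$ and $\tfrac12+3\cdot\tfrac{3}{10}=\tfrac{14}{10}>1$, no matter how you distribute the polynomial weight and the compactly supported profile, since those can at best be placed in $L^\infty$ and contribute nothing to the H\"older sum. (Your scheme does close for part 1), where the worst term has three $\psi$-factors and no $v_\theta$: $3\cdot\tfrac{3}{10}<1$.) So as written your proposal cannot establish even the $C^1$ statement for $F_2$, which is precisely the functional needed for the nonlinear stability argument.

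The paper avoids this with a strictly stronger embedding obtained by staying in three dimensions: on the bounded domain ($r\le 1$) Hardy's inequality gives $\bigl\Vert \psi/r\bigr\Vert_{H^1(\Omega)}\lesssim\Vert\psi\Vert_{H^1_{mag}}$ (this is \eqref{bounded}), and then the \emph{three}-dimensional Sobolev embedding $H^1(\Omega)\hookrightarrow L^6(\Omega)$ yields $\psi/r\in L^6$. With $L^6$ all exponents close: $\tfrac12+\tfrac16+\tfrac16=\tfrac56$ for the $C^1$ terms and $\tfrac12+\tfrac16+\tfrac16+\tfrac16=1$ for the $C^2$ terms, which is exactly how the paper estimates, e.g., $\Vert v_\theta\Vert_{L^2}\Vert r^3\Phi''\Vert_{L^6}\Vert(\psi_n-\psi)/r\Vert_{L^6}\Vert\tilde\psi/r\Vert_{L^6}$ and $\Vert v_\theta\Vert_{L^2}\Vert r^4\Phi'''\Vert_{L^\infty}\Vert(\psi_n-\psi)/r\Vert_{L^6}\Vert\tilde\psi/r\Vert_{L^6}\Vert\hat\psi/r\Vert_{L^6}$, with the growth conditions \eqref{order1bound}--\eqref{order7bound} guaranteeing finiteness of the weighted profile norms. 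The conceptual point is that dividing by one power of $r$ and working in $3$D (critical exponent $6$) is better than dividing by two powers and working in $5$D (critical exponent $10/3$); if you replace your embedding step by this Hardy-plus-$3$D-Sobolev inequality, the rest of your argument goes through and essentially coincides with the paper's proof.
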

\begin{proof}
The proof of Lemma \ref{Fderivate} is given in Appendix.
\end{proof}

By Lemma \ref{Fderivate}, we have
\begin{align}\label{secondorderF1}
\notag&F_1(\psi)-F_1(\psi_0)-F_1'(\psi_0)(\psi-\psi_0)-\frac{1}{2}\langle F_1''(\psi_0)(\psi-\psi_0),(\psi-\psi_0)\rangle\\
&=\int_{\Omega}[f(\psi)-f(\psi_0)-f^{'}(\psi_0)(\psi-\psi_0)-\frac{1}{2}f^{''}(\psi_0)(\psi-\psi_0)^2] dx\\
\notag&=o(\|\psi-\psi_0\|^2_{H^1_{mag}})
\end{align}
and
\begin{align}\label{secondorderF2}
\notag&F_2(v_\theta,\psi)-F_2(\mathbf{v}_0,\psi_0)-\partial_{\psi}F_2(\mathbf{v}_0,\psi_0)(\psi-\psi_0)-\partial_{v_\theta}F_2(\mathbf{v}_0,\psi_0)(v_\theta-\mathbf{v}_0)\\
\notag&-\langle\partial_{\psi v_\theta}F_2(\mathbf{v}_0,\psi_0)(\psi-\psi_0),(v_\theta-\mathbf{v}_0)\rangle-\frac{1}{2}\langle\partial_{\psi\psi}F_2(\mathbf{v}_0,\psi_0)(\psi-\psi_0),(\psi-\psi_0)\rangle\\
\notag&=\int_{\Omega}r(v_\theta-\mathbf{v}_0)[\Phi(\psi)-\Phi(\psi_0)-\Phi^{'}(\psi_0)(\psi-\psi_0)]dx
\\&\quad+\int_{\Omega}r\mathbf{v}_0[\Phi(\psi)-\Phi(\psi_0)-\Phi^{'}(\psi_0)(\psi-\psi_0)-\frac{1}{2}\Phi^{''}(\psi_0)(\psi-\psi_0)^2]dx\\
\notag&=o(\|\psi-\psi_0\|^2_{H^1_{mag}})+o(\|v_\theta-\mathbf{v}_0\|^2_{L^2}).
\end{align}
The steady state $(v_0,0,\psi_0)$ has the following variational structure.
First, $(v_0=\omega r\mathbf{e}_\theta,0,\psi_0)$ is a critical point of $E_c$, i.e.,
\begin{align}\label{firstvariational}
&\langle \nabla E_c(v_0,0,\psi_0),(\delta v,\delta H_\theta,\delta\psi)\rangle\\
\notag&=\int_\Omega\left[(r\omega+r\Phi(\psi_0))\delta v_\theta
+\left(\omega r^2\Phi'(\psi_0)+f'(\psi_0)+\epsilon\frac{\partial_rb}{r}\right)\delta\psi\right] dx=0,
\end{align}
by \eqref{extension1}-\eqref{extension2}, where $(\delta v,\delta H_\theta,\delta\psi):=(v,H_{\theta},\psi)-(v_{0},0,\psi_{0})$.

From \eqref{extension1} and \eqref{extension2}, it follows that
\begin{align}\label{secondsteadyeqbyE1}
r\Phi^{'}(\psi_0)=\frac{\partial_r\omega}{ \epsilon  b}
\end{align}
and
\begin{align}\label{secondsteadyeqbyE2}
r\mathbf{v}_0\Phi^{''}(\psi_0)+f^{''}(\psi_0)
=
\frac{(\partial_r\omega)^2}{ \epsilon^2 b^2}
+\frac{\partial_r(\omega^2)}{ \epsilon^2 b^2r}+ \left(\frac{\partial_r^2b}{r^2b}-\frac{\partial_rb}{r^3b}\right) .
\end{align}
By \eqref{secondsteadyeqbyE1}-\eqref{secondsteadyeqbyE2}, we can write the second-order variation of $E_c(v,H_\theta,\psi)$
at $(v_0,0,\psi_0)$ as
\begin{align}\label{secondvariational}
&\langle \nabla^2E_c(v_0,0,\psi_0)(\delta v,\delta H_\theta,\delta\psi),(\delta v,\delta H_\theta,\delta\psi)\rangle
\notag\\
&=\int_{\Omega}\left(|\delta v_r|^2+|\delta v_z|^2+|\delta H_\theta|^2+|\delta v_\theta|^2+\frac{1}{r^2}|\partial_z\delta\psi|^2+\frac{1}{r^2}|\partial_r\delta\psi|^2\right)dx
\notag\\&\quad+\int_{\Omega}(\omega r^2\Phi''(\psi_0)+f''(\psi_0))|\delta\psi|^2dx +\int_{\Omega}2r\Phi'(\psi_0)\delta\psi\delta v_\theta dx
\notag\\&=\int_{\Omega}\left(|\delta v_r|^2+|\delta v_z|^2+|\delta H_\theta|^2+|\delta v_\theta|^2+\frac{1}{r^2}|\partial_z\delta\psi|^2+\frac{1}{r^2}|\partial_r\delta\psi|^2\right)dx
\notag\\
&\quad+\int_{\Omega}\left[\frac{(\partial_r\omega)^2}{ \epsilon^2 b^2}dx
+\frac{\partial_r(\omega^2)}{ \epsilon^2 b^2r}+ \left(\frac{\partial_r^2b}{r^2b}-\frac{\partial_rb}{r^3b}\right)\right]|\delta\psi|^2dx +\int_{\Omega}2\frac{\partial_r\omega}{ \epsilon  b}\delta\psi\delta v_\theta dx
\notag\\
&=\int_{\Omega}\left(|\delta v_r|^2+|\delta v_z|^2+|\delta H_\theta|^2+\frac{1}{r^2}|\partial_z\delta\psi|^2+\frac{1}{r^2}|\partial_r\delta\psi|^2\right)dx
\notag\\
&\quad+\int_{\Omega}\left[\mathfrak{F}(r) |\delta\psi|^2+\left(\delta v_\theta+\frac{\partial_r\omega}{ \epsilon b}\delta\psi\right)^2\right]
dx=\langle A \delta u_2,\delta u_2\rangle +\left\langle \mathbb{L}\delta u_1,\delta u_1\right\rangle,
\end{align}where $\delta u_{1}=\left(  \delta v_{\theta}+\frac{\partial_{r}\omega
(r)}{\epsilon b(r)}\delta\psi,\delta\psi\right)  $, $\delta u_{2}=(\delta
v_{r},\delta v_{z},\delta H_{\theta})$ are associated with the perturbation
$(v,H_{\theta},\psi)-(v_{0},0,\psi_{0})$.
Combining \eqref{secondorderF1}-\eqref{firstvariational} and \eqref{secondvariational}, it turns out that
\begin{align}\label{EcE01}
&\notag E_c(v,H_\theta,\psi)-E_c(v_0,0,\psi_0)\notag\\
&=\frac{1}{2}\langle \nabla^2E_c(v_0,0,\psi_0)(v-v_0, H_\theta,\psi-\psi_0),(v-v_0, H_\theta,\psi-\psi_0)\rangle
\notag \\&\quad+ o(\|\psi-\psi_0\|^2_{H^1_{mag}})+o(\|v_\theta-\mathbf{v}_0\|^2_{L^2})\notag\\
&=\frac{1}{2}\int_{\Omega}\left(|v_r|^2+|v_z|^2+|H_\theta|^2+\frac{1}{r^2}|\partial_z(\psi-\psi_0)|^2+\frac{1}{r^2}|\partial_r(\psi-\psi_0)|^2\right)dx
\notag\\&\quad+\frac{1}{2}\int_{\Omega}\mathfrak{F}(r) |\psi-\psi_0|^2dx+\frac{1}{2}\int_{\Omega}\left[(v_\theta-\mathbf{v}_0)+\frac{\partial_r\omega}{ \epsilon b}(\psi-\psi_0)\right]^2
dx\notag \\&\quad+ o(\|\psi-\psi_0\|^2_{H^1_{mag}})+o(\|v_\theta-\mathbf{v}_0\|^2_{L^2}).
\end{align}
Define the distance function
\begin{align}\label{distance}
d(t)&=d_1(t)+d_2(t)+d_3(t),
\end{align}
where
\begin{align*}
d_1(t)&=d_1(v_r,v_z,H_\theta)=\frac{1}{2}\int_{\Omega}(|v_r|^2+|v_z|^2+|H_\theta|^2)dx,\\
d_2(t)&=d_2(v_\theta)=\frac{1}{2}\int_{\Omega}|v_\theta-\mathbf{v}_0(r)|^2dx,\\
d_3(t)&=d_3(H_r,H_z)=\frac{1}{2}\int_{\Omega}(|H_r|^2+|H_z-\epsilon b|^2)
 dx\\
 &=d_3(\psi)=\frac{1}{2}\int_{\Omega}\left(\frac{1}{r^2}|\partial_z(\psi-\psi_0)|^2+\frac{1}{r^2}|\partial_r(\psi-\psi_0)|^2\right)
 dx.
\end{align*}

Then the upper bound of the relative
energy-Casimir functional can be controlled by the distance function.
\begin{lemma}\label{EcEleq}
 There exists some $C>0$, such that
\begin{align*}
E_c(v,H_\theta,\psi)-E_c(v_0,0,\psi_0)\leq Cd +o(d).
\end{align*}
\end{lemma}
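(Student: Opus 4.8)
The plan is to read off the conclusion directly from the expansion \eqref{EcE01}, bounding each of its pieces by $Cd+o(d)$. Recall from \eqref{distance} that $d=d_1+d_2+d_3$, and note the two identities $\|\psi-\psi_0\|_{H^{1}_{mag}}^2=2d_3$ and $\|v_\theta-\mathbf{v}_0\|_{L^2}^2=2d_2$, which are immediate from the definitions of $d_3$ and $d_2$. Three of the five pieces of \eqref{EcE01} are then handled at once. The quadratic velocity/magnetic-plus-gradient integral
\[
\frac12\int_{\Omega}\Big(|v_r|^2+|v_z|^2+|H_\theta|^2+\tfrac{1}{r^2}|\partial_z(\psi-\psi_0)|^2+\tfrac{1}{r^2}|\partial_r(\psi-\psi_0)|^2\Big)dx
\]
is \emph{exactly} $d_1+d_3\le d$. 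The potential term $\tfrac12\int_\Omega\mathfrak{F}(r)|\psi-\psi_0|^2dx$ I would bound from above by $\tfrac12\int_\Omega|\mathfrak{F}(r)||\psi-\psi_0|^2dx$ and then invoke the (continuous) embedding contained in Lemma \ref{embedd}, $H^{1}_{mag}(\Omega)\hookrightarrow L^{2}_{|\mathfrak{F}(r)|}(\Omega)$, to obtain $\lesssim\|\psi-\psi_0\|_{H^{1}_{mag}}^2=2d_3\le 2d$. Finally the two error terms in \eqref{EcE01} are $o(\|\psi-\psi_0\|_{H^{1}_{mag}}^2)+o(\|v_\theta-\mathbf{v}_0\|_{L^2}^2)=o(d_3)+o(d_2)=o(d)$.

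The only genuinely delicate piece is the completed square $\tfrac12\int_\Omega\big[(v_\theta-\mathbf{v}_0)+\tfrac{\partial_r\omega}{\epsilon b}(\psi-\psi_0)\big]^2dx$. Using $(a+b)^2\le 2a^2+2b^2$ I would split it as $\int_\Omega|v_\theta-\mathbf{v}_0|^2dx=2d_2$ plus $\int_\Omega\big(\tfrac{\partial_r\omega}{\epsilon b}\big)^2|\psi-\psi_0|^2dx$, and the main obstacle is to control this last weighted $L^2$ norm by $\|\psi-\psi_0\|_{H^{1}_{mag}}^2$. Since we have reduced to $R_1=0,\ R_2=1$ and $b$ has a positive lower bound with $\omega,b\in C^3$, the weight $(\partial_r\omega/\epsilon b)^2$ is bounded away from the origin, so only the regime $r\to0$ is at issue. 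There the nonlinear-stability hypothesis $\partial_r\omega=O(r^{\beta-3})$ with $\beta\ge4$ (Theorem \ref{nonlinearstable}) gives $\big(\tfrac{\partial_r\omega}{\epsilon b}\big)^2=O(r^{2\beta-6})\lesssim r^{-4}$ on $(0,1)$, hence $\big(\tfrac{\partial_r\omega}{\epsilon b}\big)^2 r\lesssim r^{-3}$ throughout $(0,1)$. Because $(\psi-\psi_0)(t,0,z)=0$, the weighted Hardy inequality $\int_0^1 r^{-3}|g|^2dr\lesssim\int_0^1 r^{-1}|\partial_r g|^2dr$ (the same inequality used in Lemma \ref{embedd}) applied to $g=\psi-\psi_0$ then yields $\int_\Omega\big(\tfrac{\partial_r\omega}{\epsilon b}\big)^2|\psi-\psi_0|^2dx\lesssim\|\psi-\psi_0\|_{H^{1}_{mag}}^2=2d_3\le 2d$.

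Collecting the four bounds gives $E_c(v,H_\theta,\psi)-E_c(v_0,0,\psi_0)\le Cd+o(d)$, as claimed. I expect the verification of the weighted Hardy step to be the crux: one must confirm that the growth/decay assumptions on $\partial_r\omega$ and $b$ in Theorem \ref{nonlinearstable} are precisely what make $(\partial_r\omega/\epsilon b)^2 r$ dominated by the Hardy weight $r^{-3}$ near the axis, so that the zeroth-order weighted term is absorbed into the Dirichlet energy $d_3$; the remaining estimates are routine applications of Young's inequality and the embedding of Lemma \ref{embedd}.
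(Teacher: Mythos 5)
Your proposal is correct and follows essentially the same route as the paper: both start from the Taylor expansion \eqref{EcE01}, read off $d_1+d_3$ (and $d_2$ from the completed square), and reduce everything to controlling the $\mathfrak{F}$-weighted and $(\partial_r\omega/\epsilon b)^2$-weighted $L^2$ norms of $\psi-\psi_0$ by $d_3$ via the embedding of Lemma \ref{embedd} and Hardy's inequality. The only difference is cosmetic — you apply Young's inequality to the completed square where the paper expands it and absorbs the cross term — and your explicit Hardy-inequality verification that $(\partial_r\omega/\epsilon b)^2 r\lesssim r^{-3}$ under $\beta\geq 4$ actually spells out a step the paper's proof leaves implicit in its final ``$\leq C(d_2+d_3)$'' assertion.
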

\begin{proof}
Applying \eqref{EcE01}-\eqref{distance}, and $H^1_{mag}\hookrightarrow\hookrightarrow L_{ |\mathfrak{F}(r)| }^2$, we get
\begin{align*}
&E_c(v,H_\theta,\psi)-E_c(v_0,0,\psi_0)
\\
&=d_1+d_3+\frac{1}{2}\int_{\Omega}\mathfrak{F}(r) (\psi-\psi_0)^2dx
+\frac{1}{2}\int_{\Omega}\left[(v_\theta-\mathbf{v}_0)+\frac{\partial_r\omega}{ \epsilon b}(\psi-\psi_0)\right]^2
dx+o(d)
\\
&=d_1+d_3+d_2+\frac{1}{2}\int_{\Omega}\frac{(\partial_r\omega)^2}{ \epsilon^2 b^2}(\psi-\psi_0)^2dx
+\int_{\Omega}(v_\theta-\mathbf{v}_0)\frac{\partial_r\omega}{ \epsilon b}(\psi-\psi_0)dx
\\&\quad+\frac{1}{2}\int_{\Omega} \mathfrak{F}(r) (\psi-\psi_0)^2dx+o(d)
\\&\leq d_1+C(d_3
+d_2)
+o(d)
\leq Cd +o(d),
\end{align*}
since
\begin{align*}
&\frac{1}{2}\int_{\Omega}\frac{(\partial_r\omega)^2}{ \epsilon^2 b^2}(\psi-\psi_0)^2dx
+\int_{\Omega}(v_\theta-\mathbf{v}_0)\frac{\partial_r\omega}{ \epsilon b}(\psi-\psi_0)dx
+\frac{1}{2}\int_{\Omega} \mathfrak{F}(r) (\psi-\psi_0)^2dx
\\&\leq C\left[\int_{\Omega}(v_\theta-\mathbf{v}_0)^2dx+\int_{\Omega}\frac{(\partial_r\omega)^2}{ \epsilon^2 b^2}(\psi-\psi_0)^2dx
+\int_{\Omega} \mathfrak{F}(r) (\psi-\psi_0)^2dx\right]
\\&\leq C(d_2+d_3).
\end{align*}
\end{proof}

Next, we will give the lower bound estimate of $E_c(v,H_\theta,\psi)-E_c(v_0,0,\psi_0)$ in three steps.
\begin{Step}
To begin, by the Taylor expansion of $E_c(v,H_\theta,\psi)$ at $(v_0,0,\psi_0),$ we deduce from \eqref{EcE01}-\eqref{distance} that
\begin{align}\label{casimirsminus}
&E_c(v,H_\theta,\psi)-E_c(v_0,0,\psi_0)
\notag\\&=d_1+d_3+\frac{1}{2}\int_{\Omega}\mathfrak{F}(r) (\psi-\psi_0)^2dx
\nonumber+\frac{1}{2}\int_{\Omega}\left[(v_\theta-\mathbf{v}_0)+\frac{\partial_r\omega}{ \epsilon b}(\psi-\psi_0)\right]^2
dx+o(d)\nonumber\\
&=d_1+\frac{1}{2}\langle L(\psi-\psi_0),(\psi-\psi_0)\rangle+\frac{1}{2}\int_{\Omega}\left[(v_\theta-\mathbf{v}_0)+\frac{\partial_r\omega}{ \epsilon b}(\psi-\psi_0)\right]^2dx+o(d).
\end{align}

Under the linear stability condition $\mathbb{L}_1>0,$ we have $n^{-}(\mathbb{L})=n^{-}(L)=\sum\limits_{k\in \mathbb{Z}}n^{-}(\mathbb{L}_k)=n^{-}(\mathbb{L}_0)$, since $\mathbb{L}_{-k}=\mathbb{L}_k>\mathbb{L}_1$ for $k>1$.
Thus, in order to obtain the coercivity estimate of $\langle L(\psi-\psi_0),(\psi-\psi_0)\rangle$, we need to control the projection of $\psi-\psi_0$ to negative and kernel directions of $\langle \mathbb{L}_0\cdot,\cdot\rangle$.
We
denote $K:=n^-(\mathbb{L}_0)$. By solving the following eigenvalue problem
\begin{align*}
\mathbb{L}_0 h+(\lambda-1)\mathfrak{F}(r)h=-\frac{1}{r}\partial_r\left(\frac{1}{r}\partial_r h \right)+\lambda\mathfrak{F}(r)h=0
\end{align*}
in the space $Z$, we obtain $K$ negative directions of $\langle \mathbb{L}_0\cdot,\cdot\rangle$ denoted by $\{h_i\}_{i=1}^{K}\in C^2(0,1)$ for $0\neq\lambda_i<1$.
Below, we give the detailed proof for the case when $\ker(\mathbb{L}_0)\neq \{0\}$ and the proof for the case $\ker(\mathbb{L}_0)= \{0\}$ is similar but simpler.
For the 2nd order ODE operator $\mathbb{L}_0$, it is easy to see that $\ker(\mathbb{L}_0)$ is at most one-dimensional. We denote
the kernel direction of $\langle \mathbb{L}_0\cdot,\cdot\rangle$ by $h_{0}\in C^2(0,1)$.
By the direct decomposition of $L$ on $Z$, we have the following lemma.
\begin{lemma}\label{lemma3.3}
Assume $\mathbb{L}_1>0$ with $\mathbb{L}_1$ defined by \eqref{defLk} for $k=1$.
If $0\neq\varphi$ satisfies
$
\int_\Omega \mathfrak{F}(r)h_i(r)\varphi dx=0$  for $i=0,1,...,K$,
then
\begin{align}\label{LNpositive}
\langle L\varphi,\varphi\rangle\geq\tilde{\delta}\|\varphi\|_{H^1_{mag}}^2
\end{align}
for some constant $\tilde{\delta}>0$.
\end{lemma}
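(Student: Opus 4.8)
The plan is to diagonalize the form $\langle L\cdot,\cdot\rangle$ by separating the Fourier modes in $z$ and then reducing the whole difficulty to the zero mode. Writing $\varphi(r,z)=\sum_{k\in\mathbb{Z}}e^{ikz}\varphi_k(r)$ and using that $\mathfrak{F}$ depends only on $r$, I would first record the orthogonal splitting
\[
\langle L\varphi,\varphi\rangle=(2\pi)^2\Big(\langle\mathbb{L}_0\varphi_0,\varphi_0\rangle+2\sum_{k\ge1}\langle\mathbb{L}_k\varphi_k,\varphi_k\rangle\Big),
\]
together with the matching decomposition of the target $\|\varphi\|_{H^1_{mag}}^2$ into the mode seminorms $\int_{R_1}^{R_2}(\frac1r|\partial_r\varphi_k|^2+\frac{k^2}{r}|\varphi_k|^2)\,dr$. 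For $k\ge1$ the monotonicity $\mathbb{L}_k\ge\mathbb{L}_1$, the strict positivity $\mathbb{L}_1>0$, and the compact embedding of Lemma \ref{embedd} give a spectral gap $\langle\mathbb{L}_1\psi,\psi\rangle\ge\delta_1\|\psi\|_{H^r_{mag}}^2$ for some $\delta_1>0$ (the form is ``identity plus compact'' with no nonpositive directions, so its infimum is the least eigenvalue, which is positive). A short computation then yields $\langle\mathbb{L}_k\varphi_k,\varphi_k\rangle\ge\tilde\delta\int_{R_1}^{R_2}(\frac1r|\partial_r\varphi_k|^2+\frac{k^2}{r}|\varphi_k|^2)\,dr$ for every $k\ge1$ once $\tilde\delta\le\delta_1$. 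Thus all nonzero modes are automatically coercive, and the entire constraint must be exploited only in the zero mode $\varphi_0$.

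For the zero mode I would regard $T:=-\frac1r\partial_r(\frac1r\partial_r\cdot)$ as defining, through $\langle\varphi,\phi\rangle_T:=\int_{R_1}^{R_2}\frac1r\partial_r\varphi\,\partial_r\phi\,dr$, a Hilbert inner product on $H^r_{mag}$ equivalent to its norm (by Hardy's inequality, exactly as in the proof of Lemma \ref{embedd}), and introduce the bounded symmetric form $B_0(\varphi,\phi):=\int_{R_1}^{R_2}\mathfrak{F}(r)\varphi\phi\,r\,dr$. By the compactness $H^r_{mag}\hookrightarrow\hookrightarrow L^2_{r|\mathfrak{F}|}$ the form $B_0$ is represented by a compact self-adjoint operator $\mathcal{S}$ with $B_0(\varphi,\phi)=\langle\mathcal{S}\varphi,\phi\rangle_T$, so that $\langle\mathbb{L}_0\varphi,\varphi\rangle=\langle(I+\mathcal{S})\varphi,\varphi\rangle_T$. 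The eigenfunctions of $\mathcal{S}$ are precisely the solutions $h_i$ of the generalized eigenvalue problem used to define the $\{h_i\}$: $\mathcal{S}h_i=\sigma_i h_i$ means $\mathfrak{F}h_i=\sigma_i Th_i$, i.e.\ $Th_i=\sigma_i^{-1}\mathfrak{F}h_i$, which is the problem $\mathbb{L}_0 h+(\lambda-1)\mathfrak{F}h=0$ with $\lambda_i=-\sigma_i^{-1}$, and $\langle\mathbb{L}_0 h_i,h_i\rangle=(1+\sigma_i)\|h_i\|_T^2$. Hence the nonpositive directions of $\langle\mathbb{L}_0\cdot,\cdot\rangle$ are exactly the finitely many eigenvalues $\sigma_i\le-1$ (negative for $\lambda_i\in(0,1)$, the kernel for $\lambda_0=1$), i.e.\ $\mathrm{span}\{h_0,\dots,h_K\}$, while $\sigma_j\to0$ forces the remaining spectrum of $\mathcal{S}$ to lie in $[-1+\gamma,\sigma_{\max}]$ for some $\gamma>0$.

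The key observation is that the hypotheses remove exactly these bad modes. Since $h_i$ is $z$-independent, $\int_\Omega\mathfrak{F}h_i\varphi\,dx=(2\pi)^2\,B_0(h_i,\varphi_0)=(2\pi)^2\,\sigma_i\langle h_i,\varphi_0\rangle_T$, and because $\sigma_i\le-1\ne0$ for $i=0,\dots,K$, the assumption $\int_\Omega\mathfrak{F}h_i\varphi\,dx=0$ is equivalent to $\langle h_i,\varphi_0\rangle_T=0$. Therefore $\varphi_0$ is $T$-orthogonal to every eigenspace of $\mathcal{S}$ with $\sigma_i\le-1$, and on the orthogonal complement $I+\mathcal{S}\ge\gamma I$, giving $\langle\mathbb{L}_0\varphi_0,\varphi_0\rangle\ge\gamma\|\varphi_0\|_T^2=\gamma\int_{R_1}^{R_2}\frac1r|\partial_r\varphi_0|^2\,dr$, which is precisely $\gamma$ times the $k=0$ part of the target norm (recall $H^1_{mag}(\Omega)$ carries no zeroth-order term). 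Combining this with the nonzero-mode estimates and taking $\tilde\delta=\min\{\gamma,\delta_1\}$ yields \eqref{LNpositive}. The case $\ker(\mathbb{L}_0)=\{0\}$ is identical, with $i$ running over $1,\dots,K$ only.

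I expect the main obstacle to be the bookkeeping that makes the $\mathfrak{F}$-weighted orthogonality conditions coincide with genuine orthogonality onto the nonpositive spectral subspace of $\mathbb{L}_0$; this rests on the fact that the selected eigenvalues satisfy $\sigma_i\ne0$ (equivalently $\mathfrak{F}h_i\not\equiv0$ on these modes, so that $B_0(h_i,\cdot)=0$ really forces $\langle h_i,\cdot\rangle_T=0$), and on extracting the uniform gap $\gamma>0$ from the accumulation of the spectrum of the compact operator $\mathcal{S}$ only at $0$. The remaining points—equivalence of $\langle\cdot,\cdot\rangle_T$ with the $H^r_{mag}$ norm and the compactness needed to represent $B_0$ by $\mathcal{S}$—are supplied directly by Lemma \ref{embedd} and Hardy's inequality.
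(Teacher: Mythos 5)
Your proposal is correct for the setting in which Lemma \ref{lemma3.3} actually lives (Section \ref{nonlinearstability}, where $R_1=0$, $R_2=1$), but it takes a genuinely different route from the paper's proof. The paper never separates Fourier modes inside this lemma: it works with $L$ on all of $Z$ at once, observes that your weighted conditions place $\varphi$ in the constrained subspace $\hat{Z}=\{\phi:\langle L\phi,h_i\rangle=0\ (i=1,\dots,K),\ (\phi,h_0)_{H^1_{mag}}=0\}$ (the conversion uses $Lh_i=(1-\lambda_i)\mathfrak{F}h_i$ with $\lambda_i\neq1$, the analogue of your $\sigma_i\neq0$ step), then cites the abstract Lemma A.2 of \cite{lin-zeng-hamiltonian} to reduce the coercivity estimate \eqref{LNpositive} to mere strict positivity of the form, gets nonnegativity from the index count $n^-(L|_{\hat Z})=n^-(L)-K=0$, and rules out a kernel via the direct sum $\mathrm{span}\{h_1,\dots,h_K\}\oplus\ker L\oplus\hat Z$. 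You instead diagonalize in $z$, dispose of every mode $k\geq1$ by $\mathbb{L}_k\geq\mathbb{L}_1>0$ plus the identity-plus-compact structure, and for $k=0$ construct the spectral decomposition of the compact operator $\mathcal{S}$ representing the weighted form in the $T$-inner product, so that the $K+1$ constraints become exact spectral orthogonality and the gap $\gamma$ is read off from the fact that the spectrum of $\mathcal{S}$ accumulates only at $0$. The trade-off: the paper's proof is shorter because the compactness/spectral-gap mechanism is outsourced to the cited lemma; yours is self-contained and constructive, makes explicit why exactly $K+1$ scalar constraints suffice, and correctly isolates the useful fact that the $k=0$ part of $\|\varphi\|^2_{H^1_{mag}}$ contains no zeroth-order term, so only the seminorm $\|\varphi_0\|_T^2$ needs to be recovered.

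One caveat you should state explicitly: the equivalence of $\langle\cdot,\cdot\rangle_T$ with the $H^r_{mag}$ norm holds here only because $R_2<\infty$; Hardy's inequality gives $\int|h|^2r^{-3}dr\lesssim\|h\|_T^2$, and one then uses $r^{-1}\leq r^{-3}$ on $(0,1]$. On the unbounded domain of the Appendix this equivalence fails (a bump of width $\sqrt{M}$ near $r=M$ has $\int r^{-1}|h|^2dr\sim M^{-1/2}$ but $\|h\|_T^2\sim M^{-3/2}$), so the unbounded analogue of the lemma would require running your $k=0$ argument in the completion of $H^r_{mag}$ under $\|\cdot\|_T$ --- which is harmless, since the compactness proof of Lemma \ref{embedd} only uses the $T$-seminorm, but it is not automatic from what you wrote. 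For the statement as posed, your argument stands.
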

\begin{proof}
Let
$$\hat{Z}=\{\phi\in Z| \langle L\phi, h_i\rangle=0, ( i=1,2,...K), \text{ and } ( \phi, h_0)_{H^1_{mag}}=0\}.$$
First, we note that $
\langle L\varphi,h_i\rangle=\int_\Omega (1-\lambda_i)\mathfrak{F}(r)h_i(r)\varphi dx=0$  $(i=1,...,K)$,
and $( \varphi,h_0)_{H^1_{mag}}=\int_\Omega \mathfrak{F}(r)h_0(r)\varphi dx=0$ because of $-\frac{1}{r}\partial_r\left(\frac{1}{r}\partial_r h_0 \right)+\mathfrak{F}(r)h_0=0.$
By the proof of Lemma A.2 in \cite{lin-zeng-hamiltonian}, it is sufficient to show $\langle L\varphi,\varphi\rangle>0$.
From $n^-(L|_{\hat{Z}})=n^-(L)-K=0$, we know that $L|_{\hat{Z}}\geq0$.
We show $\ker(L|_{\hat{Z}})=\{0\}$, and thus $L|_{\hat{Z}}>0$. Suppose otherwise.
Then for any $h\in \ker(L|_{\hat{Z}})$, we have $\langle Lh,g\rangle=0, \forall g\in \hat{Z}$ and consequently $\langle Lh,\tilde{g}\rangle=0, \forall \tilde{g}\in H^1_{mag}$, due to $H^1_{mag}=span\{h_1,\cdot\cdot\cdot,h_k\}\oplus \ker L\oplus\hat{Z}$. Hence, $h=ch_0$ and by $( h, h_0)_{H^1_{mag}}=0$, we have $c=0$.
\end{proof}

\end{Step}

\begin{Step}
In this step, we will separate $(\psi-\psi_0)$ to the non-positive direction $P_K(\psi-\psi_0)$ and the positive direction $(I-P_K)(\psi-\psi_0)$ of $L$,
where the operator $P_K$ is defined by \eqref{defPK}. Then we obtain the lower bound estimate of $\langle L(I-P_K)(\psi-\psi_0),(I-P_K)(\psi-\psi_0)\rangle$
and the upper bound estimate of $\langle LP_K(\psi-\psi_0),P_K(\psi-\psi_0)\rangle$.

We construct additional $K+1$ invariants
\begin{align*}
J_i(\psi)=\int_\Omega f_i(\psi)dx
\end{align*}
for $0\leq i\leq K$, where $f_i(s)\in C^3(\mathbb{R})$ and
\begin{align*}
f_i(s)=
\int_0^s h_i\left(r(\tau)\right)\mathfrak{F}(r(\tau))d\tau,\quad \text{for } 0\leq i\leq K,
\end{align*}
$r(\tau)$ is the inverse function of $\tau(r)=-\epsilon\int_0^rsb(s)ds$.
Thus
\begin{align}\label{derivateofQ}
f'_i(\psi_0)&=
 h_i\left(r\right)\mathfrak{F}(r),\quad \text{for } 0\leq i\leq K,
\end{align}
and $J_i(\psi)\in C^2(H^1_{mag})$ directly by Lemma \ref{Fderivate}.
Then, we introduce the finite-dimensional projection operator
\begin{align}\label{defPK}
P_K \varphi&=\sum_{i=0}^{K}\frac{\int_\Omega \mathfrak{F}(r)h_i(r)\varphi dx}{\int_\Omega \mathfrak{F}(r)h_i^2(r) dx} h_i(r)
\end{align}
for any $\varphi\in Z.$
By a direct computation, we have for any $\varphi\in Z$,
\begin{align*}
\langle L(I-P_K)\varphi,h_i\rangle&=\langle \mathbb{L}_0h_i,(I-P_K)\varphi\rangle
\nonumber\\&=\int_\Omega (1-\lambda_i)\mathfrak{F}(r)h_i(r)(I-P_K)\varphi dx=0
\end{align*}
for $i=1,...,K$,
and
\begin{align*}
( h_0(r),(I-P_K)\varphi)_{H^1_{mag}}=0.
\end{align*}

Using \eqref{derivateofQ}, 
$J_i(\psi)\in C^2(H^1_{mag})$ and Taylor expansion, we can now derive
\begin{align*}
J_i(\psi)-J_i(\psi_0)
&=\int_\Omega f_i'(\psi_0)(\psi-\psi_0) dx+O(d)\nonumber\\
&=\int_\Omega \mathfrak{F}(r)h_i(r)(\psi-\psi_0) dx+O(d).
\end{align*}
Hence, by $J_i(\psi(t))=J_i(\psi(0))$, one has
\begin{align}\label{Internal-product}
\notag\left|\int_\Omega \mathfrak{F}(r)h_i(r)(\psi-\psi_0) dx\right|&=|J_i(\psi(t))-J_i(\psi_0)-O(d)|\nonumber\\
\notag&\leq |J_i(\psi(t))-J_i(\psi_0)|+ O(d)\nonumber\\
\notag&= |J_i(\psi(0))-J_i(\psi_0)|+ O(d)\nonumber\\
&
\leq O(d(0)^{\frac{1}{2}})+O(d),
\end{align}
for $i=0,...,K.$

For the estimate of $\langle L\cdot,\cdot\rangle$ acting on positive directions, combing \eqref{LNpositive} and \eqref{Internal-product}, we have
\begin{align}\label{positive}
\left\langle L(I-P_K)(\psi-\psi_0),(I-P_K)(\psi-\psi_0)\right\rangle&\geq\tilde{\delta}\|(I-P_K)(\psi-\psi_0)\|_{H^1_{mag}}^2\nonumber\\
&\geq\tilde{\delta}\left[\frac{1}{2}\|\psi-\psi_0\|_{H^1_{mag}}^2-\|P_K(\psi-\psi_0)\|_{H^1_{mag}}^2\right]
\nonumber\\
&\geq\tilde{\delta}\left[\frac{1}{2}d_3-O( d(0))-O(d^2)\right],
\end{align}
since
$\|P_K(\psi-\psi_0)\|_{L_{ |\mathfrak{F}(r)| }^2}^2\lesssim\|P_K(\psi-\psi_0)\|_{H^1_{mag}}^2.$

For the estimate of $\langle L\cdot,\cdot\rangle$ acting on the projection in the kernel and negative directions, on account of \eqref{Internal-product}, we have
\begin{align}\label{Quadratic1}
&|\langle LP_K(\psi-\psi_0),P_K(\psi-\psi_0)\rangle|\nonumber\\
&\quad
=\sum_{i=0}^{K}\left(\frac{\int_\Omega \mathfrak{F}(r)h_i(r)(\psi-\psi_0) dx}{\int_\Omega \mathfrak{F}(r)h_i^2(r) dx}\right)^2 |\langle Lh_i,h_i\rangle|\nonumber\\
&\quad=\sum_{i=0}^{K}\left(\frac{\int_\Omega \mathfrak{F}(r)h_i(r)(\psi-\psi_0) dx}{\int_\Omega \mathfrak{F}(r)h_i^2(r) dx}\right)^2 |\langle (1-\lambda_i)\mathfrak{F}(r)h_i(r),h_i(r)\rangle|\nonumber\\
&\quad
\leq O( d(0))+O(d^2).
\end{align}

\end{Step}

\begin{Step} Finally, by $\langle L(I-P_K)\varphi,h_i\rangle=\int_\Omega \mathfrak{F}(r)h_i(r)(I-P_K)\varphi dx=0$, we can deduce from \eqref{casimirsminus} that
\begin{align}\label{casimirsminus*}
&E_c(v,H_\theta,\psi)-E_c(v_0,0,\psi_0)
\notag\\&=d_1+\frac{1}{2}\int_{\Omega}\left[(v_\theta-\mathbf{v}_0)+\frac{\partial_r\omega}{ \epsilon b}(\psi-\psi_0)\right]^2dx+\frac{1}{2}\langle LP_K(\psi-\psi_0),P_K(\psi-\psi_0)\rangle\nonumber\\
&\quad+\frac{1}{2}\left\langle L(I-P_K)(\psi-\psi_0),(I-P_K)(\psi-\psi_0)\right\rangle+o(d).
\end{align}
Note that
\begin{align}\label{3.18}
&\frac{1}{2}\int_{\Omega}\left[(v_\theta-\mathbf{v}_0)+\frac{\partial_r\omega}{ \epsilon b}(\psi-\psi_0)\right]^2
dx\geq\frac{1}{c}d_2-\frac{1}{c}d_3
\end{align}
for some $c>0$ large enough.

Plugging \eqref{positive}, \eqref{Quadratic1} and \eqref{3.18} into \eqref{casimirsminus*}, and by taking $c$ sufficiently large, such that $c>\frac{4}{\tilde{\delta}}$, we obtain
\begin{align*}
&E_c(v,H_\theta,\psi)-E_c(v_0,0,\psi_0)
\notag\\
&\geq d_1
+\frac{1}{c}d_2-\frac{1}{c}d_3-o(d)- O( d(0))- O(d^2(0))-O(d^2)\notag\\
&\quad+\frac{1}{2}\left\langle L(I-P_K)(\psi-\psi_0),(I-P_K)(\psi-\psi_0)\right\rangle \notag\\
&\geq d_1
+\frac{1}{c}d_2-o(d)+\left(\frac{1}{4}\tilde{\delta}-\frac{1}{c}\right)d_3- O( d(0))- O(d^2(0))-O(d^2)
\notag\\&\geq \tau d-o(d)- O( d(0)),
\end{align*}
where $\tau=\min\{1,\frac{1}{c},\frac{1}{4}\tilde{\delta}-\frac{1}{c}\}>0$.
\end{Step}

Thus, we obtain the lower bound estimate of $E_c(v,H_\theta,\psi)-E_c(v_0,0,\psi_0)$ in the following lemma.
\begin{lemma}\label{EcEgeq}
There exists a constant $\tau>0$, such that
\begin{align*}
E_c(v,H_\theta,\psi)-E_c(v_0,0,\psi_0)\geq   \tau d-o(d)- O( d(0)).
\end{align*}
\end{lemma}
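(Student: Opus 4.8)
The plan is to assemble the three steps developed above into the stated lower bound, with the entire argument pivoting on the single term in the Taylor expansion that lacks a definite sign. First I would start from \eqref{EcE01}--\eqref{casimirsminus}, which give
\begin{align*}
E_c(v,H_\theta,\psi)-E_c(v_0,0,\psi_0)
&=d_1+\frac{1}{2}\langle L(\psi-\psi_0),(\psi-\psi_0)\rangle\\
&\quad+\frac{1}{2}\int_{\Omega}\left[(v_\theta-\mathbf{v}_0)+\frac{\partial_r\omega}{\epsilon b}(\psi-\psi_0)\right]^2dx+o(d).
\end{align*}
Here $d_1$ and the perfect-square integral are manifestly nonnegative, so the only obstruction is the indefinite quadratic form $\langle L(\psi-\psi_0),(\psi-\psi_0)\rangle$. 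Under the linear stability hypothesis $\mathbb{L}_1>0$ one has $n^-(L)=n^-(\mathbb{L}_0)=K$ with $\ker(\mathbb{L}_0)$ at most one-dimensional, so the crux is to isolate the finitely many nonpositive directions $\{h_i\}_{i=0}^K$ of $L$ and show that $\psi-\psi_0$ has only a higher-order small component along them.

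Next I would control these bad directions through the Casimir invariants. Using $J_i(\psi)=\int_\Omega f_i(\psi)dx$ with $f_i'(\psi_0)=h_i(r)\mathfrak{F}(r)$, whose conservation is furnished by Lemma \ref{fpsicon}, the Taylor expansion combined with $|J_i(\psi(t))-J_i(\psi_0)|=|J_i(\psi(0))-J_i(\psi_0)|\lesssim d(0)^{1/2}$ yields the projection estimate \eqref{Internal-product}, namely $\bigl|\int_\Omega \mathfrak{F}(r)h_i(r)(\psi-\psi_0)dx\bigr|=O(d(0)^{1/2})+O(d)$. Introducing the finite-rank projection $P_K$ from \eqref{defPK}, this estimate immediately produces the upper bound \eqref{Quadratic1}, $|\langle LP_K(\psi-\psi_0),P_K(\psi-\psi_0)\rangle|=O(d(0))+O(d^2)$, while on the complementary piece the coercivity \eqref{LNpositive} of Lemma \ref{lemma3.3} supplies the lower bound \eqref{positive}, $\langle L(I-P_K)(\psi-\psi_0),(I-P_K)(\psi-\psi_0)\rangle\geq\tilde{\delta}\bigl[\tfrac12 d_3-O(d(0))-O(d^2)\bigr]$.

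Finally I would assemble everything. Substituting the splitting $\psi-\psi_0=P_K(\psi-\psi_0)+(I-P_K)(\psi-\psi_0)$ gives \eqref{casimirsminus*}; inserting the elementary square bound \eqref{3.18}, $\tfrac12\int_\Omega[(v_\theta-\mathbf{v}_0)+\tfrac{\partial_r\omega}{\epsilon b}(\psi-\psi_0)]^2dx\geq \tfrac1c d_2-\tfrac1c d_3$, together with \eqref{positive} and \eqref{Quadratic1}, yields a lower bound of the form $d_1+\tfrac1c d_2+(\tfrac14\tilde{\delta}-\tfrac1c)d_3-o(d)-O(d(0))-O(d^2)$. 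The decisive choice is $c>4/\tilde{\delta}$, which makes the coefficient $\tfrac14\tilde{\delta}-\tfrac1c$ of $d_3$ strictly positive, so that all three pieces $d_1,d_2,d_3$ appear with positive coefficients and collapse into $\tau d$ with $\tau=\min\{1,\tfrac1c,\tfrac14\tilde{\delta}-\tfrac1c\}>0$, absorbing the remaining $O(d^2)$ into $o(d)$. The main obstacle is precisely the indefiniteness of $L$: the whole argument succeeds only because the conserved Casimirs force the components of $\psi-\psi_0$ along the nonpositive directions of $L$ to be higher order in $d$ and $d(0)$, so that the genuinely negative contribution $\langle LP_K(\psi-\psi_0),P_K(\psi-\psi_0)\rangle$ is a remainder rather than a leading-order term.
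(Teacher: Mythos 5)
Your proposal is correct and follows essentially the same route as the paper: the Taylor expansion \eqref{casimirsminus}, the Casimir-invariant projection estimate \eqref{Internal-product} leading to the bounds \eqref{positive} and \eqref{Quadratic1} via the coercivity of Lemma \ref{lemma3.3}, and the final assembly with \eqref{3.18} and the choice $c>4/\tilde{\delta}$ giving $\tau=\min\{1,\tfrac1c,\tfrac14\tilde{\delta}-\tfrac1c\}$ are precisely the paper's three Steps. Nothing essential is missing.
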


\begin{proof}[Proof of Theorem \ref{nonlinearstable} when $R_2<\infty$]
By assumption 1) in Theorem \ref{nonlinearstable}, we have
$$E(t)=E(v(t,x),H_\theta(t,x),\psi(t,x))\leq E(v^0(x),0,\psi^0(x)).$$
By Lemma \ref{EcEleq} and Lemma \ref{EcEgeq}, it follows that
\begin{align*}
 Cd(0)+o(d(0))
&\geq [E_c(v^0(x),H_\theta^0(x),\psi^0(x))-E_c(v_0(x),0,\psi_0(x))]\\
&\geq [E_c(v(t,x),H_\theta(t,x),\psi(t,x))-E_c(v_0(x),0,\psi_0(x))]\\
&\geq \tau d-o(d)- O( d(0)).
\end{align*}
Then we have $ Cd(0)+o(d(0))\geq \tau d -o(d)$.
By the continuity assumption of $d$ on $t$ (assumption 2) in Theorem \ref{nonlinearstable}), we conclude that
 if $d(0)$ is small enough,
$d(t)\lesssim d(0)$ for any $t\geq0$.
\end{proof}


\section{Nonlinear instability} 
\label{nonlinearinstability}

In this section, we study the nonlinear instability under the linear instability condition $n^{-}(\mathbb{L}_1)\neq0$. 
We only prove Theorem \ref{nonlinearunstable} for the case $R_2<\infty$, $\epsilon=1,  b(r)=b$, where $b$ is a constant. For the general case including $R_2=\infty$, we can prove Theorem \ref{nonlinearunstable} by similar arguments with additional conditions $\omega=O(r^{-1-\alpha}), \partial_r b=O(r^{-1-2\alpha}) (\alpha>0,r\rightarrow\infty)$, which ensure $\nabla(v_0,H_0)\in L^2.$
Here, let the perturbations be
$
u(t,r,z)=v(t,r,z)-v_0(r,z), B(t,r,z)=H(t,r,z)-H_0(r,z),
W(t,r,z)=p(t,r,z)-p_0(r,z).
$
We shall rewrite the MHD system \eqref{EPM} in following perturbation form:
\begin{align}\label{2.1'}
\begin{cases}
\partial_t u+u\cdot\nabla v_0+v_0\cdot\nabla u-B\cdot\nabla H_0-H_0\cdot\nabla B+\nabla W=B\cdot\nabla B-u\cdot\nabla u,\\
\partial_t B+u\cdot\nabla H_0+v_0\cdot\nabla B-B\cdot\nabla v_0-H_0\cdot\nabla u=B\cdot\nabla u-u\cdot\nabla B,\\
\text{div}u=\text{div} B=0,
\end{cases}
\end{align}
with the initial and boundary conditions
\begin{align}\label{linearized-cartesian-boundary}
\begin{cases}
(u,B)(0,r,z)=(u^0,B^0)(r,z),\quad (r,z)\in\Omega,\\
u\cdot\mathbf{n}|_{\partial\Omega}= B\cdot\mathbf{n}|_{\partial\Omega}=0,\\
(u,B)(t,r,z)\to (0,0) \quad \mbox{as} \quad r\to \infty,\quad \mbox{if}\quad R_2=\infty;\\
u(t,r,z)=u(t,r,z+2\pi), B(t,r,z)=B(t,r,z+2\pi).
\end{cases}
\end{align}
Then we obtain the linearized system of the MHD system \eqref{EPM} around the axisymmetric steady solution $(v_0,H_0)(x)$ in \eqref{steady1} as
\begin{align}\label{linearized-cartesian}
\begin{cases}
\partial_t\begin{pmatrix}
 u\\
 B
\end{pmatrix}=-
\begin{pmatrix}
u\cdot\nabla v_0+v_0\cdot\nabla u-B\cdot\nabla H_0-H_0\cdot\nabla B+\nabla W\\
u\cdot\nabla H_0+v_0\cdot\nabla B-B\cdot\nabla v_0-H_0\cdot\nabla u
\end{pmatrix}:=L_{u,B}\begin{pmatrix}
 u\\
 B
\end{pmatrix}\\
\text{div}u=\text{div} B=0,
\end{cases}
\end{align}
with the initial and boundary conditions
 \eqref{linearized-cartesian-boundary}.
Moreover, in the cylindrical coordinates, \eqref{linearized-cartesian} can be rewritten as
\begin{align}\label{linearized-cylindrical}
\begin{cases}
\partial_t u_r=b\partial_zB_r+2\omega u_\theta-\partial_r W,\\
\partial_t u_\theta=b\partial_zB_\theta-\frac{\partial_r(r^2\omega)}{r}u_r,\\
\partial_t u_z=b\partial_zB_z-\partial_z W,\\
\partial_t B_r=b\partial_zu_r,\\
\partial_t B_\theta=b\partial_zu_\theta+rB_r\partial_r\omega,\\
\partial_t B_z=b\partial_zu_z,\\
\frac{1}{r}\partial_r(ru_r)+\partial_zu_z=\frac{1}{r}\partial_r(rB_r)+\partial_zB_z=0.
\end{cases}
\end{align}

\subsection{Semigroup estimates for the linearized MHD equations }

According to Theorem \ref{linearstability}, if $b^2<B_0^2,$ we have $n^{-}\left(  \mathbb{L}|_{\overline{R\left(  \mathbb{B}\right)
}}\right)>0$. Define $\Lambda^2:=\max\{\sigma(-\mathbb{B}^{'}\mathbb{L}\mathbb{B}A)\}>0$, then $\Lambda>0$ is the maximal exponential growth rate for the linearized MHD equations \eqref{linearmhd}. By Theorem \ref{linearstability}, we can construct a maximal growing normal mode ($\Lambda=\lambda_{k_0}$):
\begin{align}\label{lineargrowmode}
&u_r(t,r,z)=\tilde{u}_r(r)\cos (k_0z)  e^{\lambda_{k_0} t},\nonumber\\
&u_\theta(t,r,z)=\tilde{u}_\theta(r)\cos (k_0z)  e^{\lambda_{k_0} t},\nonumber\\
&u_z(t,r,z)=\tilde{u}_z(r)\sin (k_0z)  e^{\lambda_{k_0} t},\nonumber\\
&\varphi(t,r,z)=\tilde{\varphi}(r)\cos (k_0z)  e^{\lambda_{k_0} t},\nonumber\\
&B_\theta(t,r,z)=\tilde{B}_\theta(r)\sin (k_0z)  e^{\lambda_{k_0} t},\nonumber\\
&W(t,r,z)=\tilde{W}(r)\cos (k_0z)  e^{\lambda_{k_0} t}.
\end{align}
By substituting \eqref{lineargrowmode} into \eqref{linearmhd},
the spectral problem then reduces to
\begin{align}\label{spectralequation}
&\left(\lambda_{k_0}+\frac{k_0^2b^2}{\lambda_{k_0}}\right)r\partial_r\left(\frac{1}{r}\partial_r\tilde{\varphi}(r)\right)
=k_0^2\left[\lambda_{k_0}+\frac{k_0^2b^2}{\lambda_{k_0}}
+\frac{4\omega^2}{\lambda_{k_0}}\left(1-\frac{k_0^2b^2}{\lambda_{k_0}^2+k_0^2b^2}\right)+\frac{r\partial_r(\omega^2)}{\lambda_{k_0}}
\right]\tilde{\varphi}(r),
\end{align}
with boundary conditions
\begin{align*}
\tilde{\varphi}(R_1)=\tilde{\varphi}(R_2)=0.
\end{align*}
Due to $\omega(r)\in C^{\infty}(\Omega),$ we can deduce from the above second-order equation \eqref{spectralequation} that $\tilde{\varphi}(r)\in C^{\infty}(\Omega)$ by a standard bootstrap argument. Moreover, we get $\tilde{u}_r(r),\tilde{u}_\theta(r),\tilde{u}_z(r),\tilde{B}_\theta(r),\tilde{W}(r)\in C^{\infty}(\Omega)$ by the same token.

\begin{lemma}\label{linearexponential}
Let $(u,B)(t)$ be the solution to the linearized equations \eqref{linearized-cartesian} with the initial and boundary conditions
 \eqref{linearized-cartesian-boundary}. Then, for any $s\geq0$, there holds
\begin{align*}
\|(u,B)(t)\|_{H^s}\lesssim e^{\Lambda t}\|(u,B)(0)\|_{H^{s}}.
\end{align*}
\end{lemma}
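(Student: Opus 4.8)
The plan is to control the $H^s$-norm by reducing it to $L^2$-bounds on the mixed derivatives $\partial_r^j\partial_z^k(u,B)$ with $j+k\le s$, and to propagate the growth rate $e^{\Lambda t}$ through a bootstrap that exploits the Hamiltonian and anti-self-adjoint structures hidden in \eqref{linearized-cylindrical}. Since $b$ is constant, $\partial_z$ commutes with every coefficient in \eqref{hamiltonian-RS} and with the Leray projection $\mathcal{P}$ (defined through the problem $\Delta p=\text{div}\,\vec{u}$, which has constant coefficients in $z$), so $\partial_z^k(u_1,u_2)$ solves the same separable Hamiltonian system \eqref{hamiltonian-RS} and keeps its boundary conditions. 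Applying the exponential trichotomy of Theorem~\ref{T:abstract} to this differentiated system, and recalling that the forward-in-time norm on the eigenvector spaces $E^u\oplus E^s$ grows at most like $e^{\Lambda t}$ while $E^c$ grows only polynomially (hence is dominated by $e^{\Lambda t}$ as $\Lambda>0$), I obtain the base estimate
\[
\|\partial_z^k(u,B)(t)\|_{L^2}\lesssim e^{\Lambda t}\|(u,B)(0)\|_{H^k},\qquad k\le s .
\]
This is Step~1 and settles all pure $z$-derivatives; note that $B_r=\partial_z\varphi/r$ and $B_z=-\partial_r\varphi/r$, so the $\mathbf{X}$-norm of $u_1$ already controls $B_r,B_z$ (and, via Hardy's inequality, $u_\theta$) in $L^2$.

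Next I would induct on the number $j$ of radial derivatives, treating $\partial_z^k(u,B)$ for each fixed $k$ as a solution of \eqref{linearized-cylindrical}. Step~2 handles the angular pair $(u_\theta,B_\theta)$: differentiating its two equations $j$ times in $r$ gives
\[
\partial_t\begin{pmatrix}\partial_r^j u_\theta\\ \partial_r^j B_\theta\end{pmatrix}=A_1\begin{pmatrix}\partial_r^j u_\theta\\ \partial_r^j B_\theta\end{pmatrix}+f_s,\qquad A_1=\begin{pmatrix}0 & b\partial_z\\ b\partial_z & 0\end{pmatrix},
\]
where $A_1$ is anti-self-adjoint (by the $z$-periodicity the integration by parts leaves no boundary term, so $e^{tA_1}$ is an $L^2$-isometry) and $f_s$ collects $\partial_r^j$ landing on the coefficients times the radial fields $u_r,B_r$. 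The key point is that the seemingly top-order terms $\partial_r^j u_r,\partial_r^j B_r$ are not genuinely of radial order $j$: the constraints $\frac{1}{r}\partial_r(ru_r)+\partial_z u_z=0$ and $\frac{1}{r}\partial_r(rB_r)+\partial_z B_z=0$ let me write $\partial_r^j u_r=-\partial_z\partial_r^{j-1}u_z+(\text{lower radial order})$, and likewise for $B_r$, so $f_s$ involves only $r$-derivatives of order $<j$ together with at most $s$ derivatives in $z$. By the inductive hypothesis and Step~1 these are already bounded by $e^{\Lambda t}\|(u,B)(0)\|_{H^s}$, so Duhamel's formula and the isometry of $e^{tA_1}$ yield
\[
\|\partial_r^j(u_\theta,B_\theta)(t)\|_{L^2}\le \|\partial_r^j(u_\theta,B_\theta)(0)\|_{L^2}+\int_0^t\|f_s(\tau)\|_{L^2}\,d\tau\lesssim e^{\Lambda t}\|(u,B)(0)\|_{H^s}.
\]

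Step~3 closes the loop on the remaining components. After eliminating $W$ via $\mathcal{P}$, the block $(u_r,u_z,B_r,B_z)$ satisfies, upon applying $\partial_r^j$,
\[
\partial_t\begin{pmatrix}\partial_r^j\vec{u}\\ \partial_r^j\vec{B}\end{pmatrix}=A_2\begin{pmatrix}\partial_r^j\vec{u}\\ \partial_r^j\vec{B}\end{pmatrix}+\begin{pmatrix}\partial_r^j\mathcal{P}(2\omega u_\theta,0)\\ 0\end{pmatrix},\qquad A_2=\begin{pmatrix}0 & b\partial_z\\ b\partial_z & 0\end{pmatrix},
\]
with $A_2$ again anti-self-adjoint. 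Here I deliberately keep $\partial_r^j$ outside $\mathcal{P}$, since $\partial_r^j\mathcal{P}\neq\mathcal{P}\partial_r^j$; instead I bound $\|\partial_r^j\mathcal{P}(2\omega u_\theta,0)\|_{L^2}\lesssim\|\mathcal{P}(2\omega u_\theta,0)\|_{H^j}\lesssim\|u_\theta\|_{H^j}$ using the $H^j$-boundedness (Calder\'{o}n--Zygmund estimate) of $\mathcal{P}$, and then feed in the $H^j$-control of $u_\theta$ assembled from Steps~1--2 and the induction. Duhamel together with the isometry of $e^{tA_2}$ then gives the same $e^{\Lambda t}$-bound for $\partial_r^j(u_r,u_z,B_r,B_z)$. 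Summing over $j+k\le s$ produces $\|\partial_r^j\partial_z^k(u,B)(t)\|_{L^2}\lesssim e^{\Lambda t}\|(u,B)(0)\|_{H^s}$, and the claimed $H^s$-estimate follows since the Cartesian $H^s$-norm of axisymmetric fields on $\Omega$ is controlled by such radial--axial derivatives. The main obstacle I expect is precisely the coupling that makes a direct energy estimate fail: differentiating in $r$ destroys the Hamiltonian form, the pressure reintroduces $\mathcal{P}$ with $\partial_r^j\mathcal{P}\neq\mathcal{P}\partial_r^j$, and the angular and radial blocks appear to feed each other's top-order radial derivatives in a circular way. The resolution rests on two structural facts that must be checked carefully—that $A_1,A_2$ are genuinely anti-self-adjoint (so their flows are $L^2$-isometries and, unlike in the naive energy method, generate no uncontrolled boundary integrals), and that the divergence-free constraints demote the apparently top-order radial derivatives of $u_r,B_r$ to lower radial order at the cost of extra $z$-derivatives—and the crux is making these two reductions interlock so that the induction on $j$ actually closes rather than demanding one more derivative.
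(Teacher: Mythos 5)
Your proposal is correct and reproduces the paper's own argument essentially step for step: pure $z$-derivatives via the preserved Hamiltonian structure and the trichotomy estimates (\ref{estimate-stable-unstable})--(\ref{estimate-center}), then an induction in the radial order using Duhamel with the anti-self-adjoint blocks $A_1$ (for $(u_\theta,B_\theta)$, demoting $\partial_r^{j}u_r,\partial_r^{j}B_r$ through the divergence-free constraints) and $A_2$ (for $(u_r,u_z,B_r,B_z)$, keeping $\partial_r^{j}$ outside $\mathcal{P}$ and invoking elliptic boundedness of the Leray projection). The only difference is presentational---you make explicit the mixed-derivative bookkeeping and the $H^{j}$-boundedness of $\mathcal{P}$ that the paper leaves implicit---so no further comparison is needed.
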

\begin{proof}
Note that in the cylindrical coordinates, the linearized system \eqref{linearized-cartesian} with the initial and boundary conditions
 \eqref{linearized-cartesian-boundary} can be rewritten as \eqref{linearized-cylindrical} with the initial and boundary conditions
 \eqref{linearized-cartesian-boundary}, which is equivalent to the linearized system \eqref{hamiltonian-RS} with $B_r=\frac{\partial_z\varphi}{r}$ and $B_z=-\frac{\partial_r\varphi}{r}$. It is clear that
\begin{equation*}
\frac{d}{dt}\begin{pmatrix} \partial_{z}^{\alpha}u_1\\
\partial_{z}^{\alpha}u_2\end{pmatrix}=\begin{pmatrix} 0, &\mathbb{B}\\
-\mathbb{B}',& 0
\end{pmatrix} \begin{pmatrix} \mathbb{L}, &0\\
0,& A
\end{pmatrix}\begin{pmatrix}\partial_{z}^{\alpha}u_1\\
\partial_{z}^{\alpha}u_2
\end{pmatrix}=\begin{pmatrix}\mathbb{B}A\partial_{z}^{\alpha}u_2\\
-\mathbb{B}^{'}\mathbb{L}\partial_{z}^{\alpha}u_1
\end{pmatrix}
\end{equation*}
with $|\alpha|\leq s$ and $(u_1,u_2)$ be defined by \eqref{u1u2}.
Using (\ref{estimate-stable-unstable}) and (\ref{estimate-center}),
 we obtain
 \begin{align*}
\|(u_1,u_2)(t)\|_{\mathbf{X}}\lesssim e^{\Lambda t}\|(u_1,u_2)(0)\|_{\mathbf{X}}
\end{align*}
and
 \begin{align*}
\|\partial_{z}^{\alpha}(u_1,u_2)(t)\|_{\mathbf{X}}\lesssim e^{\Lambda t}\|\partial_{z}^{\alpha}(u_1,u_2)(0)\|_{\mathbf{X}},\quad |\alpha|\leq s.
\end{align*}
Together with the facts $\|(u,B)(t)\|_{L^2}\sim\|(u_1,u_2)(t)\|_{\mathbf{X}}$ and $\|\partial_{z}^{\alpha}(u,B)(t)\|_{L^2}\sim\|\partial_{z}^{\alpha}(u_1,u_2)(t)\|_{\mathbf{X}}$, we get
  \begin{align*}
\|(u,B)(t)\|_{L^2}\lesssim e^{\Lambda t}\|(u,B)(0)\|_{L^2}
\end{align*}
and
\begin{align}\label{zinvariant}
\|\partial_{z}^{\alpha}(u,B)(t)\|_{L^2}\lesssim e^{\Lambda t}\|\partial_{z}^{\alpha}(u,B)(0)\|_{L^2},\quad |\alpha|\leq s.
\end{align}
Next, we need to prove $\|\partial_{r}^{\alpha}(u,B)(t)\|_{L^2}\lesssim e^{\Lambda t}\|(u,B)(0)\|_{H^{s}}$ $(|\alpha|=s)$ by an induction on $s$.
For $s=1$, taking $\partial_r$ on both side of $\eqref{linearized-cylindrical}_2$ and $\eqref{linearized-cylindrical}_5$, we obtain
\begin{equation*}
\partial_t\left(
\begin{array}
[c]{c}%
\partial_ru_\theta\\
\partial_rB_\theta
\end{array}
\right)  =\left(
\begin{array}
[c]{cc}%
0 & b\partial_z\\
b\partial_z & 0
\end{array}
\right)  \left(
\begin{array}
[c]{c}%
\partial_ru_\theta\\
\partial_rB_\theta
\end{array}
\right)  +\left(
\begin{array}
[c]{c}%
-\partial_r(ru_r)\frac{\partial_r(r^2\omega)}{r^2}-ru_r\partial_r\left(\frac{\partial_r(r^2\omega)}{r^2}\right)\\
\partial_r(rB_r)\partial_r\omega+rB_r\partial_r^2\omega
\end{array}
\right)  ,
\end{equation*}
together with $\eqref{linearized-cylindrical}_7$, it follows
\begin{equation*}
\partial_t\left(
\begin{array}
[c]{c}%
\partial_ru_\theta\\
\partial_rB_\theta
\end{array}
\right)=\left(
\begin{array}
[c]{cc}%
0 & b\partial_z\\
b\partial_z & 0
\end{array}
\right)  \left(
\begin{array}
[c]{c}%
\partial_ru_\theta\\
\partial_rB_\theta
\end{array}
\right) +\left(
\begin{array}
[c]{c}%
\partial_zu_z\frac{\partial_r(r^2\omega)}{r}-ru_r\partial_r\left(\frac{\partial_r(r^2\omega)}{r^2}\right)\\
-\partial_zB_zr\partial_r\omega+rB_r\partial_r^2\omega
\end{array}
\right). %
\end{equation*}
Denote
\begin{equation*}
A_1=\left(
\begin{array}
[c]{cc}%
0 & b\partial_z\\
b\partial_z & 0
\end{array}
\right)
\end{equation*}
and
\begin{equation*}
f_1=\left(
\begin{array}
[c]{c}%
\partial_zu_z\frac{\partial_r(r^2\omega)}{r}-ru_r\partial_r\left(\frac{\partial_r(r^2\omega)}{r^2}\right)\\
-\partial_zB_z r\partial_r\omega+rB_r\partial_r^2\omega
\end{array}
\right).
\end{equation*}
By Duhamel's principle, we have
\begin{equation*}
\left\|\left(
\begin{array}
[c]{c}%
\partial_ru_\theta\\
\partial_rB_\theta
\end{array}
\right) (t)\right\|_{L^2}\leq \left\|e^{tA_1}\left(
\begin{array}
[c]{c}%
\partial_ru_\theta\\
\partial_rB_\theta
\end{array}
\right) (0)\right\|_{L^2}+\int_0^t \|e^{(t-s)A_1}f_1(s)\|_{L^2}ds.
\end{equation*}
Since $A_1$ is anti-self adjoint on $L^2$, by Stone theorem, we know that $\|e^{tA_1}\|_{L^2\rightarrow L^2}=1$.
Together with \eqref{zinvariant}, we deduce from above inequality that
\begin{equation}\label{r-u-theta}
\left\|\left(
\begin{array}
[c]{c}%
\partial_ru_\theta\\
\partial_rB_\theta
\end{array}
\right) (t)\right\|_{L^2}
\lesssim e^{\Lambda t}\|(u,B)(0)\|_{H^{1}}.
\end{equation}
Then, taking $\partial_r$ on both side of $\eqref{linearized-cylindrical}_1,\eqref{linearized-cylindrical}_3,\eqref{linearized-cylindrical}_4$ and $\eqref{linearized-cylindrical}_6$, we obtain
\begin{equation*}
\partial_t\left(
\begin{array}
[c]{c}%
\partial_ru_r\\
\partial_ru_z\\
\partial_rB_r\\
\partial_rB_z
\end{array}
\right)  =\left(
\begin{array}
[c]{cccc}%
0 & 0 & b\partial_z & 0 \\
0 & 0 &0 & b\partial_z \\
b\partial_z & 0 &0 & 0\\
0 & b\partial_z & 0 &0
\end{array}
\right)  \left(
\begin{array}
[c]{c}%
\partial_ru_r\\
\partial_ru_z\\
\partial_rB_r\\
\partial_rB_z
\end{array}
\right)  +\begin{pmatrix} \partial_r\mathcal{P}\begin{pmatrix}2\omega(r)u_\theta\\
0\end{pmatrix}\\
0\\
0
\end{pmatrix}.
\end{equation*}
Let
\begin{equation*}
A_2=\left(
\begin{array}
[c]{cccc}%
0 & 0 & b\partial_z & 0 \\
0 & 0 &0 & b\partial_z \\
b\partial_z & 0 &0 & 0\\
0 & b\partial_z & 0 &0
\end{array}
\right)
\end{equation*}
and
\begin{equation*}
f_2=\begin{pmatrix} \partial_r\mathcal{P}\begin{pmatrix}2\omega(r)u_\theta\\
0\end{pmatrix}\\
0\\
0
\end{pmatrix}.
\end{equation*}
By Duhamel's principle, we get
\begin{equation*}
\left\|\left(
\begin{array}
[c]{c}%
\partial_ru_r\\
\partial_ru_z\\
\partial_rB_r\\
\partial_rB_z
\end{array}
\right)  (t)\right\|_{L^2}\leq \left\|e^{tA_2}\left(
\begin{array}
[c]{c}%
\partial_ru_r\\
\partial_ru_z\\
\partial_rB_r\\
\partial_rB_z
\end{array}
\right)  (0)\right\|_{L^2}+\int_0^t \|e^{(t-s)A_2}f_2(s)\|_{L^2}ds.
\end{equation*}
Since $A_2$ is anti-self adjoint on $L^2$, by Stone theorem and
\eqref{r-u-theta}, one has
\begin{equation}\label{r-u-r}
\left\|\left(
\begin{array}
[c]{c}%
\partial_ru_r\\
\partial_ru_z\\
\partial_rB_r\\
\partial_rB_z
\end{array}
\right)  (t)\right\|_{L^2}\lesssim e^{\Lambda t}\|(u,B)(0)\|_{H^{1}}.
\end{equation}
Combining \eqref{r-u-theta} and \eqref{r-u-r}, we obtain
 \begin{align*}
\|\partial_{r}(u,B)(t)\|_{L^2}
\lesssim e^{\Lambda t}\|(u,B)(0)\|_{H^{1}}.
\end{align*}
Assume it holds for $|\alpha|\leq s-1 (s\geq 1)$, we will prove $|\alpha|=s$ by induction.
Similarly, we get
\begin{equation}\label{uo-uB-r}
\partial_t\left(
\begin{array}
[c]{c}%
\partial_r^\alpha u_\theta\\
\partial_r^\alpha B_\theta
\end{array}
\right)=A_1  \left(
\begin{array}
[c]{c}%
\partial_r^\alpha u_\theta\\
\partial_r^\alpha B_\theta
\end{array}
\right) +f_s %
\end{equation}
and
\begin{equation}\label{uo-uuBB-r}
\partial_t\left(
\begin{array}
[c]{c}%
\partial_r^\alpha u_r\\
\partial_r^\alpha u_z\\
\partial_r^\alpha B_r\\
\partial_r^\alpha B_z
\end{array}
\right)  =A_2  \left(
\begin{array}
[c]{c}%
\partial_r^\alpha u_r\\
\partial_r^\alpha u_z\\
\partial_r^\alpha B_r\\
\partial_r^\alpha B_z
\end{array}
\right)  +\begin{pmatrix} \partial_r^\alpha \mathcal{P}\begin{pmatrix}2\omega(r)u_\theta\\
0\end{pmatrix}\\
0\\
0
\end{pmatrix},
\end{equation}
with
\begin{equation}\label{fs}
f_s=\left(
\begin{array}
[c]{c}%
\sum\limits_{|\beta|+|\gamma|= s-1,0\leq|\beta|,|\gamma|\leq s-1}
\left[\partial_r^{\beta}\partial_zu_z\partial_r^{\gamma}\left(\frac{\partial_r(r^2\omega)}{r}\right)
-\partial_r^\beta (ru_r)\partial_r^{\gamma}\partial_r\left(\frac{\partial_r(r^2\omega)}{r^2}\right)\right]\\
\sum\limits_{|\beta|+|\gamma|= s-1,0\leq|\beta|,|\gamma|\leq s-1}\left[-\partial_r^\beta(\partial_zB_z) \partial_r^{\gamma}(r\partial_r\omega)+\partial_r^\beta (rB_r)\partial_r^{\gamma}\partial_r^2\omega\right]
\end{array}
\right).
\end{equation}
By the induction hypothesis and Duhamel's principle, one gets from \eqref{uo-uB-r}-\eqref{fs} that
\begin{equation*}
\left\|\left(
\begin{array}
[c]{c}%
\partial_r^\alpha u_\theta\\
\partial_r^\alpha B_\theta
\end{array}
\right) (t)\right\|_{L^2}
\lesssim e^{\Lambda t}\|(u,B)(0)\|_{H^{s}}
\end{equation*}
 and
 \begin{equation*}
\left\|\left(
\begin{array}
[c]{c}%
\partial_r^\alpha u_r\\
\partial_r^\alpha u_z\\
\partial_r^\alpha B_r\\
\partial_r^\alpha B_z
\end{array}
\right)  (t)\right\|_{L^2}\lesssim e^{\Lambda t}\|(u,B)(0)\|_{H^{s}}.
\end{equation*}
Therefore, it follows that
\begin{align*}
\|\partial_{r}^{\alpha}(u,B)(t)\|_{L^2}
\lesssim e^{\Lambda t}\|(u,B)(0)\|_{H^{s}}
\end{align*}
with $|\alpha|=s.$
Hence,
\begin{align*}
\|(u,B)(t)\|_{H^s}
\lesssim e^{\Lambda t}\|(u,B)(0)\|_{H^{s}}.
\end{align*}
We complete the proof of the lemma.

\end{proof}

\begin{proof}[Proof of Corollary \ref{exp_dychotomy}]
We define a linear map $\Gamma: \mathbf{X}\rightarrow H^s_{mhd}$ by $(v_1,v_2):=\Gamma \xi\in H^s_{mhd}$ where $\xi=(\xi_1,\xi_2,\xi_3,\xi_4,\xi_5)\in \mathbf{X}$,
\begin{align*}
v_1=\xi_3\textbf{e}_r+\xi_4\textbf{e}_z+\left(\xi_1-\frac{\partial_r\omega}{\epsilon b}\xi_2\right)\textbf{e}_\theta
\text{ and } v_2=\frac{\partial_z\xi_2}{r}\textbf{e}_r-\frac{\partial_r\xi_2}{r}\textbf{e}_z+\xi_5\textbf{e}_\theta.
\end{align*}
By Theorem \ref{linearstability} and assumptions $\omega,b\in H^{s+2}$, we denote
\begin{align*}
E^u:=\text{span}\{\xi^i,i=1,2,...,N\} \text{ and } E^s:=\text{span}\{\eta^i,i=1,2,...,N\},
\end{align*}
and
\begin{align*}
X_u:=\text{span}\{\hat{\xi}^i,i=1,2,...,N\}\subset H^s_{mhd} \text{ and } X_{s}:=\text{span}\{\hat{\eta}^i,i=1,2,...,N\}\subset H^s_{mhd},
\end{align*}
where $\hat{\xi}^i$ and $\hat{\eta}^i$ satisfy
$\hat{\xi}^i=\Gamma\xi^i$, $\hat{\eta}^i=\Gamma \eta^i$,
for all $i=1,2,...,N$, and $N=2\sum\limits_{k=1}^{\infty}n^{-}(\mathbb{L}_{k})<+\infty$
is the dimension of the unstable/stable spaces $E^u$/$E^s$.
Then we have
 $$X^c=\{\hat{\zeta}|\hat{\zeta}=\Gamma \zeta, \zeta\in E^c\}\cap H^s_{mhd}.$$

For $s=0$, Corollary \ref{exp_dychotomy} can be directly inferred from Theorem \ref{linearstability}.
For $s>0$, Corollary \ref{exp_dychotomy} i)-iii) are clear since $X_{s}$ and $X_u$ are finite-dimensional.
Corollary \ref{exp_dychotomy} iv) can be directly obtained by the proof of Lemma \ref{linearexponential} with the initial data in the center subspace.
\end{proof}

\subsection{A higher-order approximate solution}
In this subsection, we shall construct the higher-order axisymmetric approximate solution $(u^a,B^a,W^a)(t,r,z)$ to \eqref{2.1'}-\eqref{linearized-cartesian-boundary} with the form
\begin{align}\label{approximate}
\begin{cases}
u^a(t,r,z)=\sum\limits_{j=1}^{N}\delta^{j}\phi_{j}(t,r,z),\\
B^a(t,r,z)=\sum\limits_{j=1}^{N}\delta^{j}\xi_{j}(t,r,z),\\
W^a(t,r,z)=\sum\limits_{j=1}^{N}\delta^{j}\eta_{j}(t,r,z),
\end{cases}
\end{align}
which satisfies
\begin{align}\label{4.1'}
\begin{cases}
 &\partial_t u^a+u^a\cdot\nabla v_0+v_0\cdot\nabla u^a-B^a\cdot\nabla H_0-H_0\cdot\nabla B^a+\nabla W^a
 \\&\quad\quad\quad\quad\quad=B^a\cdot\nabla B^a-u^a\cdot\nabla u^a+R_{N,1}^a,
 \\&
\partial_t B^a+u^a\cdot\nabla H_0+v_0\cdot\nabla B^a-B^a\cdot\nabla v_0-H_0\cdot\nabla u^a
\\&\quad\quad\quad\quad\quad=B^a\cdot\nabla u^a-u^a\cdot\nabla B^a+R_{N,2}^a,
\\&
\text{div}\phi_{j}=\text{div} \xi_{j}=0\quad (1\leq j\leq N),
\end{cases}
\end{align}
with the initial and boundary conditions
\begin{align}\label{aubdcdiffer}
\begin{cases}
(u^a,B^a)(0,r,z)=(u^{a0},B^{a0})(r,z)=\delta(\phi_{1},\xi_{1})(0,r,z),\quad (r,z)\in\Omega,\\
u^a\cdot\mathbf{n}|_{\partial\Omega}= B^a\cdot\mathbf{n}|_{\partial\Omega}=0,\\
(u^a,B^a)(t,r,z)\to (0,0) \quad as \quad r\to \infty,\quad \mbox{if}\quad R_2=\infty;\\
u^a(t,r,z)=u^a(t,r,z+2\pi), B^a(t,r,z)=B^a(t,r,z+2\pi),
\end{cases}
\end{align}
where $\delta>0$ be
an arbitrary small parameter and $(\phi_{1},\xi_{1})$ be the maximal growing normal mode with $\phi_{1}=(u_r,u_\theta,u_z),\xi_{1}=(\frac{\partial_z\varphi}{r},B_\theta,-\frac{\partial_r\varphi}{r})$, and $(u_r,u_\theta,u_z,\varphi,B_\theta)$ is given by \eqref{lineargrowmode}.
\begin{lemma}\label{4.1}
Let $N>0$ be a fixed integer and $\delta>0$ be
an arbitrary small parameter. 
Then
there exists an axisymmetric approximate solution $(u^a,B^a,W^a)(t,r,z)$ of the form \eqref{approximate} to \eqref{4.1'}-\eqref{aubdcdiffer}. Moreover, for every integer $s\geq0$, there is $\theta>0$ 
small enough,
for
$0\leq t\leq T^\delta$
with
$\theta=\delta e^{\Lambda T^{\delta}}$,
 the $j$-th order coefficients $\phi_{j}(t,r,z),\xi_{j}(t,r,z),\eta_{j}(t,r,z)$, the $N+1$-th order remainders $R_{N,1}^a(t,r,z),R_{N,2}^a(t,r,z)$ satisfy
\begin{align}\label{4.4}
\|(\phi_{j},\xi_{j},\nabla\eta_{j})(t)\|_{H^s}\leq C_{s,N}e^{j\Lambda t}, \quad \text{for} \quad 1\leq j\leq N,
\end{align}
\begin{align}\label{4.11}
\|(R_{N,1}^a,R_{N,2}^a)(t)\|_{H^s}\leq C_{s,N}\delta^{N+1}e^{(N+1)\Lambda t}.
\end{align}
\end{lemma}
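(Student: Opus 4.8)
The plan is to build the coefficients $(\phi_j,\xi_j,\eta_j)$ by a formal expansion in $\delta$, turning \eqref{4.1'} into a finite hierarchy of inhomogeneous linearized problems that I solve by Duhamel's principle, with the sharp growth bound of Lemma \ref{linearexponential} doing the analytic work. Write the perturbed system \eqref{2.1'} schematically as $\partial_t(u,B)=\mathcal{A}(u,B)-\nabla(W,0)+Q\big((u,B),(u,B)\big)$, where $\mathcal{A}$ is the linear generator of \eqref{linearized-cartesian} and $Q$ is the symmetric bilinear map coming from the quadratic nonlinearity $\big(B\cdot\nabla B-u\cdot\nabla u,\ B\cdot\nabla u-u\cdot\nabla B\big)$. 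Substituting \eqref{approximate} and matching powers of $\delta$, the order $\delta^1$ equation is homogeneous, and I take $(\phi_1,\xi_1,\eta_1)$ to be the maximal growing normal mode \eqref{lineargrowmode}; since its radial profile is $C^\infty$ and its growth rate is $\Lambda$, it satisfies $\|(\phi_1,\xi_1,\nabla\eta_1)(t)\|_{H^s}\le C_s e^{\Lambda t}$ for every $s\ge0$. For $2\le j\le N$ the coefficients must solve
\begin{equation*}
\partial_t(\phi_j,\xi_j)=\mathcal{A}(\phi_j,\xi_j)-\nabla(\eta_j,0)+\sum_{l+m=j,\,l,m\ge1}Q\big((\phi_l,\xi_l),(\phi_m,\xi_m)\big),\qquad \operatorname{div}\phi_j=\operatorname{div}\xi_j=0,
\end{equation*}
with zero initial data, so that $(u^a,B^a)(0)=\delta(\phi_1,\xi_1)(0)$ as required in \eqref{aubdcdiffer}.

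The pressure is removed by the Leray projection $\mathcal{P}$: the magnetic forcing $B\cdot\nabla u-u\cdot\nabla B$ is automatically divergence free, while for the momentum equation $\nabla\eta_j$ is recovered from the Poisson problem obtained by taking the divergence, giving $\|\nabla\eta_j\|_{H^s}\lesssim\|(F_j,G_j)\|_{H^s}$ by elliptic regularity, where $(F_j,G_j):=\sum_{l+m=j}Q\big((\phi_l,\xi_l),(\phi_m,\xi_m)\big)$. Denoting by $S(t)$ the solution operator of \eqref{linearized-cartesian}, Duhamel's formula reads $(\phi_j,\xi_j)(t)=\int_0^t S(t-\tau)\,\mathcal{P}(F_j,G_j)(\tau)\,d\tau$, so Lemma \ref{linearexponential} yields
\begin{equation*}
\|(\phi_j,\xi_j)(t)\|_{H^s}\lesssim\int_0^t e^{\Lambda(t-\tau)}\|(F_j,G_j)(\tau)\|_{H^s}\,d\tau.
\end{equation*}

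The bounds \eqref{4.4} now follow by induction on $j$. For $s>3/2$ the space $H^s(\Omega)$ is a Banach algebra, so
\begin{equation*}
\|(F_j,G_j)(\tau)\|_{H^s}\lesssim\sum_{l+m=j}\|(\phi_l,\xi_l)(\tau)\|_{H^{s+1}}\,\|(\phi_m,\xi_m)(\tau)\|_{H^{s+1}},
\end{equation*}
and by the induction hypothesis applied at regularity $s+1$ (available because every lower-order coefficient is controlled in all $H^{s'}$, the base case $\phi_1$ being smooth) the right side is $\lesssim e^{j\Lambda\tau}$. Feeding this into the Duhamel estimate gives $\int_0^t e^{\Lambda(t-\tau)}e^{j\Lambda\tau}\,d\tau\lesssim e^{j\Lambda t}$, which closes the induction, and the elliptic bound closes it for $\nabla\eta_j$; since norms with small $s$ are dominated by large ones, \eqref{4.4} holds for all $s\ge0$. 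Finally, reinserting the truncated series into \eqref{2.1'} cancels the powers $\delta^1,\dots,\delta^N$ by construction and leaves the remainder $R_N^a=\sum_{j=N+1}^{2N}\delta^j\sum_{l+m=j,\,1\le l,m\le N}Q\big((\phi_l,\xi_l),(\phi_m,\xi_m)\big)$; by \eqref{4.4} its $H^s$ norm is $\lesssim\sum_{j=N+1}^{2N}(\delta e^{\Lambda t})^{j}$, and on $0\le t\le T^\delta$, where $\delta e^{\Lambda t}\le\theta\le1$, each term with $j\ge N+1$ obeys $(\delta e^{\Lambda t})^{j}=(\delta e^{\Lambda t})^{N+1}(\delta e^{\Lambda t})^{j-N-1}\le\delta^{N+1}e^{(N+1)\Lambda t}$, giving \eqref{4.11}.

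The main obstacle is that the nonlinearity costs one derivative at every order, and $\partial_r$ does not commute with $\mathcal{P}$, so a direct energy estimate would either lose regularity uncontrollably or degrade the growth rate. This is precisely why the construction rests on the sharp $H^s$ semigroup bound of Lemma \ref{linearexponential}: used inside Duhamel's formula it transfers the optimal rate $e^{\Lambda t}$ to every Sobolev level without loss, so that the derivative lost to $Q$ is absorbed by running the induction simultaneously over all $s\ge0$.
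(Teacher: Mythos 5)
Your proposal is correct and follows essentially the same route as the paper: take the maximal growing normal mode as the first-order coefficient, solve the order-$\delta^j$ hierarchy with zero initial data via Duhamel's formula and the sharp semigroup bound of Lemma \ref{linearexponential}, close the induction on $j$ by running the estimates at all Sobolev levels simultaneously to absorb the derivative lost in the quadratic terms, and bound the remainder (the interactions of total order $N+1$ through $2N$) using $\delta e^{\Lambda t}\leq\theta$ on $[0,T^\delta]$. Your treatment of the product estimates and of the pressure via the elliptic problem is in fact slightly more explicit than the paper's.
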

\begin{proof}
By the linear semigroup estimates in Lemma \ref{linearexponential}, this proof can be obtained by the same method in \cite{MR1869291}.
Roughly speaking, we can construct $\phi_{j},\xi_{j},\eta_{j},R_{j,1}^a,R_{j,2}^a$ by induction on $j$.
For $j=1$, let $(\phi_{1},\xi_{1},\eta_{1})$ be the maximal growing normal mode with $\phi_{1}=(u_r,u_\theta,u_z),\xi_{1}=(\frac{\partial_z\varphi}{r},B_\theta,-\frac{\partial_r\varphi}{r}),\eta_{1}=W$, where $(u_r,u_\theta,u_z,\varphi,B_\theta,W)$ is given by \eqref{lineargrowmode}.
 Clearly $\phi_{j},\xi_{j}$ and $\eta_{j}$ satisfy \eqref{4.4}, and hence $R_{N,1}^a$ and $R_{N,2}^a$ satisfy \eqref{4.11}.

Assume that we have constructed $\phi_{j},\xi_{j},\eta_{j}$ and $R_{j,1}^a,R_{j,2}^a$ which satisfy
\eqref{4.4} and \eqref{4.11} for $j<N$. Based on it, now we turn to construct $\phi_{j+1},\xi_{j+1},\eta_{j+1}$ and $R_{j+1,1}^a,R_{j+1,2}^a$. Let
\begin{align*}
\begin{cases}
u_j=\sum\limits_{k=1}^{j}\delta^{k}\phi_{j},\\
B_j=\sum\limits_{k=1}^{j}\delta^{k}\xi_{j},\\
W_j=\sum\limits_{k=1}^{j}\delta^{k}\eta_{j}.
\end{cases}
\end{align*}
Putting $u_{j},B_{j},W_{j}$ into \eqref{4.1'}, we define
\begin{align*}
&F_{j+1}(\delta)=B_{j}\cdot\nabla B_{j}-u_{j}\cdot\nabla u_{j}, \quad G_{j+1}(\delta)=B_{j}\cdot\nabla u_{j}-u_{j}\cdot\nabla B_{j}
\end{align*}
as the nonlinear part corresponding to $u_j,B_{j}$. For
$0\leq t\leq T^\delta$ with $\theta$ small, we can expand $F_{j+1}(\delta)$ and $G_{j+1}(\delta)$ around $\delta=0.$
Let $(\phi_{j+1},\xi_{j+1},\eta_{j+1})$ be the solution of the system
\begin{align*}
\begin{cases}
 &\partial_t \phi_{j+1}+\phi_{j+1}\cdot\nabla v_0+v_0\cdot\nabla \phi_{j+1}-\xi_{j+1}\cdot\nabla H_0-H_0\cdot\nabla \xi_{j+1}+\nabla \eta_{j+1}
 =-\frac{F_{j+1}^{(j+1)}(0)}{(j+1)!},
 \\&
\partial_t \xi_{j+1}+\phi_{j+1}\cdot\nabla H_0+v_0\cdot\nabla \xi_{j+1}-\xi_{j+1}\cdot\nabla v_0-H_0\cdot\nabla \phi_{j+1}
=-\frac{G_{j+1}^{(j+1)}(0)}{(j+1)!},
\\&
\text{div}\phi_{j+1}=\text{div} \xi_{j+1}=0,
\end{cases}
\end{align*}
with $\phi_{j+1}(0,r,z)=(0,0,0),\xi_{j+1}(0,r,z)=(0,0,0)$ and
\begin{align*}
-\frac{F_{j+1}^{(j+1)}(0)}{(j+1)!}=\sum_{j_1+j_{2}=j+1, 1\leq j_1,j_{2}\leq j
}\xi_{j_1}\cdot\nabla \xi_{j_2}-\phi_{j_1}\cdot\nabla \phi_{j_2},
\end{align*}
\begin{align*}
-\frac{G_{j+1}^{(j+1)}(0)}{(j+1)!}=\sum_{j_1+j_{2}=j+1, 1\leq j_1,j_{2}\leq j
}\xi_{j_1}\cdot\nabla \phi_{j_2}-\phi_{j_1}\cdot\nabla \xi_{j_2}.
\end{align*}
Consequently, by induction, we have
\begin{align*}
\left\|\frac{F_{j+1}^{(j+1)}(0)}{(j+1)!}\right\|_{H^s}\leq C_{s,N}e^{(j+1)\Lambda t},\\
\left\|\frac{G_{j+1}^{(j+1)}(0)}{(j+1)!}\right\|_{H^s}\leq C_{s,N}e^{(j+1)\Lambda t}.
\end{align*}
Together with the estimate in Lemma \ref{linearexponential}, Duhamel's principle and $j\geq1,$ we obtain
\begin{align*}
\|(\phi_{j+1},\xi_{j+1})\|_{H^s}&\leq C\int_{0}^te^{\Lambda (t-\tau)}\left\|\frac{h_{j+1}^{(j+1)}(0)}{(j+1)!}\right\|_{H^{s}}d\tau
\\&\leq C_{s,N}\int_{0}^te^{\Lambda (t-\tau)}e^{(j+1)\Lambda \tau}d\tau
\\&\leq C_{s,N}e^{\Lambda t}\int_{0}^te^{[(j+1)\Lambda-\Lambda ]\tau}d\tau
\\&\leq C_{s,N}e^{\Lambda t}\frac{1}{(j+1)\Lambda-\Lambda }e^{[(j+1)\Lambda-\Lambda ]t}
\\&\leq C_{s,N}\frac{1}{(j+1)\Lambda-\Lambda }e^{(j+1)\Lambda t}\leq C_{s,N} e^{(j+1)\Lambda t}
\end{align*}
and
\begin{align*}
\|\nabla\eta_{j+1}\|_{H^s}\leq C\|(\phi_{j+1},\xi_{j+1})\|_{H^s}
\leq C_{s,N} e^{(j+1)\Lambda t}.
\end{align*}
Then, 
it follows from the definition of $(u^a,B^a,W^a)$ in \eqref{approximate} that
\begin{align*}
\begin{cases}
 &\partial_t u^a+u^a\cdot\nabla v_0+v_0\cdot\nabla u^a-B^a\cdot\nabla H_0-H_0\cdot\nabla B^a+\nabla W^a
 =-\sum\limits_{j=1}^{N}\frac{F_{j+1}^{(j+1)}(0)}{(j+1)!},
 \\&
\partial_t B^a+u^a\cdot\nabla H_0+v_0\cdot\nabla B^a-B^a\cdot\nabla v_0-H_0\cdot\nabla u^a
=-\sum\limits_{j=1}^{N}\frac{G_{j+1}^{(j+1)}(0)}{(j+1)!},
\\&
\text{div}u^a=\text{div} B^a=0.
\end{cases}
\end{align*}
Thus
\begin{align*}
R_{N,1}^a=F(\delta)-\sum_{j=1}^{N}\delta^{j+1}\frac{F_{j+1}^{(j+1)}(0)}{(j+1)!},\\
R_{N,2}^a=G(\delta)-\sum_{j=1}^{N}\delta^{j+1}\frac{G_{j+1}^{(j+1)}(0)}{(j+1)!},
\end{align*}
with
\begin{align*}
&F(\delta)=B^a\cdot\nabla B^a-u^a\cdot\nabla u^a, \quad G(\delta)=B^a\cdot\nabla u^a-u^a\cdot\nabla B^a.
\end{align*}
Hence, one has
\begin{align*}
\|(R_{N,1}^a,R_{N,2}^a)\|_{H^s}\leq C_{s,N}\delta^{N+1}e^{(N+1)\Lambda t}.
\end{align*}
The proof of the lemma is completed.

\end{proof}

\subsection{Energy estimates}\label{energyes}

As a prime, we shall give the local solvability for the system \eqref{2.1'} with the initial and boundary conditions
 \eqref{linearized-cartesian-boundary}.
\begin{lemma}\label{Local-existence}
Let $s\geq3$ and the initial data $(u^0,B^0)(r,z)\in H^s$. Then there is a $T>0$ such that there exists a unique local solution $(u,B,W)(t,r,z)\in C([0,T];H^s)$ to the system \eqref{2.1'} with the initial and boundary conditions
 \eqref{linearized-cartesian-boundary}.
\end{lemma}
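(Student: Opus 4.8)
The plan is to treat \eqref{2.1'} as a quasilinear symmetric-hyperbolic system with an incompressibility constraint and to run the standard energy method of Kato--Majda type for ideal incompressible MHD. First I would apply the Leray projection $\mathcal{P}$ of \eqref{projectP} to the momentum equation to eliminate the pressure gradient $\nabla W$, reducing \eqref{2.1'} to a closed evolution equation for $U=(u,B)$ of the schematic form
\begin{equation*}
\partial_t U=\mathcal{A}U+Q(U,U),
\end{equation*}
where $\mathcal{A}$ collects the linear terms coupling $U$ to the steady state $(v_0,H_0)$ --- all with coefficients that are smooth and bounded together with their derivatives on $\Omega$, since $\omega,b\in C^\infty[R_1,R_2)$ with $R_2<\infty$ --- and $Q(U,U)$ is the quadratic transport/stretching nonlinearity $B\cdot\nabla B-u\cdot\nabla u$ and $B\cdot\nabla u-u\cdot\nabla B$. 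Equivalently, solving for $(u,B)$ is the same as solving the full system \eqref{EPM} for $(v,H)=(v_0+u,H_0+B)$. The constraints $\operatorname{div}u=\operatorname{div}B=0$ and the boundary conditions $u\cdot\mathbf{n}=B\cdot\mathbf{n}=0$ are propagated by the flow, so it suffices to solve this reduced system and afterwards recover $\nabla W=(I-\mathcal{P})[\,\cdots\,]$.

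The core is an a priori $H^s$ estimate. Applying $\partial^\alpha$ with $|\alpha|\le s$, pairing with $\partial^\alpha U$ in $L^2$, and integrating by parts, the top-order contributions of the transport terms $v_0\cdot\nabla(\cdot)$ and $u\cdot\nabla(\cdot)$ vanish because the advecting fields are divergence free and tangent to $\partial\Omega$; likewise the magnetic cross terms (those involving $H_0\cdot\nabla B$ against $H_0\cdot\nabla u$, and $B\cdot\nabla B$ against $B\cdot\nabla u$) cancel pairwise when the energies of $u$ and $B$ are summed, reflecting the conservation of $\tfrac12\|v\|_{L^2}^2+\tfrac12\|H\|_{L^2}^2$. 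The remaining commutator and lower-order terms are controlled by the Kato--Ponce and Moser product estimates together with the boundedness of $\mathcal{P}$ on $H^s$, giving
\begin{equation*}
\frac{d}{dt}\|U(t)\|_{H^s}^2\le C\big(1+\|U(t)\|_{H^s}\big)\|U(t)\|_{H^s}^2,
\end{equation*}
so that $\|U(t)\|_{H^s}$ remains bounded on an interval $[0,T]$ with $T=T(\|(u^0,B^0)\|_{H^s})>0$.

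To construct a solution I would introduce a Friedrichs mollifier (or equivalently a successive-linearization scheme) to obtain regularized problems that are genuine ODEs in $H^s$ and hence solvable; the same cancellations yield uniform bounds on $[0,T]$, and an $L^2$ estimate on differences shows the family is Cauchy in $C([0,T];L^2)$. The limit then lies in $L^\infty([0,T];H^s)\cap C([0,T];L^2)$, and a Bona--Smith/interpolation argument upgrades the time regularity to $C([0,T];H^s)$. Uniqueness follows from an $L^2$ energy estimate for the difference of two solutions: since the coefficients now sit in $H^s\hookrightarrow W^{1,\infty}$ (using $s\ge3>\tfrac52$ in dimension three), no derivative is lost, and Gronwall forces the difference to vanish. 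Finally $W$ is recovered up to a constant from $\nabla W=(I-\mathcal{P})[\,\cdots\,]\in C([0,T];H^s)$.

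The hard part will be the derivative loss inherent in the quasilinear terms, compounded by the curved boundary $r=R_2$ and the non-commutativity $\partial_r^\alpha\mathcal{P}\ne\mathcal{P}\partial_r^\alpha$ noted in the introduction. This is exactly what the energy cancellations are designed to defeat: the top-order transport and magnetic-tension terms either integrate to zero (the boundary integrals on $r=R_2$, and on $r=R_1$ when $R_1>0$, being killed by $u\cdot\mathbf{n}=B\cdot\mathbf{n}=0$) or cancel between the two equations, leaving only commutators that cost no derivative. The boundedness of the Helmholtz--Leray projection on $H^s(\Omega)$ under the impermeability condition is what permits $\mathcal{P}$ to be absorbed in the estimates despite this non-commutativity.
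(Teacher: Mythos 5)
Your proposal is correct and coincides with the paper's approach: the paper's entire proof of this lemma is the single sentence ``This can be proved by standard methods \cite{Kato1975,Majda1984}'', and your sketch is a faithful elaboration of exactly those standard Kato--Majda arguments (pressure elimination via the Leray projection or the Neumann problem, symmetric-hyperbolic energy cancellations for the transport and magnetic-tension terms, mollification for construction, $L^{2}$ difference estimates for uniqueness, with $s\geq 3>\tfrac{5}{2}$ giving $H^{s}\hookrightarrow W^{1,\infty}$). No gap to report.
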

\begin{proof}
This can be proved by standard methods \cite{Kato1975,Majda1984}.
\end{proof}

Let $(u,B,W)(t,r,z)\in C([0,T];H^s)$ be a local axisymmetric solution
as constructed in Lemma \ref{Local-existence}. Let $(u^a,B^a,W^a)(t,r,z)$ be an axisymmetric approximate
solution as in Lemma \ref{4.1}.
In precise, $(u^a,B^a,W^a)(t,r,z)$ satisfies
\begin{align*}
\begin{cases}
 &\partial_t u^a+u^a\cdot\nabla v_0+v_0\cdot\nabla u^a-B^a\cdot\nabla H_0-H_0\cdot\nabla B^a+\nabla W^a
 \\&\quad\quad\quad\quad\quad=B^a\cdot\nabla B^a-u^a\cdot\nabla u^a+R_{N,1}^a,
 \\&
\partial_t B^a+u^a\cdot\nabla H_0+v_0\cdot\nabla B^a-B^a\cdot\nabla v_0-H_0\cdot\nabla u^a
\\&\quad\quad\quad\quad\quad=B^a\cdot\nabla u^a-u^a\cdot\nabla B^a+R_{N,2}^a,
\\&
\text{div}u^a=\text{div} B^a=0.
\end{cases}
\end{align*}
Denote
$$u^d=u-u^a, \quad B^d=B-B^a, \quad W^d=W-W^a.$$
Thus, $(u^d,B^d,W^d)(t,r,z)$ satisfies
\begin{align}\label{5.1}
\begin{cases}
  &\partial_t u^d+u^d\cdot\nabla v_0+v_0\cdot\nabla u^d-B^d\cdot\nabla H_0-H_0\cdot\nabla B^d+\nabla W^d
  \\&\quad\quad\quad\quad\quad=B^d\cdot\nabla B+B^a\cdot\nabla B^d-u^d\cdot\nabla u-u^a\cdot\nabla u^d-R_{N,1}^a,\\
&\partial_t B^d+u^d\cdot\nabla H_0+v_0\cdot\nabla B^d-B^d\cdot\nabla v_0-H_0\cdot\nabla u^d\\&
\quad\quad\quad\quad\quad=B^d\cdot\nabla u+B^a\cdot\nabla u^d-u^d\cdot\nabla B-u^a\cdot\nabla B^d-R_{N,2}^a,\\
&\text{div}u^d=\text{div} B^d=0,
\end{cases}
\end{align}
with the initial and boundary conditions
\begin{align}\label{ubdcdiffer}
\begin{cases}
(u^d,B^d)(0,r,z)=(u^{d0},B^{d0})(r,z),\quad (r,z)\in\Omega,\\
u^d\cdot\mathbf{n}|_{\partial\Omega}= B^d\cdot\mathbf{n}|_{\partial\Omega}=0,\\
(u^d,B^d)(t,r,z)\to (0,0) \quad as \quad r\to \infty,\quad \mbox{if}\quad R_2=\infty;\\
u^d(t,r,z)=u^d(t,r,z+2\pi), B^d(t,r,z)=B^d(t,r,z+2\pi).
\end{cases}
\end{align}
\begin{lemma}\label{le4.5}
Let $s\geq3$ and $(u^a,B^a,W^a)$,$R_{N}^a=(R_{N,1}^a$, $R_{N,2}^a)$ $\in L_{loc}^{\infty}(H^s)$ as in Lemma \ref{4.1}. Then, there exists a constant $C>0$ such that \begin{align}\label{differenceest}
&\frac{d}{dt}\|(u^d,B^d)\|_{H^s}^2\leq C (1+\|(u^a,B^a)\|_{H^{s+1}}+\|(u^d,B^d)\|_{H^s})\|(u^d,B^d)\|_{H^s}^2
+\|R_{N}^a\|_{H^s}^2.
\end{align}
\end{lemma}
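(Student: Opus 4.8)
The plan is to run a standard Friedrichs--Sobolev energy estimate on the symmetrizable system \eqref{5.1}, arranged so that the only terms forcing $\|(u^d,B^d)\|_{H^{s+1}}$ cancel, leaving the derivative loss to fall on the approximate solution as recorded in \eqref{differenceest}. Concretely, for each Cartesian multi-index $\alpha$ with $|\alpha|\le s$ I would apply $\partial^\alpha$ to the two evolution equations in \eqref{5.1}, pair the first with $\partial^\alpha u^d$ and the second with $\partial^\alpha B^d$ in $L^2(\Omega)$, and sum over $\alpha$. This gives
\[
\tfrac12\tfrac{d}{dt}\|(u^d,B^d)\|_{H^s}^2=\sum_{|\alpha|\le s}\Big(\langle\partial^\alpha(\cdots),\partial^\alpha u^d\rangle+\langle\partial^\alpha(\cdots),\partial^\alpha B^d\rangle\Big),
\]
and the task reduces to bounding each family of terms by the three types on the right of \eqref{differenceest}.

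Next come the principal-order cancellations. For the advection term I write $\partial^\alpha(v_0\cdot\nabla u^d)=v_0\cdot\nabla\partial^\alpha u^d+[\partial^\alpha,v_0\cdot\nabla]u^d$; the leading piece pairs to $\tfrac12\int_\Omega v_0\cdot\nabla|\partial^\alpha u^d|^2\,dx$, which vanishes by $\operatorname{div}v_0=0$ together with the no-penetration condition $v_0\cdot\mathbf n|_{\partial\Omega}=0$. The magnetic coupling $-H_0\cdot\nabla B^d$ in the $u^d$-equation and $-H_0\cdot\nabla u^d$ in the $B^d$-equation combine, after integration by parts using $\operatorname{div}H_0=0$ and $H_0\cdot\mathbf n|_{\partial\Omega}=0$, into $\int_\Omega H_0\cdot\nabla(\partial^\alpha u^d\cdot\partial^\alpha B^d)\,dx=0$, the usual MHD symmetry. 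Finally the pressure gradient is eliminated via $\operatorname{div}\partial^\alpha u^d=\partial^\alpha\operatorname{div}u^d=0$. These are precisely the terms $v_0\cdot\nabla\partial^\alpha u^d$ and $H_0\cdot\nabla\partial^\alpha(\cdot)$ that would otherwise force $\|(u^d,B^d)\|_{H^{s+1}}$, so after cancellation every surviving top-order $(u^d,B^d)$-factor is paired with a bounded coefficient; this is why \eqref{differenceest} carries only $\|(u^d,B^d)\|_{H^s}$.

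What remains are commutators and lower-order terms, handled by the Moser / Kato--Ponce calculus and the embedding $H^{s-1}(\Omega)\hookrightarrow L^\infty(\Omega)$, valid for $s\ge3$ (this is the role of that hypothesis). I would classify them as follows. Terms whose coefficients are the fixed smooth steady fields $v_0,H_0,\nabla v_0,\nabla H_0$ (here $H_0$ is constant, so $\nabla H_0=0$) contribute $\lesssim\|(u^d,B^d)\|_{H^s}^2$. Terms where $(u^d,B^d)$ interacts with the approximate solution, such as $B^d\cdot\nabla B^a$ or $u^a\cdot\nabla u^d$, place $s+1$ derivatives on $(u^a,B^a)$ after the commutator is taken, giving $\lesssim\|(u^a,B^a)\|_{H^{s+1}}\|(u^d,B^d)\|_{H^s}^2$. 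The genuinely quadratic self-interactions $B^d\cdot\nabla B^d$ and $u^d\cdot\nabla u^d$ yield, after the transport cancellation, a commutator bounded by $\|\nabla(u^d,B^d)\|_{L^\infty}\|(u^d,B^d)\|_{H^s}^2\lesssim\|(u^d,B^d)\|_{H^s}^3$. The remainder is treated by Cauchy--Schwarz, $\sum_{|\alpha|\le s}\langle\partial^\alpha R_N^a,\partial^\alpha(u^d,B^d)\rangle\le\tfrac12\|R_N^a\|_{H^s}^2+\tfrac12\|(u^d,B^d)\|_{H^s}^2$, the last term absorbed into the first type. Collecting the four contributions gives \eqref{differenceest}.

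The hard part is the boundary. The integrations by parts above generate boundary integrals on $\{r=R_2\}$ whenever $\alpha$ contains a normal ($\partial_r$) derivative: the conditions $u^d\cdot\mathbf n=B^d\cdot\mathbf n=0$ annihilate the purely tangential ($\partial_z$) contributions but not $\partial_r^k u^d\cdot\mathbf n$, which is exactly the non-commutativity $\partial_r\mathcal P\neq\mathcal P\partial_r$ flagged earlier and also obstructs the clean treatment of the pressure boundary term $\int_{\partial\Omega}\partial^\alpha W^d\,(\partial^\alpha u^d\cdot\mathbf n)\,dS$. Since \eqref{differenceest} is only a non-sharp Gr\"onwall-type bound rather than a sharp growth rate, it suffices to \emph{control} these boundary terms rather than to cancel them: I would recover $\nabla W^d$ from the elliptic problem obtained by taking the divergence of the $u^d$-equation, bound $\|\nabla W^d\|_{H^s}$ through elliptic regularity by the remaining fields, and estimate the surviving boundary integrals by the trace theorem, all of which feed back into the same three right-hand-side types. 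Verifying that these boundary contributions close \emph{without} a derivative loss on $(u^d,B^d)$ is the step demanding the most care.
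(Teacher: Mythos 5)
Your proposal follows essentially the same route as the paper: apply $\partial^{\alpha}$, $|\alpha|\le s$, to \eqref{5.1}, pair with $\partial^{\alpha}u^{d},\partial^{\alpha}B^{d}$, cancel the top-order transport and magnetic-coupling terms using the divergence-free conditions and no-penetration boundary conditions, classify the commutator terms into the three right-hand-side types, and control the pressure through the Neumann problem obtained by taking the divergence of the $u^{d}$-equation together with elliptic regularity. One clarification on your final paragraph, which is the only place your plan, read literally, would run into trouble: the paper never integrates the pressure term by parts, so the boundary integral $\int_{\partial\Omega}\partial^{\alpha}W^{d}\,(\partial^{\alpha}u^{d}\cdot\mathbf{n})\,dS$ that worries you never appears; once $\Vert\nabla W^{d}\Vert_{H^{s}}$ is bounded by the elliptic estimate, one simply pairs $\partial^{\alpha}\nabla W^{d}$ with $\partial^{\alpha}u^{d}$ via Cauchy--Schwarz. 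Estimating that boundary integral by the trace theorem, as you suggest, would cost half a derivative on $u^{d}$ beyond $H^{s}$ and would not close. Likewise, in all the remaining integrations by parts the boundary factor is the normal trace of the \emph{undifferentiated} advecting field ($v_{0}$, $u^{a}$, $u^{d}$, $H_{0}$, $B^{a}$, $B^{d}$), which vanishes identically by \eqref{ubdcdiffer}, not the trace of $\partial^{\alpha}u^{d}$; so no boundary contributions survive anywhere and no trace estimate is needed.
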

\begin{proof}
We first obtain, by applying the operator $\partial^{\alpha} (|\alpha|\leq s)$
to $\eqref{5.1}_1$, $\eqref{5.1}_2$, then taking the $L^2$
inner product of the resulting equations
with $\partial^{\alpha}u^d,\partial^{\alpha}B^d$ respectively, and summing them up, that
\begin{align}\label{difference}
\frac{1}{2}\frac{d}{dt}\|(u^d,B^d)\|_{H^s}^2\nonumber&=-\int_{\Omega}\partial^\alpha(u^d\cdot\nabla v_0+v_0\cdot\nabla u^d-B^d\cdot\nabla H_0-H_0\cdot\nabla B^d+\nabla W^d)\cdot\partial^\alpha u^d dx
\nonumber\\&\quad-\int_{\Omega}\partial^\alpha(u^d\cdot\nabla H_0+v_0\cdot\nabla B^d-B^d\cdot\nabla v_0-H_0\cdot\nabla u^d)\cdot\partial^\alpha B^d dx
  \nonumber\\&\quad+\int_{\Omega}\partial^\alpha(B^d\cdot\nabla B+B^a\cdot\nabla B^d-u^d\cdot\nabla u-u^a\cdot\nabla u^d-R_{N,1}^a)\cdot\partial^\alpha u^d dx
  \nonumber\\&\quad+\int_{\Omega}\partial^\alpha(B^d\cdot\nabla u+B^a\cdot\nabla u^d-u^d\cdot\nabla B-u^a\cdot\nabla B^d-R_{N,2}^a)\cdot\partial^\alpha B^d dx.
\end{align}
Then we need to estimate the right-hand side of \eqref{difference}.
It is easy to prove that
\begin{align*}
&\int_{\Omega}\partial^\alpha(u^d\cdot\nabla v_0)\cdot\partial^\alpha u^d dx\leq C\|u^d\|_{H^s}^2;\\
&\int_{\Omega}\partial^\alpha(B^d\cdot\nabla H_0)\cdot\partial^\alpha u^d dx\leq C\|(u^d,B^d)\|_{H^s}^2;\\
&\int_{\Omega}\partial^\alpha(u^d\cdot\nabla u^a)\cdot\partial^\alpha u^d+\partial^\alpha(B^d\cdot\nabla u^a)\cdot\partial^\alpha B^d dx\leq C\|u^a\|_{H^{s+1}}\|(u^d,B^d)\|_{H^s}^2;\\
&\int_{\Omega}\partial^\alpha(u^d\cdot\nabla H_0)\cdot\partial^\alpha B^d dx\leq C\|(u^d,B^d)\|_{H^s}^2;\\
&\int_{\Omega}\partial^\alpha(B^d\cdot\nabla v_0)\cdot\partial^\alpha B^d dx\leq C\|B^d\|_{H^s}^2;\\
&\int_{\Omega}\partial^\alpha(u^d\cdot\nabla B^a)\cdot\partial^\alpha B^d+\partial^\alpha(B^d\cdot\nabla B^a)\cdot\partial^\alpha u^d dx\leq C\|B^a\|_{H^{s+1}}\|(u^d,B^d)\|_{H^s}^2;\\
&\int_{\Omega}\partial^\alpha (R_{N,1}^a)\cdot\partial^\alpha u^d dx\leq C(\|u^d\|_{H^s}^2+\|R_{N,1}^a\|_{H^s}^2);\\
&\int_{\Omega}\partial^\alpha (R_{N,2}^a)\cdot\partial^\alpha B^d dx\leq C(\|B^d\|_{H^s}^2+\|R_{N,2}^a\|_{H^s}^2).
\end{align*}
Taking $\text{div}$ on $\eqref{5.1}_1$, we get the Neumann problem
\begin{align*}
\begin{cases}
-\Delta W^d=
f,\quad\quad\quad in \quad\Omega,\\
\frac{\partial W^d}{\partial \mathbf{n}}=
g,  \quad\quad\quad\quad on \quad \partial\Omega,
\end{cases}
\end{align*}
with $f=\text{div}[u^d\cdot\nabla v_0+v_0\cdot\nabla u^d-B^d\cdot\nabla H_0-H_0\cdot\nabla B^d-B^d\cdot\nabla B-B^a\cdot\nabla B^d+u^d\cdot\nabla u+u^a\cdot\nabla u^d+R_{N,1}^a]$ and $g=-[u^d\cdot\nabla v_0+v_0\cdot\nabla u^d-B^d\cdot\nabla H_0-H_0\cdot\nabla B^d-B^d\cdot\nabla B-B^a\cdot\nabla B^d+u^d\cdot\nabla u+u^a\cdot\nabla u^d+R_{N,1}^a]\cdot \mathbf{n}$. 
A standard elliptic estimate gives
\begin{align*}
\|\nabla W^d\|_{H^{s}}\leq C\|(u^a,B^a)\|_{H^{s}}\|(u^d,B^d)\|_{H^{s}}+C\|(u^d,B^d,R_{N,1}^a)\|_{H^{s}}.
\end{align*}
Thus, one has, by using $\text{div}v_0=\text{div}u^d=\text{div} u^a=0$
and the boundary conditions \eqref{ubdcdiffer}, that
\begin{align*}
&\int_{\Omega}\partial^\alpha(\nabla W^d)\cdot\partial^\alpha u^d dx\leq C\|(u^a,B^a)\|_{H^{s}}\|(u^d,B^d)\|_{H^{s}}^2+C\|(u^d,B^d,R_{N,1}^a)\|^2_{H^{s}};\\
&\int_{\Omega}\partial^\alpha(v_0\cdot\nabla u^d)\cdot\partial^\alpha u^d dx
\leq C\|u^d\|_{H^s}^2;\\
&\int_{\Omega}\partial^\alpha(u^d\cdot\nabla u^d)\cdot\partial^\alpha u^d dx
\leq C\|u^d\|_{H^s}^3;\\
&\int_{\Omega}\partial^\alpha(u^a\cdot\nabla u^d)\cdot\partial^\alpha u^d dx
\leq C\|u^a\|_{H^s}\|u^d\|_{H^s}^2;\\
&\int_{\Omega}\partial^\alpha(v_0\cdot\nabla B^d)\cdot\partial^\alpha B^d dx
\leq C\|B^d\|_{H^s}^2;\\
&\int_{\Omega}\partial^\alpha(u^d\cdot\nabla B^d)\cdot\partial^\alpha B^d dx
\leq C\|u^d\|_{H^s}\|B^d\|_{H^s}^2;\\
&\int_{\Omega}\partial^\alpha(u^a\cdot\nabla B^d)\cdot\partial^\alpha B^d dx
\leq C\|u^a\|_{H^s}\|B^d\|_{H^s}^2.
\end{align*}
It remains to estimate the higher-order terms in $\eqref{5.1}_{1}$ involving
$$B^d\cdot\nabla\partial^\alpha B^d\cdot\partial^\alpha u^d,\quad
B^a\cdot\nabla\partial^\alpha B^d\cdot\partial^\alpha u^d,\quad
H_0\cdot\nabla\partial^\alpha B^d\cdot\partial^\alpha u^d,$$
and
the higher-order terms in $\eqref{5.1}_{2}$ involving
$$B^d\cdot\nabla\partial^\alpha u^d\cdot\partial^\alpha B^d,\quad
B^a\cdot\nabla\partial^\alpha u^d\cdot\partial^\alpha B^d,\quad
H_0\cdot\nabla\partial^\alpha u^d\cdot\partial^\alpha B^d.$$
Notice that the boundary conditions \eqref{ubdcdiffer}
and $\text{div} B^d=0$. We find that
\begin{align*}
&\int_{\Omega}(B^d\cdot\nabla\partial^\alpha B^d\cdot\partial^\alpha u^d+B^d\cdot\nabla\partial^\alpha u^d\cdot\partial^\alpha B^d)dx
\\&=
-\int_{\Omega}\text{div}B^d \partial^\alpha B^d\cdot\partial^\alpha u^ddx=0.
\end{align*}
Similarly, the boundary conditions \eqref{ubdcdiffer}
and $\text{div} B^a=0, \text{div} H_0=0$ imply
\begin{align*}
&\int_{\Omega}(B^a\cdot\nabla\partial^\alpha B^d\cdot\partial^\alpha u^d+B^a\cdot\nabla\partial^\alpha u^d\cdot\partial^\alpha B^d)dx
=0
\end{align*}
and 
\begin{align*}
&\int_{\Omega}(H_0\cdot\nabla\partial^\alpha B^d\cdot\partial^\alpha u^d+H_0\cdot\nabla\partial^\alpha u^d\cdot\partial^\alpha B^d)dx
=0.
\end{align*}
So, we have
\begin{align*}
&\int_{\Omega}[\partial^\alpha(H_0\cdot\nabla B^d)\cdot\partial^\alpha u^d
  +\partial^\alpha(H_0\cdot \nabla u^d)\cdot\partial^\alpha B^d] dx\leq C\|(u^d,B^d)\|_{H^s}^2;\\
&\int_{\Omega}[\partial^\alpha(B^d\cdot\nabla B^d)\cdot\partial^\alpha u^d
  +\partial^\alpha(B^d\cdot\nabla u^d)\cdot\partial^\alpha B^d] dx\leq C\|u^d\|_{H^s}\|B^d\|_{H^s}^2;\\
&\int_{\Omega}[\partial^\alpha(B^a\cdot\nabla B^d)\cdot\partial^\alpha u^d
  +\partial^\alpha(B^a\cdot\nabla u^d)\cdot\partial^\alpha B^d] dx\leq C\|B^a\|_{H^{s+1}}\|(u^d,B^d)\|_{H^s}^2.
  \end{align*}
Hence, putting the above estimates into \eqref{difference}, leads to \eqref{differenceest}.
In conclusion, we finish the proof of Lemma \ref{le4.5}.

\end{proof}

\subsection{Proof of nonlinear instability}
Roughly speaking, our goal is to find a family of unstable axisymmetric solutions to \eqref{2.1'}-\eqref{linearized-cartesian-boundary}.
Let $(u^a,B^a,W^a)(t)$ be the axisymmetric approximate solution as in Lemma \ref{4.1}, for which $N$ will be chosen later.  For any $\delta>0$, let $(u^\delta,B^\delta,W^\delta)(t)$ be the unique axisymmetric solution of \eqref{2.1'}-\eqref{linearized-cartesian-boundary} with initial data $(u^a,B^a,W^a)(0,r,z)=\delta(\phi_{1},\xi_{1},\eta_1)(0,r,z)$, which was established by Lemma \ref{Local-existence}. 
Note that
\begin{align*}
(u^d,B^d,W^d)(t)=(u^\delta-u^a,B^\delta-B^a,W^\delta-W^a)(t),
\end{align*}
with
\begin{align*}
(u^d,B^d,W^d)(0)=0.
\end{align*}
By Lemma \ref{le4.5}, we get 
\begin{align}\label{detaildifference}
\frac{d}{dt}\|(u^d,B^d)\|_{H^s}^2&\leq C(1+\|(u^a,B^a)\|_{H^{s+1}}+\|(u^d,B^d)\|_{H^s})\|(u^d,B^d)\|_{H^s}^2\nonumber
+\|R_{N}^a\|_{H^s}^2\nonumber\\
&\leq C(1+\|(u^a,B^a)\|_{H^{s+1}}+\|(u^d,B^d)\|_{H^s})\|(u^d,B^d)\|_{H^s}^2\nonumber\\
&\quad+C_{s,N}\delta^{2(N+1)}e^{2(N+1)\Lambda t}.
\end{align}
On the one hand, let 
\begin{align*}
T=\sup\left\{t\big| \|(u^a,B^a)(t)\|_{H^{s+1}}\leq\frac{\zeta}{2}, \quad\|(u^d,B^d)(t)\|_{H^s}\leq\frac{\zeta}{2}\right\},
\end{align*}
where
$\zeta>0$ is a small constant which ensures local existence.
According to $(u^d,B^d)(0)=0$ and $\|(u^a,B^a)(0)\|_{H^{s+1}}=O(\delta)$, $T$ is
well defined for $\delta$ small enough.
Note that $T^\delta$ satisfies $\theta=\delta e^{\Lambda T^{\delta}}.$
Then, we claim $T^{\delta}\leq T$ for $\theta$ small enough. In fact, if otherwise, suppose that $T^{\delta}> T$, through the construction of the axisymmetric approximate solution in \eqref{approximate}, one has
\begin{align*}
\|(u^a,B^a)(t)\|_{H^{s+1}}&\leq C\sum_{j=1}^{N}\delta^{j}\|\phi_{j}\|_{H^{s+1}}+C\sum_{j=1}^{N}\delta^{j}\|\xi_{j}\|_{H^{s+1}}
\leq \sum_{j=1}^{N}C_j\delta^{j}e^{j\Lambda t}
\\&\leq \sum_{j=1}^{N}C_j(\delta e^{\Lambda T^{\delta}})^{j}=\sum_{j=1}^{N}C_j\theta^j\leq\frac{\zeta}{4}
\end{align*}
for $0\leq t\leq T.$
Together with the definition of $T$ and \eqref{detaildifference}, we obtain
\begin{align*}
\frac{d}{dt}\|(u^d,B^d)(t)\|_{H^s}^2
&\leq C(1+\frac{\zeta}{2})\cdot\|(u^d,B^d)\|_{H^s}^2
+C_{s,N}\delta^{2(N+1)}e^{2(N+1)\Lambda t}
\end{align*}
for $0\leq t\leq T$.
Then choosing $N$ sufficiently large such that
\begin{align*}
 C(1+\frac{\zeta}{2})<2(N+1)\Lambda,
\end{align*}
and using Gronwall inequality, we find
\begin{align*}
\|(u^d,B^d)(t)\|_{H^s}
&\leq C\delta^{(N+1)}e^{(N+1)\Lambda t}\leq C\theta^{N+1}<\frac{\zeta}{2}
\end{align*}
for $0\leq t\leq T$.
It leads to a contradiction to the definition of $T$. Consequently, we infer $T^{\delta}\leq T$ for $\theta$ small enough.

Based on the above statements, it is time to verify the nonlinear instability. Letting $t=T^{\delta}$, recalling the construction of the axisymmetric approximate solution in \eqref{approximate}, we infer
\begin{align*}
\|(u^a,B^a)(T^{\delta})\|_{L^2}
&\geq \delta\|(\phi_1,\xi_1)(T^{\delta})\|_{L^2}-\sum_{j=2}^{N}\delta^{j}\|(\phi_j,\xi_{j})(T^{\delta})\|_{L^2}
\\&\geq C\delta e^{\Lambda T^\delta}-\sum_{j=2}^{N}C_j\delta^{j}e^{j\Lambda T^\delta}
\\&=C \theta-\sum_{j=2}^{N}C_j\theta^j\geq\frac{C\theta}{2}.
\end{align*}
Therefore, we obtain
\begin{align*}
\|(u^\delta,B^\delta)(T^{\delta})\|_{L^2}
&\geq \|(u^a,B^a)(T^{\delta})\|_{L^2}-\|[(u^\delta,B^\delta)-(u^a,B^a)](T^{\delta})\|_{L^2}
\\&\geq \frac{C\theta}{2}-\|(u^d,B^d)(T^{\delta})\|_{L^2}
\\&\geq \frac{C\theta}{2}-C\theta^{N+1}
\geq \frac{\theta}{4}=\epsilon_0>0.
\end{align*}
We have thus proved Theorem \ref{nonlinearunstable}.

\section{Euler equations}\label{Eulercase}

Consider the Euler system for the incompressible
fluids
\begin{align}\label{Euler}
\begin{cases}
\partial_tv+v\cdot\nabla v+\nabla p=0,\\
\text{div}v=0.
\end{cases}
\end{align}
Here, $v(t,r,z)$ is the velocity, $p(t,r,z)$ is the pressure, where $t\geq0, (r,\theta,z)\in\Omega=\{R_1\leq r\leq R_2, (\theta,z)\in\mathbb{T}_{2\pi}\times\mathbb{T}_{2\pi}\}$, with $0\leq R_1<R_2<\infty.$ We impose the initial and boundary conditions
\begin{align}\label{Eulerboundary}
\begin{cases}
 v(0,r,z)=v^0(r,z),\\
 v_r(t,R_1,z)=0, \\
 v_r(t,R_2,z)=0, \\
 v(t,r,z)=v(t,r,z+2\pi).
\end{cases}
\end{align}
Let the steady solution $v_0(r,z)$ be the rotating flow
\begin{align}\label{Eulersteady}
 v_0(r,z)=\omega(r)r \mathbf{e}_\theta,
\end{align}
with angular velocity $\omega\in C^{2}(R_1,R_2)$,
and the perturbations be
\begin{align*}
 u(t,r,z)=v(t,r,z)-v_0(r,z), \quad W(t,r,z)=p(t,r,z)-p_0(r,z).
\end{align*}
The linearized Euler system around a given steady state $v_0(r,z)$
in cylindrical coordinates can be reduced to the following equations:
\begin{align}\label{linearizedEuler}
\begin{cases}
  \partial_t u_r=2\omega(r)u_{\theta}-\partial_{r} W,\\
\partial_t u_\theta=-\frac{u_r}{r}\partial_r(r^2\omega(r)),\\
\partial_t u_z=-\partial_{z} W,
\end{cases}
\end{align}
with the initial and boundary conditions
\begin{align}\label{newEulerboundary}
\begin{cases}
 u(0,r,z)=u^0(r,z),\\
 u_r(t,R_1,z)=0, \\
 u_r(t,R_2,z)=0, \\
 u(t,r,z)=u(t,r,z+2\pi).
\end{cases}
\end{align}
 Here, we list the nonlinear Rayleigh stability and instability results of the Euler system \eqref{Euler} in the following two theorems and give the sketches of the proofs in Section \ref{RSI}-\ref{newRSI}.
\begin{theorem}\label{nonEuler}
Assume that
the function $\omega(r)\in C^{2}(R_1,R_2)$, 
 the linear stability condition $\frac{\partial_r(\omega^2r^4)}{r^3}>0$
 and the existence of global axisymmetric weak solution to \eqref{Euler}-\eqref{Eulerboundary} satisfying:\\
1)The total energy defined by \eqref{Eulerenergy} is non-increasing with respect to $t$.\\
2)The distance function $d(t)$ defined by \eqref{distanceEuler} is continuous with respect to $t$.\\
3)The functional $\int_{\Omega}g(rv_\theta) dx$ is conserved for the weak solutions.\\
Then the steady state $v_0(x)$ in \eqref{Eulersteady} of \eqref{Euler}-\eqref{Eulerboundary} is nonlinearly stable, in the sense that
if
$d(0)=d(v^0(x),v_0(x))$ is small enough,
then the weak solution $v(t,x)$ of \eqref{Euler}-\eqref{Eulerboundary} satisfies
\begin{align*}
d(t)=d(v(t,x),v_0(x))
\leq d(0)
\end{align*}
for any $t\geq 0$.
\end{theorem}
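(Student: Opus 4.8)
The plan is to adapt the Casimir--energy method used for Theorem \ref{nonlinearstable} to the simpler Euler setting, where the absence of a magnetic field means the angular momentum $rv_\theta$ is transported by the flow and the relevant quadratic form is directly coercive. First I would exploit the conservation of $rv_\theta$: since the axisymmetric Euler equations \eqref{linearizedEuler} carry $rv_\theta$ along particle trajectories, the functional $\int_\Omega g(rv_\theta)\,dx$ is a Casimir invariant for every $g\in C^1(\mathbb{R})$ (this is assumption 3). I would then introduce the Casimir--energy functional
\[
E_c(v)=E(v)+\int_\Omega g(rv_\theta)\,dx,
\]
with $E(v)$ the total energy \eqref{Eulerenergy}, and choose $g$ so that the steady flow $v_0=\omega(r)r\,\mathbf{e}_\theta$ in \eqref{Eulersteady} is a critical point of $E_c$.

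The critical-point condition reduces to $g'(r^2\omega)=-\omega$, and here the Rayleigh condition $\partial_r(\omega^2 r^4)>0$ is essential. Writing $\ell(r)=r^2\omega(r)$ for the background angular momentum, it gives $\partial_r(\ell^2)=2\ell\,\partial_r\ell>0$, so $\ell$ is strictly monotone (in particular $\omega$ never vanishes) and hence invertible; I can then set $g'(s)=-\omega(\ell^{-1}(s))$ on the range of $\ell$ and extend it to a compactly supported $C^3$ function off that range exactly as in \eqref{extension1}--\eqref{extension2}. Differentiating yields $g''(\ell(r))=-\omega'(r)/\partial_r(r^2\omega)$, so the second variation of $E_c$ at $v_0$ collapses to
\[
\int_\Omega\Big(|\delta v_r|^2+|\delta v_z|^2+\big(1+r^2 g''(\ell(r))\big)|\delta v_\theta|^2\Big)\,dx
=\int_\Omega\Big(|\delta v_r|^2+|\delta v_z|^2+\frac{4r^3\omega^2}{\partial_r(\omega^2r^4)}|\delta v_\theta|^2\Big)\,dx.
\]
Under $\partial_r(\omega^2r^4)>0$ the weight $\frac{4r^3\omega^2}{\partial_r(\omega^2r^4)}$ is continuous and strictly positive on the compact interval $[R_1,R_2]$ (with finite positive limit $1$ as $r\to0$ when $R_1=0$), so the quadratic form is bounded above and below by constant multiples of the distance function $d(t)$ of \eqref{distanceEuler}.

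With this coercivity in hand the remainder follows the scheme of Theorem \ref{nonlinearstable}, but is strictly simpler: there are no negative or kernel directions to project out, so the projection operator $P_K$ is unnecessary. I would verify that $\int_\Omega g(rv_\theta)\,dx\in C^2(L^2)$ by the analogue of Lemma \ref{Fderivate} (using the boundedness of $g',g'',g'''$), Taylor-expand $E_c$ about $v_0$ to obtain both a lower bound $E_c(v)-E_c(v_0)\ge \tau d-o(d)$ and an upper bound $E_c(v(0))-E_c(v_0)\le C d(0)+o(d(0))$, and then combine the non-increase of $E$ (assumption 1) with the conservation of the Casimir (assumption 3) to get $E_c(v(t))\le E_c(v(0))$. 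Chaining these inequalities and invoking the continuity of $d$ (assumption 2) in the usual continuation argument yields $d(t)\lesssim d(0)$.

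I expect the main obstacle to be purely technical: confirming the $C^2$ regularity of the Casimir functional and the validity of the remainder $o(d)$ in the energy space --- the same delicate point handled by Lemma \ref{Fderivate} --- together with checking that the weight $\frac{4r^3\omega^2}{\partial_r(\omega^2r^4)}$ stays bounded below by a positive constant near a possible axis $r=0$, so that the coercive term genuinely controls $\int_\Omega|v_\theta-\mathbf{v}_0|^2\,dx$ rather than a degenerate weighted norm.
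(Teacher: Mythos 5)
Your construction of the Casimir--energy functional is exactly the paper's: the same choice $g'(\tau)=-\omega(\beta(\tau))$ on the range of $\ell(r)=r^2\omega(r)$ (invertibility coming from the Rayleigh condition), the same criticality identity $r\omega+rg'(r^2\omega)=0$, and the same second-variation weight $\frac{4r^3\omega^2}{\partial_r(\omega^2r^4)}$. Where you diverge is the endgame. The paper does not Taylor-expand at all: because the distance \eqref{distanceEuler} is \emph{defined} to contain the relative Casimir terms,
\[
d_2=\int_\Omega\Big[\tfrac12|v_\theta-r\omega|^2+g(rv_\theta)-g(r^2\omega)-rg'(r^2\omega)(v_\theta-r\omega)\Big]dx,
\]
the criticality identity makes
\[
d(t)=E_c(v(t))-E_c(v_0)
\]
an exact algebraic identity (expand $\tfrac12\left(|v_\theta|^2-|r\omega|^2\right)$ and substitute $g'(r^2\omega)=-\omega$). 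Nonnegativity of $d_2$ follows from the extension requirement $1+r^2g''\geq0$ in \eqref{eulerextension} via the intermediate-value form $d_2=\tfrac12\int_\Omega(1+r^2g''(r\tilde{v}_\theta))(v_\theta-r\omega)^2dx$. Then assumptions 1) and 3) give $d(t)=E_c(v(t))-E_c(v_0)\leq E_c(v(0))-E_c(v_0)=d(0)$ immediately: no analogue of Lemma \ref{Fderivate}, no $o(d)$ remainders, no continuation argument, no smallness of $d(0)$, and the constant is exactly $1$, as the theorem asserts.

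Your route has two concrete problems. First, even with all technical steps completed, Taylor expansion plus coercivity plus continuation (the scheme of Theorem \ref{nonlinearstable}) yields only $d(t)\leq Cd(0)$ for some constant $C$, in general larger than $1$, which is strictly weaker than the stated conclusion $d(t)\leq d(0)$. Second, your claimed two-sided equivalence between the second-variation quadratic form and the distance $d$ of \eqref{distanceEuler} is not automatic: $d_2$ involves $1+r^2g''$ evaluated at intermediate values $r\tilde{v}_\theta$ that can leave the range of $\ell$, and the paper's extension only guarantees $1+r^2g''\geq0$ there, not a uniform lower bound $\geq c>0$; hence $d_2$ need not dominate $\|v_\theta-\mathbf{v}_0\|_{L^2}^2$, and your lower bound $E_c(v)-E_c(v_0)\geq\tau d-o(d)$ (with $d$ the paper's distance) does not follow from positivity of the second variation alone. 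This could be repaired by strengthening the extension so that $1+r^2g''\geq c>0$ on all of $\mathbb{R}$, but the cleaner fix is to notice the exact identity above, which makes the entire expansion machinery unnecessary.
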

\begin{theorem}\label{nonunEuler}
Assume that 
the function $\omega(r)\in C^{\infty}(R_1,R_2)$.
If $\exists  r_0\in(R_1,R_2)$ such that $\frac{\partial_r(\omega^2r^4)}{r^3}|_{r=r_0}<0$,
then the steady state $v_0(x)$ in \eqref{Eulersteady} of \eqref{Euler}-\eqref{Eulerboundary} is nonlinearly unstable in the sense that for any $s\geq0$ large enough, there exists $\epsilon_{0}>0$, such that for any small $\delta>0$, there exists a family of classical axisymmetric solutions $v^\delta(t,x)$ to \eqref{Euler}-\eqref{Eulerboundary} satisfying
\begin{align*}
\|v^\delta(0,x)-v_0(x)\|_{H^s}\leq\delta
\end{align*}
and
\begin{align*}
\sup_{0\leq t\leq T^\delta}\|v^\delta(t,x)-v_0(x)\|_{L^2}\geq\epsilon_{0},
\end{align*}
where $T^\delta=O(|\ln\delta|).$
\end{theorem}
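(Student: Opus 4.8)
The plan is to prove Theorem \ref{nonunEuler} along the lines of Theorem \ref{nonlinearunstable}: a sharp linear growth estimate, a higher-order approximate solution, an energy estimate for the difference between the genuine and approximate solutions, and a bootstrap argument. Write $\Phi(r):=\frac{\partial_r(\omega^2 r^4)}{r^3}$ and set $\Lambda:=\sqrt{\max_{r\in[R_1,R_2]}\max\{0,-\Phi(r)\}}$, which is positive by the hypothesis $\Phi(r_0)<0$. The essential new feature, relative to the MHD case, is that for the Euler flow the unstable spectrum is continuous and the maximal rate $\Lambda$ is not attained, so Theorem \ref{T:abstract} cannot be invoked directly.

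First I would prove the linear growth bound $\|u(t)\|_{H^s}\lesssim e^{\Lambda t}\|u(0)\|_{H^s}$ for \eqref{linearizedEuler}, the analogue of Lemma \ref{linearexponential}. I would Fourier-decompose in $z$ and, for each frequency $k$, eliminate $W$, $u_z$ and $u_\theta$ via incompressibility and \eqref{linearizedEuler}, reducing the dynamics to the scalar equation $(I-k^{-2}\mathcal{D})\,\partial_{tt}u_r=-\Phi\,u_r$, where $\mathcal{D}u_r=\partial_r(\frac1r\partial_r(ru_r))$. The operator $\mathcal{K}_k:=(I-k^{-2}\mathcal{D})^{-1}\Phi$ is self-adjoint in the inner product induced by the positive operator $I-k^{-2}\mathcal{D}$, and its Rayleigh quotient shows $\mathrm{spec}(\mathcal{K}_k)\subset[-\Lambda^2,\infty)$ uniformly in $k$; hence $\partial_{tt}u_r=-\mathcal{K}_k u_r$ yields $\|u_r(t)\|\lesssim e^{\Lambda t}$ for each mode. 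Summing over $k$ controls $\|\partial_z^\alpha u(t)\|_{L^2}$, and the radial derivatives $\|\partial_r^\alpha u(t)\|_{L^2}$ are recovered exactly as in Steps 2--3 of the MHD sketch, by differentiating, isolating an anti-self-adjoint principal part, and closing with Duhamel's principle and induction on $|\alpha|$.

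Second, since $\Lambda$ is not attained, I would select a genuine growing normal mode at a fixed large frequency $k_0$. For every $k\neq0$ the operator $\mathcal{K}_k$ has a negative eigenvalue $\mu_{\min}(k)\in(-\Lambda^2,0)$ (take a trial function supported where $\Phi<0$), with $\mu_{\min}(k)\to-\Lambda^2$ as $k\to\infty$; the associated eigenfunction gives a genuine solution of \eqref{linearizedEuler} growing like $e^{\lambda_{k_0}t}$ with $\lambda_{k_0}=\sqrt{-\mu_{\min}(k_0)}$, smooth by elliptic bootstrapping. I would fix $k_0$ so large that $\lambda_{k_0}>\Lambda/2$. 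Taking $\phi_1$ to be this mode, I would then build $u^a=\sum_{j=1}^N\delta^j\phi_j$ as in Lemma \ref{4.1}; the point of the choice $2\lambda_{k_0}>\Lambda$ is that $\int_0^t e^{\Lambda(t-\tau)}e^{j\lambda_{k_0}\tau}\,d\tau\lesssim e^{j\lambda_{k_0}t}$ for $j\ge2$, so that the inductive bounds $\|\phi_j\|_{H^s}\lesssim e^{j\lambda_{k_0}t}$ and $\|R_N^a\|_{H^s}\lesssim\delta^{N+1}e^{(N+1)\lambda_{k_0}t}$ both close even though the semigroup is only controlled by the larger rate $\Lambda$.

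Finally, writing $u^d=u^\delta-u^a$ for the local solution $u^\delta$ furnished by Lemma \ref{Local-existence}, I would derive the energy inequality $\frac{d}{dt}\|u^d\|_{H^s}^2\lesssim(1+\|u^a\|_{H^{s+1}}+\|u^d\|_{H^s})\|u^d\|_{H^s}^2+\|R_N^a\|_{H^s}^2$, the analogue of Lemma \ref{le4.5}; this is simpler than the MHD computation because there is no magnetic field, the top-order transport terms canceling by skew-symmetry of $v_0\cdot\nabla$ together with $\mathrm{div}\,u^d=0$, and $\nabla W^d$ being controlled by a standard elliptic (Neumann) estimate. Choosing $N$ large and applying Gronwall's inequality then bounds $\|u^d(t)\|_{H^s}$ by $\delta^{N+1}e^{(N+1)\lambda_{k_0}t}$ on $0\le t\le T^\delta$ with $\theta=\delta e^{\lambda_{k_0}T^\delta}$, whence at $t=T^\delta$ the leading term $\delta\phi_1$ dominates and gives $\sup_{0\le t\le T^\delta}\|v^\delta-v_0\|_{L^2}\ge\epsilon_0$ with $T^\delta=O(|\ln\delta|)$. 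I expect the first two steps to be the main obstacle: the continuity of the unstable spectrum rules out a direct use of the Hamiltonian machinery of Theorem \ref{T:abstract} and forces both the mode-by-mode growth estimate and the careful selection of a near-maximal genuine eigenmode satisfying $2\lambda_{k_0}>\Lambda$.
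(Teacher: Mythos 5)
Your proposal is correct and follows the same overall Grenier-type architecture the paper uses for Theorem \ref{nonlinearunstable} (sharp linear growth bound, higher-order approximate solution, energy estimate for the difference, bootstrap), but it departs from the paper in how the key linear lemma is obtained and in one point of rigor. For the growth bound, the paper's Lemma \ref{linearexponentialEuler} works directly on the vectorial second-order equation \eqref{eulersecondorder}: multiplying by $\partial_t(u_r,u_z)$, integrating, and applying Gronwall gives $\|u(t)\|_{L^2}\lesssim e^{\sqrt{a_1}t}\|u(0)\|_{L^2}$ with no spectral theory at all, after which $u_\theta$ and all $z$- and $r$-derivatives are recovered by Duhamel and induction; your mode-by-mode reduction to $(I-k^{-2}\mathcal{D})\partial_{tt}u_r=-\Phi u_r$ and the lower bound $\mathcal{K}_k\geq-\Lambda^2$ in the weighted inner product reach the same conclusion, at the cost of invoking the spectral theorem (and you should note the possible factor $(1+t)$ from spectrum near zero, harmless since any rate in $(\Lambda,2\lambda_{k_0})$ suffices). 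Where your write-up is actually \emph{more} careful than the paper is the mode selection: since the maximal rate $\Lambda=\sqrt{a_1}$ is only a supremum of the attained rates $\lambda_k$ of \eqref{smalllambdak} and is never achieved (the unstable spectrum is continuous, so Theorem \ref{T:abstract} is unavailable), Grenier's iteration only closes if the chosen genuine mode satisfies $2\lambda_{k_0}>\Lambda$, which by \eqref{limlambdak} can be arranged by taking $k_0$ large; the paper states only that the result follows "by replacing Lemma \ref{linearexponential} with Lemma \ref{linearexponentialEuler}" and leaves this condition, which is trivial in the MHD case where the maximal mode exists, entirely implicit. Your remaining steps (the analogue of Lemma \ref{4.1} with rates $e^{j\lambda_{k_0}t}$ against the semigroup bound $e^{\Lambda t}$, the simplified analogue of Lemma \ref{le4.5} without magnetic terms, and the bootstrap with $\theta=\delta e^{\lambda_{k_0}T^\delta}$) match the paper's intended argument.
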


\subsection{Nonlinear Rayleigh stability}\label{RSI}

We assume Rayleigh stability condition $\frac{\partial_r(\omega^2r^4)}{r^3}>0$, which implies $\omega\neq 0$. Without loss of generality, we take $\omega(r)>0.$
Let $\tau(r)=r^2\omega(r)$, then $r$ can be written as $r=\beta(\tau)$, since $\frac{\partial_r(\omega^2r^4)}{r^3}>0$.
Define the total energy $E(v)$:
\begin{align}\label{Eulerenergy}
E(v)=\frac{1}{2}\int_{\Omega}(|v_r|^2+|v_z|^2+|v_\theta|^2) dx,
\end{align}
and the Casimir-energy $E_{c}(v)$:
\begin{align*}
E_{c}(v)=E(v)+\int_{\Omega}g(rv_\theta) dx,
\end{align*}
where $g(\tau)\in C^2(\mathbb{R})$ is a function which satisfies
\begin{align}\label{eulerextension}
g'(\tau)=\begin{cases}
 -\omega(\beta(\tau)),\quad  \tau\in (R_1\omega(R_1),R_2\omega(R_2)) ,\\
\text{some $C_0^\infty$ extension function such that $1+r^2g''(\tau)\geq0$, }  \tau\notin (R_1\omega(R_1),R_2\omega(R_2)).
\end{cases}
\end{align}
Hence, we deduce from \eqref{eulerextension} that $g'(\tau)=-\omega(\beta(\tau))$ and
$$1+r^2(\tau)g''(\tau)=1-r^2\frac{\partial_r(\omega(r))}{\partial_r(r^2\omega(r))}\bigg|_{r=\beta(\tau)}=\frac{4r^3\omega^2(r)}{\partial_r(r^4\omega^2(r))}\bigg|_{r=\beta(\tau)}$$
for $\tau\in (R_1\omega(R_1),R_2\omega(R_2)).$
The steady state $v_0=\omega r\mathbf{e}_\theta$ has the following variational structure.
First, $v_0$ is a critical point of $E_c$, i.e.,
\begin{align*}
\langle \nabla E_c(v_0),\delta v\rangle
=\int_\Omega(r\omega+rg'(r^2\omega))\delta v_\theta
 dx=0,
\end{align*}
and the second-order variation of $E_c(v)$
at $v_0$,
\begin{align*}
\langle \nabla^2E_c(v_0)\delta v,\delta v\rangle
&=\int_{\Omega}[|\delta v_r|^2+|\delta v_z|^2+(1+r^2g''(r^2\omega))|\delta v_\theta|^2] dx
\notag\\
&=\int_{\Omega}\left(|\delta v_r|^2+|\delta v_z|^2+\frac{4r^3\omega^2(r)}{\partial_r(r^4\omega^2(r))}|\delta v_\theta|^2\right)
dx>0
\end{align*}
for any $\delta v=(\delta v_r,\delta v_\theta,\delta v_z)\neq0\in L^2$.
Define the nonlinear distance functional
\begin{align}\label{distanceEuler}
d((v_r,v_\theta,v_z),(0,r\omega,0))=d(t)&=d_1(t)+d_2(t),
\end{align}
where
\begin{align*}
d_1(t)&=d_1((v_r,v_z),(0,0))=\frac{1}{2}\int_{\Omega}(|v_r|^2+|v_z|^2)dx,\\
d_2(t)&=d_2(v_\theta,r\omega)=\int_{\Omega}\left[\frac{1}{2}|v_\theta-r\omega|^2+g(rv_\theta)-g(r^2\omega)-rg'(r^2\omega)(v_\theta-r\omega)\right]dx.
\end{align*}
It is easy to check that $d((v_r,v_\theta,v_z),(0,r\omega,0))$ is well defined for $v_r,v_\theta,v_z\in L^2$ and is a distance functional
since
\begin{align*}
d_2(v_\theta,r\omega)&=\frac{1}{2}\int_{\Omega}(1+r^2g''(r\tilde{v}_\theta))(v_\theta-r\omega)^2dx\geq0,
\end{align*}
where $\tilde{v}_\theta$ is between $r\omega$ and $v_\theta$.
It is obvious that
\begin{align*}
d(t) = E_c(v)-E_c(v_0).
\end{align*}
Thus $d(t)\leq d(0)$. This proves Theorem \ref{nonEuler} on the nonlinear Rayleigh stability.

\subsection{Nonlinear Rayleigh instability}\label{newRSI}

If $\exists  r_0$ such that $\Upsilon(r)|_{r=r_0}=\frac{\partial_r(\omega^2r^4)}{r^3}|_{r=r_0}<0$,
it follows from \eqref{linearizedEuler} that
\begin{equation}\label{eulersecondorder}
\partial_{tt}\left(
\begin{array}
[c]{c}%
u_r\\
u_z
\end{array}
\right)= \mathcal{P}\begin{pmatrix}-\frac{\partial_r(\omega^2r^4)}{r^3}u_r\\
0\end{pmatrix}:=\widetilde{\mathbb{L}}_E \left(
\begin{array}
[c]{c}%
u_r\\
u_z
\end{array}
\right)
.
\end{equation}
In this case, two fundamental differences between Euler equations and MHD equations are:

1) Unstable continuous spectra vs discrete spectrum

By Theorem \ref{linearstability}, the MHD equations have unstable discrete spectrum and the unstable subspace for MHD equations is finite dimensional. However, we shall show that Euler equations have unstable continuous spectra and the unstable subspace for Euler equations is infinite dimensional. By the Rayleigh instability condition $\Upsilon(r)|_{r=r_0}<0$, 
we know that $\text{range}(\Upsilon(r))=[-a_1,a_2]$ for some real numbers $a_1:=-\min\limits_{r\in [R_1,R_2]}\frac{\partial_r(\omega^2r^4)}{r^3}>0$ and $a_2.$
It is obvious that $\widetilde{\mathbb{L}}_E$ is a bounded and symmetric operator, and hence $\widetilde{\mathbb{L}}_E$ is a self-adjoint operator. Then, by the same arguments as in the proof of Lemma 4.6
in \cite{LinWang2022}, we can
prove that
$\sigma_{ess}(\widetilde{\mathbb{L}}_E)\supset\text{range}(\Upsilon(r))=[-a_1,a_2].$ Therefore, we obtain a linear Rayleigh instability result of the system \eqref{Euler} similar to Theorem 1.2 in \cite{LinWang2022}.

2) Define
\begin{align}\label{smalllambdak}
\lambda_k^2=\sup\frac{\int_\Omega-\frac{\partial_r(\omega^2r^4)}{r^3}|u_r|^2dx}{\int_\Omega\left(|u_r|^2+\frac{|\partial_r(ru_r)|^2}{k^2r^2}\right)dx}>0
\end{align}
for any $k\in \mathbb{Z}.$
Then
\begin{align}\label{limlambdak}
\lambda_k^2\rightarrow \sup_{u_r\in L^2}\frac{\int_\Omega-\frac{\partial_r(\omega^2r^4)}{r^3}|u_r|^2dx}{\int_\Omega|u_r|^2dx}=-\min_{r\in [R_1,R_2]}\frac{\partial_r(\omega^2r^4)}{r^3}=a_1 \quad\text{as} \quad k\rightarrow\infty.
\end{align}
Note that the largest growth rate of Euler system is $\sqrt{a_1}$ and
 the maximal unstable growing mode of Euler system is obtained in the high frequency limits.
But from
Theorem \ref{linearstability}, we know that the maximal unstable growing mode of MHD system is obtained in the low frequency, and it is stable for high frequencies.
Moreover, a nonlinear Rayleigh instability result similar to Theorem \ref{nonlinearunstable} can be obtained through replacing Lemma \ref{linearexponential} by Lemma \ref{linearexponentialEuler}.
\begin{lemma}\label{linearexponentialEuler}
Let $u(t)$ be the solution to the linearized equations \eqref{linearizedEuler} with the initial and boundary conditions
\eqref{newEulerboundary}. Then, for any $s\geq0$, there holds
\begin{align*}
\|u(t)\|_{H^s}\lesssim e^{\sqrt{a_1} t}\|u(0)\|_{H^{s}}.
\end{align*}
\end{lemma}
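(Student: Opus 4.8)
The plan is to follow the three-step scheme of Lemma \ref{linearexponential}, but with the abstract Hamiltonian group estimate (which is unavailable here, since the Rayleigh-unstable Euler flow has infinitely many unstable directions and continuous unstable spectrum) replaced by a direct spectral analysis of the bounded self-adjoint second-order operator $\widetilde{\mathbb{L}}_E$ from \eqref{eulersecondorder}. Writing $\vec u=(u_r,u_z)$, the first task is the $L^2$ bound. Since $\vec u$ is divergence free and $\mathcal{P}$ is the orthogonal projection onto such fields, $\langle\widetilde{\mathbb{L}}_E\vec u,\vec u\rangle=\int_\Omega-\Upsilon(r)|u_r|^2\,dx\le a_1\|\vec u\|_{L^2}^2$, so $\sigma(\widetilde{\mathbb{L}}_E)\subseteq(-\infty,a_1]$. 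Representing the solution of $\partial_{tt}\vec u=\widetilde{\mathbb{L}}_E\vec u$ through the spectral measure of $\widetilde{\mathbb{L}}_E$, each spectral mode evolves by $\cosh(\sqrt\lambda t)$ and $\sinh(\sqrt\lambda t)/\sqrt\lambda$ (oscillatory for $\lambda<0$), all bounded by $Ce^{\sqrt{a_1}t}$ on $\sigma(\widetilde{\mathbb{L}}_E)$; this pins the sharp rate to $\sqrt{a_1}$ and gives $\|\vec u(t)\|_{L^2}\lesssim e^{\sqrt{a_1}t}(\|\vec u(0)\|_{L^2}+\|\partial_t\vec u(0)\|_{L^2})$.

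Since $\partial_t\vec u(0)=\mathcal{P}(2\omega u_\theta(0),0)$, the data is controlled by $\|u(0)\|_{L^2}$; then integrating $\partial_t u_\theta=-r^{-1}\partial_r(r^2\omega)u_r$ in time recovers $\|u_\theta(t)\|_{L^2}\lesssim e^{\sqrt{a_1}t}\|u(0)\|_{L^2}$, completing the $L^2$ estimate for the full field $u=(u_r,u_\theta,u_z)$. Because all coefficients depend only on $r$, the operator $\partial_z$ commutes with the whole system, so applying $\partial_z^\alpha$ ($|\alpha|\le s$) reproduces the same self-adjoint structure and yields $\|\partial_z^\alpha u(t)\|_{L^2}\lesssim e^{\sqrt{a_1}t}\|u(0)\|_{H^s}$. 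For the $r$-derivatives I would induct on $s$ exactly as in Steps 2--3 of Lemma \ref{linearexponential}: applying $\partial_r^\alpha$ to $\partial_t u_\theta=-r^{-1}\partial_r(r^2\omega)u_r$ and to $\partial_t\vec u=\mathcal{P}(2\omega u_\theta,0)$ produces, on the right, the same principal coupling acting on the differentiated unknowns plus a source involving only lower-order $r$-derivatives, which is controlled by the induction hypothesis; Duhamel's principle against the second-order propagator, whose kernels obey the same $e^{\sqrt{a_1}(t-s)}$ bound, then closes the estimate and assembles $\|u(t)\|_{H^s}\lesssim e^{\sqrt{a_1}t}\|u(0)\|_{H^s}$.

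The main obstacle is the same non-commutativity $\partial_r^\alpha\mathcal{P}\ne\mathcal{P}\partial_r^\alpha$ that was flagged for the MHD case: the source $\partial_r^\alpha\mathcal{P}(2\omega u_\theta,0)$ cannot be treated by moving $\partial_r^\alpha$ inside the projection. I would bound it instead via the $H^{|\alpha|}\to H^{|\alpha|}$ boundedness of the Leray projection, $\|\partial_r^\alpha\mathcal{P}(2\omega u_\theta,0)\|_{L^2}\lesssim\|u_\theta\|_{H^{|\alpha|}}$, reducing it to the already-established growth of $u_\theta$ in $H^{|\alpha|}$, which itself follows by integrating its evolution equation against the controlled $r$-derivatives of $u_r$. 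A secondary subtlety, absent in the MHD argument, is sharpness of the exponent: the Euler propagator is genuinely growing rather than a norm-one skew-adjoint semigroup (as $A_1,A_2$ were), so resonance of a source growing at rate $\sqrt{a_1}$ with the fastest mode can a priori create a polynomial prefactor $t^k e^{\sqrt{a_1}t}$. One checks that against the oscillatory part of the spectrum the Duhamel kernel $\sin((t-s)\sqrt{-\widetilde{\mathbb{L}}_E})/\sqrt{-\widetilde{\mathbb{L}}_E}$ integrates cleanly, and that any remaining polynomial factor is harmless for the instability construction, since $t^k e^{\sqrt{a_1}t}$ is absorbed into $e^{(\sqrt{a_1}+\eta)t}$ for arbitrarily small $\eta>0$ in the approximate-solution scheme of Lemma \ref{4.1}.
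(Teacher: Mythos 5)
Your $L^{2}$ and $\partial_{z}^{\alpha}$ steps are sound: representing solutions of $\partial_{tt}\vec{u}=\widetilde{\mathbb{L}}_{E}\vec{u}$ through the spectral measure of the bounded self-adjoint operator $\widetilde{\mathbb{L}}_{E}$, whose spectrum lies in $(-\infty,a_{1}]$, is a valid alternative to the paper's route (the paper instead multiplies \eqref{eulersecondorder} by $\partial_{t}\vec{u}$, integrates the resulting energy identity in time and applies Gronwall), and both yield $\|(u_{r},u_{z})(t)\|_{L^{2}}\lesssim e^{\sqrt{a_{1}}t}\|u(0)\|_{L^{2}}$, after which $u_{\theta}$ follows by integrating $\eqref{linearizedEuler}_{2}$. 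The genuine gap is in the $r$-derivative induction, and it sits exactly where the paper's proof is engineered. At top order $|\alpha|=s$ your two estimates feed on each other: the equation for $\partial_{r}^{\alpha}u_{\theta}$ contains the Leibniz term $-\frac{\partial_{r}(r^{2}\omega)}{r^{2}}\,\partial_{r}^{\alpha}(ru_{r})$, i.e. $s$ full $r$-derivatives of $u_{r}$, while your bound for $\partial_{r}^{\alpha}(u_{r},u_{z})$ invokes $\|\partial_{r}^{\alpha}\mathcal{P}(2\omega u_{\theta},0)\|_{L^{2}}\lesssim\|u_{\theta}\|_{H^{s}}$, i.e. $s$ full $r$-derivatives of $u_{\theta}$. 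The induction hypothesis only controls $r$-derivatives of order $\leq s-1$, so the phrase ``which itself follows by integrating its evolution equation against the controlled $r$-derivatives of $u_{r}$'' is circular. Your other escape route---keeping the principal coupling on the differentiated unknowns and running Duhamel against the original propagator---requires the commutator $[\partial_{r}^{\alpha},\mathcal{P}]$ to gain a derivative, which is precisely the boundary-term obstruction the paper flags before \eqref{sharp}; moreover $\partial_{r}^{\alpha}\vec{u}$ is neither divergence-free nor satisfies the boundary conditions, so the $e^{\sqrt{a_{1}}t}$ bound proved for the flow (whose derivation uses the orthogonality of $\mathcal{P}$ on such fields) does not apply to it. Finally, closing the coupled top-order system by a crude Gronwall gives an exponent governed by coefficient sup-norms (at least $\sqrt{\sup_{r}|\Upsilon(r)|}$, inflated further by Leray constants), which has no reason to be close to $\sqrt{a_{1}}$, and in particular no reason to be below $2\sqrt{a_{1}}$, the threshold needed for the Duhamel closing in the scheme of Lemma \ref{4.1}; so your final remark about absorbing polynomial losses, while correct in itself, does not rescue the argument, because the failure mode is a wrong exponent, not a polynomial prefactor.

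The missing idea is the paper's use of incompressibility. By $\frac{1}{r}\partial_{r}(ru_{r})+\partial_{z}u_{z}=0$ one has $\partial_{r}^{\alpha}(ru_{r})=-\partial_{r}^{\alpha-1}(r\partial_{z}u_{z})$, so the dangerous term in the $u_{\theta}$-equation involves at most $s-1$ $r$-derivatives (plus one $z$-derivative), which is controlled by the induction hypothesis applied to $\partial_{z}u$ (itself a solution of the same system, since $\partial_{z}$ commutes with all coefficients). This breaks the circle and fixes the order of the estimates: first $\partial_{r}^{\alpha}u_{\theta}$ by direct time integration of a right-hand side that is already $O(e^{\sqrt{a_{1}}t}\|u(0)\|_{H^{s}})$---no propagator is needed at this level, the sharp rate is simply inherited from the source---and only then $\partial_{r}^{\alpha}(u_{r},u_{z})$ from $\partial_{t}\partial_{r}^{\alpha}(u_{r},u_{z})=\partial_{r}^{\alpha}\mathcal{P}(2\omega u_{\theta},0)$, whose right-hand side is bounded by the now fully established $\|u_{\theta}(t)\|_{H^{s}}$ via the $H^{s}$-boundedness of $\mathcal{P}$. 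With this one change, your outline becomes the paper's proof.
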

\begin{proof}
Multiplying \eqref{eulersecondorder} by $(\partial_t u_r,\partial_t u_z)$, and then integrating it on $\Omega$, we obtain
\begin{align*}
\frac{1}{2}\frac{d}{dt}(\|\partial_tu_r(t)\|_{L^2}^2+\|\partial_tu_z(t)\|_{L^2}^2)&=
\int_\Omega\begin{pmatrix}-\frac{\partial_r(\omega^2r^4)}{r^3}u_r\\
0\end{pmatrix}\cdot\begin{pmatrix}\partial_t u_r\\ \partial_t u_z\end{pmatrix}dx\\
&=\frac{1}{2}\frac{d}{dt}\int_\Omega-\frac{\partial_r(\omega^2r^4)}{r^3}|u_r|^2dx.
\end{align*}
Integrating the above inequality over time, using the definition of $a_1$ and the fact
$$\frac{d}{dt}(\|u_r(t)\|_{L^2}^2+\|u_z(t)\|_{L^2}^2)^{\frac{1}{2}}\leq (\|\partial_tu_r(t)\|_{L^2}^2+\|\partial_tu_z(t)\|_{L^2}^2)^{\frac{1}{2}},$$ one has
\begin{align}\label{0orderestimate}
\frac{d}{dt}(\|u_r(t)\|_{L^2}^2+\|u_z(t)\|_{L^2}^2)^{\frac{1}{2}}&\leq(\|\partial_tu_r(t)\|_{L^2}^2+\|\partial_tu_z(t)\|_{L^2}^2)^{\frac{1}{2}}\nonumber\\
&\leq 2\|\partial_tu_r(0)\|_{L^2}+2\|\partial_tu_z(0)\|_{L^2}\nonumber\\
&\quad
+
\left(\int_\Omega-\frac{\partial_r(\omega^2r^4)}{r^3}|u_r(t)|^2dx-\int_\Omega-\frac{\partial_r(\omega^2r^4)}{r^3}|u_r(0)|^2dx\right)^{\frac{1}{2}}\nonumber\\
&\leq C\|u(0)\|_{L^2}+\sqrt{a_1}\|u_r(t)\|_{L^2}.
\end{align}
By Gronwall inequality, we have
\begin{align}\label{G1orderestimate}
\|u_r(t)\|_{L^2}+\|u_z(t)\|_{L^2}\leq \sqrt{2}(\|u_r(t)\|_{L^2}^2+\|u_z(t)\|_{L^2}^2)^{\frac{1}{2}}\lesssim e^{\sqrt{a_1}t}\|u(0)\|_{L^2}.
\end{align}
Using \eqref{G1orderestimate}, we deduce from $\eqref{linearizedEuler}_2$ that
\begin{align}\label{G2orderestimate}
\|u_\theta(t)\|_{L^2}\lesssim \|u_\theta(0)\|_{L^2}+\int_0^t\|u_r (\tau)\|_{L^2}d\tau\lesssim e^{\sqrt{a_1}t}\|u (0)\|_{L^2}.
\end{align}
Combing \eqref{G1orderestimate} and \eqref{G2orderestimate}, one has
\begin{align*}
\|u(t)\|_{L^2}\lesssim e^{\sqrt{a_1}t}\|u (0)\|_{L^2}.
\end{align*}
Moreover, taking $\partial_z^{\alpha}$ with $(|\alpha|= s)$ on \eqref{eulersecondorder}, we deduce
\begin{align*}
\partial_{tt}\left(
\begin{array}
[c]{c}%
\partial_z^{\alpha}u_r\\
\partial_z^{\alpha}u_z
\end{array}
\right)
=\mathcal{P}\begin{pmatrix}-\frac{\partial_r(\omega^2r^4)}{r^3}\partial_z^{\alpha}u_r\\
0\end{pmatrix}.
\end{align*}
Similar to \eqref{0orderestimate} and \eqref{G1orderestimate}, we get
\begin{align}\label{z1orderestimate}
\|\partial_z^{\alpha}u_r(t)\|_{L^2}+\|\partial_z^{\alpha}u_z(t)\|_{L^2}\lesssim e^{\sqrt{a_1}t}\|u (0)\|_{H^s}
\end{align}
with $|\alpha|= s.$
Using \eqref{z1orderestimate}, we deduce from $\eqref{linearizedEuler}_2$ that
\begin{align}\label{z2orderestimate}
\|\partial_z^{\alpha}u_\theta(t)\|_{L^2}\lesssim \|\partial_z^{\alpha}u_\theta(0)\|_{L^2}+\int_0^t\|\partial_z^{\alpha}u_r (\tau)\|_{L^2}d\tau\lesssim e^{\sqrt{a_1}t}\|u (0)\|_{H^s}
\end{align}
with $|\alpha|= s.$
Thus, we deduce from \eqref{z1orderestimate} and \eqref{z2orderestimate} that
\begin{align}\label{z3orderestimate}
\|\partial_z^{\alpha}u(t)\|_{L^2}\lesssim e^{\sqrt{a_1}t}\|u (0)\|_{H^s}, \quad |\alpha|= s.
\end{align}
Next, we need to prove $\|\partial_{r}^{\alpha}u(t)\|_{L^2}\lesssim e^{\sqrt{a_1} t}\|u(0)\|_{H^{s}}$ $(|\alpha|=s)$ by an induction on $s$.
For $s=1$, taking $\partial_r$ on both side of $\eqref{linearizedEuler}_2$, we have
\begin{align*}
\partial_t\partial_ru_\theta&=-\partial_r(ru_r)\frac{\partial_r(r^2\omega)}{r^2}-ru_r\partial_r\left(\frac{\partial_r(r^2\omega)}{r^2}\right)\nonumber\\
&=\partial_zu_z\frac{\partial_r(r^2\omega)}{r}-ru_r\partial_r\left(\frac{\partial_r(r^2\omega)}{r^2}\right).
\end{align*}
By \eqref{z1orderestimate}, it follows that
\begin{align}\label{1orderrestimate}
\|\partial_ru_\theta(t)\|_{L^2}&\lesssim \|\partial_ru_\theta(0)\|_{L^2}+\int_0^t(\|\partial_zu_z(\tau)\|_{L^2}+\|u_r(\tau)\|_{L^2})d\tau
\nonumber\\&\lesssim \|\partial_ru_\theta(0)\|_{L^2}+e^{\sqrt{a_1}t}\|u(0)\|_{H^1}
\nonumber\\&\lesssim e^{\sqrt{a_1}t}\|u(0)\|_{H^1}.
\end{align}
Then, taking $\partial_r$ on both side of $\eqref{linearizedEuler}_1$ and $\eqref{linearizedEuler}_3$, we have
\begin{equation*}
\partial_t\left(
\begin{array}
[c]{c}%
\partial_ru_r\\
\partial_ru_z
\end{array}
\right)  = \partial_r\mathcal{P}\begin{pmatrix}2\omega(r)u_\theta\\
0\end{pmatrix}
.
\end{equation*}
By \eqref{1orderrestimate}, we obtain
\begin{equation}\label{11orderestimate}
\left\|\left(
\begin{array}
[c]{c}%
\partial_ru_r\\
\partial_ru_z
\end{array}
\right) (t)\right\|_{L^2}\lesssim \left\|\left(
\begin{array}
[c]{c}%
\partial_ru_r\\
\partial_ru_z
\end{array}
\right) (0)\right\|_{L^2}+\int_0^t \|\partial_ru_\theta(s)\|_{L^2}ds \lesssim e^{\sqrt{a_1}t}\|u(0)\|_{H^1}.
\end{equation}
Combining \eqref{1orderrestimate} and \eqref{11orderestimate}, we obtain
 \begin{align*}
\|\partial_{r}u(t)\|_{L^2}
\lesssim e^{\sqrt{a_1} t}\|u(0)\|_{H^{1}}.
\end{align*}
Assume it holds for $|\alpha|\leq s-1$, we will prove $|\alpha|=s$ by induction. Similarly, we get
\begin{align*}
\partial_t\partial_r^\alpha u_\theta&=\sum\limits_{0\leq|\beta|,|\gamma|\leq s-1,|\beta|+|\gamma|=s-1}
\partial_r^{\beta}\partial_zu_z\cdot\partial_r^{\gamma}\left(\frac{\partial_r(r^2\omega)}{r^2}\right)\\
&\quad
-\sum\limits_{0\leq|\beta|,|\gamma|\leq s-1,|\beta|+|\gamma|=s-1}\partial_r^\beta (ru_r)\cdot\partial_r^{\gamma}\partial_r\left(\frac{\partial_r(r^2\omega)}{r^2}\right)
\end{align*}
and
\begin{equation*}
\partial_t\left(
\begin{array}
[c]{c}%
\partial_r^\alpha u_r\\
\partial_r^\alpha u_z
\end{array}
\right)  = \partial_r^\alpha \mathcal{P}\begin{pmatrix}2\omega(r)u_\theta\\
0\end{pmatrix}
\end{equation*}
with $|\alpha|=s.$
By the induction hypothesis and Duhamel's principle, one gets
\begin{align*}
\|\partial_{r}^{\alpha}u(t)\|_{L^2}
\lesssim e^{\sqrt{a_1} t}\|u(0)\|_{H^{s}}
\end{align*}
with $|\alpha|=s.$
Together with \eqref{z3orderestimate}, we get
\begin{align*}
\|u(t)\|_{H^s}
\lesssim e^{\sqrt{a_1} t}\|u(0)\|_{H^{s}}.
\end{align*}
We complete the proof of the lemma.

\end{proof}

\subsection{Comparison of rotating flows
with and without magnetic fields}
Although the stability criterion of rotating Euler system is totally different from the MHD system, their most unstable growth rates are close if $\epsilon$ is small enough. Without loss of generality, we take $b(r)=1$, $R_1=0$ and $R_2=\mathfrak{R}<\infty$.

Let
the maximal unstable growth rate of MHD equations
$$\Lambda^2:=\sup_{\langle u_2,u_2\rangle:=1,u_2\in Y}\langle-\mathbb{B}^{'}\mathbb{L}\mathbb{B}Au_2,u_2\rangle,$$
and the unstable growth rate of MHD equations for each frequency $k$
$$\Lambda_k^2:=\sup_{\langle u^k_2,u^k_2\rangle=1,u_2\in Y}\langle-\mathbb{B}^{'}\mathbb{L}\mathbb{B}Au^k_2,u^k_2\rangle,$$
where $u_2^k=(u^k_r(r)e^{ikz},u^k_z(r)e^{ikz},B^k_\theta(r)e^{ikz})$.
\begin{proposition}\label{pp5.1}
Assume $\partial_r(\omega^2)|_{r=r_0}<0$.

1) \textbf{Rayleigh stable case:} If $\Upsilon(r)> 0$ for all $r\in [0,\mathfrak{R}]$, then for any frequency $k$,
the corresponding unstable growth rate $\Lambda_k^2\leq O(\epsilon^2)$ and
the maximal unstable growth rate of MHD equations $\Lambda^2\geq O(\epsilon^2)$.

2) \textbf{Rayleigh unstable case:} If $\Upsilon(r_0)< 0$ for some $r_0\in [0,\mathfrak{R}]$, then for any frequency $k$,
the corresponding unstable growth rate $\Lambda_k$ satisfies $\Lambda_k^2\in(\lambda_k^2  + O(\epsilon^2), \lambda_k^2+\varepsilon k^2+O(\epsilon^2))\subseteq(a_1 -\varepsilon_k + O(\epsilon^2), a_1+\varepsilon k^2+O(\epsilon^2))$,
where $a_1:=-\min\limits_{r\in [0,\mathfrak{R}]}\frac{\partial_r(\omega^2r^4)}{r^3}>0$, $\varepsilon>0$ is an arbitrary small constant, $\varepsilon_k>0$ satisfies $\lim\limits_{k\rightarrow\infty}\varepsilon_k=0.$
\end{proposition}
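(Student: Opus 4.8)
The plan is to reduce both $\Lambda_k^2$ and $\Lambda^2=\sup_k\Lambda_k^2$ to a scalar dispersion relation coming from the Rayleigh quotient of the self-dual operator $-\mathbb{B}'\mathbb{L}\mathbb{B}A$, and then to compare that relation, as $\epsilon\to0$, with the Euler quadratic form that defines $\lambda_k^2$. For $u_2=(\vec u,B_\theta)\in Y$ one has $\langle-\mathbb{B}'\mathbb{L}\mathbb{B}Au_2,u_2\rangle=-\langle\mathbb{L}\mathbb{B}u_2,\mathbb{B}u_2\rangle=-\|{-2\omega u_r+\epsilon\partial_zB_\theta}\|_{L^2}^2-\langle L(\epsilon r u_r),\epsilon r u_r\rangle$, the Leray projection in $\mathbb{B}'$ dropping out because $\vec u$ is divergence free. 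Restricting to a single frequency $k$ by $u_r=\phi(r)\cos(kz)$, $B_\theta=\chi(r)\sin(kz)$ with $u_z$ fixed by incompressibility, and setting (recall $b=1$, so $\mathfrak{F}=\tfrac{\partial_r(\omega^2)}{\epsilon^2 r}$)
\[
D:=\int_{R_1}^{R_2}\Big(r|\phi|^2+\tfrac{1}{k^2 r}|\partial_r(r\phi)|^2\Big)dr,\quad P:=\int_{R_1}^{R_2}\partial_r(\omega^2)\,r^2|\phi|^2\,dr,\quad Q:=\int_{R_1}^{R_2}\omega^2 r|\phi|^2\,dr,
\]
I get $\langle L(\epsilon r u_r),\epsilon r u_r\rangle=\epsilon^2k^2D+P$, while $P+4Q=\int_{R_1}^{R_2}\Upsilon(r)\,r|\phi|^2\,dr$ is exactly the Euler form, so that $\lambda_k^2=\sup_\phi\bigl(-(P+4Q)/D\bigr)$.

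Optimizing the Rayleigh quotient over $\chi$ for fixed $\phi$ (a pointwise maximization after completing the square) then collapses the problem to the dispersion relation
\[
D\mu^2+(P+4Q)\mu-4Q\,\epsilon^2k^2=0,\qquad \mu=\Lambda^2+\epsilon^2k^2,
\]
whose unique positive root yields $\Lambda_k^2=\sup_\phi\bigl(\mu(\phi)-\epsilon^2k^2\bigr)$ with $\mu(\phi)=\tfrac{1}{2D}\bigl(-(P+4Q)+\sqrt{(P+4Q)^2+16DQ\,\epsilon^2k^2}\bigr)$. This reproduces \eqref{dispersion equation} and \eqref{spectralequation}, so it is consistent with the global eigenvalue problem. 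Writing $R:=-(P+4Q)/D$ and $q:=Q/D\in(0,\max\omega^2]$ (since $Q\le\max\omega^2\,D$), the root becomes $\mu=\tfrac12R+\tfrac12\sqrt{R^2+16q\,\epsilon^2k^2}$, and everything then follows from elementary estimates of this expression.

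For \textbf{Part 1 (Rayleigh stable case)}, $\Upsilon>0$ on $[0,\mathfrak{R}]$ forces $P+4Q\ge c_0\int r|\phi|^2>0$ with $c_0=\min\Upsilon>0$; hence $\mu(\phi)\le \tfrac{4Q\,\epsilon^2k^2}{P+4Q}\le \tfrac{4\max\omega^2}{c_0}\,\epsilon^2k^2$ uniformly, giving $\Lambda_k^2\le\mu\le O(\epsilon^2)$. For the lower bound $\Lambda^2\ge O(\epsilon^2)$ I would insert a profile $\phi$ localized near an $r_0$ with $\partial_r(\omega^2)<0$: there $P<0$, so $\mu-\epsilon^2k^2=\epsilon^2k^2\tfrac{-P}{P+4Q}+o(\epsilon^2)>0$, which is precisely the MRI rate $\approx-\tfrac{\partial_r(\omega^2)r^4}{\partial_r(\omega^2r^4)}\epsilon^2k^2$. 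For \textbf{Part 2 (Rayleigh unstable case)}, $\Upsilon(r_0)<0$ makes $\lambda_k^2>0$, attained thanks to the compact embedding of Lemma \ref{embedd}, and $a_1=-\min\Upsilon>0$. Using $R\le\mu\le R+2\sqrt q\,\epsilon k$ I obtain $\Lambda_k^2\le\sup_\phi R+2\sqrt{\max\omega^2}\,\epsilon k=\lambda_k^2+O(\epsilon k)$, which is $<\lambda_k^2+\varepsilon k^2$ once $\epsilon<\varepsilon/(2\sqrt{\max\omega^2})$, uniformly in $k\ge1$; evaluating at the Euler maximizer $\phi^\ast$ and using $\mu\ge R$ gives $\Lambda_k^2\ge R(\phi^\ast)-\epsilon^2k^2=\lambda_k^2-\epsilon^2k^2=\lambda_k^2+O(\epsilon^2)$. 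Combining with $\lambda_k^2=a_1-\varepsilon_k$, $\varepsilon_k\to0$ from \eqref{limlambdak}, yields the stated nesting of intervals.

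The hard part is to justify rigorously that this scalar dispersion relation characterizes the \emph{top} of the spectrum of $-\mathbb{B}'\mathbb{L}\mathbb{B}A$ at frequency $k$, rather than merely producing one admissible mode. This requires verifying that the identity $\langle-\mathbb{B}'\mathbb{L}\mathbb{B}Au_2,u_2\rangle=-\langle\mathbb{L}\mathbb{B}u_2,\mathbb{B}u_2\rangle$ and the optimization over $\chi$ are valid in the energy space, so that $\Lambda_k^2=\sup_\phi(\mu(\phi)-\epsilon^2k^2)$ genuinely equals the largest eigenvalue (which exists and is attained since $n^-(\mathbb{L})<\infty$ and the embedding is compact), and then controlling the estimates uniformly so that $\sup_\phi$ and $\epsilon\to0$ may be interchanged. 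It is exactly here that the explicit root $\mu(\phi)$ is indispensable: near marginal modes $R\to0^+$ the naive $O(\epsilon^2)$ expansion of $\mu$ fails, and one must fall back on the sharper bound $\mu\le R+2\sqrt q\,\epsilon k$. Once these points are secured, the remaining steps are routine.
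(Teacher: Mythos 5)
Your proposal is correct, and at its core it follows the same route as the paper: both arguments work directly with the variational definitions $\Lambda^2=\sup\langle-\mathbb{B}'\mathbb{L}\mathbb{B}Au_2,u_2\rangle$ and $\Lambda_k^2$ (so no spectral theory is needed), use the identity $\langle-\mathbb{B}'\mathbb{L}\mathbb{B}Au_2,u_2\rangle=-\|-2\omega u_r+\epsilon\partial_z B_\theta\|_{L^2}^2-\langle L(\epsilon r u_r),\epsilon r u_r\rangle$ with the Leray projection absorbed by divergence-freeness, reduce to radial profiles at a single frequency $k$, and compare against the Euler quadratic form $\int\Upsilon\, r|\phi|^2\,dr$ defining $\lambda_k^2$ in \eqref{smalllambdak}. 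The difference is in execution. The paper never eliminates $B_\theta$: for the lower bounds it inserts explicit test pairs (in the Rayleigh-unstable case $B_\theta=0$, which is exactly your bound $\mu\ge R$; in the Rayleigh-stable case $B_\theta\sim 2\omega\phi/\epsilon$ together with the maximizer of the $B_0^2$-quotient), and for the upper bounds it estimates the two-variable Rayleigh quotient directly, handling the cross term by Young's inequality $C\epsilon k\le\varepsilon k^2+C(\varepsilon)\epsilon^2$. You instead maximize over $\chi=B_\theta$ exactly (a pointwise-in-$r$ concave maximization), obtaining the quadratic $D\mu^2+(P+4Q)\mu-4Q\epsilon^2k^2=0$ and reading off all four bounds from the explicit root; this is more systematic, makes the link with the heuristic relation \eqref{dispersion equation} transparent, and replaces Young's inequality by the elementary bound $\sqrt{R^2+16q\epsilon^2k^2}\le|R|+4\sqrt{q}\,\epsilon k$. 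What the paper's version buys is that it needs no justification of the $\chi$-optimization; what yours buys is that the $\epsilon$-expansions (e.g. $\mu-\epsilon^2k^2=\epsilon^2k^2\frac{-P}{P+4Q}+O(\epsilon^4)$ in Part 1) become mechanical.

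Three small remarks. First, the step you flag as the ``hard part'' is not actually needed: since Proposition \ref{pp5.1} is stated for the suprema of quadratic forms, no identification with a largest attained eigenvalue is required, and the fixed-point characterization $\lambda^*(\phi)=\mu(\phi)-\epsilon^2k^2$ follows from the elementary fact that $g(\lambda)=\sup_\chi(N(\chi)-\lambda M(\chi))$ is finite, strictly decreasing, and has a unique zero for $\lambda>-\epsilon^2k^2$. Likewise, attainment of the Euler supremum is unnecessary for your Part 2 lower bound: $\mu(\phi)\ge R(\phi)$ for every $\phi$ already gives $\Lambda_k^2\ge\sup_\phi R-\epsilon^2k^2=\lambda_k^2-\epsilon^2k^2$. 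Second, $\mu\le R+2\sqrt{q}\,\epsilon k$ is only valid when $R\ge0$; in general $\mu\le\max(R,0)+2\sqrt{q}\,\epsilon k$, which still yields $\Lambda_k^2\le\lambda_k^2+2\sqrt{\max\omega^2}\,\epsilon k$ in the Rayleigh-unstable case because there $\lambda_k^2=\sup_\phi R>0$. Third, your Part 1 upper bound carries a constant proportional to $k^2$; this is consistent with the proposition and with the paper's proof, both of which are statements for each fixed frequency $k$.
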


\begin{proof}
1)
By taking $u_r(r,z)=\frac{h(r)}{r\epsilon}e^{iz}$, $u_z(r,z)=i\frac{e^{iz}}{r\epsilon}\partial_rh$, $B_\theta=-i\frac{2\omega}{\epsilon^2r}he^{iz}$,
where $h(r)$ is the maximizer of $$\sup_{h_1(0)=0}\frac{-\int_{0}^{\mathfrak{R}}\partial_r(\omega^2)|h_1(r)|^2dr}{\int_{0}^{\mathfrak{R}}\left(\frac{1}{r}|\partial_rh_1(r)|^2+\frac{1}{r}|h_1(r)|^2\right)dr}\equiv B_0^2,$$ i.e.,
$-\int_{0}^{\mathfrak{R}}\partial_r(\omega^2)|h(r)|^2dr=B_0^2\int_{0}^{\mathfrak{R}}\left(\frac{1}{r}|\partial_rh(r)|^2+\frac{1}{r}|h(r)|^2\right)dr$.
We know the maximal unstable growth rate of MHD system has the lower bound
\begin{align*}
\Lambda^2&=\sup_{\langle u_2,u_2\rangle=1,u_2\in Y}\langle-\mathbb{B}^{'}\mathbb{L}\mathbb{B}Au_2,u_2\rangle\\
&=\sup\frac{\int_{\Omega}\left(-r\partial_r(\omega^2)|u_r|^2-\epsilon^2|\partial_{z}u_r|^2
-\epsilon^2|\partial_{z}u_z|^2
-\epsilon^2\left|\partial_{z}B_\theta+\frac{2\omega}{-\epsilon}u_r\right|^2\right)dx}{\int_\Omega (|u_r|^2+|u_z|^2+|B_\theta|^2) dx}\\
&\geq\frac{\int_{0}^{\mathfrak{R}}\left(-\frac{\partial_r(\omega^2)}{\epsilon^2}|h|^2-\frac{1}{r}|h|^2
-\frac{1}{r}|\partial_{r}h|^2\right)dr}{\int_{0}^{\mathfrak{R}} \left(\frac{1}{r\epsilon^2}|h|^2+\frac{1}{r\epsilon^2}|\partial_r h|^2+\frac{1}{\epsilon^4r}|2\omega h|^2\right) dr}\\
&=\frac{\left(B_0^2-\epsilon^2\right)\int_{0}^{\mathfrak{R}}\left(\frac{1}{r}|h|^2
+\frac{1}{r}|\partial_{r}h|^2\right)dr}{\int_{0}^{\mathfrak{R}} \left(\frac{1}{r}|h|^2+\frac{1}{r}|\partial_r h|^2+\frac{1}{\epsilon^2r}|2\omega h|^2 \right)dr}\geq C(B_0^2-\epsilon^2) \epsilon^2=O(\epsilon^2).
\end{align*}
For any frequency $k$ fixed,
one has for any unstable growth rate corresponding to the frequency $k$,
\begin{align*}
\Lambda_k^2&=\sup_{\langle u^k_2,u^k_2\rangle=1,u_2\in Y}\langle-\mathbb{B}^{'}\mathbb{L}\mathbb{B}Au^k_2,u^k_2\rangle\\
&=\sup\frac{\int_{\Omega}\left(-r\partial_r(\omega^2)|u^k_r|^2-\epsilon^2|\partial_{z}u^k_r|^2
-\epsilon^2|\partial_{z}u^k_z|^2
-\epsilon^2\left|\partial_{z}B^k_\theta+\frac{2\omega}{-\epsilon}u^k_r\right|^2\right)dx}{\int_\Omega (|u^k_r|^2+|u^k_z|^2+|B^k_\theta|^2) dx}\\
&=\sup\frac{\int_{0}^{\mathfrak{R}}\left(-\partial_r(\omega^2)|h_2|^2-\epsilon^2k^2r|\frac{h_2}{r}|^2
-\epsilon^2r|\frac{\partial_rh_2}{r}|^2
-r|h_1|^2\right)dr}{\int_{0}^{\mathfrak{R}} \left[r|\frac{h_2}{r}|^2+r|\frac{\partial_rh_2}{kr}|^2+r|\frac{1}{k\epsilon}\left(h_1+2\omega\frac{h_2}{r}\right)|^2\right] dr}\\
&\leq \epsilon^2\sup\frac{\int_{0}^{\mathfrak{R}}(-\partial_r(\omega^2)|h_2|^2
-|h_1|^2r)dr}{\int_{0}^{\mathfrak{R}} \left(\epsilon^2r|\frac{h_2}{r}|^2+\epsilon^2r|\frac{\partial_rh_2}{kr}|^2+r\frac{1}{k^2}|h_1+2\omega\frac{h_2}{r}|^2\right) dr}\leq O(\epsilon^2),
\end{align*}
where $u_2^k=(u^k_r,u^k_z,B^k_\theta)$ satisfies $\mathbb{B}Au_2^k=(h_1e^{ikz},\epsilon h_2e^{ikz})$ for any $(h_1(r),h_2(r))\in R(\mathbb{B})$, i.e.,
\begin{align}
&u_r^k(r,z)=\frac{h_2}{r}e^{ikz},\nonumber\\
&u_z^k(r,z)=i\frac{\partial_rh_2}{kr}e^{ikz},\nonumber\\
&B_\theta^k(r,z)=-i\frac{1}{k\epsilon}\left(h_1+2\omega\frac{h_2}{r}\right)e^{ikz}.\nonumber
\end{align}
This yields that the maximal unstable growth rate of MHD equations for each frequency $k$ tends to $0$, as $\epsilon$ tends to $0$.

2)
By taking $u_r(r,z)=h(r)e^{ikz}$, $u_z(r,z)=ie^{ikz}\frac{\partial_r(rh)}{kr}$, $B_\theta=0$.
Hence for any frequency $k$ fixed big enough,
the corresponding unstable growth rate has the following lower bound by using \eqref{limlambdak}
\begin{align*}
\Lambda_k^2&=\sup_{\langle u_2,u_2\rangle=1,u_2\in Y}\langle-\mathbb{B}^{'}\mathbb{L}\mathbb{B}Au_2,u_2\rangle\\
&=\sup\frac{\int_{\Omega}\left(-r\partial_r(\omega^2)|u_r|^2-\epsilon^2|\partial_{z}u_r|^2
-\epsilon^2|\partial_{z}u_z|^2
-\epsilon^2\left|\partial_{z}B_\theta+\frac{2\omega}{-\epsilon}u_r\right|^2\right)dx}{\int_\Omega (|u_r|^2+|u_z|^2+|B_\theta|^2) dx}\\
&\geq\sup\frac{\int_{0}^{\mathfrak{R}}\left(-\frac{\partial_r(\omega^2r^4)}{r^3}r|h|^2-\epsilon^2r|ikh|^2
-\epsilon^2r|\frac{\partial_r(rh)}{r}|^2\right)dr}{\int_0^{\mathfrak{R}} \left(r|h|^2+r|\frac{\partial_r(rh)}{kr}|^2\right) dr}\\
&=\lambda_k^2+O(\epsilon^2)\geq a_1-\varepsilon_k+O(\epsilon^2),
\end{align*}
where $\lambda_k$ is defined by \eqref{smalllambdak}, $\varepsilon_k>0$ satisfies $\lim\limits_{k\rightarrow\infty}\varepsilon_k=0$ by \eqref{limlambdak}.

For the unstable growth rate corresponding to any fixed frequency $k$, by Young's inequality,
one has
\begin{align*}
\Lambda_k^2&=\sup_{\langle u^k_2,u^k_2\rangle=1,u_2\in Y}\langle-\mathbb{B}^{'}\mathbb{L}\mathbb{B}Au^k_2,u^k_2\rangle\\
&=\sup\frac{\int_{\Omega}\left(-r\partial_r(\omega^2)|u^k_r|^2-\epsilon^2|\partial_{z}u^k_r|^2
-\epsilon^2|\partial_{z}u^k_z|^2
-\epsilon^2\left|\partial_{z}B^k_\theta+\frac{2\omega}{-\epsilon}u^k_r\right|^2\right)dx}{\int_\Omega (|u^k_r|^2+|u^k_z|^2+|B^k_\theta|^2) dx}\\
&=\sup\frac{\int_{\Omega}\left(-r\partial_r(\omega^2)|\tilde{u}_r|^2-\epsilon^2k^2|\tilde{u}_r|^2
-\epsilon^2k^2|\tilde{u}_z|^2
-\epsilon^2\left|-k\tilde{B}_\theta+\frac{2\omega}{-\epsilon}\tilde{u}_r\right|^2\right)dx}{\int_\Omega (|\tilde{u}_r|^2+|\tilde{u}_z|^2+|\tilde{B}_\theta|^2) dx}\\
&\leq\sup\frac{\int_\Omega\left(-\frac{\partial_r(\omega^2r^4)}{r^3}|\tilde{u}_r|^2
-\epsilon^2k^2|\tilde{B}_\theta|^2+C\epsilon k|\tilde{B}_\theta|^2+C\epsilon k|\tilde{u}_r|^2\right)dx}{\int_\Omega (|\tilde{u}_r|^2+|\tilde{u}_z|^2+|\tilde{B}_\theta|^2) dx}\\
&\leq \sup\frac{\int_{\Omega}-\frac{\partial_r(\omega^2r^4)}{r^3}|\tilde{u}_r|^2  dx  }
{\int_\Omega (|\tilde{u}_r|^2+|\tilde{u}_z|^2+|\tilde{B}_\theta|^2) dx} +C(\varepsilon) \epsilon^2+ \varepsilon k^2 \\
&\leq \lambda_k^2+\varepsilon k^2 + O(\epsilon^2)\leq a_1 +\varepsilon k^2 + O(\epsilon^2),
\end{align*}
where $C>0$ is a constant independent of $\epsilon$,
$\lambda_k$ is defined by \eqref{smalllambdak}, $\varepsilon>0$ is an arbitrary small constant,
 $u_2^k=(u^k_r,u^k_z,B^k_\theta)$ with the following form
\begin{align}
&u_r^k(r,z)=\tilde{u}_r(r)e^{ikz},\nonumber\\
&u_z^k(r,z)=i\tilde{u}_z(r)e^{ikz},\nonumber\\
&B_\theta^k(r,z)=i\tilde{B}_\theta(r)e^{ikz}.\nonumber
\end{align}
This yields that the unstable growth rate of MHD equations for any fixed frequency $k$ tends to $a_1$, as $\epsilon$ tends to $0$.

\end{proof}

\begin{remark}
Comparing Theorem \ref{nonEuler}, Theorem \ref{nonunEuler} and Corollary \ref{BHcorolly}, we find
that the stability criterion of the Euler system is a singular limit of the MHD system in the sense that
if $\epsilon\rightarrow0$, then the MRI criterion is $\partial_r(\omega^2)>0$ instead of the Rayleigh stability criterion $\Upsilon>0$.
Proposition \ref{pp5.1} shows that the unstable growth rate of the MRI is not a singular limit in the sense that
1) if the MHD system is magneto-rotational unstable and Rayleigh stable, then the unstable growth rate of the MHD system is small;
2) if the MHD system is magneto-rotational unstable and Rayleigh unstable, then the unstable growth rate of the MHD system is close
to the unstable growth rate of the Euler system.
\end{remark}

\section{Appendix}\label{appendix}
In this section, we shall prove Lemma \ref{Fderivate} and complete the proof of Theorem \ref{nonlinearstable} when $R_2=\infty$. 

\subsection{Proof of Lemma \ref{Fderivate}}
In this subsection, we shall prove $F_1(\psi)\in C^2(H^1_{mag})$ and $F_2(v_\theta,\psi)\in C^2(L^2\times H^1_{mag})$.
Define
$$\Omega=\{(r,\theta,z)| 0\leq r\leq1, (\theta,z)\in\mathbb{T}_{2\pi}\times\mathbb{T}_{2\pi}\}.$$
In $\Omega$, by Hardy's inequality and $\frac{1}{r}\geq1,$ for any $\psi\in H^1_{mag}$, it follows
\begin{align*}
\left\|\frac{\psi}{r}\right\|_{L^2(\Omega)}&\leq\left\|\frac{\psi}{r^2}\right\|_{L^2(\Omega)}
\lesssim\left(\int_{\Omega} \left(\frac{1}{r^2}|\partial_z \psi|^2+\frac{1}{r^2}|\partial_r \psi|^2\right) dx\right)^{\frac{1}{2}}\lesssim\|\psi\|_{H^1_{mag}}
\end{align*}
and
\begin{align*}
\left\|\nabla\left(\frac{\psi}{r}\right)\right\|_{L^2(\Omega)}&=\left(\int_{\Omega} \left(\left|\partial_z \left(\frac{\psi}{r}\right)\right|^2+\left|\partial_r \left(\frac{\psi}{r}\right)\right|^2\right) dx\right)^{\frac{1}{2}}\\&\leq\left(\int_{\Omega} \left(\frac{1}{r^2}|\partial_z \psi|^2+\frac{1}{r^2}|\partial_r \psi|^2+\left|\frac{\psi}{r^2}\right|^2\right)  dx\right)^{\frac{1}{2}}\lesssim\|\psi\|_{H^1_{mag}}.
\end{align*}
Hence, we have
\begin{align}\label{bounded}
\left\|\frac{\psi}{r}\right\|_{H^1(\Omega)}\lesssim\|\psi\|_{H^1_{mag}}.
\end{align}
Notice, by $\omega(r)\in C^{3}(0,1)$, $\partial_r\omega(r)=O(r^{\beta-3}) (r\rightarrow0)$ with $\beta\geq4$,  
we have
\begin{align}
& |r\Phi(\psi_0)|=O(r^{\beta-1})+O(r)\quad (r\rightarrow0),
\label{order1bound}\\
& |r^2\Phi^{'}(\psi_0)|=O(r^{\beta-2})\quad (r\rightarrow0),
\label{order2bound}\\
& |r^3\Phi^{''}(\psi_0)|=O(r^{\beta-3})\quad (r\rightarrow0),
\label{order3bound}\\
& |r^4\Phi^{'''}(\psi_0)|=O(r^{\beta-4})\quad (r\rightarrow0),
\label{order4bound}\\
& |rf^{'}(\psi_0)|=O(r^{\beta-1})\quad (r\rightarrow0),
\label{order5bound}\\
& |r^2f^{''}(\psi_0)|=O(r^{\beta-2})\quad (r\rightarrow0),
\label{order6bound}\\
& |r^3f^{'''}(\psi_0)|=O(r^{\beta-3})\quad (r\rightarrow0),
\label{order7bound}
\end{align}
with $\beta\geq4.$

\begin{proof}[Proof of Lemma \ref{Fderivate}]
1) For any $\psi, \widetilde{\psi}\in H^1_{mag}$, we have
\begin{align*}
|\partial_\lambda F_1(\psi+\lambda\widetilde{\psi})|_{\lambda=0}|&=\left|\int_\Omega f'(\psi)\widetilde{\psi} dx\right|
\leq \|rf'(\psi)\|_{L^2}\left\|\frac{\widetilde{\psi}}{r}\right\|_{L^2}\lesssim \|\widetilde{\psi}\|_{H^1_{mag}},
\end{align*}
by \eqref{bounded} and \eqref{order5bound}. This implies that $F_1$ is G\^{a}teaux differentiable at $\psi\in H^1_{mag}$. To show that
$F_1(\psi) \in C^1(H^1_{mag})$, we choose $\{\psi_n\}\in H^1_{mag}$ such that $\psi_n\rightarrow \psi$ in $H^1_{mag}$. 
Then we need to claim that
\begin{align*}
\partial_\lambda F_1(\psi_n+\lambda\widetilde{\psi})|_{\lambda=0}\rightarrow \partial_\lambda F_1(\psi+\lambda\widetilde{\psi})|_{\lambda=0} \text{ as } n\rightarrow\infty.
\end{align*}
As a matter of fact, by \eqref{bounded} and \eqref{order6bound}, it follows that
\begin{align*}
&|\partial_\lambda F_1(\psi_n+\lambda\widetilde{\psi})|_{\lambda=0}-\partial_\lambda F_1(\psi+\lambda\widetilde{\psi})|_{\lambda=0}|\nonumber\\
&=\left|\int_\Omega (f'(\psi_n)\widetilde{\psi} - f'(\psi)\widetilde{\psi}) dx\right|=\|f''(\psi+\kappa(\psi_n-\psi))(\psi_n -\psi )\|_{L^2}\|\widetilde{\psi}\|_{L^2}\nonumber\\
&\lesssim \|r^2f''(\psi+\kappa(\psi_n-\psi))\|_{L^{3/2}}\left\|\frac{\psi_n -\psi}{r} \right\|_{L^6}\left\|\frac{\widetilde{\psi}}{r}\right\|_{L^6}\nonumber\\
&\lesssim \|\psi_n -\psi \|_{H^1_{mag}}\|\widetilde{\psi}\|_{H^1_{mag}}\rightarrow0 \text{ as } n\rightarrow\infty,
\end{align*}
where $\kappa\in(0,1)$.
Thus $F_1(\psi) \in C^1(H^1_{mag})$.

Next, we show that the  2-th order G$\hat{\text{a}}$teaux derivative of $F_1$ exists at $\psi \in H^1_{mag}$.
For any $\widetilde{\psi},\widehat{\psi} \in H^1_{mag}$, we have
\begin{align*}
|\partial_{\tau}\partial_{\lambda}F_1(\psi+\lambda\widetilde{\psi}+\tau \widehat{\psi} )|_{\lambda=\tau=0}|=&\left|\int_{\Omega} f''(\psi)\widetilde{\psi} \widehat{\psi} dx\right|
\lesssim \|\widetilde{\psi}\|_{H^1_{mag}}\|\widehat{\psi}\|_{H^1_{mag}}.
\end{align*}
Thus,  $F_1$ is 2-th G$\hat{\text{a}}$teaux differentiable at $\psi\in H^1_{mag}$.
We will prove that
 \begin{align*}
\partial_{\tau}\partial_{\lambda}F_1(\psi_n+\lambda\widetilde{\psi}+\tau \widehat{\psi} )|_{\lambda=\tau=0}\to\partial_{\tau}\partial_{\lambda}F_1(\psi+\lambda\widetilde{\psi}+\tau \widehat{\psi} )|_{\lambda=\tau=0}\text{ as } n\rightarrow\infty
\end{align*}
for  $\{\psi_n\}\subset H^1_{mag}$  such that $\psi_n\to\psi (n\rightarrow\infty)$ in $ H^1_{mag}$.
By H\"{o}lder's inequality, \eqref{bounded} and \eqref{order7bound}, we have
\begin{align*}
&|\partial_{\tau}\partial_{\lambda}F_1(\psi_n+\lambda\widetilde{\psi}+\tau \widehat{\psi} )|_{\lambda=\tau=0}-\partial_{\tau}\partial_{\lambda}F_1(\psi+\lambda\widetilde{\psi}+\tau \widehat{\psi} )|_{\lambda=\tau=0}|\\
&= \left|\int_{\Omega} [f''(\psi_n)-f''(\psi)]\widetilde{\psi} \widehat{\psi} dx\right|\\
&\lesssim\|r^3f'''(\psi+\kappa(\psi_n-\psi))\|_{L^2} \left\|\frac{\psi_n-\psi}{r}\right\|_{L^6}\left\|\frac{\widetilde{\psi}}{r}\right\|_{L^6} \left\|\frac{\widehat{\psi}}{r}\right\|_{L^6}\\
&\lesssim  \|\psi_n-\psi\|_{H^1_{mag}}\|\widetilde{\psi}\|_{H^1_{mag}} \|\widehat{\psi}\|_{H^1_{mag}}\rightarrow 0 \text{ as } n\rightarrow\infty,
\end{align*}
with $0<\kappa<1.$ Hence, $F_1(\psi) \in C^2(H^1_{mag})$.

2) For any $v_\theta,\widetilde{v}_\theta\in L^2(\Omega),\psi,\widetilde{\psi}\in H^1_{mag}$, one has
\begin{align*}
|\partial_{\lambda}F_2(v_\theta+\lambda\widetilde{v}_\theta,\psi)|_{\lambda=0}|&=\left|\partial_{\lambda}\int_\Omega r(v_\theta+\lambda\widetilde{v}_\theta) \Phi(\psi)dx|_{\lambda=0}\right|\\
&=\left|\int_\Omega r\widetilde{v}_\theta \Phi(\psi)dx\right|\\
&\lesssim\|\widetilde{v}_\theta\|_{L^2}\|r\Phi(\psi)\|_{L^2}
\end{align*}
and
\begin{align*}
|\partial_{\lambda}F_2(v_\theta,\psi+\lambda\widetilde{\psi})|_{\lambda=0}|&=\left|\partial_{\lambda}\int_\Omega rv_\theta \Phi(\psi+\lambda\widetilde{\psi})dx|_{\lambda=0}\right|\\
&=\left|\int_\Omega rv_\theta \Phi^{'}(\psi)\widetilde{\psi}dx\right|\\
&\lesssim\|v_\theta\|_{L^2}\|r^2\Phi^{'}(\psi)\|_{L^3}\left\|\frac{\widetilde{\psi}}{r}\right\|_{L^6}\\
&\lesssim\|v_\theta\|_{L^2}\|\widetilde{\psi}\|_{H^1_{mag}},
\end{align*}
by \eqref{bounded} and \eqref{order1bound}-\eqref{order2bound}.
This implies that $F_2$ is G\^{a}teaux differentiable at $(v_\theta,\psi)\in L^2\times H^1_{mag}$. To show that
$F_2(v_\theta,\psi) \in C^1(L^2\times H^1_{mag})$, we choose $\{(v_\theta)_n\}\in L^2, \{\psi_n\}\in H^1_{mag}$ such that $(v_\theta)_n\rightarrow v_\theta$
in $L^2$ and $\psi_n\rightarrow\psi$ in $H^1_{mag}$. 
It remains to prove that
\begin{align*}
\partial_{\lambda}F_2((v_\theta)_n+\lambda\widetilde{v}_\theta,\psi)|_{\lambda=0}\rightarrow \partial_{\lambda}F_2(v_\theta+\lambda\widetilde{v}_\theta,\psi)|_{\lambda=0} \text{ as } n\rightarrow\infty,
\end{align*}
\begin{align*}
\partial_{\lambda}F_2(v_\theta,\psi_n+\lambda\widetilde{\psi})|_{\lambda=0}\rightarrow \partial_{\lambda}F_2(v_\theta,\psi+\lambda\widetilde{\psi})|_{\lambda=0} \text{ as } n\rightarrow\infty.
\end{align*}
In fact, using \eqref{bounded} and \eqref{order3bound}, we have
\begin{align*}
&|\partial_{\lambda}F_2((v_\theta)_n+\lambda\widetilde{v}_\theta,\psi)|_{\lambda=0}- \partial_{\lambda}F_2(v_\theta+\lambda\widetilde{v}_\theta,\psi)|_{\lambda=0}|
=\left|\int_\Omega (r\widetilde{v}_\theta \Phi(\psi)-r\widetilde{v}_\theta \Phi(\psi))dx\right|=0
\end{align*}
and
\begin{align*}
&|\partial_{\lambda}F_2(v_\theta,\psi+\lambda\widetilde{\psi})|_{\lambda=0}- \partial_{\lambda}F_2(v_\theta,\psi_n+\lambda\widetilde{\psi})|_{\lambda=0}|\nonumber\\
&= \left|\int_\Omega (rv_\theta \Phi^{'}(\psi_n)-rv_\theta \Phi^{'}(\psi))\widetilde{\psi}dx\right|
\nonumber\\
&\lesssim\|v_\theta\|_{L^2}\|r^3\Phi^{''}(\psi+\kappa(\psi_n-\psi))\|_{L^6}\left\|\frac{\psi_n-\psi}{r}\right\|_{L^6}\left
\|\frac{\widetilde{\psi}}{r}\right\|_{L^6}
\nonumber\\
&\lesssim\|v_\theta\|_{L^2}\|\psi_n-\psi\|_{H^1_{mag}}\|\widetilde{\psi}\|_{H^1_{mag}}\rightarrow 0 \text{ as } n\rightarrow\infty,
\end{align*}
with $0<\kappa<1.$
Therefore, $F_2(v_\theta,\psi) \in C^1(L^2\times H^1_{mag})$.

Next, we show that the  2-th order G$\hat{\text{a}}$teaux derivative of $F_2$ exists at $(v_\theta,\psi) \in L^2\times H^1_{mag}$.
For $v_\theta,\widetilde{v}_\theta,\widehat{v}_\theta\in L^2,\psi,\widetilde{\psi},\widehat{\psi}\in H^1_{mag}$, we have
\begin{align*}
|\partial_\tau\partial_{\lambda}F_2(v_\theta+\lambda\widetilde{v}_\theta+\tau\widehat{v}_\theta,\psi)|_{\lambda=\tau=0}|
&=\left|\partial_\tau\partial_{\lambda}\int_\Omega r(v_\theta+\lambda\widetilde{v}_\theta+\tau\widehat{v}_\theta) \Phi(\psi)dx|_{\lambda=\tau=0}\right|\\
&=\left|\partial_\tau\int_\Omega r\widetilde{v}_\theta \Phi(\psi)dx||_{\lambda=\tau=0}\right|=0,
\end{align*}
\begin{align*}
|\partial_\tau\partial_{\lambda}F_2(v_\theta,\psi+\lambda\widetilde{\psi}+\tau\widehat{\psi})|_{\lambda=\tau=0}|
&=\left|\partial_\tau\partial_{\lambda}\int_\Omega rv_\theta \Phi(\psi+\lambda\widetilde{\psi}+\tau\widehat{\psi})dx|_{\lambda=\tau=0}\right|\\
&=\left|\partial_\tau\int_\Omega rv_\theta \Phi^{'}(\psi+\lambda\widetilde{\psi}+\tau\widehat{\psi})\widetilde{\psi}dx|_{\lambda=\tau=0}\right|\\
&=\left|\int_\Omega rv_\theta \Phi^{''}(\psi)\widetilde{\psi}\widehat{\psi}dx\right|
\\
&\lesssim\|v_\theta\|_{L^2}\|r^3\Phi^{''}(\psi)\|_{L^6}\left\|\frac{\widetilde{\psi}}{r}\right\|_{L^6}\left\|\frac{\widehat{\psi}}{r}\right\|_{L^6}
\\
&\lesssim\|v_\theta\|_{L^2}\|\widetilde{\psi}\|_{H^1_{mag}}\|\widehat{\psi}\|_{H^1_{mag}}
\end{align*}
and
\begin{align*}
|\partial_\tau\partial_{\lambda}F_2(v_\theta+\lambda\widetilde{v}_\theta,\psi+\tau\widetilde{\psi})|_{\lambda=\tau=0}|
&=\left|\partial_\tau\partial_{\lambda}\int_\Omega r(v_\theta+\lambda\widetilde{v}_\theta) \Phi(\psi+\tau\widetilde{\psi})dx|_{\lambda=\tau=0}\right|\\
&=\left|\partial_\tau\int_\Omega r\widetilde{v}_\theta \Phi(\psi+\tau\widetilde{\psi})dx|_{\lambda=\tau=0}\right|\\
&=\left|\int_\Omega r\widetilde{v}_\theta \Phi^{'}(\psi)\widetilde{\psi}dx\right|
\\
&\lesssim\|\widetilde{v}_\theta\|_{L^2}\|r^2\Phi^{'}(\psi)\|_{L^3}\left\|\frac{\widetilde{\psi}}{r}\right\|_{L^6}
\\
&\lesssim\|\widetilde{v}_\theta\|_{L^2}\|\widetilde{\psi}\|_{H^1_{mag}},
\end{align*}
by \eqref{bounded} and \eqref{order2bound}-\eqref{order3bound}.
Thus, $F_2$ is 2-th G$\hat{\text{a}}$teaux differentiable at $(v_\theta,\psi) \in L^2\times H^1_{mag}$.
We will prove that
 \begin{align*}
\partial_{\tau}\partial_{\lambda}F_2((v_\theta)_n+\lambda\widetilde{v}_\theta+\tau\widehat{v}_\theta,\psi)|_{\lambda=\tau=0}
\to  \partial_{\tau}\partial_{\lambda}F_2(v_\theta+\lambda\widetilde{v}_\theta+\tau\widehat{v}_\theta,\psi)|_{\lambda=\tau=0},
\end{align*}
 \begin{align*}
\partial_{\tau}\partial_{\lambda}F_2(v_\theta,\psi_n+\lambda\widetilde{\psi}+\tau\widehat{\psi})|_{\lambda=\tau=0}
\to  \partial_{\tau}\partial_{\lambda}F_2(v_\theta,\psi+\lambda\widetilde{\psi}+\tau\widehat{\psi})|_{\lambda=\tau=0},
\end{align*}
 \begin{align*}
\partial_{\tau}\partial_{\lambda}F_2((v_\theta)_n+\lambda\widetilde{v}_\theta,\psi_n+\tau\widetilde{\psi})|_{\lambda=\tau=0}
\to  \partial_{\tau}\partial_{\lambda}F_2(v_\theta+\lambda\widetilde{v}_\theta,\psi+\tau\widetilde{\psi})|_{\lambda=\tau=0},
\end{align*}
for $(v_\theta)_n\rightarrow v_\theta$
in $L^2$ and $\psi_n\rightarrow\psi$ in $H^1_{mag}$. 
In specify, by \eqref{bounded} and \eqref{order3bound}-\eqref{order4bound},
 \begin{align*}
|\partial_{\tau}\partial_{\lambda}F_2((v_\theta)_n+\lambda\widetilde{v}_\theta+\tau\widehat{v}_\theta,\psi)|_{\lambda=\tau=0}
-\partial_{\tau}\partial_{\lambda}F_2(v_\theta+\lambda\widetilde{v}_\theta+\tau\widehat{v}_\theta,\psi)|_{\lambda=\tau=0}|=0,
\end{align*}
 \begin{align*}
&|\partial_{\tau}\partial_{\lambda}F_2(v_\theta,\psi_n+\lambda\widetilde{\psi}+\tau\widehat{\psi})|_{\lambda=\tau=0}
-\partial_{\tau}\partial_{\lambda}F_2(v_\theta,\psi+\lambda\widetilde{\psi}+\tau\widehat{\psi})|_{\lambda=\tau=0}|
\nonumber\\&=\left|\int_\Omega (rv_\theta \Phi^{''}(\psi_n)\widetilde{\psi}\widehat{\psi}-rv_\theta \Phi^{''}(\psi)\widetilde{\psi}\widehat{\psi})dx\right|
\nonumber\\
&\lesssim\|v_\theta\|_{L^2}\|r^4\Phi^{'''}(\psi+\kappa(\psi_n-\psi))\|_{L^\infty}
\left\|\frac{\psi_n-\psi}{r}\right\|_{L^6}\left\|\frac{\widetilde{\psi}}{r}\right\|_{L^6}\left\|\frac{\widehat{\psi}}{r}\right\|_{L^6}\nonumber\\
&\lesssim\|v_\theta\|_{L^2}\|\psi_n-\psi\|_{H^1_{mag}}\|\widetilde{\psi}\|_{H^1_{mag}}\|\widehat{\psi}\|_{H^1_{mag}}\rightarrow 0\text{ as } n\rightarrow\infty
\end{align*}
and
 \begin{align*}
&|\partial_{\tau}\partial_{\lambda}F_2((v_\theta)_n+\lambda\widetilde{v}_\theta,\psi_n+\tau\widetilde{\psi})|_{\lambda=\tau=0}
-\partial_{\tau}\partial_{\lambda}F_2(v_\theta+\lambda\widetilde{v}_\theta,\psi+\tau\widetilde{\psi})|_{\lambda=\tau=0}|\nonumber\\
&=\left|\int_\Omega (r\widetilde{v}_\theta \Phi^{'}(\psi_n)\widetilde{\psi}-r\widetilde{v}_\theta \Phi^{'}(\psi)\widetilde{\psi})dx\right|
\nonumber\\
&\lesssim\|\widetilde{v}_\theta\|_{L^2}\|r^3\Phi^{''}(\psi+\kappa(\psi_n-\psi))\|_{L^6}
\left\|\frac{\psi_n-\psi}{r}\right\|_{L^6}\left\|\frac{\widetilde{\psi}}{r}\right\|_{L^6}\nonumber\\
&\lesssim\|\widetilde{v}_\theta\|_{L^2}\|\psi_n-\psi\|_{H^1_{mag}}\|\widetilde{\psi}\|_{H^1_{mag}}\rightarrow 0\text{ as } n\rightarrow\infty,
\end{align*}
with $0<\kappa<1.$ Consequently, we have proved that
$F_2(v_\theta,\psi)\in C^2(L^2\times H^1_{mag})$.

\end{proof}

\subsection{Nonlinear stability for MHD equations in unbounded domain}
In this section, we will consider the nonlinear stability for MHD equations in unbounded domain $\Omega=\{x\in\mathbb{R}^3|r=\sqrt{x_1^2+x_2^2}< \infty,  x_3=z\in\mathbb{T}_{2\pi}\}$. For simplify, we take $\epsilon=1, b(r)=b$, where $b$ is a constant.

Define
$$\Omega_1=\{(r,\theta,z)| 0\leq r\leq1, (\theta,z)\in\mathbb{T}_{2\pi}\times\mathbb{T}_{2\pi}\},$$
$$\Omega_2=\{(r,\theta,z)| 1<r<\infty, (\theta,z)\in\mathbb{T}_{2\pi}\times\mathbb{T}_{2\pi}\}.$$
In $\Omega_1$, by \eqref{bounded}, it follows
\begin{align}\label{unbounded}
 \left\|\frac{\psi}{r}\right\|_{H^1(\Omega_1)}\lesssim\|\psi\|_{H^1_{mag}}.
 \end{align}
In $\Omega_2$, by Hardy's inequality and $\frac{1}{r}\leq1,$ it follows
\begin{align*}
\left\|\frac{\psi}{r^2}\right\|_{L^2(\Omega_2)}&\lesssim\left(\int_{\Omega} \frac{1}{r^2}|\partial_r \psi|^2 dx\right)^{\frac{1}{2}}\\
&\lesssim\left(\int_{\Omega} \left(\frac{1}{r^2}|\partial_z \psi|^2+\frac{1}{r^2}|\partial_r \psi|^2\right) dx\right)^{\frac{1}{2}}\lesssim\|\psi\|_{H^1_{mag}}
\end{align*}
 and
 \begin{align*}
\left\|\nabla\left(\frac{\psi}{r^2}\right)\right\|_{L^2(\Omega_2)}&=\left(\int_{\Omega_2}\left(\left|\partial_r\left(\frac{\psi}{r^2}\right)\right|^2
+\left|\partial_z\left(\frac{\psi}{r^2}\right)\right|^2\right)dx\right)^{\frac{1}{2}}
\\&\lesssim\left(\int_{\Omega_2}\left(\frac{1}{r^4}|\partial_z \psi|^2+\frac{1}{r^4}|\partial_r \psi|^2+\left|\frac{\psi}{r^3}\right|^2\right)dx\right)^{\frac{1}{2}}
\\&\lesssim\left(\int_{\Omega_2}\left(\frac{1}{r^2}|\partial_z \psi|^2+\frac{1}{r^2}|\partial_r \psi|^2+\left|\frac{\psi}{r^2}\right|^2\right)dx\right)^{\frac{1}{2}}
\lesssim\|\psi\|_{H^1_{mag}}.
\end{align*}
Thus, we get
\begin{align}\label{magunbounded}
\left\|\frac{\psi}{r^2}\right\|_{H^1(\Omega_2)}\lesssim\|\psi\|_{H^1_{mag}}.
 \end{align}
Define the relative energy $E(\delta v,\delta H_\theta,\delta \psi)$ and the relative Casimir-energy functional $E_{c}(\delta v,\delta H_\theta,\delta\psi)$:
\begin{align}\label{'7.1}
E(\delta v,\delta H_\theta,\delta\psi)\notag&=\frac{1}{2}\int_{\Omega}(|v_r|^2+|v_z|^2+|H_\theta|^2)dx\nonumber\\
&\quad+\frac{1}{2}\int_{\Omega}\left(|v_\theta|^2+\frac{1}{r^2}|\partial_z\psi|^2+\frac{1}{r^2}|\partial_r\psi|^2
 -|r\omega(r)|^2-|b|^2\right) dx
\end{align}
and
\begin{align*}
&E_{c}(\delta v,\delta H_\theta,\delta\psi)=E(\delta v,\delta H_\theta,\delta\psi)+\int_{\Omega}(rv_\theta\Phi(\psi)-r^2\omega(r)\Phi(\psi_0))dx+\int_{\Omega}(f(\psi)-f(\psi_0)) dx,
\end{align*}
with $(\delta v,\delta H_\theta,\delta\psi)=(v,H_{\theta},\psi)-(v_{0},0,\psi_{0})$,
where
\begin{align*}
\Phi(s)=\begin{cases}
-\omega\left(\sqrt{\frac{-2s}{b}}\right),  \text{ for any }s\in (-\infty,0],\\
\text{some $C_0^\infty$ extension function, } \text{ for any } s\notin (-\infty,0]
\end{cases}
\end{align*}
and
\begin{align*}
f^{'}(s)=\begin{cases}
\frac{-s}{b}\partial_s(\omega^2(\sqrt{\frac{-2s}{b}})),  \text{ for any }s\in (-\infty,0],\\
\text{some $C_0^\infty$ extension function, } \text{ for any } s\notin (-\infty,0].
\end{cases}
\end{align*}
Hence, by $\omega(r)\in C^{3}(0,\infty)$, $\omega(r)=O(r^{-(1+\alpha)}) (r\rightarrow\infty)$ with $\alpha>1$ and $\partial_r\omega(r)=O(r^{\beta-3}) (r\rightarrow0)$ with $\beta\geq4$,  
we have
\begin{align}
&|r\Phi(\psi_0)|=O(r^{-\alpha}) (r\rightarrow\infty),\quad |r\Phi(\psi_0)|=O(r^{\beta-1})+O(r) (r\rightarrow0);
\label{order1}\\
&|r^3\Phi^{'}(\psi_0)|=O(r^{-\alpha}) (r\rightarrow\infty),\quad |r^2\Phi^{'}(\psi_0)|=O(r^{\beta-2}) (r\rightarrow0);
\label{order2}\\
&|r^5\Phi^{''}(\psi_0)|=O(r^{-\alpha}) (r\rightarrow\infty),\quad |r^3\Phi^{''}(\psi_0)|=O(r^{\beta-3}) (r\rightarrow0);
\label{order3}\\
&|r^7\Phi^{'''}(\psi_0)|=O(r^{-\alpha}) (r\rightarrow\infty),\quad |r^4\Phi^{'''}(\psi_0)|=O(r^{\beta-4}) (r\rightarrow0);
\label{order4}\\
&|r^2f^{'}(\psi_0)|=O(r^{-2\alpha}) (r\rightarrow\infty),\quad |rf^{'}(\psi_0)|=O(r^{\beta-1}) (r\rightarrow0);
\label{order5}\\
&|r^4f^{''}(\psi_0)|=O(r^{-2\alpha}) (r\rightarrow\infty),\quad |r^2f^{''}(\psi_0)|=O(r^{\beta-2}) (r\rightarrow0);
\label{order6}\\
&|r^6f^{'''}(\psi_0)|=O(r^{-2\alpha}) (r\rightarrow\infty),\quad |r^3f^{'''}(\psi_0)|=O(r^{\beta-3}) (r\rightarrow0).
\label{order7}
\end{align}
\begin{lemma} 
\label{fpsiconunbounded}
For any $\Phi(\tau),f(\tau)\in C^1(\mathbb{R})$, 
the functionals $\int_{\Omega}(f(\psi)-f(\psi_0)) dx$
and $\int_{\Omega}(rv_{\theta} \Phi(\psi)-r^2\omega(r)\Phi(\psi_0)) dx$ are conserved for strong solutions of MHD equations \eqref{nonlinearMHDrz}.
\end{lemma}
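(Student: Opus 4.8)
The plan is to follow the proof of Lemma \ref{fpsicon} for the bounded domain, exploiting the Hamiltonian form \eqref{PoissonBracket} of \eqref{nonlinearMHDrz},
\[
\partial_t(rv_\theta)=\{\varpi,rv_\theta\}+\{rH_\theta,\psi\},\qquad \partial_t\psi=\{\varpi,\psi\},
\]
where $\{f,g\}=\tfrac1r(\partial_r f\,\partial_z g-\partial_z f\,\partial_r g)$ and $\varpi$ is the stream function with $-\partial_r\varpi/r=v_z$, $\partial_z\varpi/r=v_r$. The two subtracted integrals $\int_\Omega f(\psi_0)\,dx$ and $\int_\Omega r^2\omega(r)\Phi(\psi_0)\,dx$ are independent of $t$; their sole purpose is to render the functionals finite on the unbounded domain, which is where the decay bounds \eqref{order1}--\eqref{order7} and the Hardy estimates \eqref{unbounded}, \eqref{magunbounded} enter. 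After differentiating in $t$ they drop out, so it suffices to show that the $t$-derivatives of the $\psi$- and $v_\theta$-dependent parts integrate to zero. The algebraic mechanism is unchanged from the bounded case: cyclicity of the bracket under the measure $dx=2\pi r\,dr\,dz$, together with $\{\Phi(\psi),\psi\}=\Phi'(\psi)\{\psi,\psi\}=0$.

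For the first functional I would compute, for a strong solution,
\[
\frac{d}{dt}\int_\Omega\big(f(\psi)-f(\psi_0)\big)\,dx=\int_\Omega f'(\psi)\,\partial_t\psi\,dx=\int_\Omega f'(\psi)\{\varpi,\psi\}\,dx=\int_\Omega\{\varpi,f(\psi)\}\,dx,
\]
and then rewrite the integrand as an exact divergence in $(r,z)$,
\[
r\{\varpi,f(\psi)\}=\partial_z\big(\partial_r\varpi\,f(\psi)\big)-\partial_r\big(\partial_z\varpi\,f(\psi)\big).
\]
The $z$-integral annihilates the first term by periodicity, and the $r$-integral of the second collapses to the boundary flux $-2\pi\int_0^{2\pi}\big[rv_r\,f(\psi)\big]_{r=0}^{r=\infty}\,dz$, whose contribution at $r=0$ vanishes because of the factor $r$.

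For the second functional, differentiating and inserting \eqref{PoissonBracket} gives
\[
\frac{d}{dt}\int_\Omega\big(rv_\theta\Phi(\psi)-r^2\omega\Phi(\psi_0)\big)\,dx=\int_\Omega\big[\Phi(\psi)\{\varpi,rv_\theta\}+\Phi(\psi)\{rH_\theta,\psi\}+rv_\theta\Phi'(\psi)\{\varpi,\psi\}\big]\,dx.
\]
Exactly as in Lemma \ref{fpsicon}, integrations by parts recast the middle term as $\int_\Omega rH_\theta\{\Phi(\psi),\psi\}\,dx=0$, and combine the first and third terms, via $rv_\theta\Phi'(\psi)\{\varpi,\psi\}=rv_\theta\{\varpi,\Phi(\psi)\}$ and the divergence identity $r\big(\Phi(\psi)\{\varpi,rv_\theta\}+rv_\theta\{\varpi,\Phi(\psi)\}\big)=\partial_z(\partial_r\varpi\,rv_\theta\Phi(\psi))-\partial_r(\partial_z\varpi\,rv_\theta\Phi(\psi))$, into the single boundary flux $-2\pi\int_0^{2\pi}\big[r^2v_rv_\theta\,\Phi(\psi)\big]_{r=0}^{r=\infty}\,dz$. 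Each reorganization is legitimate up to boundary fluxes at $r=\infty$ built from products of $rv_r$, $rH_\theta$, $v_\theta$, $\Phi(\psi)$ and $\psi$, so both functionals are conserved once the interior integrals converge and these fluxes vanish.

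The main obstacle is precisely the vanishing of the boundary fluxes at $r=\infty$. Since $\psi\to\psi_0$ with $\psi_0(r)=-\tfrac{b}{2}r^2\to-\infty$, one cannot claim that $f(\psi)$ or $\Phi(\psi)$ are small there; instead I would first verify that $f(\psi_0)$ and $\Phi(\psi_0)$ tend to finite limits (integrating $r^2f'(\psi_0)=O(r^{-2\alpha})$ and using $r\Phi(\psi_0)=O(r^{-\alpha})$ from \eqref{order1}, \eqref{order5}), so that the fluxes are controlled by $rv_r$, respectively $r^2v_rv_\theta$, times a bounded factor. Their vanishing then follows from the spatial decay of strong solutions under $\omega=O(r^{-1-\alpha})$, $\partial_r b=O(r^{-1-2\alpha})$ with $\alpha>1$ (which in particular give $\nabla(v_0,H_0)\in L^2$); concretely I would integrate the divergences only up to a finite radius $r=R$, use the $L^1$ and $L^2$ decay of $v_r$, $v_\theta$ and $\psi-\psi_0$ to estimate the circle integrals at $r=R$, and pass to the limit along a sequence $R_n\to\infty$ for which these integrals tend to zero. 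The same truncation simultaneously justifies differentiating under the integral sign and all the integrations by parts above, completing the proof of Lemma \ref{fpsiconunbounded}.
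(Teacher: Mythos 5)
Your proposal is correct and takes essentially the same route as the paper: the paper's entire proof of Lemma \ref{fpsiconunbounded} is the single sentence that it is ``similar to Lemma \ref{fpsicon}'', i.e., exactly the Poisson-bracket computation you carry out (time-independence of the subtracted $\psi_0$-terms, $\{\Phi(\psi),\psi\}=0$, and antisymmetry/divergence structure of the bracket). The additional care you take at $r=\infty$ --- verifying via \eqref{order1} and \eqref{order5} that $\Phi(\psi_0)$ and $f(\psi_0)$ have finite limits, and killing the boundary fluxes along a sequence $R_n\to\infty$ using the decay of strong solutions --- is precisely the unbounded-domain detail that the paper's ``similar'' glosses over, and it is the right way to close that gap.
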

\begin{proof}
The proof of the lemma is similar to Lemma \ref{fpsicon}.
\end{proof}
\begin{lemma}\label{ubdFderivate}
1) 
For any $f(\tau)\in C^3(\mathbb{R})$ and satisfies \eqref{order5}-\eqref{order7}, then
$F_1(\delta\psi):=\int_\Omega (f(\delta\psi+\psi_0)-f(\psi_0))dx $ is second order Fr\'{e}chet differentiable at $\delta\psi=0$, i.e.,
\begin{align*}
\notag&F_1(\delta\psi)-F_1(0)-F_1^{'}(0)\delta\psi
=\int_{\Omega}(f(\delta\psi+\psi_0)-f(\psi_0)-f^{'}(\psi_0)\delta\psi) dx=o(\|\delta\psi\|_{H^1_{mag}})
\end{align*}
\\
and
\begin{align*}
\notag&F_1(\delta\psi)-F_1(0)-F_1'(0)\delta\psi-\frac{1}{2}\langle F_1''(0)\delta\psi,\delta\psi\rangle\\
&=\int_{\Omega}\left(f(\delta\psi+\psi_0)-f(\psi_0)-f^{'}(\psi_0)\delta\psi-\frac{1}{2}f^{''}(\psi_0)|\delta\psi|^2\right) dx\\
\notag&=o(\|\delta\psi\|^2_{H^1_{mag}}).
\end{align*}
2) For any $\Phi(\tau)\in C^3(\mathbb{R})$ and satisfies \eqref{order1}-\eqref{order4}, then
$F_2(\delta v_\theta,\delta\psi):=\int_\Omega (r(\mathbf{v}_0+\delta v_\theta) \Phi(\psi_0+\delta\psi)-r\mathbf{v}_0\Phi(\psi_0))dx$ is second order Fr\'{e}chet differentiable at $(\delta v_\theta,\delta\psi)=(0,0)$, i.e.,
\begin{align*}
\notag&F_2(\delta v_\theta,\delta\psi)-F_2(0,0)-\partial_{\delta\psi}F_2(0,0)\delta\psi-\partial_{\delta v_\theta}F_2(0,0)\delta v_\theta\\
\notag&=\int_{\Omega}r\delta v_\theta(\Phi(\delta\psi+\psi_0)-\Phi(\psi_0))dx
+\int_{\Omega}r\mathbf{v}_0(\Phi(\delta\psi+\psi_0)-\Phi(\psi_0)-\Phi^{'}(\psi_0)\delta\psi)dx\\
\notag&=o(\|\delta\psi\|_{H^1_{mag}})+o(\|\delta v_\theta\|_{L^2})
\end{align*}
and
\begin{align*}
\notag&F_2(\delta v_\theta,\delta\psi)-F_2(0,0)-\partial_{\delta\psi}F_2(0,0)\delta\psi-\partial_{\delta v_\theta}F_2(0,0)\delta v_\theta\\
\notag&-\langle\partial_{\delta\psi \delta v_\theta}F_2(0,0)\delta\psi,\delta v_\theta\rangle-\frac{1}{2}\langle\partial_{\delta\psi\delta\psi}F_2(0,0)\delta\psi,\delta\psi\rangle\\
\notag&=\int_{\Omega}r\delta v_\theta(\Phi(\delta\psi+\psi_0)-\Phi(\psi_0)-\Phi^{'}(\psi_0)\delta\psi)dx
\\&\quad\quad+\int_{\Omega}r\mathbf{v}_0\left(\Phi(\delta\psi+\psi_0)-\Phi(\psi_0)-\Phi^{'}(\psi_0)\delta\psi-\frac{1}{2}\Phi^{''}(\psi_0)|\delta\psi|^2\right)dx\\
\notag&=o(\|\delta\psi\|^2_{H^1_{mag}})+o(\|\delta v_\theta\|^2_{L^2})
.
\end{align*}
\end{lemma}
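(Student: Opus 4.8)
The plan is to prove both parts by directly estimating the Taylor remainders, following the scheme of the bounded-domain Lemma \ref{Fderivate} but carried out separately on the near-axis region $\Omega_1$ and the far-field region $\Omega_2$. The two pieces are controlled by the two weighted embeddings already established: on $\Omega_1$ one uses $\|\delta\psi/r\|_{H^1(\Omega_1)}\lesssim\|\delta\psi\|_{H^1_{mag}}$ from \eqref{unbounded}, while on $\Omega_2$ one uses $\|\delta\psi/r^2\|_{H^1(\Omega_2)}\lesssim\|\delta\psi\|_{H^1_{mag}}$ from \eqref{magunbounded}; in both cases the three-dimensional Sobolev embedding $H^1\hookrightarrow L^6$ converts these into $L^6$ bounds for $\delta\psi/r$ and $\delta\psi/r^2$ respectively. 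Throughout, one must work with the relative functionals $f(\psi)-f(\psi_0)$ and $rv_\theta\Phi(\psi)-r^2\omega\Phi(\psi_0)$, since in the unbounded domain the individual integrals need not converge.

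For the first-order statement on $F_1$, I would write the remainder in mean-value form, $f(\psi_0+\delta\psi)-f(\psi_0)-f'(\psi_0)\delta\psi=\tfrac12 f''(\xi)|\delta\psi|^2$ with $\xi$ between $\psi_0$ and $\psi_0+\delta\psi$, split the integral over $\Omega_1\cup\Omega_2$, and on each piece factor $|\delta\psi|^2=r^{2}|\delta\psi/r|^2$ on $\Omega_1$ and $|\delta\psi|^2=r^{4}|\delta\psi/r^2|^2$ on $\Omega_2$. A H\"older split with exponents $(3/2,6,6)$ then leaves the coefficient norms $\|r^{2}f''(\xi)\|_{L^{3/2}(\Omega_1)}$ and $\|r^{4}f''(\xi)\|_{L^{3/2}(\Omega_2)}$, which are finite by the boundedness of the smooth nonlinearity together with the decay bounds \eqref{order6} (near $r=0$ using $\beta\ge4$; as $r\to\infty$ using $\alpha>1$, so that the far-field integral converges against the cylindrical volume element $r\,dr$). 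This gives $O(\|\delta\psi\|^2_{H^1_{mag}})=o(\|\delta\psi\|_{H^1_{mag}})$. The second-order statement is obtained from the refined remainder $\tfrac12[f''(\xi)-f''(\psi_0)]|\delta\psi|^2=\tfrac12 f'''(\eta)(\xi-\psi_0)|\delta\psi|^2$, which is cubic in $\delta\psi$; estimating $\int_\Omega|f'''(\eta)||\delta\psi|^3\,dx$ by the same $\Omega_1/\Omega_2$ splitting with weights $r^3$ and $r^6$, H\"older exponents $(2,6,6,6)$, and the Sobolev bounds yields $O(\|\delta\psi\|^3_{H^1_{mag}})=o(\|\delta\psi\|^2_{H^1_{mag}})$, where finiteness of $\|r^{3}f'''\|_{L^2(\Omega_1)}$ and $\|r^{6}f'''\|_{L^2(\Omega_2)}$ uses \eqref{order7}.

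The treatment of $F_2$ is analogous but must respect its mixed structure in $(\delta v_\theta,\delta\psi)\in L^2\times H^1_{mag}$ and the steady factor $\mathbf v_0=r\omega$. The first-order remainder splits into $\int_\Omega r\,\delta v_\theta(\Phi(\psi_0+\delta\psi)-\Phi(\psi_0))\,dx$ and $\int_\Omega r\mathbf v_0(\Phi(\psi_0+\delta\psi)-\Phi(\psi_0)-\Phi'(\psi_0)\delta\psi)\,dx$; for the first I pair $\|\delta v_\theta\|_{L^2}$ with $\Phi'$ controlled through \eqref{order2} and the $L^6$ bound on $\delta\psi/r$ or $\delta\psi/r^2$, and for the second I use Taylor's form together with \eqref{order3} and the weight $r\mathbf v_0=r^2\omega$. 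The corresponding second-order remainder is handled with \eqref{order2}, \eqref{order3} and \eqref{order4}. The decay powers in \eqref{order1}--\eqref{order4}, namely $r,r^3,r^5,r^7$ for $\Phi,\Phi',\Phi'',\Phi'''$ as $r\to\infty$, are tuned precisely so that each $\delta\psi/r^2\in L^6(\Omega_2)$ factor absorbs one extra power of $r$, keeping every far-field integral convergent for $\alpha>1$.

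The \emph{main obstacle} I anticipate is the far-field bookkeeping on $\Omega_2$: one must verify that the weighted coefficient norms such as $\|r^{4}f''(\xi)\|_{L^{3/2}(\Omega_2)}$ stay finite when the nonlinearity is evaluated at the perturbed argument $\xi=\psi_0+\theta\delta\psi$ rather than at $\psi_0$, even though the decay bounds \eqref{order1}--\eqref{order7} are stated only at $\psi_0$. I would close this by noting that $f'',f''',\Phi',\Phi'',\Phi'''$ are globally bounded smooth functions, so their compositions are bounded, while the $r\to\infty$ decay is supplied by the explicit algebraic weights; the smallness of $\delta\psi$ in $H^1_{mag}$, hence in the weighted $L^6$ spaces, keeps $\xi$ within the regime governed by these estimates, which is exactly what makes the unbounded-domain integrals converge and delivers the claimed $o(\|\delta\psi\|_{H^1_{mag}})$ and $o(\|\delta\psi\|^2_{H^1_{mag}})$ bounds.
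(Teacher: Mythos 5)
Your estimates are the same ones the paper uses: the $\Omega_1/\Omega_2$ splitting, the weighted bounds \eqref{unbounded} and \eqref{magunbounded} converted into $L^6$ control of $\delta\psi/r$ and $\delta\psi/r^2$, the H\"older patterns $(3/2,6,6)$ and $(2,6,6,6)$ with weights $r^2,r^3$ on $\Omega_1$ and $r^4,r^6$ on $\Omega_2$, and the decay hypotheses \eqref{order1}--\eqref{order7}. Where you genuinely differ is the organization. You bound the Taylor remainders directly through pointwise Lagrange forms ($\tfrac12 f''(\xi)|\delta\psi|^2$ and $\tfrac16 f'''(\eta)|\delta\psi|^3$, and their $F_2$ analogues), which yields quantitative bounds $O(\|\delta\psi\|_{H^1_{mag}}^2)$ and $O(\|\delta\psi\|_{H^1_{mag}}^3)$, stronger than the $o(\cdot)$ statements claimed. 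The paper instead proves G\^ateaux differentiability at the base point and then sequential continuity of the first and second G\^ateaux derivatives (each continuity estimate again via a mean-value form evaluated at $\psi_0+\kappa\delta\psi_n$), deducing Fr\'echet differentiability; this mirrors the bounded-domain Lemma \ref{Fderivate} verbatim. Both routes use identical ingredients; yours is shorter and gives rates, the paper's records the $C^1$/$C^2$-type continuity in the form it reuses elsewhere.

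The one step in your write-up that does not hold together is your resolution of the ``main obstacle'', and it fails precisely on $\Omega_2$. Global boundedness of $f''$, $f'''$, $\Phi''$, $\Phi'''$ buys nothing in the far field: with the cylindrical measure $r\,dr\,dz$ one has $r^4\notin L^{3/2}(\Omega_2)$ and $r^6\notin L^{2}(\Omega_2)$, so finiteness of $\|r^4 f''(\xi)\|_{L^{3/2}(\Omega_2)}$ and $\|r^6 f'''(\eta)\|_{L^{2}(\Omega_2)}$ requires decay of the compositions themselves; the algebraic weights grow, they do not ``supply decay''. Moreover, smallness of $\delta\psi/r^2$ in $L^6(\Omega_2)$ gives no pointwise confinement of $\xi=\psi_0+\theta\delta\psi$, so it does not by itself keep $\xi$ in the regime where \eqref{order5}--\eqref{order7} (bounds stated only along $\psi_0$) apply. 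A correct closure would read \eqref{order5}--\eqref{order7} as bounds on $f''(s)$, $f'''(s)$, etc.\ for $s\in(-\infty,0]$ (legitimate, since $\psi_0(r)=-br^2/2$ is a bijection onto $(-\infty,0)$), and then confine the argument pointwise, e.g.\ using that the perturbations lie in $Z$ and vanish as $r\to\infty$, so that $|\theta\delta\psi|\le\tfrac{b}{4}r^2$ outside a bounded set, the bounded set being handled by local boundedness. To be fair, the paper's own proof makes exactly the same leap silently---it simply asserts $\|f''(\psi_0+\kappa\delta\psi_n)r^4\|_{L^{3/2}(\Omega_2)}\lesssim 1$---so this does not put you behind the published argument; but your stated justification, as written, does not close the gap either.
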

\begin{proof}
1) For any $\tilde{\psi}\in Z$, we have
\begin{align*}
|\partial_\lambda F_1(0+\lambda\tilde{\psi})|_{\lambda=0}|&=\left|\int_\Omega f'(\psi_0)\tilde{\psi} dx\right|\leq\left|\int_{\Omega_1} f'(\psi_0)\tilde{\psi} dx\right|+\left|\int_{\Omega_2} f'(\psi_0)\tilde{\psi} dx\right|\nonumber\\
&\leq \|rf'(\psi_0)\|_{L^2(\Omega_1)}\left\|\frac{\tilde{\psi}}{r}\right\|_{L^2(\Omega_1)}+\|f'(\psi_0)r^2\|_{L^2(\Omega_2)}\left\|\frac{\tilde{\psi}}{r^2}\right\|_{L^2(\Omega_2)}\nonumber\\
&\lesssim \|\tilde{\psi}\|_{H^1_{mag}},
\end{align*}
with the facts \eqref{unbounded}-\eqref{magunbounded} and \eqref{order5}. This implies that $F_1$ is G\^{a}teaux differentiable at $0$. To show that
$F_1(\delta\psi)$ is Fr\'{e}chet differentiable at $0$, we choose $\{\delta\psi_n\}\in Z$ such that $\delta\psi_n\rightarrow 0$ in $Z$,
and prove that
\begin{align*}
\partial_\lambda F_1(\delta\psi_n+\lambda\tilde{\psi})|_{\lambda=0}\rightarrow \partial_\lambda F_1(0+\lambda\tilde{\psi})|_{\lambda=0} \text{ as } n\rightarrow\infty.
\end{align*}
In fact, using \eqref{unbounded}-\eqref{magunbounded} and \eqref{order6}, it follows that
\begin{align*}
&|\partial_\lambda F_1(\delta\psi_n+\lambda\tilde{\psi})|_{\lambda=0}-\partial_\lambda F_1(0+\lambda\tilde{\psi})|_{\lambda=0}|\nonumber\\
&=\left|\int_\Omega (f'(\delta\psi_n+\psi_0)\tilde{\psi} - f'(0+\psi_0)\tilde{\psi}) dx\right|\nonumber\\
&\lesssim\|r^2f''(\psi_0+\kappa\delta\psi_n)\|_{L^{3/2}(\Omega_1)}\left\|\frac{\delta\psi_n}{r}\right\|_{L^6(\Omega_1)}
\left\|\frac{\tilde{\psi}}{r}\right\|_{L^6(\Omega_1)}\nonumber\\
&\quad + \|f''(\psi_0+\kappa\delta\psi_n)r^4\|_{L^{3/2}(\Omega_2)}\left\|\frac{\delta\psi_n}{r^2}\right\|_{L^6(\Omega_2)}
\left\|\frac{\tilde{\psi}}{r^2}\right\|_{L^6(\Omega_2)}\nonumber\\
&\lesssim \|\delta\psi_n\|_{H^1_{mag}}\|\tilde{\psi}\|_{H^1_{mag}}\rightarrow0 \text{ as } n\rightarrow\infty,
\end{align*}
where $\kappa\in(0,1)$.
Thus, $F_1(\delta\psi)$ is Fr\'{e}chet differentiable at $0$.

Next, we show that the  2-th order G$\hat{\text{a}}$teaux derivative of $F_1(\delta\psi)$ exists at $0$.
For $\tilde{\psi} \in Z$ and $\widehat{\psi}\in Z$, by \eqref{unbounded}-\eqref{magunbounded} and \eqref{order6}, we have
\begin{align*}
|\partial_{\tau}\partial_{\lambda}F_1(0+\lambda\psi+\tau \widehat{\psi} )|_{\lambda=\tau=0}|=&\left|\int_{\Omega} f''(\psi_0)\tilde{\psi} \widehat{\psi} dx\right|\\
\lesssim&   \|r^2f''(\psi_0)\|_{L^{3/2}(\Omega_1)}\left\|\frac{\tilde{\psi}}{r}\right\|_{L^6(\Omega_1)}\left\|\frac{\widehat{\psi}}{r}\right\|_{L^6(\Omega_1)}\\
& + \|f''(\psi_0)r^4\|_{L^{3/2}(\Omega_2)}\left\|\frac{\tilde{\psi}}{r^2}\right\|_{L^6(\Omega_2)}\left\|\frac{\widehat{\psi}}{r^2}\right\|_{L^6(\Omega_2)}\\
\lesssim&   \|\tilde{\psi}\|_{H^1_{mag}}\|\widehat{\psi}\|_{H^1_{mag}}.
\end{align*}
Thus,  $F_1(\delta\psi)$ is 2-th G$\hat{\text{a}}$teaux differentiable at $0$.
We will prove that
 \begin{align*}
\partial_{\tau}\partial_{\lambda}F_1(\delta\psi_n+\lambda\tilde{\psi}+\tau \widehat{\psi} )|_{\lambda=\tau=0}\to\partial_{\tau}\partial_{\lambda}F_1(0+\lambda\tilde{\psi}+\tau \widehat{\psi} )|_{\lambda=\tau=0},
\end{align*}
for  $\{\delta\psi_n\}\subset Z$  such that $\delta\psi_n\to0$ in $ Z$, which implies that
$F_1(\delta\psi)$ is second order Fr\'{e}chet differentiable at $0$.
By H\"{o}lder's inequality, \eqref{unbounded}-\eqref{magunbounded} and \eqref{order7}, we have
\begin{align*}
&|\partial_{\tau}\partial_{\lambda}F_1(\delta\psi_n+\lambda\tilde{\psi}+\tau \widehat{\psi} )|_{\lambda=\tau=0}-\partial_{\tau}\partial_{\lambda}F_1(0+\lambda\tilde{\psi}+\tau \widehat{\psi} )|_{\lambda=\tau=0}|\\
=& \left|\int_{\Omega} [f''(\delta\psi_n+\psi_0)-f''(\psi_0)]\tilde{\psi} \widehat{\psi} dx\right|\\
\lesssim & \|r^3f'''(\psi_0+\kappa\delta\psi_n)\|_{L^2(\Omega_1)}\left\|\frac{\delta\psi_n}{r}\right\|_{L^6(\Omega_1)}
\left\|\frac{\tilde{\psi}}{r}\right\|_{L^6(\Omega_1)}  \left\|\frac{\widehat{\psi}}{r}\right\|_{L^6(\Omega_1)} \\
&+ \|f'''(\psi_0+\kappa\delta\psi_n)r^6\|_{L^2(\Omega_2)}\left\|\frac{\delta\psi_n}{r^2}\right\|_{L^6(\Omega_2)}
\left\|\frac{\tilde{\psi}}{r^2}\right\|_{L^6(\Omega_2)} \left\|\frac{\widehat{\psi}}{r^2}\right\|_{L^6(\Omega_2)}\\
\lesssim & \|\delta\psi_n\|_{H^1_{mag}}\|\tilde{\psi}\|_{H^1_{mag}} \|\widehat{\psi}\|_{H^1_{mag}}\rightarrow 0 \text{ as } n\rightarrow\infty,
\end{align*}
with $0<\kappa<1.$ Hence, $F_1(\delta\psi)$ is second-order Fr\'{e}chet differentiable at $0$.

2) For any $\tilde{v}_\theta \in L^2(\Omega),\tilde{\psi}\in Z$, combining \eqref{unbounded}-\eqref{magunbounded} and \eqref{order1}-\eqref{order2}, one has
\begin{align*}
|\partial_{\lambda}F_2(0+\lambda \tilde{v}_\theta,0)|_{\lambda=0}|&=\left|\partial_{\lambda}\int_\Omega r(\mathbf{v}_0+\lambda \tilde{v}_\theta) \Phi(\psi_0)dx|_{\lambda=0}\right|\\
&=\left|\int_\Omega r \tilde{v}_\theta \Phi(\psi_0)dx\right|\\
&\lesssim\|\tilde{v}_\theta\|_{L^2}\|r\Phi(\psi_0)\|_{L^2}\lesssim\|\tilde{v}_\theta\|_{L^2}
\end{align*}
and
\begin{align*}
|\partial_{\lambda}F_2(0,0+\lambda\tilde{\psi})|_{\lambda=0}|&=\left|\partial_{\lambda}\int_\Omega r\mathbf{v}_0 \Phi(\psi_0+\lambda\tilde{\psi})dx|_{\lambda=0}\right|\\
&=\left|\int_\Omega r\mathbf{v}_0 \Phi^{'}(\psi_0)\tilde{\psi} dx\right|\\
&\lesssim\|\mathbf{v}_0\|_{L^2(\Omega_1)}\|r^2\Phi^{'}(\psi_0)\|_{L^3(\Omega_1)}\left\|\frac{\tilde{\psi}}{r}\right\|_{L^6(\Omega_1)}\\
&\quad+ \|\mathbf{v}_0\|_{L^2(\Omega_2)}\|r^3\Phi^{'}(\psi_0)\|_{L^3(\Omega_2)}\left\|\frac{\tilde{\psi}}{r^2}\right\|_{L^6(\Omega_2)}
\\
&\lesssim\|\tilde{\psi}\|_{H^1_{mag}}.
\end{align*}
This implies that $F_2(\delta v_\theta,\delta\psi)$ is G\^{a}teaux differentiable at $(0,0)$. To show that
$F_2(\delta v_\theta,\delta\psi)$ is Fr\'{e}chet differentiable at $(0,0)$, we choose $\{(\delta v_\theta)_n\}\in L^2(\Omega), \{\delta\psi_n\}\in Z$ such that $(\delta v_\theta)_n\rightarrow 0$ in $L^2(\Omega), \delta\psi_n\rightarrow0$ in $Z$,
and prove that
\begin{align*}
\partial_{\lambda}F_2((\delta v_\theta)_n+\lambda \tilde{v}_\theta,0)|_{\lambda=0}\rightarrow \partial_{\lambda}F_2(0+\lambda \tilde{v}_\theta,0)|_{\lambda=0} \text{ as } n\rightarrow\infty,
\end{align*}
\begin{align*}
\partial_{\lambda}F_2(0,\delta\psi_n+\lambda\tilde{\psi})|_{\lambda=0}\rightarrow \partial_{\lambda}F_2(0,0+\lambda\tilde{\psi})|_{\lambda=0} \text{ as } n\rightarrow\infty.
\end{align*}
In fact, through \eqref{unbounded}-\eqref{magunbounded} and \eqref{order3}, we have
\begin{align*}
&|\partial_{\lambda}F_2((\delta v_\theta)_n+\lambda \tilde{v}_\theta,0)|_{\lambda=0}- \partial_{\lambda}F_2(0+\lambda \tilde{v}_\theta,0)|_{\lambda=0}| \nonumber\\
&=\left|\int_\Omega (r\tilde{v}_\theta \Phi(\psi_0)-r\tilde{v}_\theta \Phi(\psi_0))dx\right|=0
\end{align*}
and
\begin{align*}
&|\partial_{\lambda}F_2(0,\delta\psi_n+\lambda\tilde{\psi})|_{\lambda=0}- \partial_{\lambda}F_2(0,0+\lambda\tilde{\psi})|_{\lambda=0}|\nonumber\\
&= \left|\int_\Omega (r\mathbf{v}_0 \Phi^{'}(\delta\psi_n+\psi_0)\tilde{\psi}-r\mathbf{v}_0 \Phi^{'}(\psi_0)\tilde{\psi}) dx\right|
\nonumber\\
&\lesssim\|\mathbf{v}_0\|_{L^2(\Omega_1)}\|r^3\Phi^{''}(\psi_0+\kappa\delta\psi_n)\|_{L^6(\Omega_1)}
\left\|\frac{\delta\psi_n}{r}\right\|_{L^6(\Omega_1)}\left\|\frac{\tilde{\psi}}{r}\right\|_{L^6(\Omega_1)}\nonumber\\
&\quad+ \|\mathbf{v}_0\|_{L^2(\Omega_2)}\|r^5\Phi^{''}(\psi_0+\kappa\delta\psi_n)\|_{L^6(\Omega_2)}
\left\|\frac{\delta\psi_n}{r^2}\right\|_{L^6(\Omega_2)}\left\|\frac{\tilde{\psi}}{r^2}\right\|_{L^6(\Omega_2)}
\nonumber\\
&\lesssim\|\mathbf{v}_0\|_{L^2}\|\delta\psi_n\|_{H^1_{mag}}\|\tilde{\psi}\|_{H^1_{mag}}\rightarrow 0 \text{ as } n\rightarrow\infty
\end{align*}
with $0<\kappa<1.$
Therefore, $F_2(\delta v_\theta,\delta\psi)$ is Fr\'{e}chet differentiable at $(0,0)$.

Next, we show that the  2-th order G$\hat{\text{a}}$teaux derivative of $F_2(\delta v_\theta,\delta\psi)$ exists at $(0,0)$.
For any $\tilde{v}_\theta,\widehat{u}_\theta\in L^2,\tilde{\psi},\widehat{\psi}\in Z$, combining \eqref{unbounded}-\eqref{magunbounded} and \eqref{order2}-\eqref{order3}, we have
\begin{align*}
|\partial_\tau\partial_{\lambda}F_2(0+\lambda \tilde{v}_\theta+\tau\widehat{v}_\theta,0)|_{\lambda=\tau=0}|
&=\left|\partial_\tau\partial_{\lambda}\int_\Omega r(\lambda \tilde{v}_\theta+\tau\widehat{v}_\theta) \Phi(\psi_0)dx|_{\lambda=\tau=0}\right|\\
&=\left|\partial_\tau\int_\Omega r\tilde{v}_\theta \Phi(\psi_0)dx|_{\lambda=\tau=0}\right|=0,
\end{align*}
\begin{align*}
&|\partial_\tau\partial_{\lambda}F_2(0,0+\lambda\tilde{\psi}+\tau\widehat{\psi})|_{\lambda=\tau=0}|=\left|\partial_\tau\partial_{\lambda}\int_\Omega r\mathbf{v}_0 \Phi(\psi_0+\lambda\tilde{\psi}+\tau\widehat{\psi})dx|_{\lambda=\tau=0}\right|\\
&=\left|\partial_\tau\int_\Omega r\mathbf{v}_0 \Phi^{'}(\psi_0+\lambda\tilde{\psi}+\tau\widehat{\psi})\psi dx|_{\lambda=\tau=0}\right|\\
&=\left|\int_\Omega r\mathbf{v}_0 \Phi^{''}(\psi_0)\tilde{\psi}\widehat{\psi}dx\right|
\\
&\lesssim\|\mathbf{v}_0\|_{L^2(\Omega_1)}\|r^3\Phi^{''}(\psi_0)\|_{L^6(\Omega_1)}\left\|\frac{\tilde{\psi}}{r}\right\|_{L^6(\Omega_1)}
\left\|\frac{\widehat{\psi}}{r}\right\|_{L^6(\Omega_1)}\\
&\quad+ \|\mathbf{v}_0\|_{L^2(\Omega_1)}\|r^5\Phi^{''}(\psi_0)\|_{L^6(\Omega_2)}\left\|\frac{\tilde{\psi}}{r^2}\right\|_{L^6(\Omega_2)}\left\|\frac{\widehat{\psi}}{r^2}\right\|_{L^6(\Omega_2)}
\\
&\lesssim\|\mathbf{v}_0\|_{L^2(\Omega_1)}\|\tilde{\psi}\|_{H^1_{mag}}\|\widehat{\psi}\|_{H^1_{mag}}
\end{align*}
and
\begin{align*}
|\partial_\tau\partial_{\lambda}F_2(0+\lambda \tilde{v}_\theta,0+\tau\psi)|_{\lambda=\tau=0}|
&=\left|\partial_\tau\partial_{\lambda}\int_\Omega r(\mathbf{v}_0+\lambda \tilde{v}_\theta) \Phi(\psi_0+\tau\tilde{\psi})dx|_{\lambda=\tau=0}\right|\\
&=\left|\partial_\tau\int_\Omega r {\tilde{v}}_\theta \Phi(\psi_0+\tau\tilde{\psi})dx|_{\lambda=\tau=0}\right|\\
&=\left|\int_\Omega r {\tilde{v}}_\theta \Phi^{'}(\psi_0)\tilde{\psi} dx\right|
\\
&\lesssim\| {\tilde{v}}_\theta\|_{L^2(\Omega_1)}\|r^2\Phi^{'}(\psi_0)\|_{L^3(\Omega_1)}\left\|\frac{\tilde{\psi}}{r}\right\|_{L^6(\Omega_1)}\\
&\quad+ \| {\tilde{v}}_\theta\|_{L^2(\Omega_2)}\|r^3\Phi^{'}(\psi_0)\|_{L^3(\Omega_2)}\left\|\frac{\tilde{\psi}}{r^2}\right\|_{L^6(\Omega_2)}
\\
&\lesssim\| {\tilde{v}}_\theta\|_{L^2}\|\tilde{\psi}\|_{H^1_{mag}}.
\end{align*}
Thus,  $F_2(\delta v_\theta,\delta\psi)$ is 2-th G$\hat{\text{a}}$teaux differentiable at $(0,0)$.
We will prove that
 \begin{align*}
\partial_{\tau}\partial_{\lambda}F_2((\delta v_\theta)_n+\lambda \tilde{v}_\theta+\tau\widehat{v}_\theta,0)|_{\lambda=\tau=0}
\to  \partial_{\tau}\partial_{\lambda}F_2(0+\lambda \tilde{v}_\theta+\tau\widehat{v}_\theta,0)|_{\lambda=\tau=0},
\end{align*}
 \begin{align*}
\partial_{\tau}\partial_{\lambda}F_2(0,\delta\psi_n+\lambda\tilde{\psi}+\tau\widehat{\psi})|_{\lambda=\tau=0}
\to  \partial_{\tau}\partial_{\lambda}F_2(0,0+\lambda\tilde{\psi}+\tau\widehat{\psi})|_{\lambda=\tau=0},
\end{align*}
 \begin{align*}
\partial_{\tau}\partial_{\lambda}F_2((\delta v_\theta)_n+\lambda \tilde{v}_\theta,\delta\psi_n+\tau\tilde{\psi})|_{\lambda=\tau=0}
\to  \partial_{\tau}\partial_{\lambda}F_2(0+\lambda \tilde{v}_\theta,0+\tau\tilde{\psi})|_{\lambda=\tau=0},
\end{align*}
for $\{(\delta v_\theta)_n\}\in L^2, \{\delta\psi_n\}\in Z$ such that $(\delta v_\theta)_n\rightarrow 0
\in L^2, \delta\psi_n\rightarrow 0$ in $Z$, which implies that
$F_2(\delta v_\theta,\delta\psi)$ is second order Fr\'{e}chet differentiable at $(0,0)$.
In specify, by \eqref{unbounded}-\eqref{magunbounded} and \eqref{order3}-\eqref{order4},
 \begin{align*}
\left|\partial_{\tau}\partial_{\lambda}F_2((\delta v_\theta)_n+\lambda \tilde{v}_\theta+\tau\widehat{v}_\theta,0)|_{\lambda=\tau=0}
-\partial_{\tau}\partial_{\lambda}F_2(0+\lambda \tilde{v}_\theta+\tau\widehat{v}_\theta,0)|_{\lambda=\tau=0}\right|=0,
\end{align*}
 \begin{align*}
&\left|\partial_{\tau}\partial_{\lambda}F_2(0,\delta\psi_n+\lambda\tilde{\psi}+\tau\widehat{\psi})|_{\lambda=\tau=0}
-\partial_{\tau}\partial_{\lambda}F_2(0,0+\lambda\tilde{\psi}+\tau\widehat{\psi})|_{\lambda=\tau=0}\right|
\nonumber\\&=\left|\int_\Omega \left(r\mathbf{v}_0 \Phi^{''}(\delta\psi_n+\psi_0)\tilde{\psi}\widehat{\psi}-r\mathbf{v}_0 \Phi^{''}(\psi_0)\tilde{\psi}\widehat{\psi}\right)dx\right|
\nonumber\\
&\lesssim\|\mathbf{v}_0\|_{L^2(\Omega_1)}\|r^4\Phi^{'''}(\psi_0+\kappa\delta\psi_n)\|_{L^\infty(\Omega_1)}
\left\|\frac{\delta\psi_n}{r}\right\|_{L^6(\Omega_1)}\left\|\frac{\tilde{\psi}}{r}\right\|_{L^6(\Omega_1)}\left\|\frac{\widehat{\psi}}{r}\right\|_{L^6(\Omega_1)}
\nonumber\\
&\quad+ \|\mathbf{v}_0\|_{L^2(\Omega_2)}\|r^7\Phi^{'''}(\psi_0+\kappa\delta\psi_n)\|_{L^\infty(\Omega_2)}\left\|\frac{\delta\psi_n}{r^2}\right\|_{L^6(\Omega_2)}
\left\|\frac{\tilde{\psi}}{r^2}\right\|_{L^6(\Omega_2)}\left\|\frac{\widehat{\psi}}{r^2}\right\|_{L^6(\Omega_2)}\nonumber\\
&\lesssim\|{\mathbf{v}}_0\|_{L^2}\|\delta\psi_n\|_{H^1_{mag}}\|\tilde{\psi}\|_{H^1_{mag}}\|\widehat{\psi}\|_{H^1_{mag}}\rightarrow 0 \text{ as } n\rightarrow\infty
\end{align*}
and
 \begin{align*}
&|\partial_{\tau}\partial_{\lambda}F_2((\delta v_\theta)_n+\lambda \tilde{v}_\theta,\delta\psi_n+\tau\tilde{\psi})|_{\lambda=\tau=0}
-\partial_{\tau}\partial_{\lambda}F_2(0+\lambda \tilde{v}_\theta,0+\tau\tilde{\psi})|_{\lambda=\tau=0}|\nonumber\\
&=\left|\int_\Omega (r{\tilde{v}}_\theta \Phi^{'}(\delta\psi_n+\psi_0)\tilde{\psi}-r{\tilde{v}}_\theta \Phi^{'}(\psi_0)\tilde{\psi}) dx\right|
\nonumber\\
&\lesssim\|{\tilde{v}}_\theta\|_{L^2(\Omega_1)}\|r^3\Phi^{''}(\psi_0+\kappa\delta\psi_n)\|_{L^6(\Omega_1)}\left\|\frac{\delta\psi_n}{r}\right\|_{L^6(\Omega_1)}
\left\|\frac{\tilde{\psi}}{r}\right\|_{L^6(\Omega_1)}
\nonumber\\
&\quad+ \|{\tilde{v}}_\theta\|_{L^2(\Omega_2)}\|r^5\Phi^{''}(\psi_0+\kappa\delta\psi_n)\|_{L^6(\Omega_2)}\left\|\frac{\delta\psi_n}{r^2}\right\|_{L^6(\Omega_2)}
\left\|\frac{\tilde{\psi}}{r^2}\right\|_{L^6(\Omega_2)}\nonumber\\
&\lesssim\|{\tilde{v}}_\theta\|_{L^2}\|\delta\psi_n\|_{H^1_{mag}}\|\tilde{\psi}\|_{H^1_{mag}}\rightarrow 0 \text{ as } n\rightarrow\infty,
\end{align*}
with $0<\kappa<1.$

\end{proof}

Define the distance function as in \eqref{distance}, i.e.,
\begin{align*}
d(t)&=d_1(t)+d_2(t)+d_3(t),
\end{align*}
where
\begin{align*}
d_1(t)&=d_1(\delta v_r,\delta v_z,\delta H_\theta)=\frac{1}{2}\int_{\Omega}(|\delta v_r|^2+|\delta v_z|^2+|\delta H_\theta|^2)dx,\\
d_2(t)&=d_2(\delta v_\theta)=\frac{1}{2}\int_{\Omega}|\delta v_\theta|^2dx,\\
d_3(t)&=d_3(\delta\psi)=\frac{1}{2}\int_{\Omega}\left(\frac{1}{r^2}|\partial_z\delta\psi|^2+\frac{1}{r^2}|\partial_r\delta\psi|^2\right)
 dx.
\end{align*}
\begin{lemma}\label{unboundedEcEleq}
 There exists a constant $C>0$, such that
\begin{align*}
E_c(\delta v,\delta H_\theta,\delta\psi)\leq Cd +o(d).
\end{align*}
\end{lemma}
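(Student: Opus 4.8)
The plan is to mirror the bounded-domain argument of Lemma \ref{EcEleq}, replacing the differentiability input by Lemma \ref{ubdFderivate} and the compact embedding by Lemma \ref{embedd}, while carefully tracking the decay of all weights at both $r\to0$ and $r\to\infty$. First I would combine the explicit quadratic part of the relative energy $E$ in \eqref{'7.1} with the second-order Fr\'{e}chet expansions of $F_1(\delta\psi)=\int_\Omega(f(\psi)-f(\psi_0))\,dx$ and $F_2(\delta v_\theta,\delta\psi)=\int_\Omega(rv_\theta\Phi(\psi)-r^2\omega\Phi(\psi_0))\,dx$ supplied by Lemma \ref{ubdFderivate}. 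Since $(v_0,0,\psi_0)$ remains a critical point of $E_c$ (the first-order terms cancel exactly as in \eqref{firstvariational}) and $-\partial_r\psi_0/r=b$, $\partial_z\psi_0=0$, this yields an expansion analogous to \eqref{EcE01}--\eqref{secondvariational}, namely
\begin{align*}
E_c(\delta v,\delta H_\theta,\delta\psi)
&=d_1+d_3+\frac{1}{2}\int_\Omega\mathfrak{F}(r)|\delta\psi|^2\,dx
+\frac{1}{2}\int_\Omega\Big[\delta v_\theta+\frac{\partial_r\omega}{b}\delta\psi\Big]^2dx+o(d),
\end{align*}
where the remainder is $o(\|\delta\psi\|_{H^1_{mag}}^2)+o(\|\delta v_\theta\|_{L^2}^2)=o(d)$ by the distance decomposition \eqref{distance}.

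Next I would expand the square and estimate the cross and weighted terms. Writing $\tfrac12\int[\delta v_\theta+\tfrac{\partial_r\omega}{b}\delta\psi]^2=d_2+\int\delta v_\theta\tfrac{\partial_r\omega}{b}\delta\psi+\tfrac12\int\tfrac{(\partial_r\omega)^2}{b^2}|\delta\psi|^2$, Young's inequality bounds the cross term by $Cd_2+C\int\tfrac{(\partial_r\omega)^2}{b^2}|\delta\psi|^2$. The crucial point is that, with $b$ constant, both weights $\tfrac{(\partial_r\omega)^2}{b^2}$ and $|\mathfrak{F}(r)|$ share the decay treated in Lemma \ref{embedd}: the hypothesis $\omega=O(r^{-1-\alpha})$ (with $\omega\in C^3$) gives $(\partial_r\omega)^2=O(r^{-4-2\alpha})$ as $r\to\infty$, matching $|\mathfrak{F}|=O(r^{-4-2\alpha})$, while near $r=0$ the hypothesis $\partial_r\omega=O(r^{\beta-3})$, $\beta\ge4$, makes $(\partial_r\omega)^2=O(r^{2\beta-6})$ vanish and keeps $\mathfrak{F}$ bounded. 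Hence the split into $\Omega_1=\{r\le1\}$ and $\Omega_2=\{r>1\}$, Hardy's inequality, and the compactness $H^1_{mag}\hookrightarrow\hookrightarrow L^2_{|\mathfrak{F}(r)|}$ (together with the analogous weighted embedding for $(\partial_r\omega)^2/b^2$) give $\int\tfrac{(\partial_r\omega)^2}{b^2}|\delta\psi|^2+\int|\mathfrak{F}(r)||\delta\psi|^2\lesssim\|\delta\psi\|_{H^1_{mag}}^2\lesssim d_3$.

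Collecting these bounds leaves $E_c(\delta v,\delta H_\theta,\delta\psi)\le d_1+C(d_2+d_3)+o(d)\le Cd+o(d)$, which is the claim. I expect the main obstacle to be the weighted $L^2$ control of $\delta\psi$ on the unbounded piece $\Omega_2$: unlike the bounded case, one must verify that the constant $b$ and the prescribed decay of $\omega$ force both $\tfrac{(\partial_r\omega)^2}{b^2}$ and $\mathfrak{F}$ into the admissible weight class of Lemma \ref{embedd}, so that the cross and curvature terms remain genuine lower-order multiples of $d_3$ rather than diverging at infinity. The behavior at the origin, handled by Hardy's inequality through the cut-offs already used in the proof of Lemma \ref{embedd}, should be comparatively routine.
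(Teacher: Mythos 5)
Your proposal is correct and follows essentially the same route as the paper's own proof: expand $E_c$ to second order at the critical point $(v_0,0,\psi_0)$ using Lemma \ref{ubdFderivate} (the unbounded analogue of \eqref{EcE01}), then absorb the cross term $\int_\Omega \delta v_\theta\frac{\partial_r\omega}{b}\delta\psi\,dx$ and the weighted terms $\int_\Omega \frac{(\partial_r\omega)^2}{b^2}|\delta\psi|^2\,dx$, $\int_\Omega \mathfrak{F}(r)|\delta\psi|^2\,dx$ into $C(d_2+d_3)$ via Young's inequality and the weighted embedding $H^1_{mag}\hookrightarrow L^2_{|\mathfrak{F}(r)|}$. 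Your explicit verification that both weights share the admissible decay $O(r^{-4-2\alpha})$ at infinity and remain bounded (for $\beta\geq 4$) at the origin is exactly what the paper leaves implicit when citing the embedding, so the two arguments coincide in substance.
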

\begin{proof}
From \eqref{EcE01} and ${H^1_{mag}}\hookrightarrow\hookrightarrow L^2_{|\frac{\partial_r(\omega^2)}{b^2r}|}$, we can obtain
\begin{align*}
E_c(\delta v,\delta H_\theta,\delta \psi)
&=\frac{1}{2}\int_{\Omega}(|\delta v_r|^2+|\delta v_z|^2+|\delta H_\theta|^2)dx+\frac{1}{2}\int_{\Omega}\left(\frac{1}{r^2}|\partial_z\delta\psi|^2+\frac{1}{r^2}|\partial_r\delta\psi|^2\right)dx
\notag\\&\quad+\frac{1}{2}\int_{\Omega}\left(\frac{\partial_r(\omega^2)}{b^2r} |\delta\psi|^2+\left|\delta v_\theta+\frac{\partial_r\omega}{ b}\delta\psi\right|^2\right)
dx\notag \\&\quad+ o(\|\delta\psi\|^2_{H^1_{mag}})+o(\|\delta v_\theta\|^2_{L^2})\leq Cd +o(d),
\end{align*}
with the fact that
\begin{align*}
&\frac{1}{2}\int_{\Omega}\frac{(\partial_r\omega)^2}{b^2}|\delta\psi|^2dx
+\int_{\Omega}\delta v_\theta\frac{\partial_r(\omega)}{b}\delta\psi dx
+\frac{1}{2}\int_{\Omega}\frac{\partial_r(\omega^2)}{b^2r}|\delta\psi|^2dx
\\&\leq C\left[\|\delta v_\theta\|_{L^2}^2+\|\delta\psi\|_{H^1_{mag}}^2
\right]
\leq C(d_2+d_3).
\end{align*}
\end{proof}

Then, for the proof of Lemma \ref{unboundedEcEgeq}, the key point is to verify $J_i(\delta\psi)\in C^2(H^1_{mag})$. The other steps can be done by the same arguments as Lemma \ref{EcEgeq}. In order to prove $J_i(\delta\psi)\in C^2(H^1_{mag})$, similar to the bounded case as before in Section \ref{nonlinearstability}, by solving the eigenvalue problem
\begin{align*}
\mathbb{L}_0 h+(\lambda-1)\mathfrak{F}(r)h=-\frac{1}{r}\partial_r\left(\frac{1}{r}\partial_r h \right)+\lambda\mathfrak{F}(r)h=0
\end{align*}
for $h\in Z$. Note that the case $\ker(\mathbb{L}_0)= \{0\}$ can be proved by the same arguments as the case $\ker(\mathbb{L}_0)\neq \{0\}$. Here, we consider the case $\ker(\mathbb{L}_0)\neq \{0\}$. Thus, we obtain $K$ negative directions of $\langle \mathbb{L}_0\cdot,\cdot\rangle$ denoted by $\{h_1,...,h_{K}\}\in C^2(0,\infty)$ and
the kernel direction of $\langle \mathbb{L}_0\cdot,\cdot\rangle$ by $h_{0}\in C^2(0,\infty)$.
Moreover,
\begin{align}\label{derivativeestimate}
\lim_{r\rightarrow\infty}h(r)=0.
\end{align}
Then, we can choose $K+1$ invariants $$J_{i}(\delta\psi)=\int_\Omega (f_i(\delta\psi+\psi_0)-f_i(\psi_0))dx,$$ where $f^{'}(\psi_0)$ is defined by \eqref{derivateofQ}, i.e., \begin{align*}
f'_i(\psi_0)=
 h_i\left(r\right)\mathfrak{F}(r),\quad \text{for } 0\leq i\leq K.
\end{align*}
Together with $\mathfrak{F}(r)=\frac{\partial_r(\omega^2)}{ b^2r} $, $\omega(r)=O(r^{-(1+\alpha)}) (r\rightarrow\infty)$ for $\alpha>1$, $\partial_r\omega(r)=O(r^{\beta-3}) (r\rightarrow0)$ for $\beta\geq4$,  and \eqref{derivativeestimate}, it follows that
\begin{align}
&|r^2f_i^{'}(\psi_0)|=o(r^{-2-2\alpha}) (r\rightarrow\infty),\quad |rf^{'}(\psi_0)|=O(r^{\beta-3}) (r\rightarrow0);
\label{order8}\\
&|r^4f_i^{''}(\psi_0)|=o(r^{-2-2\alpha}) (r\rightarrow\infty),\quad |r^2f^{''}(\psi_0)|=O(r^{\beta-3}) (r\rightarrow0);
\label{order9}\\
&|r^6f_i^{'''}(\psi_0)|=o(r^{-2-2\alpha}) (r\rightarrow\infty),\quad |r^3f^{'''}(\psi_0)|=O(r^{\beta-3}) (r\rightarrow0)
\label{order10}
\end{align}
for $0\leq i\leq K.$

Applying Lemma \ref{ubdFderivate} 1), it is clear that $J_i(\delta\psi)\in C^2(H^1_{mag})$ through replacing \eqref{order5}-\eqref{order7} for $\alpha>0 $ by \eqref{order8}-\eqref{order10} for $\alpha>0.$ Therefore, we can prove the following lemma by the same arguments in Lemma \ref{EcEgeq}.
\begin{lemma}\label{unboundedEcEgeq}
There exists a constant $\tau>0$, such that
\begin{align*}
E_c(v,H_\theta,\psi)-E_c(v_0,0,\psi_0)\geq  \tau d-o(d)- O( d(0)).
\end{align*}
\end{lemma}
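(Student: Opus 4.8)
The plan is to mirror the bounded-domain proof of Lemma \ref{EcEgeq}, with the only genuinely new ingredient being the verification that the auxiliary invariants $J_i(\delta\psi)$ are twice Fr\'echet differentiable in the unbounded setting. Once this regularity is in hand, the three-step structure of Section \ref{nonlinearstability} (the decomposition of $\delta\psi$ via the projection $P_K$, the upper bound on $\langle LP_K(\delta\psi),P_K(\delta\psi)\rangle$, and the lower bound on $\langle L(I-P_K)(\delta\psi),(I-P_K)(\delta\psi)\rangle$) carries over verbatim, since all the relevant estimates there are expressed in the weighted norms $\|\cdot\|_{H^1_{mag}}$ and $\|\cdot\|_{L^2_{|\mathfrak F(r)|}}$, and the compact embedding $H^1_{mag}(\Omega)\hookrightarrow\hookrightarrow L^2_{|\mathfrak F(r)|}(\Omega)$ is available from Lemma \ref{embedd} under the present decay hypotheses.

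First I would solve the eigenvalue problem $-\frac1r\partial_r(\frac1r\partial_r h)+\lambda\mathfrak F(r)h=0$ in $Z$ to extract the $K$ negative directions $\{h_i\}_{i=1}^K$ and, in the degenerate case $\ker(\mathbb L_0)\neq\{0\}$, the kernel direction $h_0$, noting that each $h_i\in C^2(0,\infty)$ and that $h_i(r)\to0$ as $r\to\infty$ (the decay statement \eqref{derivativeestimate}). I would then set $f_i'(\psi_0)=h_i(r)\mathfrak F(r)$ exactly as in \eqref{derivateofQ} and define the invariants $J_i(\delta\psi)=\int_\Omega(f_i(\delta\psi+\psi_0)-f_i(\psi_0))\,dx$, which are conserved by Lemma \ref{fpsiconunbounded}. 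The crucial point is to translate the decay $\omega=O(r^{-1-\alpha})$ with $\alpha>1$ and $\partial_r\omega=O(r^{\beta-3})$ with $\beta\geq4$ into the pointwise weighted bounds \eqref{order8}--\eqref{order10} on $r^2f_i'(\psi_0)$, $r^4f_i''(\psi_0)$, $r^6f_i'''(\psi_0)$ near infinity and on $rf_i'(\psi_0)$, $r^2f_i''(\psi_0)$, $r^3f_i'''(\psi_0)$ near the origin. With these in place, the differentiability of $J_i$ follows by invoking Lemma \ref{ubdFderivate} part 1) with the weaker decay requirement $\alpha>0$ satisfied a fortiori by $\alpha>1$; concretely one checks each G\^ateaux difference quotient using H\"older's inequality on $\Omega_1$ and $\Omega_2$ separately together with the weighted Hardy-type bounds \eqref{unbounded}--\eqref{magunbounded}, so that $J_i(\delta\psi)\in C^2(H^1_{mag})$.

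Given $J_i\in C^2(H^1_{mag})$, I would expand $J_i(\psi)-J_i(\psi_0)=\int_\Omega\mathfrak F(r)h_i(r)(\psi-\psi_0)\,dx+O(d)$ and use conservation $J_i(\psi(t))=J_i(\psi(0))$ to conclude $|\int_\Omega\mathfrak F(r)h_i(r)(\psi-\psi_0)\,dx|=O(d(0)^{1/2})+O(d)$, the exact analogue of \eqref{Internal-product}. This controls $P_K(\psi-\psi_0)$ and hence both $|\langle LP_K(\psi-\psi_0),P_K(\psi-\psi_0)\rangle|=O(d(0))+O(d^2)$ and, via the coercivity on the orthogonal complement from Lemma \ref{lemma3.3}, the positivity $\langle L(I-P_K)(\psi-\psi_0),(I-P_K)(\psi-\psi_0)\rangle\geq\tilde\delta(\tfrac12 d_3-O(d(0))-O(d^2))$. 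Combining with the angular term via \eqref{3.18} and choosing $c$ large yields $E_c-E_c(v_0,0,\psi_0)\geq\tau d-o(d)-O(d(0))$ with $\tau=\min\{1,\tfrac1c,\tfrac14\tilde\delta-\tfrac1c\}>0$.

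The main obstacle is entirely contained in establishing the weighted bounds \eqref{order8}--\eqref{order10} and thereby the $C^2$ regularity of $J_i$ on the unbounded domain: unlike the bounded case, the integrals defining $J_i$ and its derivatives run to $r=\infty$, so one must exploit both the decay of $h_i(r)$ at infinity and the sharp behavior of $\mathfrak F(r)=\frac{\partial_r(\omega^2)}{b^2 r}$ to guarantee integrability and the vanishing of the difference quotients. Once the decay exponents are tracked carefully and matched against the $L^p$--Hardy inequalities on $\Omega_2$, the remaining coercivity argument is a routine repetition of Steps~1--3 in Section \ref{nonlinearstability}, so I would keep that part brief and refer back to Lemma \ref{EcEgeq}.
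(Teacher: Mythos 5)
Your proposal is correct and follows essentially the same route as the paper: the paper likewise reduces the unbounded case to verifying $J_i(\delta\psi)\in C^2(H^1_{mag})$ via the decay of the eigenfunctions $h_i$ at infinity, the weighted bounds \eqref{order8}--\eqref{order10}, and Lemma \ref{ubdFderivate} 1), and then repeats the Steps 1--3 argument of Lemma \ref{EcEgeq} verbatim. No gaps.
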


In conclusion,
we finish the proof of Theorem \ref{nonlinearstable} when $R_2=\infty$.

\textbf{Acknowledgments:} This work is supported partly by the NSF grants
DMS-1715201 and DMS-2007457 (Lin).

\textbf{Data Availability Statement:} This manuscript has no associated data.
\bigskip


\end{CJK*}

\end{document}